\numberwithin{equation}{section} 
\numberwithin{figure}{section} 
 \theoremstyle{plain}    
 \newtheorem{thm}{Satz} 
 \theoremstyle{plain}    
 \newtheorem{lem}{Lemma} 
 \theoremstyle{plain}    
 \newtheorem{cor}{Korollar} 
 \theoremstyle{remark}
 \theoremstyle{definition}
 \newtheorem{defn}{Definition}
 \theoremstyle{definition}
  \theoremstyle{definition}
  \theoremstyle{plain}    
 \newtheorem{ek}{Euklidische Grundkonstruktion} 
  \theoremstyle{plain}    
 \newtheorem{oax}{Origami-Axiom} 
\newcommand{\N}{\mathbb{N}}
\newcommand{\E}{\mathbb{E}}
\newcommand{\R}{\mathbb{R}}
\newcommand{\Q}{\mathbb{Q}}
\newcommand{\Or}{\mathbb{O}}
\newcommand{\Lin}{\mathbb{L}}
\newcommand{\bpm}{\begin{pmatrix}}
\newcommand{\epm}{\end{pmatrix}}
\newcommand{\bi}{\begin{itemize}}
\newcommand{\ei}{\end{itemize}}
\newcommand{\be}{\begin{enumerate}}
\newcommand{\ee}{\end{enumerate}}
\begin{document}
\title{Geometrische Konstruktionen und Origami}

\author{Kay Paulus}

\maketitle
Zulassungsarbeit für das Lehramt an Gymnasien im Fach Mathematik\\
\\
Betreuer: Prof. Dr. Friedrich Knop\\

Eingereicht am 20. Juli 2012
\newpage

\tableofcontents
\newpage

{\huge \bfseries Widmung}\\
~\\
~\\
{\bfseries Für meine Oma, die während der Anfertigung dieser Arbeit verstorben ist.}

\newpage

Vor Beginn der eigentlichen Arbeit möchte ich einige Bemerkungen zur Notation machen.

\bi
\item Lateinische Großbuchstaben bezeichnen Punkte
\item Die Strecke zwischen den beiden Punkten $A$ und $B$ wird mit $\overline{AB}$ notiert. Gelegentlich werden Strecken auch mit überstrichenen Kleinbuchstaben abgekürzt.
\item $AB$ bezeichnet die Länge dieser Strecke. Streckenlängen werden gelegentlich lateinische Kleinbuchstaben zugewiesen.
\item $\overleftrightarrow{AB}$ bezeichne die Gerade, die durch die Punkte $A$ und $B$ geht. Geraden werden auch mit lateinischen Kleinbuchstaben bezeichnet, was gemeint ist, wird im Zusammenhang deutlich.
\item $\overrightarrow{AB}$ bezeichne die Halbgerade mit Endpunkt A.
\item $K(A,r)$ bezeichnet den Kreis um den Mittelpunkt $A$ mit Radius $r$, $K(A,B)$ den Kreis um Mittelpunkt $A$, der durch den Punkt $B$ geht.
\item $\angle(ABC)$ bezeichnet den Winkel mit Scheitelpunkt $B$, wobei $A$ ein Punkt auf dem ersten Schenkel und $C$ ein Punkt auf dem zweiten Schenkel ist. Winkel werden auch mit griechischen Kleinbuchstaben bezeichnet.
\item $\measuredangle(ABC)$ bezeichnet das Maß dieses Winkels.
\ei

\newpage

\section{Über die Geschichte geometrischer Konstruktionen}

Die alten Spiele sind oft die besten Spiele, und eines der ältesten Spiele sind die geometrischen Konstruktionen. \footnote{\cite{martin}, Einleitung} Die Griechen begannen vor ca. 2500 Jahren damit, sich mit geometrischen Fragestellungen, die freilich schon viel länger in alten Kulturen oder Situationen des täglichen Lebens entstanden waren, auf einer abstrakteren Ebene zu beschäftigen. \footnote{\cite{henn}, Kapitel 2} Euklid (360-280 v. Chr.) hat wohl eine der berühmtesten Abhandlungen hierzu geschrieben, {\em Die Elemente}, die das mathematische Wissen seiner Zeit zusammenfassten. Die Geometrie wurde in der Zeit der griechischen Mathematiker der Antike von einer konkreten Arbeit, wie die Landvermessung der alten Ägypter es noch war, zu einer rein theoretischen Wissenschaft.\\

Euklid baute seine Geometrie auf eine große Anzahl Definitionen, Postulaten und Axiomen auf, die man als anzuerkennende Grundlagen, als "`Spielregeln"' zu verstehen hat.\footnote{ebd}. Viele seiner Definitionen sind nicht mehr zeitgemäß, da er sich doch nicht vollständig von einer Anschauung lösen konnte. Seine Postulate und Axiome sind auch nicht unabhängig voneinander, wie man es von einem modernen Axiomensystem erwarten würde. Spätere Generationen haben jedoch seine Festlegungen zusammengefasst, zum Beispiel kann man folgende Grundkonstruktionen extrahieren:\footnote{zitiert nach\cite{ger95}, Abschnitt 2}

\begin{ek} Durch zwei nicht identische Punkte $P$ und $Q$ kann man mittels des Lineals eine eindeutig festgelegte Gerade $l=\overleftrightarrow{PQ}$ zeichnen, die beide Punkte enthält.
\end{ek}
\begin{ek} 
Gegeben sei ein Punkt $M$ und eine Strecke mit Länge $r>0$, dann kann man den eindeutig festgelegten Kreis $K(M,r)$ mit dem Zirkel zeichnen.
\end{ek}
\begin{ek} 
Gegeben seien zwei nicht parallele Geraden $g,h$. Dann haben diese den eindeutigen Schnittpunkt $P=g\cap h$.
\end{ek}
\begin{ek}
 Gegeben sei ein Kreis $K(M,r)$ und eine Gerade $g$, wobei die Entfernung zwischen $g$ und $K(M,r)$ kleiner oder gleich $r$ sein soll. Dann werde(n) dadurch die/der Schnittpunkt(e) von Kreis und Gerade festgelegt.
\end{ek}
\begin{ek}
 Gegeben seien zwei Kreise $K_1(M_1,r_1)$ und $K_2(M_2,r_s)$, und gelte entweder:
Keiner der Kreise enthält den Mittelpunkt des anderen in seinem Inneren, und $M_1M_2\le r_1+r_2$; oder aber, dass ein Kreis den Mittelpunkt des anderen in seinem Inneren enthält, und $M_1M_2\ge \|r_1-r_2\|$, dann lassen sich der/die Schnittpunkt(e) der beiden Kreise festlegen.
\end{ek}

Aus den Postulaten Euklids ergibt sich die Beschränkung auf die sogenannten "`Euklidischen Werkzeuge"', Zirkel und Lineal. \footnote{\cite{martin}, Seite 6} Ein Lineal im euklidischen Sinne ist ein Werkzeug, mit dem man eine Strecke durch zwei beliebige gegebene Punkte zeichnen kann. Ein "`reales"' Modell eines solchen Lineals wäre also eines, auf dem keine Längenskala aufgedruckt ist. Wenn in dieser Arbeit von einem Lineal gesprochen wird, ist immer ein solches euklidisches Lineal gemeint. Ein Zirkel ist ein Werkzeug, mit dem man einem Kreis mit einem gegebenen Mittelpunkt durch einen beliebigen anderen Punkt. Unsere modernen Zirkel sind hierfür ein adäquates physikalisches Modell.\\

\subsection{Mit Zirkel und Lineal konstruierbare Zahlen}
Eine wichtige Frage der Geometrie des alten Griechenlandes war die Frage, welche Strecken man mit diesen Werkzeugen konstruieren könne. In der heutigen Zeit ist diese Frage längst beantwortet. Da die Beantwortung dieser Frage nicht Hauptthema der Zulassungsarbeit ist, möchte ich die entsprechende Theorie nur kurz skizzieren, der Beweis wird ebenfalls nur skizziert\footnote{Ein exakter Beweis findet sich zum Beispiel in \cite{martin}, Kapitel 1 und 2}.

Wir identifizieren die Zeichenebene mit dem ${\R}^2$, versehen mit karthesischen Koordinaten, gegeben sei eine Strecke, die oBdA die durch die Punkte $(0,0);(0,1)$ festgelegte Einheitsstrecke ist. \footnote{\cite{martin}, S. 30ff}

\begin{defn}{\em ZuL-Punkt\\}
Ein Punkt ist mit Zirkel und Lineal konstruierbar (kurz: ZuL-Punkt), wenn dieser Punkt Ergebnis einer endlichen Sequenz von Punkten $P_1, P_2, \dots, P_n$ von Punkten, sodass jeder Punkt ist entweder einer der Startmenge oder aber in einer der folgenden Arten entstanden ist:

\be
\item Schnittpunkt zweier Geraden, die jeweils durch 2 Punkte gehen, die vorher in der Sequenz aufgetreten sind.
\item Schnittpunkt einer Geraden durch 2 Punkte, die vorher bereits aufgetreten sind, mit einem Kreis, der einen vorher aufgetretenen Punkt als Mittelpunkt hat und durch einen weiteren vorher aufgetretenen Punkt geht.
\item Schnittpunkt zweier Kreise, die jeweils durch einen vorher aufgetretenen Mittelpunkt und einen vorher aufgetretenen Punkt auf der Kreislinie festgelegt sind.
\ee

Eine {\em ZuL-Gerade} ist eine Gerade durch zwei ZuL-Punkte, ein {\em ZuL-Kreis} ist einer, mit einem ZuL-Punkt als Mittelpunkt durch einen ZuL-Punkt. Eine Zahl $x$ ist eine ZuL-Zahl, wenn $(x,0)$ ein ZuL-Punkt ist.
\end{defn}

Insbesondere sind damit die Schnittpunkte von zwei ZuL-Geraden, einer ZuL-Gerade und einem ZuL-Kreis und die Schnittpunkte von zwei ZuL-Kreisen ZuL-Punkte und die Koordinatenachsen ZuL-Geraden. Es wird aber ausdrücklich nicht gesagt, dass jeder Punkt auf einer ZuL-Geraden ein ZuL-Punkt ist!

Da man mit Hilfe eines Zirkels die Länge einer Strecke an eine andere Strecke antragen kann, sind mit $A$ und $B$ auch die Zahlen $A+B$ und $A-B$ ZuL-Punkte. Der Strahlensatz liefert eine Möglichkeit, aus den Zahlen $a$ und $b$ die Zahlen $ab$ und $\frac{a}{b}$ für $b\ne 0$ zu konstruieren. \footnote{\cite{henn}, Seite 49}

\begin{figure}[h]
\includegraphics[width=8cm]{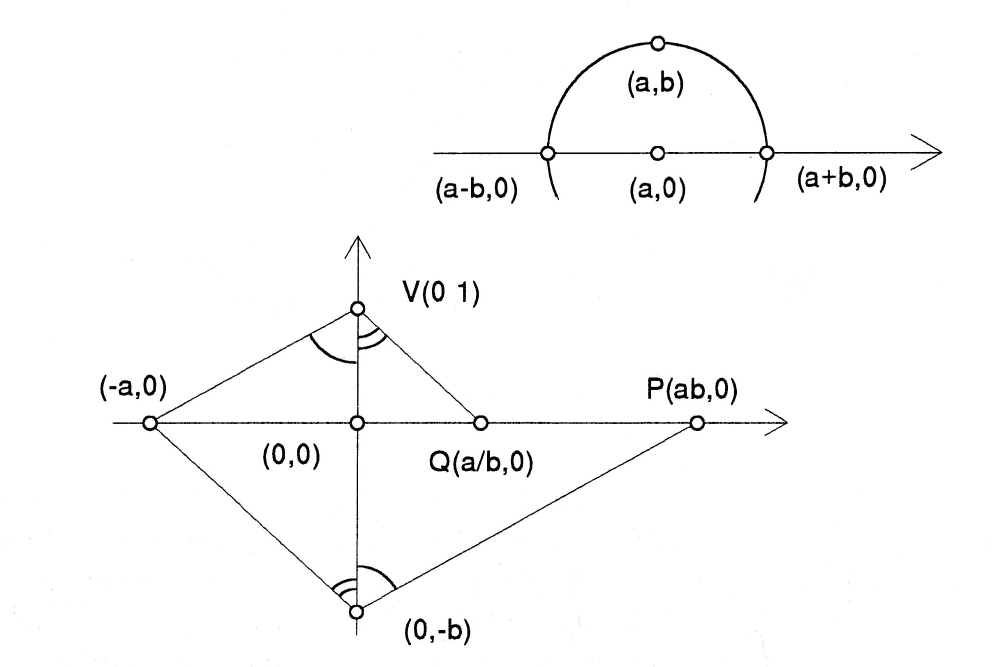}
\caption[\cite{martin}, Seite 34]{Konstruierbarkeit von Summen, Differenzen, Produkten und Quotienten}
\label{zul1}
\end{figure}

Wir erhalten somit, dass die ZuL-Zahlen einen Körper im algebraischen Sinne bilden.

Der Höhensatz liefert nun weiterhin eine Möglichkeit, Quadratwurzeln zu konstruieren.\footnote {ebd}\\

\begin{figure}[h]
\includegraphics[width=8cm]{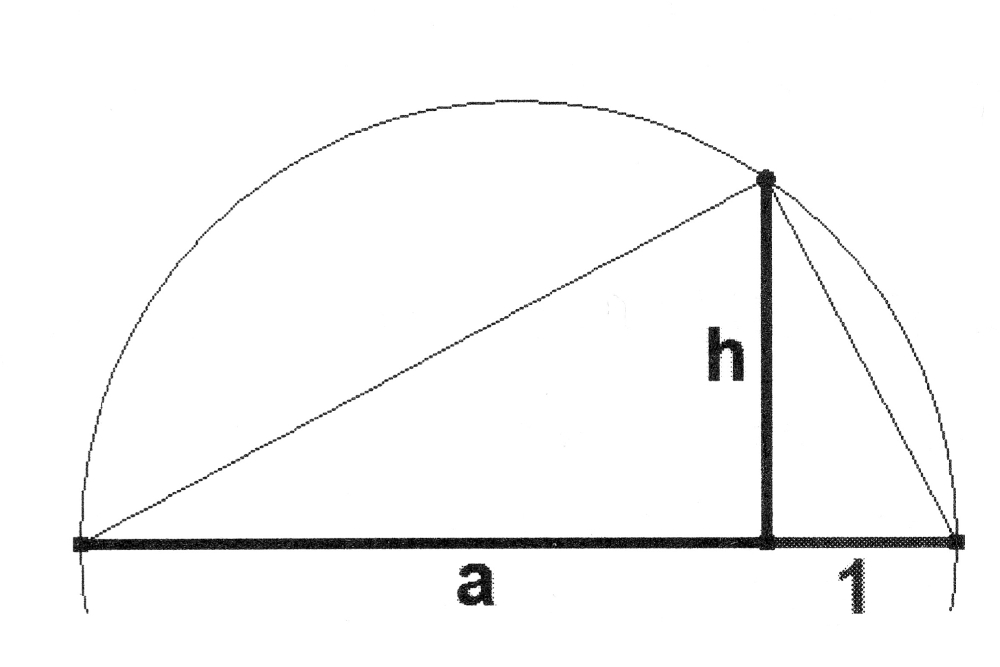}
\caption[\cite{henn}, Seite 49]{Konstruierbarkeit von Quadratwurzeln}
\label{zul2}
\end{figure}

Damit werden die ZuL-Zahlen zu einem euklidischen Körper:

\begin{defn}{Euklidischer Körper\\}
Ein Körper heißt euklidisch, wenn mit jedem Körperelement $x$ auch $\sqrt{x}$ im Körper enthalten ist.
\end{defn}

Nun benötigen wir noch die Aussage, dass, wenn $d$ eine positive Zahl ist, die in einem Körper $F$ enhalten ist, $\sqrt{d}$ aber kein Körperelement ist, die Menge $\{p+q\sqrt{d}, p,q \in F\}$ ein Körper ist. Man bezeichnet diesen Körper $F(\sqrt{d})$ als einen quadratischen Erweiterungskörper von $F$. \footnote{\cite{martin}, Seite 35f}. Dies zusammen führt zum Hauptsatz über konstruierbare Zahlen.

\begin{thm}{Hauptsatz über konstruierbare Zahlen\footnote{\cite{henn}, Seite 56}\\}
Mit Zirkel und Lineal sind genau diejenigen reellen Zahlen konstruierbar, die in einem reell-algebraischen Erweiterungskörper $K$ von $\Q$ mit Körpergrad $(K:Q)=2^n$ liegen, wobei $\Q$ und $K$ durch eine Kette quadratischer Körpererweiterungen verbunden sind.\\
Die ZuL-Zahlen lassen sich also als (eventuell mehrfach geschachtelte) Quadratwurzeln ausdrücken. Diese Menge der ZuL-Zahlen sei mit $\E$ bezeichnet.
\end{thm}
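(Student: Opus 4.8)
Mein Plan ist, den Beweis in die beiden Inklusionen zu zerlegen: zu zeigen ist einerseits, dass jede Zahl aus einem solchen Turm quadratischer Erweiterungen eine ZuL-Zahl ist (Hinl\"anglichkeit), und andererseits, dass umgekehrt jede ZuL-Zahl in einem solchen Turm liegt (Notwendigkeit). Die erste Richtung ergibt sich nahezu unmittelbar aus den zuvor bereitgestellten Ergebnissen: Wir wissen bereits, dass $\E$ ein K\"orper und sogar euklidisch ist, also unter Quadratwurzeln abgeschlossen. Da $1$ konstruierbar ist, liegt ganz $\Q$ in $\E$. Ist nun $\Q=K_0\subset K_1\subset\cdots\subset K_n=K$ eine Kette mit $K_{i+1}=K_i(\sqrt{d_i})$, $d_i\in K_i$, $d_i>0$, so schlie\ss{}e ich induktiv: Liegt $K_i$ in $\E$, so ist $d_i$ konstruierbar, wegen der Euklidizit\"at also auch $\sqrt{d_i}$, und da $\E$ ein K\"orper ist, sind alle $p+q\sqrt{d_i}$ mit $p,q\in K_i$ konstruierbar. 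Somit folgt $K_{i+1}\subseteq\E$ und nach $n$ Schritten $K\subseteq\E$.

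Die zweite Richtung ist der eigentliche Kern. Hier verfolge ich die Konstruktion entlang der definierenden Punktsequenz $P_1,\dots,P_m$. Die Startpunkte $(0,0)$ und $(0,1)$ haben rationale Koordinaten; ich setze $F_0=\Q$. Induktiv sei $F_{k-1}$ der von den Koordinaten von $P_1,\dots,P_{k-1}$ erzeugte K\"orper. Ich zeige, dass die Koordinaten von $P_k$ in einer Erweiterung von $F_{k-1}$ vom Grad h\"ochstens $2$ liegen. Eine Gerade durch zwei Punkte mit Koordinaten in $F_{k-1}$ besitzt eine Gleichung $ax+by+c=0$ mit $a,b,c\in F_{k-1}$, ein entsprechender Kreis eine Gleichung $(x-p)^2+(y-q)^2=r^2$ mit $p,q,r^2\in F_{k-1}$. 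Der Schnitt zweier solcher Geraden f\"uhrt auf ein lineares Gleichungssystem \"uber $F_{k-1}$, die L\"osung bleibt also in $F_{k-1}$. Der Schnitt einer Geraden mit einem Kreis liefert nach Einsetzen eine quadratische Gleichung \"uber $F_{k-1}$; ihre L\"osungen liegen in $F_{k-1}(\sqrt{D})$ mit der Diskriminante $D\in F_{k-1}$, wobei die Schnittbedingungen der Grundkonstruktionen $D\ge 0$ sichern, also in einer Erweiterung vom Grad h\"ochstens $2$.

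Den Schnitt zweier Kreise f\"uhre ich auf den vorigen Fall zur\"uck: Subtrahiert man die beiden Kreisgleichungen, so heben sich die quadratischen Terme weg, und es verbleibt eine lineare Gleichung (die Potenzgerade) mit Koeffizienten in $F_{k-1}$. Der gesuchte Schnitt ist also derjenige dieser Geraden mit einem der beiden Kreise, womit wiederum nur ein quadratischer Grad hinzukommt. Insgesamt gilt daher $[F_k:F_{k-1}]\in\{1,2\}$. L\"asst man die trivialen Schritte fort, so erh\"alt man eine Kette echt quadratischer Erweiterungen, und wegen der Multiplikativit\"at des K\"orpergrades folgt $[F_m:\Q]=2^n$ f\"ur ein geeignetes $n$. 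Da alle Konstruktionen in $\R^2$ stattfinden, ist $F_m$ ein reell-algebraischer Erweiterungsk\"orper von $\Q$, der \"uber die beschriebene Kette mit $\Q$ verbunden ist; die Koordinate der betrachteten ZuL-Zahl liegt in $K:=F_m$, und $[\Q(x):\Q]$ ist als Teiler von $2^n$ insbesondere eine Zweierpotenz.

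Die Hauptschwierigkeit erwarte ich beim Nachweis der zweiten Richtung, genauer bei der sorgf\"altigen algebraischen Analyse der drei Schnittf\"alle --- insbesondere bei der Reduktion des Kreis-Kreis-Schnitts auf die Potenzgerade --- sowie bei der Buchf\"uhrung \"uber den Turm und der Anwendung der Gradmultiplikativit\"at, um den Gesamtgrad verl\"asslich als Zweierpotenz zu identifizieren.
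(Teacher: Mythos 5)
Ihr Beweisvorschlag ist korrekt und verfolgt im Kern denselben Weg wie die Arbeit -- mit dem Unterschied, dass die Arbeit den Satz ausdr\"ucklich nur skizziert: Sie stellt (genau wie Ihre erste Richtung) bereit, dass die ZuL-Zahlen einen K\"orper bilden, dass dieser euklidisch ist (H\"ohensatz) und dass quadratische Erweiterungen $F(\sqrt{d})$ wieder K\"orper sind, und verweist f\"ur den vollst\"andigen Beweis auf die Literatur (\cite{martin}, Kapitel 1 und 2). Ihre zweite Richtung -- die Induktion entlang der definierenden Punktsequenz mit der Fallunterscheidung Gerade--Gerade, Gerade--Kreis, Kreis--Kreis und der Reduktion des Kreis--Kreis-Schnitts auf die Potenzgerade -- f\"ullt genau die L\"ucke, die die Arbeit der zitierten Quelle \"uberl\"asst; bemerkenswerterweise f\"uhrt die Arbeit dieselbe Potenzgeraden-Rechnung sp\"ater in Abschnitt 3.1 selbst durch, um den Schnitt zweier Kreise mit Origami-Mitteln zu reproduzieren. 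Eine kleine Erg\"anzung zur Buchf\"uhrung: Beim Geraden-Kreis-Schnitt liefert die quadratische Gleichung zun\"achst nur \emph{eine} Koordinate in $F_{k-1}(\sqrt{D})$; Sie sollten explizit festhalten, dass die zweite Koordinate sich linear aus der Geradengleichung ergibt und daher in derselben Erweiterung liegt (bei vertikalen Geraden mit vertauschten Rollen), damit tats\"achlich $[F_k:F_{k-1}]\le 2$ f\"ur den von \emph{beiden} Koordinaten erzeugten K\"orper folgt. Davon abgesehen ist Ihr Argument vollst\"andig und entspricht dem Standardbeweis, auf den sich die Arbeit beruft.
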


Damit haben wir, in unserer modernen Sprache, festgehalten, welche Zahlen mit Zirkel und Lineal konstruierbar sind. Hiermit kann man leicht zeigen, dass viele klassische Konstruktionsprobleme aus dem alten Griechenland nicht mit Zirkel und Lineal lösbar sind. \footnote{\cite{henn}, S. 50 ff}\\\

\bi
\item Eines davon ist die Quadratur des Kreises, also die Aufgabe, zu einem gegebenen Kreis ein flächengleiches Quadrat zu konstruieren. Nähme man an, der Kreis habe Radius 1, dann müsste man ein Quadrat mit Seitenlänge $\sqrt{\pi}$ konstruieren, dies ist nicht möglich, da $\pi$ nach dem Satz von Lindemann transzendent über $\Q$ ist.

\item Weiterhin ist die Verdoppelung eines Würfels, die als das Deli'sche Problem bekannt ist, nicht mit Zirkel und Lineal lösbar. 

\item Auch das Problem der Winkeldrittelung führt im Allgemeinen auf eine Gleichung vom Grad 3, und ist daher nicht mit Zirkel und Lineal lösbar.

\item Die Konstruktion regelmäßiger n-Ecke ist schon nicht mehr so einfach, tatsächlich weiß man heute, dass dieses Problem nicht für alle $n\in \N$ lösbar ist.

\ei

Neben dieser streng mathematischen Sichtweise auf Konstruktionsprobleme gab es aber auch schon immer eine pragmatische Sicht der Welt. Die Glaser der Gotik mussten für den Bau von Kirchen fähig sein, alle möglichen $n$-Ecke zu "`konstruieren"', für dieses Problem hatten schon die alten Griechen Näherungskonstruktionen gefunden.\footnote{ebd, S. 50 ff}
Weiterhin war es bereits Archimedes bekannt, dass die Winkeldrittelung exakt lösbar ist, wenn man ein Lineal mit 2 Markierungen verwenden darf. \footnote{ebd, S. 67f}\\

Man kann also sagen, dass die Frage nach der Konstruierbarkeit einer Zahl bzw. einer Streckenlänge wesentlich davon abhängt, welche Werkzeuge man für die Konstruktion zur Verfügung hat. An dieser Stelle möchte diese Arbeit ansetzen, die sich näher mit der Konstruktion durch Methoden des Origami beschäftigt. Es wird sich zeigen, dass die ersten drei dieser klassischen Konstruktionsprobleme mit Origami lösbar sind, jedoch ist auch mit Origami nicht jedes beliebige n-Eck konstruierbar.\\

Zunächst soll, ähnlich wie bei der Beschreibung der euklidischen Konstruktionen, klargestellt werden, welche Grundkonstruktionen im Origami zur Verfügung stehen, dazu soll ein mögliches System von Origami-Axiomen vorgestellt werden. Im Anschluss wird die Frage diskutiert werden, welche Zahlen mit den Methoden des Origami konstruierbar sind, und es wird gezeigt, dass diese Menge die ZuL-Zahlen enthält und identisch mit der Menge der mit einem markierten Lineal konstruierbaren Zahlen ist.
Im Anschluss sollen einige konkrete Origami-Konstruktionen vorgestellt werden. Hierzu gehören eine Methode der Winkeldrittelung sowie die allgemeine Konstruktion von dritten Wurzeln. Ein kleiner historischer Exkurs führt zur Konstruierbarkeit der Zahl des Goldenen Schnittes, bevor die Leistung des jungen Gauss, das 17-Eck mit Zirkel und Lineal zu konstruieren, mit den Methoden des Origami gezeigt wird. Dieses Kapitel schließt die konkrete Konstruktion des 7-Ecks mit Origami-Methoden ab, ein Beispiel für ein regelmäßiges n-Eck, welches nicht mit Zirkel und Lineal, wohl aber mit Origami konstruierbar ist.\\
Auch der Werkzeugbereich des Origami ist für Erweiterungen offen, so ist es zum Beispiel möglich, durch die Zulassung von kreisförmigen Faltungen die Zahl Pi zu konstruieren. Abschließen soll die Arbeit mit einem faszinierenden Ergebnis, das wunderbar für die Demonstration der Macht von Symmetrien geeignet ist, und daher auch und gerade für Lehrer nicht uninteressant ist, dem Cut-Fold-Theorem, welches besagt, dass jeder Polygonzug so gefaltet werden kann, dass man ihn anschließend mit nur einem Schnitt ausschneiden kann.

\newpage

\section{Ein Origami-Axiomensystem}

Origami ist, wenn man den Namen wörtlich übersetzt, die Kunst des Faltens von Papier. Im traditionellen Origami versteht man darunter im engeren Sinne, dass Schneiden oder Reißen verboten ist, sondern nur gerade Faltungen zugelassen werden.  \footnote{\cite{henn}, Kapitel 3.4} Weiterhin sollen zunächst nur solche Faltungen betrachtet werden, bei denen das Papier nach Durchführung einer Faltung stets umgehend wieder auf gefaltet wird, bevor man die nächste Faltung durchführt. \\

Das Falten von Papier als Methode für geometrische Konstruktionen wurde erstmals 1893 von T. Sundara Row aus Indien vorgestellt. Die wohl erste strenge Behandlung von Origami als Konstruktionsmethode waren die "`Geometric Tool"' von R.C. Yates 1949, er stellte auch ein erstes Axiomensystem für Origami-Grundkonstruktionen auf. Sein Axiomensystem bestand aus den folgenden drei Grundkonstruktionen.

\be
\item Platziere einen Punkt des Blattes auf einem anderen und falte das Blatt.
\item Falte das Blatt so, dass die Faltlinie durch zwei gegebene Punkte verläuft.
\item Falte einen gegebenen Punkt so auf eine gegebene Gerade, sodass die dabei entstehende Faltlinie durch einen weiteren, gegebenen Punkt verläuft, wenn die Punkte denn so liegen, dass das möglich ist.
\ee

Yates bewies in seiner Arbeit, dass auf Grundlage dieser 3 Konstruktionen alle Konstruktionen der Euklidischen Geometrie durch Falten möglich sind. Der durch dieses Axiomensystem konstruierbare Körper ist also der Körper $\E$ aus der Einleitung. \footnote{\cite{martin}, 145f}

Ein weiteres, mathematisch präzises Axiomensystem für das Origami wurde 2000 von Roger Alperin entworfen. \footnote{Zitiert nach \cite{henn}, Seite 60f}. Hierbei werden 6 Origami-Axiome vorgestellt. Zu beachten ist hierbei, dass die Konstruktionen, die seine ersten 5 Axiome erlauben, exakt denen des Yates-Systems entsprechen, das 6. Origami-Axiom aber eines ist, das zu einem echt größeren Körper konstruierbarer Zahlen führt.\\

Interessant ist hierbei, dass in der euklidischen Geometrie anschaulich klar ist, wie ein Anschauungsobjekt für einen Punkt zu erzeugen ist, nämlich durch das Markieren eines Punktes in der Zeichenebene. \footnote{\cite{ger95}, Seite 359}\\

 Die einfachste "`Prozedur"', die man mit einem Blatt Papier durchführen kann, ist das Falten entlang einer Geraden, am naheliegendsten dürfte die Halbierungsfaltugn parallel zu einer Kante sein. Prinzipiell kann man dabei Berg- und Talfalten unterscheiden. \footnote{\cite{flachs}, Seite 11} 
\begin{figure}[h]
\includegraphics[width=8cm]{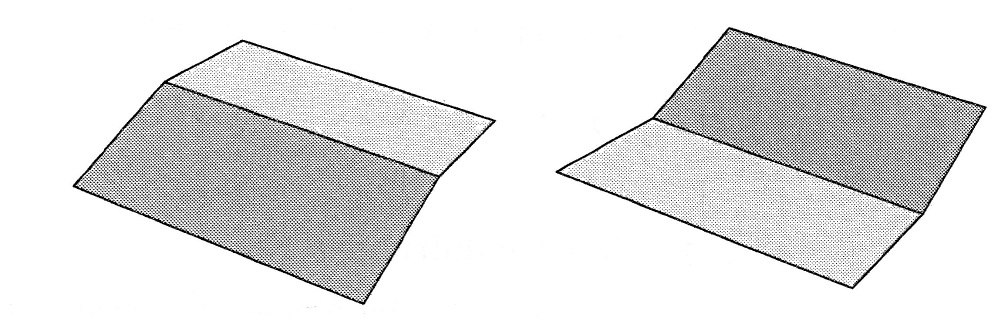}
\caption[\cite{flachs}, Seite 11]{Berg- und Talfaltung}
\label{zul2}
\end{figure}

Der im Papier auch nach dem Auffalten sichtbare Knick soll im Folgenden, unabhängig davon, ob er durch eine Berg- oder Talfaltung entstanden ist, als Faltlinie bezeichnet werden.
 
Es erscheint natürlicher, eine Gerade zu falten, als zum Beispiel eine krumme Linie, gleichwohl dieses möglich, wenn auch in der Praxis schwer exakt durchführbar ist. Hierzu sei auf das vorletzte Kapitel verwiesen. Natürlich entstehen durch iterierte Faltungen hierbei Objekte, die mehr oder weniger dreidimensional sind, gerade die künstlerische Richtung des Origami vollbringt hier ästhetische Höchstleistungen. Außer im letzten Kapitel der Arbeit soll es aber um flach aufgefaltetes Papier gehen, das man nach den entsprechenden Faltungen immer wieder aufklappt. Für die meisten Überlegungen in dieser Arbeit ist es unerheblich, ob ein rechteckiges oder quadratisches Papier verwendet wird, solange nichts anderes gesagt wird, darf von einem quadratischen Papier ausgegangen werden.\footnote{ebd}\\ 
 Das grundlegende Objekt unserer Betrachtungen ist also eine gerade Faltung, die wir mit einer Geraden identifizieren können. Nun liegt es nahe, einen Punkt als den Schnittpunkt zweier solcher Faltungen aufzufassen.\footnote{\cite{ger95}, Seite 359}
 
In den Skizzen zu den Axiomen werden bereits vorhandene Geraden fett, zu faltende Linien gestrichelt und neu entstehende Punkte als Kreuz und bereits vorhandene Punkte als Kreis gezeichnet, gestrichelte Pfeile deuten die Richtungen von Faltungen an.

\begin{oax}
Zu zwei nicht parallelen Faltlinien $g$ und $h$ kann der (eindeutige) Schnittpunkt $A$ konstruiert werden.
\end{oax}

\begin{figure}[h]
\includegraphics[width=5cm]{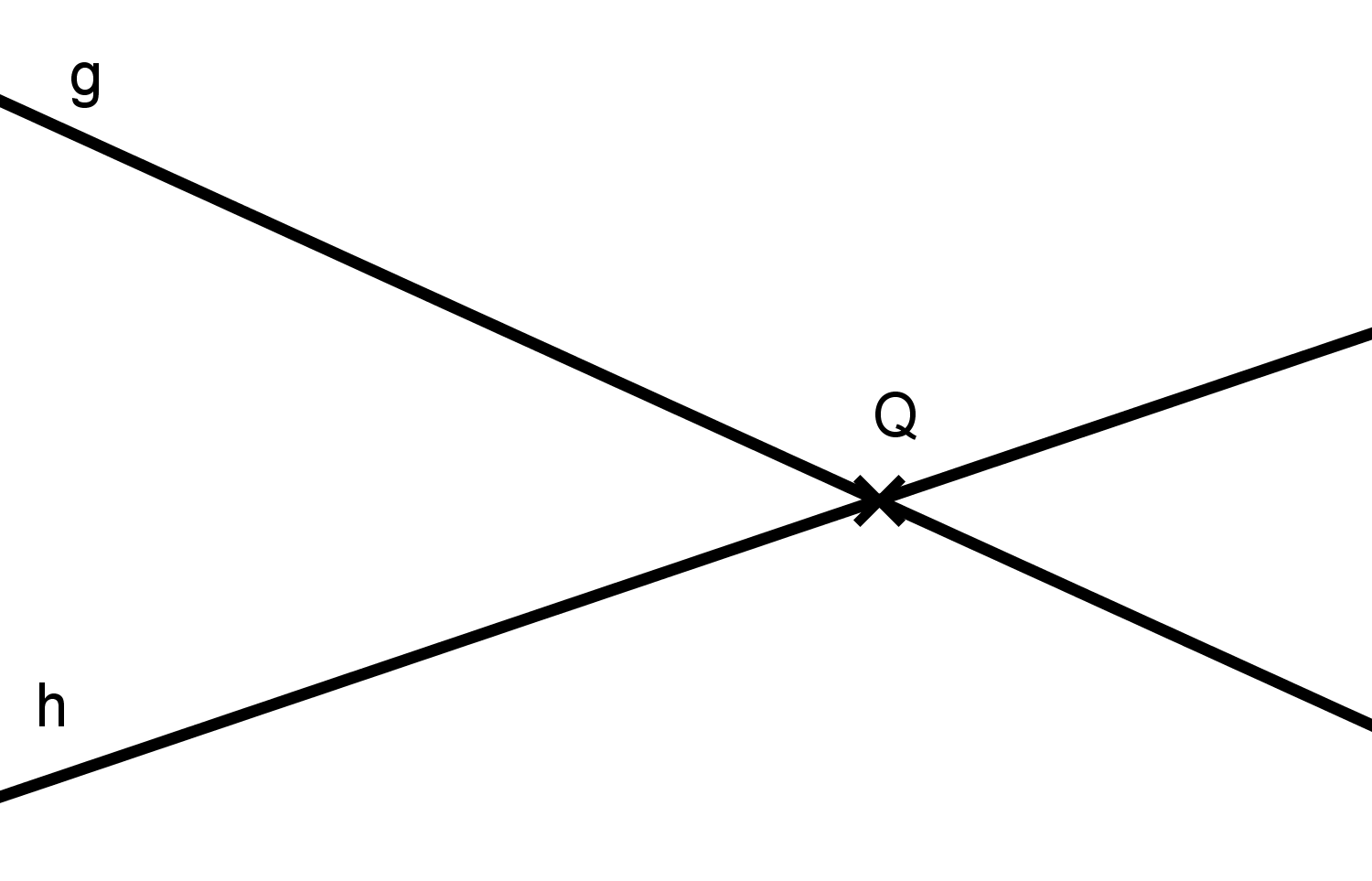}
\caption[Eigene Abbildung]{Origami-Axiom 1}
\label{zul2}
\end{figure}

Natürlich handelt es sich hierbei exakt um die Aussage der 3. Euklid-Grundkonstruktion. Wie schon angedeutet handelt es sich aber nun um eine Erklärung, wie man Punkte in der Ebene veranschaulichen kann. Man kann also sagen, dass die Faltung diejenige zentrale Position in der Origami-Geometrie einnimmt, die die Punkte in der Euklidischen Geometrie einnehmen. Zwei nicht-parallele Geraden definieren also auch hier einen eindeutigen Schnittpunkt.\\
Nachdem nun Punkte erklärt sind, können wir die Faltlinie durch zwei Punkte definieren.\footnote{\cite{ger95}, Seite 359ff}

\begin{oax}
Durch zwei nicht identische Punkte $P$ und $Q$ kann die Verbindungslinie gefaltet werden.
\end{oax}
\begin{figure}[h]
\includegraphics[width=5cm]{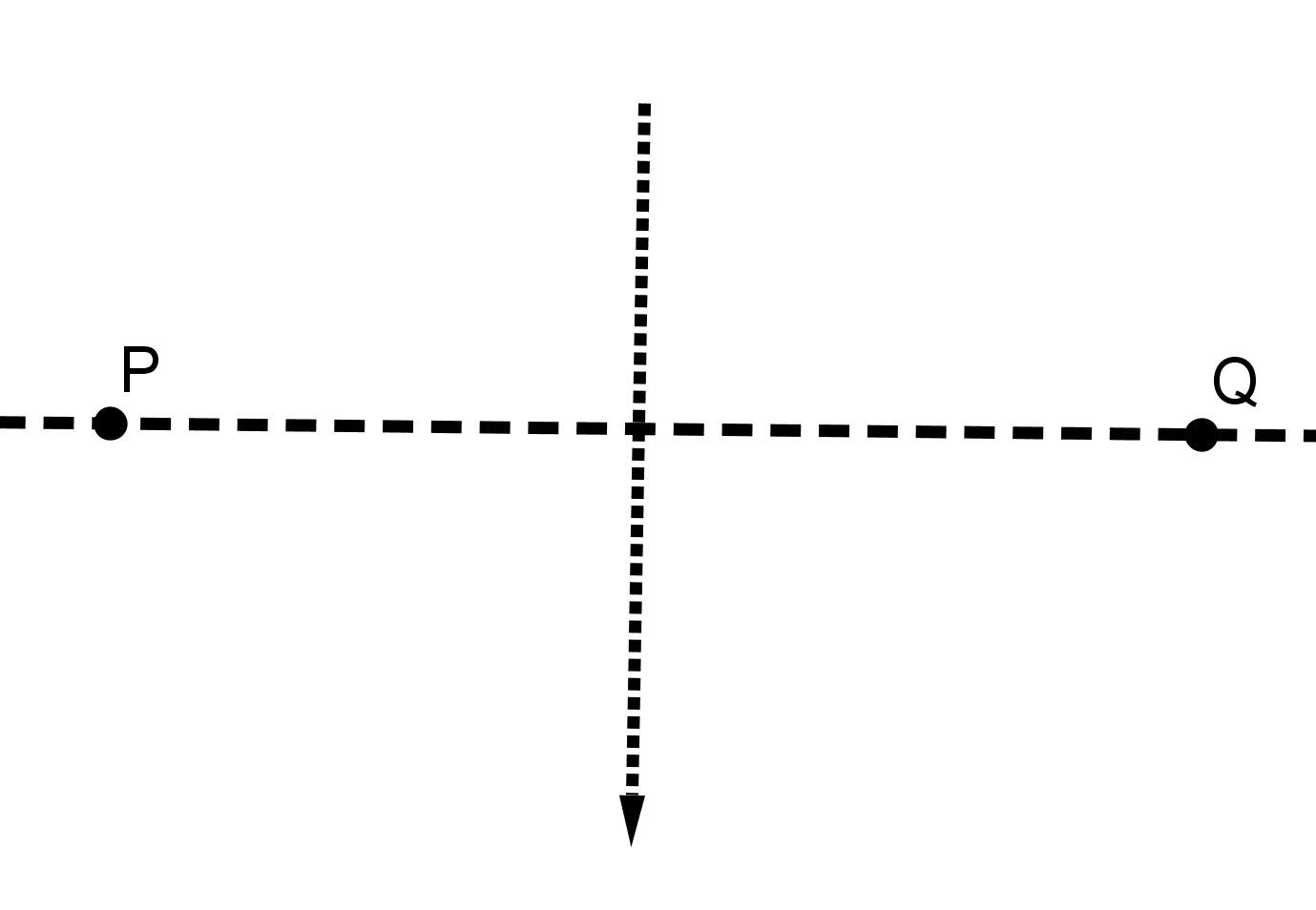}
\caption[Eigene Abbildung]{Origami-Axiom 2}
\label{zul2}
\end{figure}

Hierbei handelt es sich offensichtlich um das Äquivalent zu Euklid 1.\footnote{ebd}\\

Wir können die beiden Punkte aber nicht nur verbinden, wir können sie auch aufeinander falten.

\begin{oax}
Seien $P$ und $Q$ zwei verschiedene Punkte. Dann kann so gefaltet werden, dass $Q$ auf $P$ fällt.
\end{oax}

\begin{figure}[h]
\includegraphics[width=5cm]{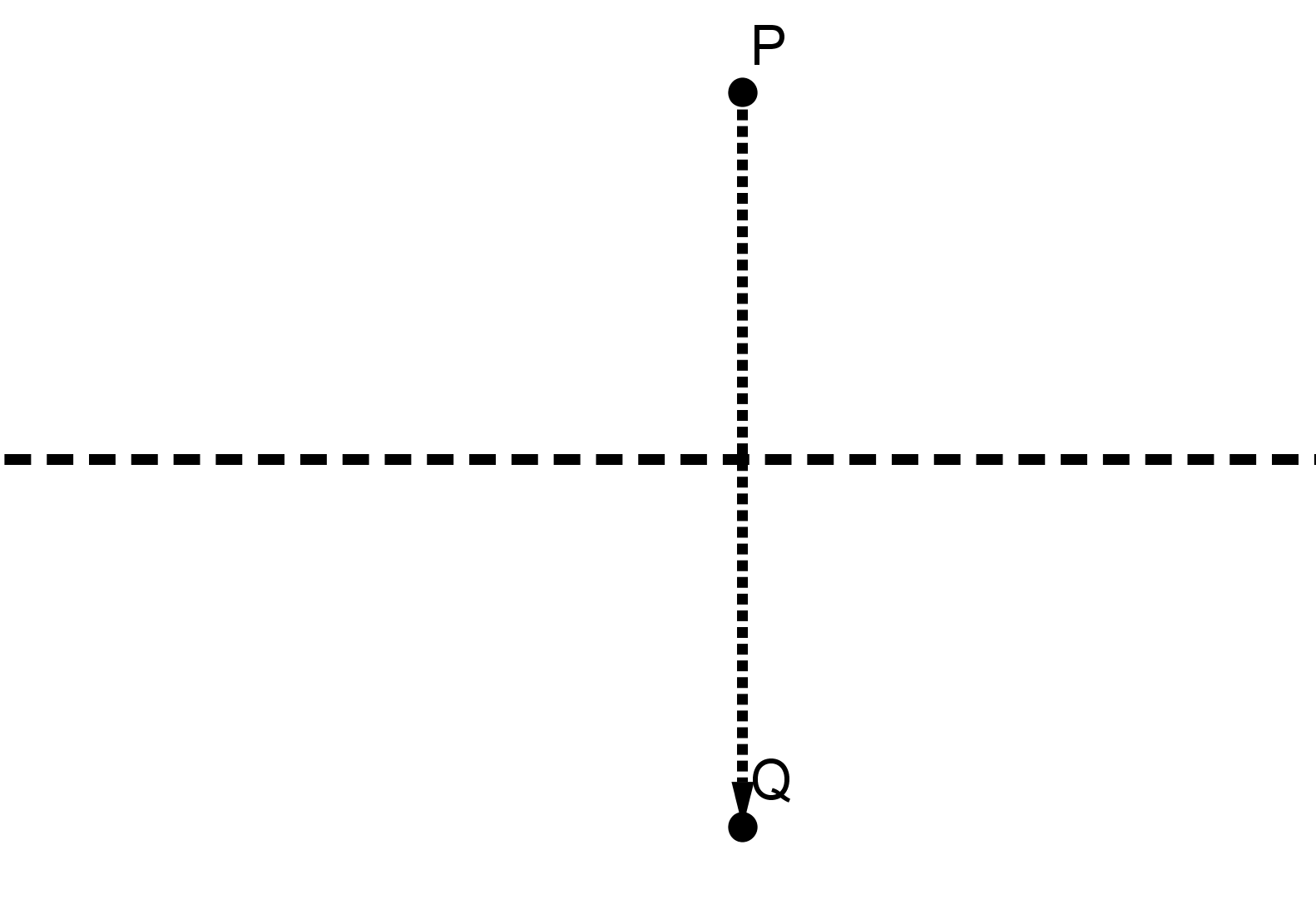}
\caption[Eigene Abbildung]{Origami-Axiom 3}
\label{zul2}
\end{figure}

Man sieht recht schnell ein, dass diese Konstruktion die Mittelsenkrechte der Strecke $\overline{PQ}$ konstruiert. 

\begin{oax}
Seien $g$ und $h$ zwei verschiedene Faltlinien. Dann kann $g$ auf $h$ gefaltet werden.
\end{oax}

Dieses Axiom wird zum Beispiel in der Publikation [Ger95] in zwei Axiome aufgeteilt. Das ist insofern gerechtfertigt, dass sich hierbei im Falle paralleler Faltlinien die Grundkonstruktion der Mittelparallelen ergibt.\footnote{\cite{ger95},Seite 360}

\begin{figure}[h]
\includegraphics[width=5cm]{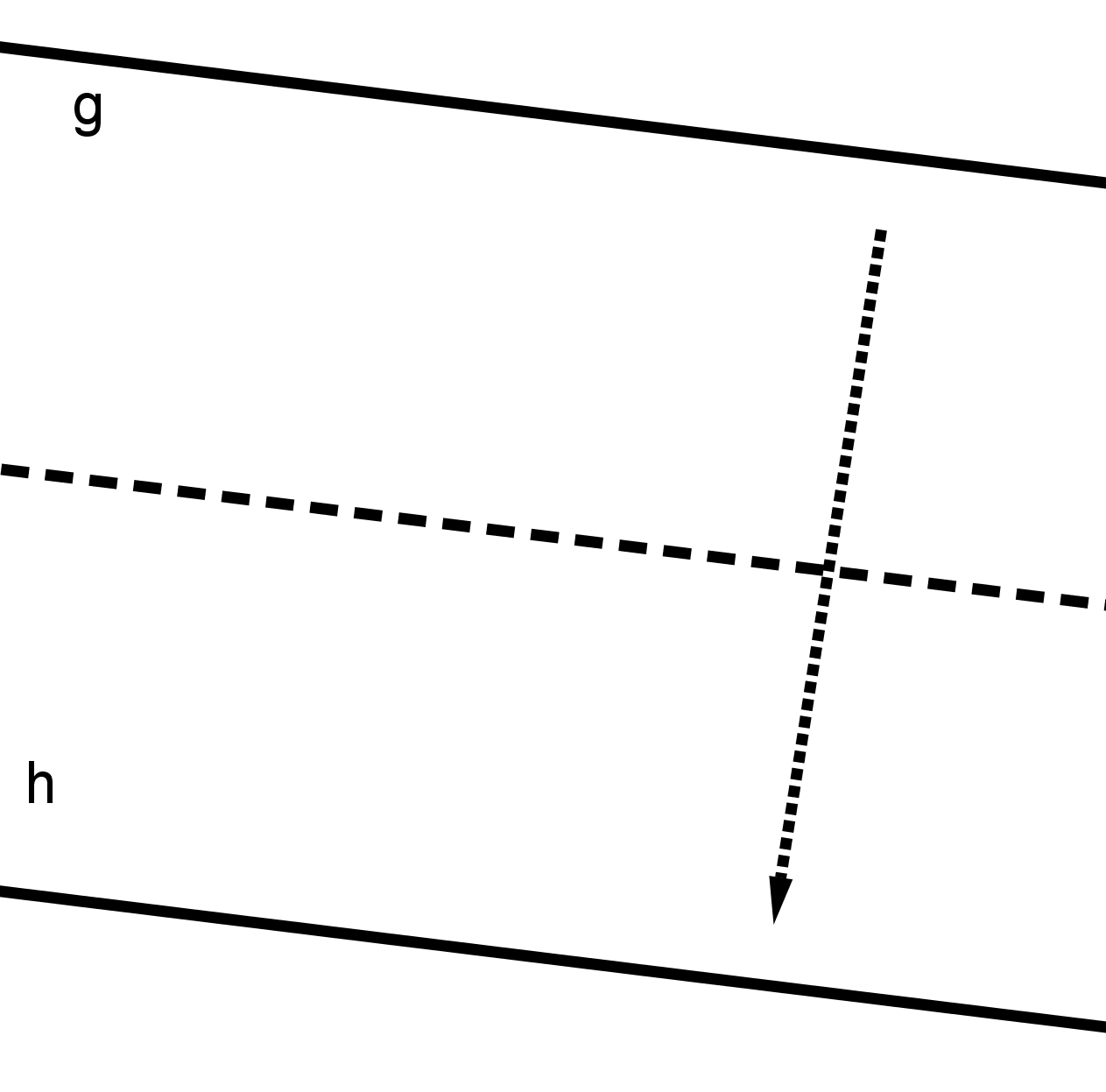}
\caption[Eigene Abbildung]{Origami-Axiom 4a}
\label{zul2}
\end{figure}

Betrachtet man jedoch dieses Axiom für nicht parallele Faltlinien, so ergibt sich die Konstruktion der Winkelhalbierenden des eingeschlossenen Winkels.\footnote{\cite{henn}, Seite 60f}

\begin{figure}[h]
\includegraphics[width=5cm]{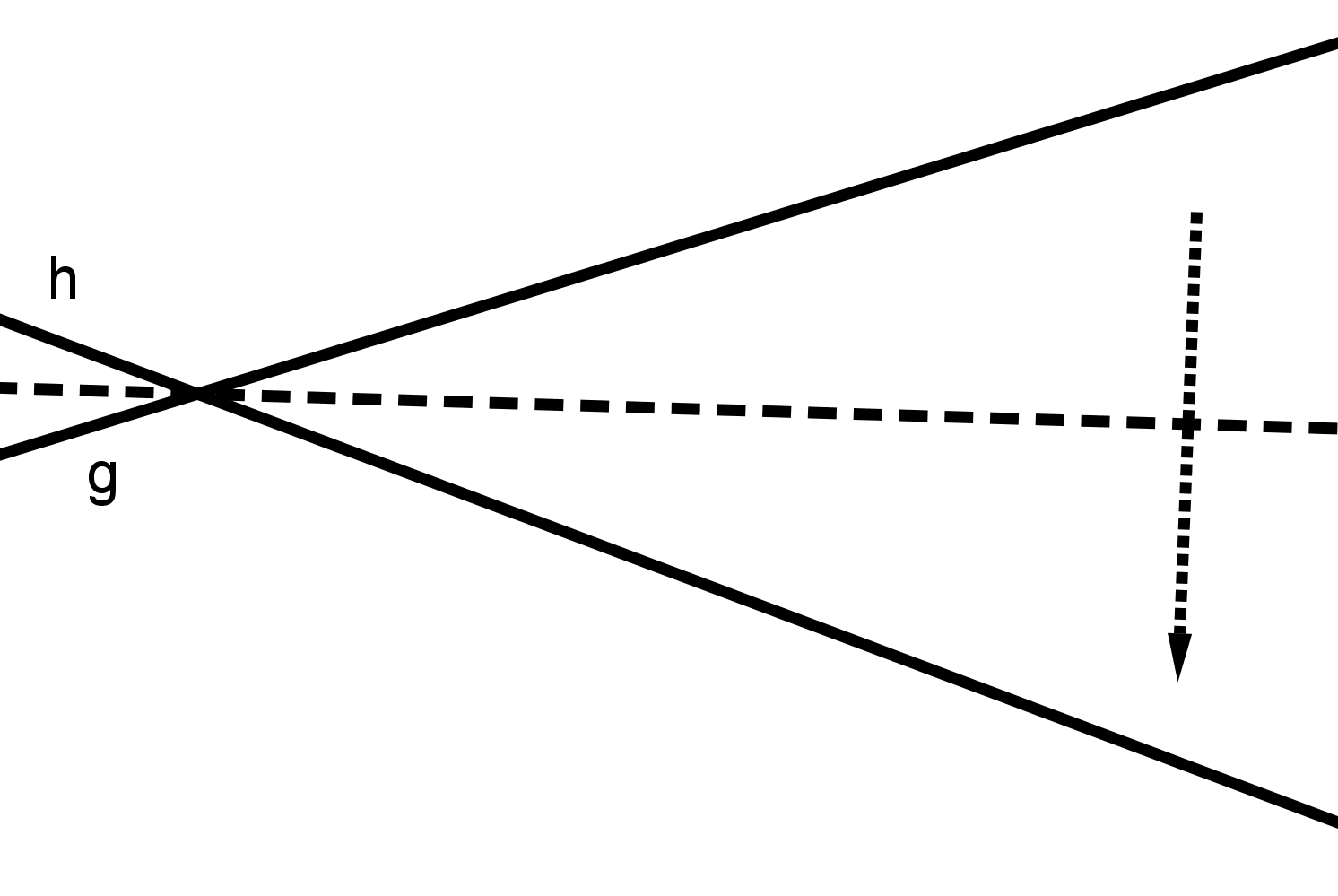}
\caption[Eigene Abbildung]{Origami-Axiom 4b}
\label{zul2}
\end{figure}

Die nächste Grundkonstruktion ist eine, der man die Verwandtschaft mit einer Konstruktion der Euklidischen Geometrie nicht sofort ansieht.

\begin{oax}
Gegeben seien zwei verschiedene Punkte $P$, $Q$ und und eine Faltlinie $g$. Dann kann man $P$ so auf $g$ falten, dass die dabei entstehende Faltlinie durch $Q$ geht.
\end{oax}

\begin{figure}[h]
\includegraphics[width=5cm]{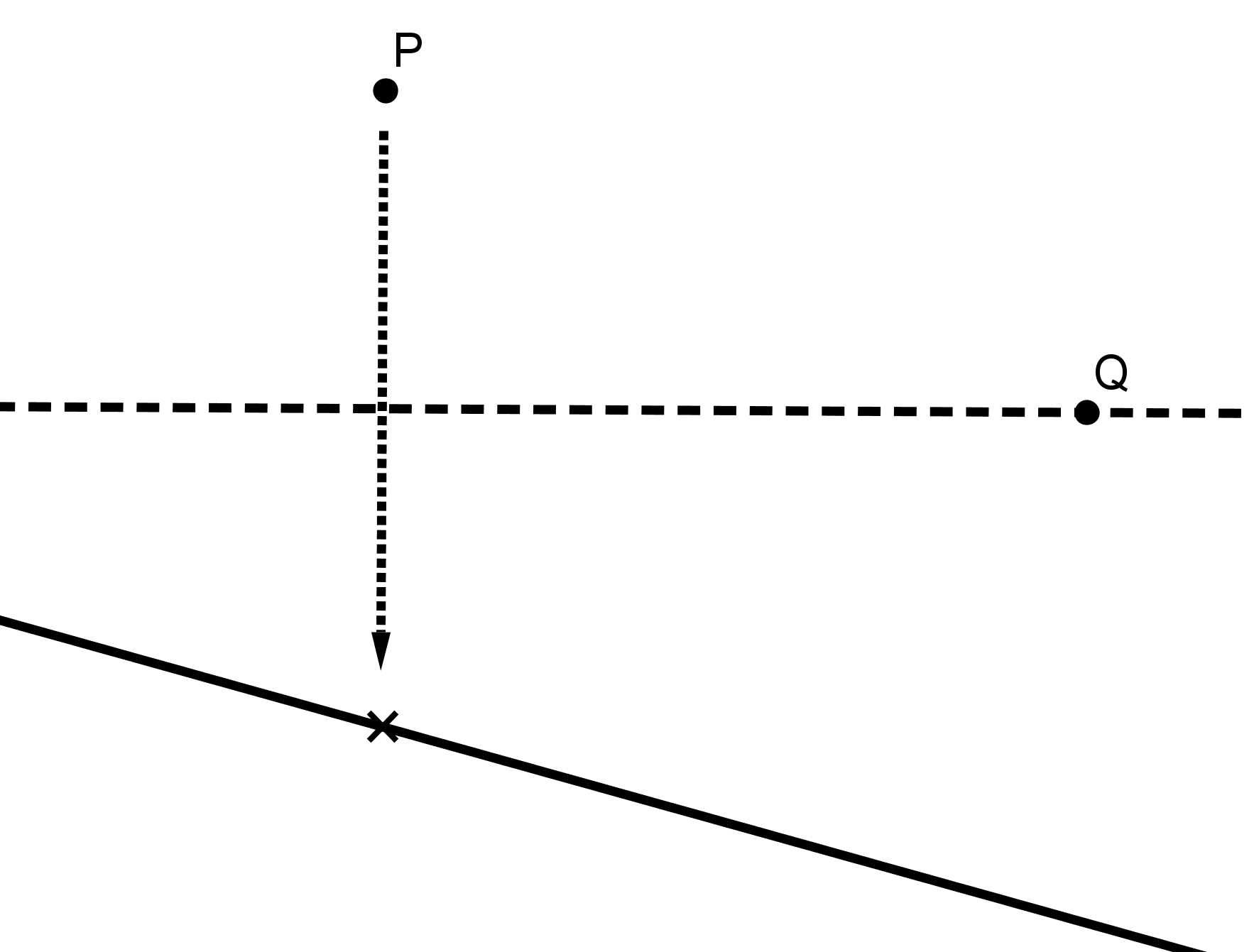}
\caption[Eigene Abbildung]{Origami-Axiom 5}
\label{zul2}
\end{figure}

Diese Konstruktion entspreicht der Konstruktion des Schnittes des Kreises $K(Q,QP)$ mit der Geraden $g$.\footnote{\cite{henn}, Seite 61}\\
Diese 5 Axiome beinhalten genau die gleichen möglichen Konstruktionen wie das Yates-System, bestehen sie doch aus (teilweise iterierten) Yates-Grundkonstruktionen.\\
Ein direktes Korollar aus dem Axiom 5 ist die Konstruktion eines Lotes durch einen Punkt:
\begin{cor}{Lot durch einen Punkt}\\
Es ist genauso möglich, eine Gerade $g$ so auf sich selbst zu falten, dass die Faltlinie durch einen bestimmten Punkt $Q$ geht. Die Faltlinie ist dann das Lot zu $g$ durch den Punkt $P$. 
\end{cor}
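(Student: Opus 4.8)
Der Plan ist, das Korollar als unmittelbaren Spezialfall von Origami-Axiom 5 zu gewinnen, indem ich den dort zu faltenden Punkt auf die Gerade $g$ selbst lege. Vorab bemerke ich, dass die Formulierung einen kleinen Schreibfehler enth\"alt: Die Faltlinie soll durch \emph{einen} festen Punkt gehen, der erst $Q$ und dann $P$ genannt wird; ich schreibe im Folgenden durchgehend $Q$ f\"ur diesen Punkt. Zuerst w\"ahle ich einen Hilfspunkt $P$ auf $g$ mit $P\ne Q$, und zwar so, dass $\overline{QP}$ nicht bereits senkrecht auf $g$ steht. Nach Axiom 5 l\"asst sich $P$ dann so auf $g$ falten, dass die entstehende Faltlinie $\ell$ durch $Q$ verl\"auft.

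Als N\"achstes identifiziere ich diese Faltlinie geometrisch. Die Faltung ist die Spiegelung an $\ell$; sie bildet $P$ auf einen Punkt $P'$ ab, der nach der Konstruktion von Axiom 5 wieder auf $g$ liegt. Da $Q$ auf $\ell$ liegt und Spiegelungen l\"angentreu sind, gilt $QP'=QP$, also ist $P'$ der zweite Schnittpunkt des Kreises $K(Q,QP)$ mit $g$ (wie im vorangehenden Absatz beschrieben), und wegen der Wahl von $P$ ist $P'\ne P$. Der Kern des Arguments ist nun die elementare Beobachtung, dass $\ell$ die Mittelsenkrechte der Strecke $\overline{PP'}$ ist und dass die Mittelsenkrechte zweier verschiedener Punkte einer Geraden stets senkrecht auf dieser Geraden steht; wegen $P,P'\in g$ folgt $\ell\perp g$. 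Insbesondere bildet die Spiegelung an $\ell$ die Gerade $g$ auf sich selbst ab -- die Faltung faltet also tats\"achlich $g$ auf sich --, und zusammen mit $Q\in\ell$ ist $\ell$ genau das gesuchte Lot zu $g$ durch $Q$.

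Der eigentliche geometrische Gehalt ist damit sehr kurz; den Hauptaufwand erwarte ich bei der sauberen Behandlung der Ausnahmef\"alle. Einerseits muss die Wahl von $P$ garantieren, dass die Faltung nichttrivial wird, d.\,h.\ $P'\ne P$: Im ausgeschlossenen Grenzfall $\overline{QP}\perp g$ w\"are n\"amlich $K(Q,QP)$ tangential an $g$, und $\overleftrightarrow{QP}$ selbst w\"are schon das Lot. Andererseits ist der Fall $Q\in g$ gesondert zu behandeln -- dort w\"ahlt man $P\in g$ mit $P\ne Q$ beliebig, $P'$ ist die Spiegelung von $P$ an $Q$ l\"angs $g$, und $Q$ ist der Fu\ss punkt des Lotes. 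Sind diese F\"alle abgehandelt, so liest sich die Behauptung unmittelbar aus Axiom 5 ab.
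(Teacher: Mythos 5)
Your proposal is correct and takes essentially the same route as the paper: the paper's entire proof is the one-line remark that the corollary is just Axiom 5 applied in the case where the folded point lies on the line $g$ itself. You merely spell out what the paper leaves implicit -- that the crease, as perpendicular bisector of two distinct points $P,P'\in g$, is perpendicular to $g$ and therefore folds $g$ onto itself -- and additionally treat the tangential case $\overline{QP}\perp g$ and the case $Q\in g$, which the paper does not discuss.
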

\begin{proof}
Es handelt sich hierbei um Axiom 5 für den Fall, dass die Gerade $g$ durch $P$ geht.
\end{proof}

Das sechste Origami-Axiom von Alperin ermöglicht nun eine Konstruktion, die so mit Zirkel und Lineal nicht möglich ist:\footnote{ebd}
\begin{oax}
Gegeben seien jeweils verschiedene Punkte $P$ und $Q$ und Geraden $p$ und $q$, so dass nicht gleichzeitig $P\in p, Q \in q$ und $p \parallel q$ gilt. Dann kann man so falten, dass $P$ auf $p$ und $Q$ auf $q$ zu liegen kommt.
\end{oax}

\begin{figure}[h]
\includegraphics[width=5cm]{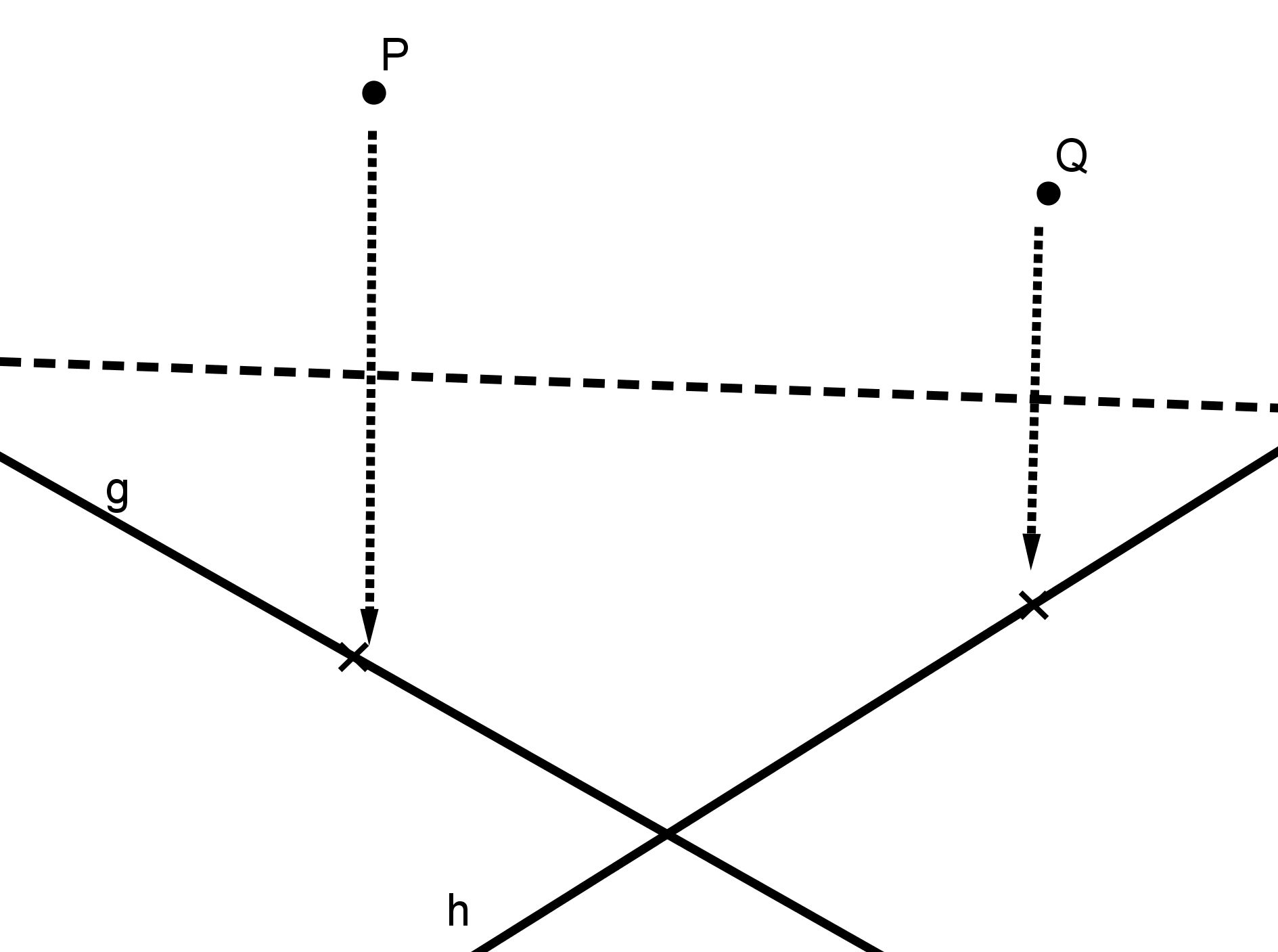}
\caption[Eigene Abbildung]{Origami-Axiom 6}
\label{zul2}
\end{figure}

Dieses Axiom macht den großen Unterschied zwischen der Euklidischen Geometrie und der Origami-Geometrie, denn dieser Origami-Konstruktion entspricht einer Konstruktion mit sogenanntem Einschiebelineal, welches in einem späteren Kapitel noch ausführlicher behandelt wird. Die Möglichkeit zu dieser Konstruktion gibt es im Axiomensystem von Yates nicht. Unter einem Einschiebelineal versteht man ein Lineal, auf dem zwei Punkte $A,B$ markiert sind, somit ist also eine bestimmte Streckenlänge auf diesem Lineal gegeben. Man kann dann das Lineal so zu verschieben, dass diese Punkte auf bestimmten Geraden (oder Kreisen) liegen.\footnote{\cite{henn}, Seite 61} Im Kapitel über die Trisektion des Winkels wird eine exakte Konstruktion beschrieben, die die Technik Einschiebelineal verwendet.\\

Als ein einfaches Beispiel für einen Origami-Beweis, und als erstes Herantasten an die Beweismethoden der Origami-Geometrie, möchte ich nun den Satz von Haga beweisen, der eine Origami-Konstruktion für die Dreiteilung einer Strecke liefert.

\begin{thm}{Satz von Haga}\\
Gegeben sei ein quadratisches Papierstück mit den Ecken ABCD. Man konstruiert zunächst die Mitte $M$ der oberen Seite $\overline{CD}$, dann wird die Ecke $B$ auf $M$ gefaltet. Der Schnittpunkt $G$ der Kante $\overleftrightarrow{AB}$ mit der Blattkante $\overleftrightarrow{AD}$ drittelt dann $\overline{AD}$.\footnote{\cite{henn}, Seite 61f}
\end{thm}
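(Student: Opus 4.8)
The plan is to place the square in coordinates as $A=(0,0)$, $B=(1,0)$, $C=(1,1)$, $D=(0,1)$ with unit side length, so that $M=(\tfrac12,1)$ is the midpoint of the top edge $\overline{CD}$. Folding $B$ onto $M$ is reflection across the perpendicular bisector of $\overline{BM}$, but rather than compute this crease explicitly I would build the argument on the two structural facts that a fold preserves both lengths and angles, and that the image of the edge $\overline{AB}$ is what meets the left edge $\overline{AD}$ in the point $G$.

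First I would locate the point $P$ at which the crease meets the right edge $\overline{BC}$. Since $P$ lies on the crease and the fold carries $B$ to $M$, it fixes $P$ and therefore $PB=PM$. Writing $PC=1-PB$ and using the Pythagorean theorem in the right triangle $PCM$ (right angle at $C$, with $CM=\tfrac12$) gives $(1-PC)^2=PC^2+\tfrac14$, hence $PC=\tfrac38$ and $PM=PB=\tfrac58$. Thus $PCM$ is a $3$:$4$:$5$ right triangle and $\tan\bigl(\measuredangle(PMC)\bigr)=PC/CM=\tfrac34$.

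The heart of the proof is an angle chase at $M$. The right angle of the square at $B$, formed by the edges $\overline{BA}$ and $\overline{BC}$, is carried by the fold to a right angle at $M$ between the folded image of $\overline{BA}$ and the segment $\overline{MP}$; since $G$ lies on that folded image, this reads $\measuredangle(GMP)=90^\circ$. Because $D$, $M$, $C$ are collinear along the top edge, the angles at $M$ satisfy $\measuredangle(DMG)+90^\circ+\measuredangle(PMC)=180^\circ$, so $\measuredangle(DMG)=90^\circ-\measuredangle(PMC)$. In the right triangle $DMG$ (right angle at the corner $D$) the remaining angle is then $\measuredangle(DGM)=\measuredangle(PMC)$, whence $DG=DM/\tan\bigl(\measuredangle(DGM)\bigr)=(\tfrac12)/(\tfrac34)=\tfrac23$ and therefore $AG=1-\tfrac23=\tfrac13$, which is the claimed trisection.

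I expect the main obstacle to be the careful bookkeeping of the fold, namely verifying that $G$ really lies on the ray from $M$ through the image $A'$ of $A$, so that $\measuredangle(GMP)$ is genuinely the image of the square's right angle at $B$ and not its supplement. Should this synthetic step feel slippery, a safe fallback is the purely computational route: write the crease as the perpendicular bisector of $\overline{BM}$, reflect $A$ across it to obtain $A'$, and intersect the line through $A'$ and $M$ with $\{x=0\}$; this produces $G=(0,\tfrac13)$ outright and establishes the trisection by direct calculation.
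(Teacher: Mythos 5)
Your proposal is correct and follows essentially the same route as the paper: Pythagoras applied to the crease triangle at the folded corner yields the $3$:$4$:$5$ proportions ($PC=\tfrac38$, $PM=\tfrac58$, matching the paper's $x=3$, $y=5$ at side length $8$), and the trisection then follows from the angle chase at $M$ using the fold-preserved right angle, which is exactly the paper's similarity argument for $\Delta MEC$ and $\Delta GMD$ in trigonometric clothing. If anything, your version of the angle chase is stated more carefully than the paper's, whose claim $\measuredangle(EMC)=\measuredangle(DMG)$ is garbled (these angles are complementary, not equal; the intended relation is the straight-angle decomposition $\measuredangle(DMG)+90^\circ+\measuredangle(EMC)=180^\circ$ that you spell out), and your coordinate fallback correctly produces $G=(0,\tfrac13)$.
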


\begin{figure}[h]
\includegraphics[height=8cm]{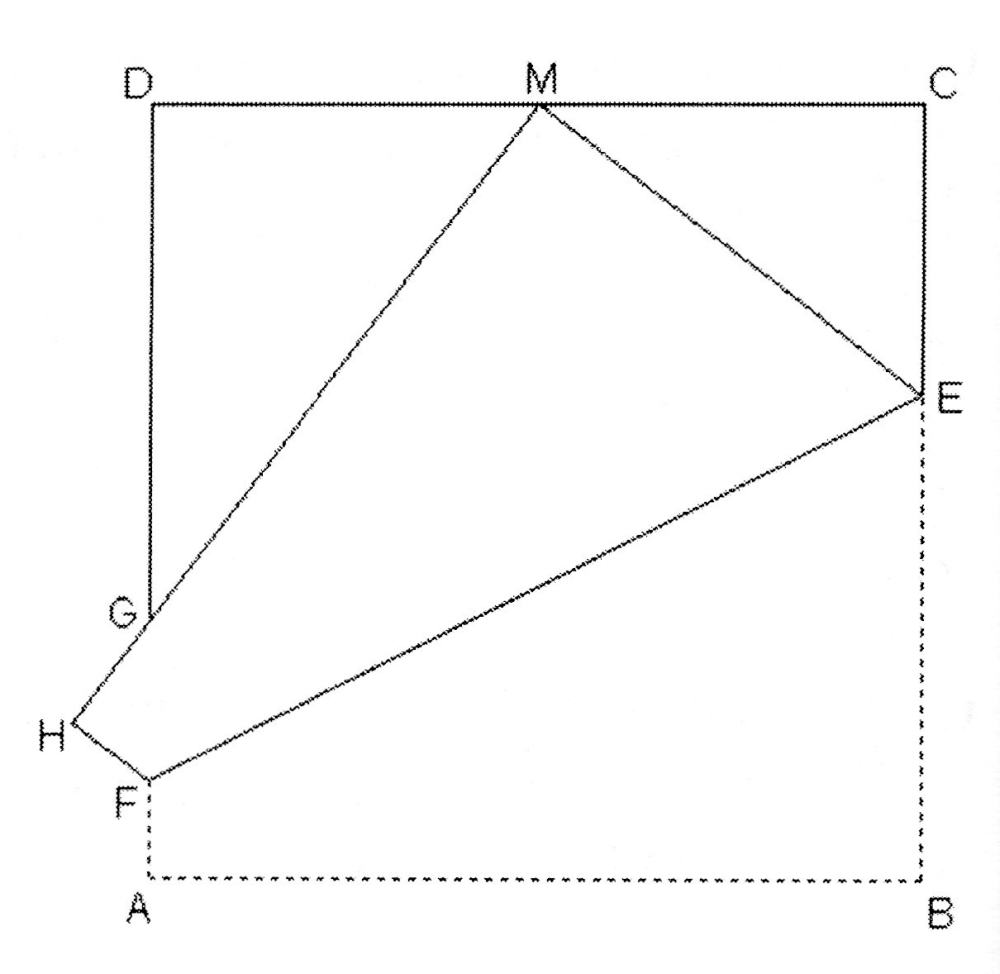}
\caption[\cite{henn}, Seite 62]{Satz von Haga}
\label{zul2}
\end{figure}

\begin{proof}
Die Konstruktion des Mittelpunktes von $\overline{AD}$ ist nach dem dritten Origami-Axiom möglich, die Faltung von $B$ auf $M$ ebenso.\\
Das Ausgangsquadrat habe o.B.d.A. die Seitenlänge 8 Längeneinheiten, da $M$ der Mittelpunkt der Seite $\overline{AD}$ ist, ist dann $MC=4$. Man betrachtet nun das Dreieck $\Delta ECM$, es bezeichne $x=EC$ und $y=EM=EB$. Der Satz des Pythagoras liefert dann im (offensichtlich bei $C$ rechtwinkligen) Dreieck: $y^2=4^2+x^2$. Weiterhin ist nach Konstruktion klar, dass $x+y$ die Länge der gesamten Quadratseite ergibt, also $x+y=8$. Ineinander einsetzen liefert sofort:

\begin{align*}
(8-x)^2 &= 16+x^2\\
64 - 16x + x^2 &= 16+x^2\\
64-16x &= 64\\
x &= 3
\end{align*}

und damit $y=5$.

Das Dreieck hat also ein pythagoräisches Tripel als Seitenlängen! Weiter ist $\measuredangle{EMC}=\measuredangle{DMG}$ und damit, auf Grund der Rechten Winkel des Ursprungsquadrates, $\measuredangle{CEM}=\measuredangle{MGD}=\measuredangle{HGF}$ (Scheitelwinkel). Damit muss $\measuredangle{GFH}=\measuredangle{CEM}$ sein, und damit haben die Dreiecke $\Delta MEC$, $\Delta GMD$ und $\Delta FGH$ die gleichen Winkel und sind somit ähnlich. Angewandt auf die Dreiecke $\Delta GMD$ und $\Delta ECM$ folgt damit $DG:4=4:3$, und damit:
	\begin{align*}
	DG &= \frac{16}{3}=\frac{2}{3}\cdot 8=\frac{2}{3} AD\\
		AG &= 8-\frac{2}{3}\cdot {8} = \frac{1}{3} AD
\end{align*}

\end{proof}

\newpage

\section{Die Origami-Zahlen}

\subsection{Die Origami-Konstruktionen und die euklidischen Konstruktionen}

In diesem Abschnitt der Arbeit soll untersucht werden, welcher Zahlenbereich mittels Origami-Konstruktionen gemäß der Axiome des vorhergehenden Kapitels konstruiert werden können. Ein geeigneter Anfang für Origami-Konstruktionen ist eine Menge von 2 Geraden und 2 Punkten, es braucht aber 3 Punkte, um 2 Geraden festzulegen. Praktischerweise beginnt man also hier stets mit zwei orthogonalen Geraden, die wir mit x- und y-Achse des Koordinatensystems identifizieren, und dem Schnittpunkt $(0,0)$ sowie den Einheitspunkten $(1,0)$ und $(0,1)$. Diese Startmenge enthält natürlich dann die Geraden $x=0, y=0$ und $x+y=1$Als Nächstes muss nun gesagt werden, was eine Origami-Zahl ist.

\begin{defn}{\em Origami-Punkt\\}
Eine Gerade ist mit Hilfe der Origami-Axiome konstruierbar (kurz: Origami-Gerade), wenn sie die letzte einer endlichen Abfolge von Geraden $t_1, \dots, t_n$ ist, so dass jede der Geraden eine aus der Startmenge ist, oder aber eine Gerade $t$ sein, bei der die Spiegelung von zwei Punkten $P$ und $Q$ an dieser Geraden auf Geraden $p$ bzw $q$ zu liegen kommen, und dass gilt:
\bi
\item $p$ und $q$ sind nicht parallele Geraden, die schon vorher in der Abfolge aufgetaucht sind\\
\item $P$ und $Q$ sind verschiedene Punkte, von denen jeder der Schnitt zweier vorher in der Abfolge auftauchenden Geraden ist\\
\item Entweder ist $q=\overleftrightarrow{PQ}$ oder $p \bot q$
\ei

Ein Origami-Punkt ist ein Punkt, der sich als Schnitt zweier Origami-Geraden ergibt.\\
Ein Origami-Kreis ist ein Kreis durch einen Origami-Punkt mit einem Origami-Punkt als Mittelpunkt.\\
Eine Zahl $x$ ist eine Origami-Zahl genau dann, wenn $(x,0$ ein Origami-Punkt ist.\\
Ein Punkt ist genau dann ein Origami-Punkt, wenn seine beiden Koordinaten Origami-Zahlen sind.\\
Die Menge der Origami-Zahlen soll mit $\Or$ bezeichnet werden.\footnote{nach \cite{martin}, Seite 151}
\end{defn}

Zunächst soll gezeigt werden dass mit den Origami-Axiomen alle euklidischen Grundkonstruktionen durchgeführt werden können.\footnote{Dieser Teil nach \cite{ger95}, Abschnitt 4}\\ 

Offensichtlich ist die euklidische Konstruktion 1, das Ziehen einer Geraden durch 2 Punkte, identisch zum Origami-Axiom 2.\\
Das Zeichnen eines Kreises, Grundkonstruktion 2, ist schon nicht mehr so einfach nachvollziehbar, ein Kreis kann nicht mit Origami gezogen werden. Er ist aber exakt bestimmt, wenn man nur Mittelpunkt und Radius kennt, denn damit lassen sich alle Punkte auf der Kreislinie und auch Tangenten konstruieren. 
\begin{figure}[h]
\includegraphics[width=5cm]{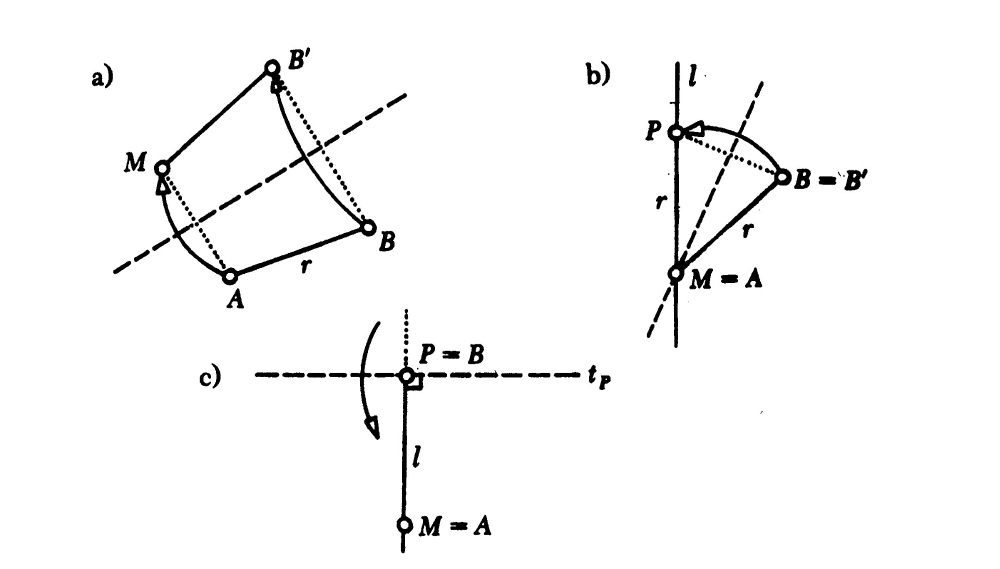}
\caption[\cite{ger95}, Abschnitt 4]{Kreise mittels Origami}
\label{zul2}
\end{figure}
Abbildung a) soll verdeutlichen, dass, wenn Mittelpunkt $M$ und der Radius $AB$ eines Kreises bekannt ist, es möglich ist, $A$ auf $M$ zu falten (Origami-Axiom 3). Dies bildet $B$ auf einen Punkt $B'$ ab, welcher von $M$ den Abstand $r$ hat, also ein weiterer Punkt auf dem Kreis ist. In Abbildung b) ist eine Gerade $l$ durch $M$ sowie der Radius $\overline{MB'}$ gegeben, dann kann dieser auf $l$ gefaltet werden, und somit der Punkt, der auf $l$ und $K(M,MB')$ liegt konstruiert werden. Wenn man schließlich, wie in Abbildung c), eine Gerade $l$  durch den Mittelpunkt $M$ und einen Punkt $P$ auf dem Kreis gegeben hat, kann man nun die Gerade $l$ so auf sich selbst falten, dass die Faltlinie durch $P$ verläuft. Dies ergibt die Tangente an $K(M,AB)$ in $P$.\\
Die euklidische Grundkonstruktion 3, der Schnitt zweier Geraden, ist identisch zum Origami-Axiom 1.\\
Der Schnitt eines Kreises mit einer Geraden, die euklidische Grundkonstruktion 4, entspricht, wie bereits dargestellt, dem Origami-Axiom 5.\\

Die euklidische Grundkonstruktion 5, der Schnitt zweier Kreise, macht die größten Probleme, da Kreise, wie bereits gesagt, nur durch die Kenntnis spezieller Punkte und Tangenten zugänglich sind. Daher verwundert es auch nicht, dass diese Konstruktion nicht direkt möglich ist, sie lässt sich jedoch auf das Problem der euklidischen Grundkonstruktion 4 zurückführen. Man nehme hierzu an, dass die beiden Kreise, deren Schnittpunkte man bestimmen will, gegeben sind. Die Entfernung ihrer Mittelpunkte sei $a$, die Radii seien $b$ und $c$. OBdA kann angenommen werden, dass sich der eine Mittelpunkt im Ursprung des karthesischen Koordinatensystems befindet und sich der Mittelpunkt des zweiten Kreises bei $(a,0)$ befindet. Die Kreisgleichungen sind dann $K_1: x^2+y^2=c^2$ bzw. $K_2: (x-a)^2+y^2=b^2$. 
\begin{figure}[h]
\includegraphics[width=5cm]{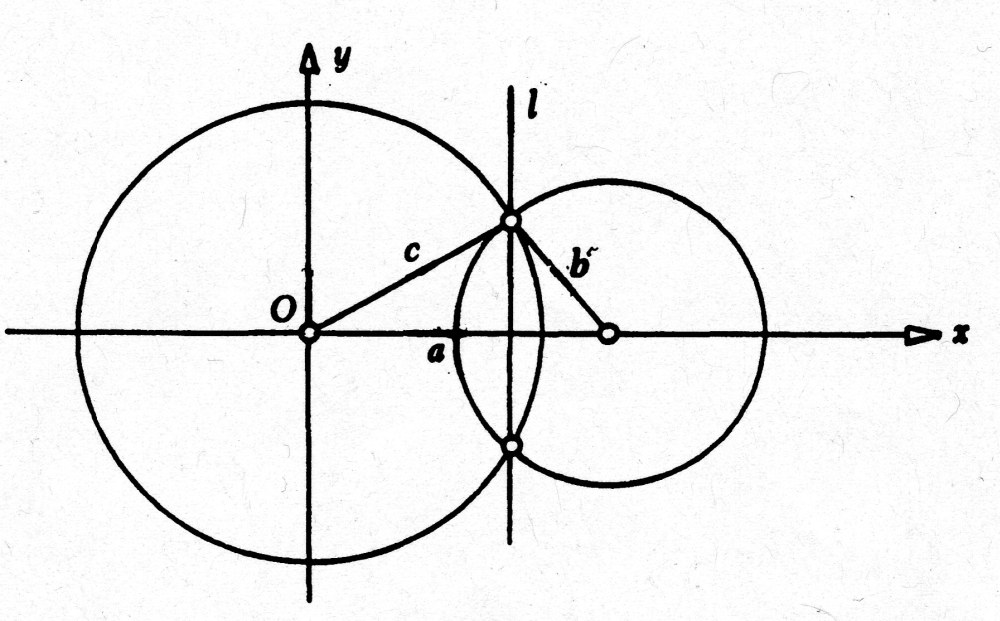}
\caption[\cite{ger95}, Abschnitt 4]{Schnitt zweier Kreise 1}
\label{zul2}
\end{figure}

Gleichsetzen der beiden Kreisgleichungen liefert:
\begin{align*}
x^2+y^2-c^2 &= x^2-2xa+a^2+y^2-b^2\\
2xa &= a^2-b^2+c^2\\
x &= \frac{a^2-b^2+c^2}{2a}
\end{align*}

Die gemeinsamen Punkte der beiden Kreise liegen also schon einmal auf der Geraden, die durch diese Gleichung gegeben ist, es handelt sich um die Gerade parallel zur y-Achse mit Abstand $\frac{a^2-b^2+c^2}{2a}$. Der Schnitt dieser Gerade mit einem der beiden Kreise ergibt die gesuchten Schnittpunkte, und diese Gerade lässt sich mittels Origami-Konstruktionen finden. Eine Möglichkeit hierzu sei nun beschrieben.
\begin{figure}[h]
\includegraphics[width=10cm]{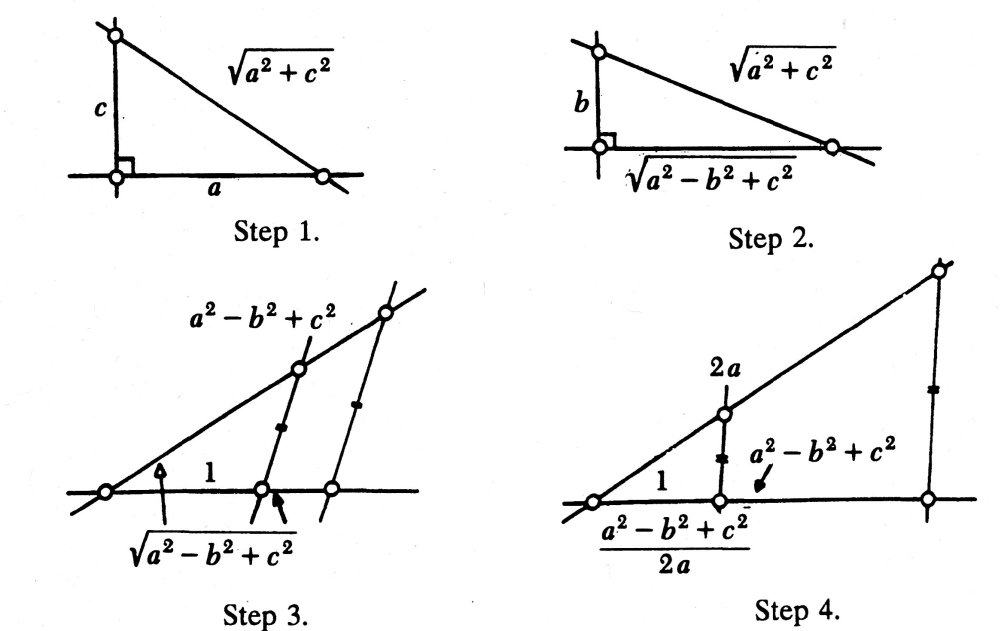}
\caption[\cite{ger95}, Abschnitt 4]{Schnitt zweier Kreise 2}
\label{zul2}
\end{figure}
\be
\item Die Entfernungen $a,c$ sind bekannt, also lässt sich ein rechtwinkliges Dreieck mit den Katheten $a$ und $c$ falten, hierzu benötigt man die Axiome 3, 4 und Korollar 1. Die Länge der Hypotenuse ist dann $\sqrt{a^2+b^2}$\\
\item Die Entfernungen $b$ und $\sqrt{a^2+b^2}$ sind nun bekannt, sodass es möglich ist, ein rechtwinkliges Dreieck mit diesen beiden Katheten zu falten (Axiome 3,4, 5 und Korollar 1). Seine Hypotenuse hat dann die Länge $\sqrt{a^2-b^2+c^2}$\\
\item Damit lässt sich nun ein Dreieck falten, bei dem eine Seite die Länge 1 und eine Seite die Länge $\sqrt{a^2-b^2+c^2}$ hat. Dann kann ein dazu ähnliches Dreieck gefaltet werden (Axiom 4), bei dem eine Seite mit Länge $\sqrt{a^2-b^2+c^2}$ der Seite mit Länge 1 im Ursprungsdreieck. Die Seite, die der Seite mit Länge $\sqrt{a^2-b^2+c^2}$ im ursprünglichen Dreieck entspricht, hat nun nach dem Strahlensatz die Länge $a^2-b^2+c^2$\\
\item  Nun kann man, mit Origami-Axiom 4, ein Dreieck falten, bei dem eine Seite die Länge $2a$ und eine Seite die Länge $a^2-b^2+c^2$ hat. Nun kann man ein dazu ähnliches Dreieck falten, bei dem eine Seite mit Länge 1 der Seite mit Länge $2a$ im ursprünglichen Dreieck entspricht (Axiome 3, 4). Dann hat die Seite, die im ursprünglichen Dreieck der Seite mit Länge $a^2-b^2+c^2$ entspricht, nach dem Strahlensatz die Länge $\frac{a^2-b^2+c^2}{2a}$, und das ist genau die Entfernung unserer Geraden zum Ursprung. \\
\ee

Nun lässt sich ein Punkt auf der x-Achse konstruieren, der diesen Abstand von 0 hat, und leicht das Lot durch diesen Punkt, indem man die x-Achse auf sich selbst faltet, sodass die Faltlinie durch diesen Punkt geht. Damit haben wir die Gerade, auf der die Schnittpunkte liegen müssen, erhalten, durch Schnitt dieser Gerade mit einem der beiden Kreise entstehen die Schnittpunkte der beiden Kreise.\\

Wir haben damit bewiesen:

\begin{thm}
Jede euklidische Grundkonstruktion kann durch elementare Origami-Konstruktionen reproduziert werden. Die euklidischen Grundkonstruktionen aus der Einleitung lassen sich alle durch Kombinationen der Origami-Axiome ersetzen.\\
\end{thm}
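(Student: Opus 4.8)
Der Plan ist, die Behauptung durch eine Fallunterscheidung über die fünf euklidischen Grundkonstruktionen zu beweisen und für jede einzelne eine Origami-Konstruktion anzugeben, die dasselbe leistet. Die beiden Grundkonstruktionen, die nur Geraden betreffen, sind dabei unmittelbar: Das Ziehen der Verbindungsgeraden zweier Punkte (Grundkonstruktion 1) ist wortwörtlich das Origami-Axiom 2, und der Schnitt zweier nicht paralleler Geraden (Grundkonstruktion 3) ist das Origami-Axiom 1. Für das Zeichnen eines Kreises (Grundkonstruktion 2) würde ich zunächst klarstellen, dass ein Kreis im Origami nicht als Kurve vorliegt, sondern durch Mittelpunkt und Radius repräsentiert wird; zu zeigen ist also nur, dass man beliebige Punkte der Kreislinie sowie die Tangenten erzeugen kann. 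Das gelingt, indem man den Radius mit Axiom 3 an den Mittelpunkt anträgt, mit Axiom 5 die Schnittpunkte einer Geraden durch den Mittelpunkt mit dem Kreis faltet und mit Korollar 1 (Lot durch einen Punkt) das Lot im Kreispunkt als Tangente erhält. Der Schnitt einer Geraden mit einem solchen Kreis (Grundkonstruktion 4) ist dann genau das Origami-Axiom 5.

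Die eigentliche Schwierigkeit liegt erwartungsgemäß bei Grundkonstruktion 5, dem Schnitt zweier Kreise, da hier zwei nur implizit gegebene Objekte zu schneiden sind. Die Strategie ist, dieses Problem auf Grundkonstruktion 4 zurückzuführen. Dazu lege ich oBdA einen Mittelpunkt in den Ursprung und den anderen auf $(a,0)$; durch Gleichsetzen der Kreisgleichungen $x^2+y^2=c^2$ und $(x-a)^2+y^2=b^2$ erhält man, dass beide Schnittpunkte auf der zur $y$-Achse parallelen Geraden mit Abszisse $x=\frac{a^2-b^2+c^2}{2a}$ liegen (der Potenzgeraden der beiden Kreise). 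Schneidet man diese Gerade mit einem der beiden Kreise, so liefert Grundkonstruktion 4 die gesuchten Punkte. Der entscheidende Schritt ist somit, die Streckenlänge $\frac{a^2-b^2+c^2}{2a}$ aus den gegebenen Längen $a,b,c$ tatsächlich zu falten.

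Hierfür würde ich nachweisen, dass die Origami-Konstruktionen die benötigten Körperoperationen realisieren: Summe und Differenz von Strecken durch Antragen mit Axiom 3, Produkt und Quotient über den Strahlensatz an geeigneten ähnlichen Dreiecken, deren parallele Seiten mit Axiom 4 gefaltet werden, sowie Quadratwurzeln über den Satz des Pythagoras an rechtwinkligen Dreiecken, deren rechter Winkel mit den Axiomen 3, 4, 5 und Korollar 1 herstellbar ist. Konkret falte ich in einem ersten Pythagoras-Schritt aus den Katheten $a$ und $c$ die Hypotenuse $\sqrt{a^2+c^2}$ und in einem zweiten aus dieser Hypotenuse und der Kathete $b$ die verbleibende Kathete $\sqrt{a^2+c^2-b^2}=\sqrt{a^2-b^2+c^2}$; anschließend quadriere ich mit einem Strahlensatz-Schritt zu $a^2-b^2+c^2$ und dividiere mit einem weiteren durch $2a$. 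Den so erhaltenen Abstand trage ich auf der $x$-Achse ab und errichte mit Korollar 1 das Lot, welches gerade die Potenzgerade ist. Damit ist jede der fünf Grundkonstruktionen auf die Origami-Axiome zurückgeführt. Der Hauptaufwand — und damit der kritische Punkt des Beweises — steckt vollständig in Grundkonstruktion 5, genauer im lückenlosen Nachweis, dass sich die gesamte Kette arithmetischer Hilfskonstruktionen (Antragen, Strahlensatz, Pythagoras, Lot) tatsächlich aus den Axiomen 3, 4, 5 und Korollar 1 zusammensetzen lässt.
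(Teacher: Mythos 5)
Dein Vorschlag ist korrekt und folgt im Wesentlichen genau dem Beweisweg der Arbeit: dieselbe Fallunterscheidung (Grundkonstruktionen 1 und 3 als Axiome 2 bzw.\ 1, Kreis als durch Mittelpunkt und Radius implizit gegebenes Objekt, Grundkonstruktion 4 als Axiom 5) und dieselbe Reduktion von Grundkonstruktion 5 auf die Potenzgerade $x=\frac{a^2-b^2+c^2}{2a}$ mit anschlie{\ss}ender Faltung dieser L\"ange via Pythagoras- und Strahlensatz-Schritten aus den Axiomen 3, 4, 5 und Korollar 1. Bemerkenswert ist lediglich, dass du den zweiten Pythagoras-Schritt sauberer formulierst als die Arbeit selbst: dort wird $\sqrt{a^2-b^2+c^2}$ f\"alschlich als Hypotenuse eines Dreiecks mit Katheten $b$ und $\sqrt{a^2+c^2}$ bezeichnet, w\"ahrend du korrekt $\sqrt{a^2+c^2}$ als Hypotenuse und $b$ als Kathete nimmst und die verbleibende Kathete bestimmst.
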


\begin{cor}
Das bedeutet insbesondere, dass die Menge der mit Origami konstruierbaren Zahlen den Körper $\E$ enthält.
\end{cor}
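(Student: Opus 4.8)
Der Beweis soll die soeben bewiesene Aussage über die Grundkonstruktionen auf die Ebene der konstruierbaren Zahlen heben. Der Plan ist, per Induktion über die Länge einer ZuL-Konstruktionsfolge zu zeigen, dass jeder ZuL-Punkt ein Origami-Punkt ist; daraus folgt unmittelbar, dass jede ZuL-Zahl eine Origami-Zahl ist, also $\E\subseteq\Or$, und da nach dem Hauptsatz $\E$ gerade die Menge dieser konstruierbaren Zahlen ist, ergibt sich die Behauptung des Korollars.

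Zunächst würde ich die Verträglichkeit der beiden Startmengen feststellen: Die Origami-Startmenge besteht aus den Punkten $(0,0),(1,0),(0,1)$ und den Geraden $x=0$, $y=0$, $x+y=1$ und enthält somit insbesondere die Punkte $(0,0)$ und $(0,1)$, welche die Einheitsstrecke der ZuL-Startmenge festlegen. Alle Startpunkte einer ZuL-Konstruktion sind also bereits Origami-Punkte; das liefert den Induktionsanfang.

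Für den Induktionsschritt nehme ich an, dass in der Folge $P_1,\dots,P_n$ die Punkte $P_1,\dots,P_k$ bereits als Origami-Punkte erkannt sind. Der nächste Punkt $P_{k+1}$ entsteht nach Definition des ZuL-Punktes durch eine der drei Operationen: Schnitt zweier Geraden, Schnitt einer Geraden mit einem Kreis oder Schnitt zweier Kreise. Jede dieser Operationen ist eine euklidische Grundkonstruktion (EK 3 bzw. EK 4 bzw. EK 5, wobei die beteiligten Geraden und Kreise zuvor mittels EK 1 bzw. EK 2 bereitgestellt werden). Nach dem vorangehenden Satz lässt sich jede dieser Grundkonstruktionen durch eine Kombination von Origami-Axiomen reproduzieren; dabei sind die beteiligten Geraden Origami-Geraden (Verbindungsgeraden zweier Origami-Punkte) und die beteiligten Kreise Origami-Kreise (mit Origami-Punkt als Mittelpunkt durch einen Origami-Punkt). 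Der entstehende Schnittpunkt $P_{k+1}$ ist damit wieder ein Origami-Punkt, und der Induktionsschluss liefert die Behauptung.

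Die eigentliche inhaltliche Arbeit steckt bereits im Satz; die Hauptschwierigkeit hier ist buchhalterischer Natur: Ich muss sorgfältig sicherstellen, dass bei jeder der simulierten Konstruktionen sämtliche Hilfspunkte selbst wieder Origami-Punkte sind. Am heikelsten ist hierbei der Schnitt zweier Kreise (EK 5), den der Satz nur mittelbar — über die Konstruktion der Radikalgeraden und deren Schnitt mit einem Kreis (EK 4) — reproduziert. Hier würde ich explizit nachprüfen, dass die in dieser Hilfskonstruktion auftretenden Längen $\sqrt{a^2+b^2}$, $a^2-b^2+c^2$ und $\frac{a^2-b^2+c^2}{2a}$ jeweils durch Origami-Punkte realisiert werden. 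Dies ist jedoch gesichert, da die gesamte Hilfskonstruktion ausschließlich die Origami-Axiome (nebst Korollar 1) auf bereits konstruierte, also nach Induktionsvoraussetzung aus Origami-Punkten bestehende Daten anwendet.
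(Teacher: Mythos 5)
Ihr Vorschlag ist korrekt und folgt im Wesentlichen genau dem Weg der Arbeit: Die Arbeit gibt f�r das Korollar keinen eigenen Beweis an, sondern betrachtet es als unmittelbare Folge von Satz 2, und Ihre Induktion �ber die ZuL-Konstruktionsfolge (mit dem Abgleich der Startmengen und der R�ckf�hrung des Kreis-Kreis-Schnitts auf die im Satz beschriebene Hilfskonstruktion) macht lediglich die dort implizit gelassene Buchhaltung explizit. Inhaltlich neue Ideen gegen�ber der Arbeit enth�lt der Vorschlag nicht, und das ist hier auch nicht n�tig.
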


Wir wissen nun schon, dass die Origami-Zahlen mindestens die Menge der ZuL-Zahlen sind. Tatsächlich aber ist dem aufmerksamen Leser sicher aufgefallen, dass das letzte Origami-Axiom bisher noch nicht verwendet wurde, es ist also anzunehmen, dass die Menge $\Or$ noch größer wird. Dies ist tatsächlich der Fall. Im Laufe des Abschnittes wird noch gezeigt, dass die Menge der mit Origami konstruierbaren Zahlen der Menge gleich ist, die man unter Verwendung eines markierten Lineals, auch als Einschiebe-Lineal bezeichnet, erreicht.

\subsection{Das markierte Lineal}\footnote{nach \cite{martin}, Kapitel 9}

Es sei noch einmal daran erinnert, dass ein Lineal im Sinne Euklids eines ist, das keinerlei Markierungen besitzt, sondern nur die Möglichkeit bietet, zwei gegebene Punkte zu verbinden. Das markierte Lineal ist nun eines, auf dem 2 Markierungen angebracht sind, die wir als die Länge einer Einheitsstrecke auffassen können. Im Laufe dieses Unterabschnittes werden diese beiden Punkte $R$ und $S$ genannt, und damit $RS=1$. Natürlich bietet es sich nun als Grundkonstruktion an, auf Geraden Einheitsstrecken abzutragen. Weiterhin ist es möglich, eine Markierung des Lineals auf einem bestimmten, gegebenen Punkt zu halten und dann das Lineal um diesen Punkt zu drehen, bis die zweite Markierung eine bestimmte Gerade berührt, falls denn der Einheitskreis um den Punkt $R$ diese Gerade schneidet. Damit haben wir also neben dem Lineal im euklidischen Sinne auch noch den Einheitskreis gegeben. Die Menge der hiermit konstruierbaren Zahlen werden als LuK-Punkte bezeichnet:

\begin{defn}{LuK-Punkt}\\
Man startet mit den Punkten $(0,0),(2,0)$ und $(0,2)$. Ein Punkt der karthesischen Ebene ist ein LuK-Punkt, wenn der Punkt der letzte einer endlichen Abfolge $P_1, \dots, P_n$ von Punkten ist, so, dass er entweder jeder Punkt aus der Startmenge ist, oder aber in einer der folgenden Arten entsteht:
\bi
\item Schnittpunkt zweier Geraden, die durch bereits vorher auftretende Punkte definiert sind.
\item Als Schnittpunkt einer solchen Geraden mit dem Kreis um $(0,0)$ durch $(0,1)$
\ei
Eine LuK-Gerade ist eine, die durch zwei LuK-Punkte geht.\footnote{\cite{martin}, Seite 98f}\\
Eine Zahl $x$ ist genau dann eine LuK-Zahl, wenn $(x,0)$ ein LuK-Punkt ist.
\end{defn} 

Eine Aussage über diese Zahlenmenge macht der Satz von Poncelet-Steiner:

\begin{thm}{Poncelet-Steiner}\\
Ein Punkt ist ein ZuL-Punkt genau dann, wenn er ein LuK-Punkt ist.\footnote{ebd}
\end{thm}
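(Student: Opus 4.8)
Der Satz besteht aus zwei Inklusionen. Die eine Richtung ist unmittelbar klar: Jeder LuK-Punkt ist ein ZuL-Punkt. Denn die beiden Grundoperationen der LuK-Konstruktion --- der Schnitt zweier Geraden durch bereits konstruierte Punkte und der Schnitt einer solchen Geraden mit dem festen Einheitskreis um $(0,0)$ --- sind Spezialf\"alle der euklidischen Grundkonstruktionen; der Einheitskreis ist als $K((0,0),(0,1))$ mit Zirkel und Lineal zeichenbar, und die Startpunkte $(0,0),(2,0),(0,2)$ lassen sich aus der Einheitsstrecke mit Zirkel und Lineal gewinnen. Folglich kann jede LuK-Abfolge Schritt f\"ur Schritt mit Zirkel und Lineal nachvollzogen werden, also ist jeder LuK-Punkt ein ZuL-Punkt.

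Die nichttriviale Richtung ist die Aussage, dass jeder ZuL-Punkt ein LuK-Punkt ist --- dass also das Lineal zusammen mit dem einen festen Kreis (samt Mittelpunkt) bereits ausreicht, um s\"amtliche Konstruktionen mit Zirkel und Lineal zu erzeugen. Der Plan ist, die drei erzeugenden ZuL-Grundkonstruktionen --- Schnitt Gerade/Gerade, Gerade/Kreis und Kreis/Kreis --- einzeln durch LuK-Konstruktionen zu reproduzieren. Der Schnitt zweier Geraden ist bereits eine LuK-Grundoperation. Den Schnitt zweier Kreise f\"uhre ich durch Gleichsetzen der Kreisgleichungen auf den Schnitt Gerade/Kreis zur\"uck (vgl.\ die entsprechende Rechnung bei den Origami-Konstruktionen): die gemeinsamen Punkte zweier Kreise liegen auf einer Geraden, deren Lage sich rein rational aus den Daten $a,b,c$ ergibt; es bleibt also nur, diese eine Gerade mit einem der Kreise zu schneiden.

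Zuerst w\"urde ich den Werkzeugkasten der Parallelen-, Lot- und Mittelpunktskonstruktion allein mit Lineal und dem festen Kreis $U$ bereitstellen. Entscheidend ist, dass der Mittelpunkt $(0,0)$ von $U$ gegeben ist: jeder Durchmesser wird von ihm halbiert, und aus einer Parallelenschar --- etwa den von $U$ ausgeschnittenen Sehnen --- gewinnt man mittels der Diagonalenkonstruktion im vollst\"andigen Vierseit zu jeder Strecke ihren Mittelpunkt und umgekehrt zu einer Strecke mit bekanntem Mittelpunkt durch jeden \"au\ss{}eren Punkt eine Parallele. Damit stehen Parallelen und Lote durch beliebige Punkte zur Verf\"ugung, und mit dem Strahlensatz folgen alle K\"orperoperationen $a\pm b$, $ab$ und $a/b$ als reine Linealkonstruktionen. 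Insbesondere kann ich Strecken um einen konstruierten Faktor zentrisch strecken und um einen konstruierten Vektor verschieben (Parallelogramm).

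Der Kern und die erwartete Hauptschwierigkeit ist der verbleibende Schnitt einer Geraden $\ell$ mit einem beliebigen Kreis $C=K(M,\rho)$, dessen Mittelpunkt $M$ und Radius $\rho$ bereits als LuK-Gr\"o\ss{}en vorliegen. Die Idee ist, diesen Schnitt durch eine \"Ahnlichkeitsabbildung auf den Schnitt mit dem festen Einheitskreis $U$ zu reduzieren: man wendet die zentrische Streckung mit Zentrum $M$ und Faktor $1/\rho$ an, die $C$ auf den Einheitskreis um $M$ abbildet, und verschiebt anschlie\ss{}end um $(0,0)-M$, sodass der Bildkreis gerade $U$ ist. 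Beide Abbildungen sind nach dem vorigen Schritt reine LuK-Konstruktionen und \"uberf\"uhren $\ell$ in eine konstruierbare Gerade $\ell'$. Ihre Schnittpunkte mit $U$ ergeben sich aus der einzigen echten Kreis-Grundoperation der LuK-Definition; durch Anwenden der Umkehrabbildungen --- wieder LuK-Konstruktionen --- erh\"alt man die gesuchten Schnittpunkte von $\ell$ und $C$. Die Sorgfalt, die dieser Schritt verlangt --- die korrekte \"Ubertragung der Streckungs- und Verschiebungsdaten mit dem Strahlensatz und die saubere R\"ucktransformation --- ist der eigentliche Gehalt des Satzes von Poncelet-Steiner. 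Ist sie erbracht, so ist jede der drei ZuL-Grundkonstruktionen als LuK-Konstruktion realisiert, und durch Induktion \"uber die endliche Konstruktionsfolge eines ZuL-Punktes folgt, dass dieser ein LuK-Punkt ist; zusammen mit der ersten Inklusion ergibt sich die behauptete Gleichheit beider Punktmengen.
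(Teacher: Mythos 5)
Ihr Beweisvorschlag ist korrekt, verfolgt aber einen genuin anderen Weg als die Arbeit. Sie f\"uhren die nichttriviale Inklusion geometrisch-konstruktiv, indem Sie die drei erzeugenden ZuL-Grundkonstruktionen einzeln durch LuK-Konstruktionen simulieren: Gerade/Gerade ist nativ, Kreis/Kreis wird durch Gleichsetzen der Kreisgleichungen (Potenzgerade) auf Gerade/Kreis zur\"uckgef\"uhrt, und Gerade/Kreis wird mittels der \"Ahnlichkeitsabbildung (zentrische Streckung in $M$ mit Faktor $1/\rho$, anschlie\ss{}end Verschiebung nach $(0,0)$) auf den Schnitt mit dem festen Einheitskreis reduziert --- das ist im Kern der klassische geometrische Poncelet--Steiner-Beweis, und die daf\"ur ben\"otigte Maschinerie ist mit Lineal und Kreis \emph{samt Mittelpunkt} tats\"achlich verf\"ugbar (antipodale Sehnen sind parallel, Diagonalen eines Parallelogramms liefern Mittelpunkte, der Strahlensatz die K\"orperoperationen), sodass die Induktion \"uber die Konstruktionsfolge durchgeht. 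Die Arbeit argumentiert stattdessen algebraisch: Sie h\"alt fest, dass die mit dem Lineal allein erreichbaren Zahlen den K\"orper $\Q$ ergeben, liest aus der Skizze ab, dass mit einer LuK-Zahl $x\in(-1,1)$ auch $\sqrt{1-x^2}$ eine LuK-Zahl ist, und gewinnt die Abgeschlossenheit unter Quadratwurzeln f\"ur positives $z$ aus der Identit\"at $\sqrt{z}=\frac{z+1}{2}\sqrt{1-\left(\frac{z-1}{z+1}\right)^2}$; zusammen mit dem Hauptsatz \"uber konstruierbare Zahlen (die ZuL-Zahlen sind gerade der euklidische K\"orper $\E$) folgt dann die Gleichheit. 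Ihr Zugang kauft Explizitheit und Unabh\"angigkeit: Er kommt ohne die algebraische Charakterisierung von $\E$ aus, liefert konkrete Konstruktionsvorschriften und macht sichtbar, wo der gegebene Mittelpunkt des festen Kreises wirklich eingeht; der Zugang der Arbeit ist daf\"ur wesentlich k\"urzer, setzt aber genau die K\"orperstruktur der LuK-Zahlen --- also Ihre Parallelen-, Lot- und Strahlensatz-Werkzeuge --- stillschweigend als \glqq leicht zu zeigen\grqq{} voraus. Auf dem hier \"ublichen Skizzenniveau sind beide Argumente vollst\"andig.
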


\begin{proof}
Es ist klar, dass jeder LuK-Punkt ein ZuL-Punkt ist. Weiterhin lässt sich leicht zeigen, dass die Menge der nur mit markiertem Lineal konstruierbaren Zahlen der Körper $\Q$ ist. Noch zu zeigen bleibt, dass mit jeder Zahl aus $\Q^+$ auch die Quadratwurzel eine LuK-Zahl ist.
\begin{figure}[h]
\includegraphics[width=5cm]{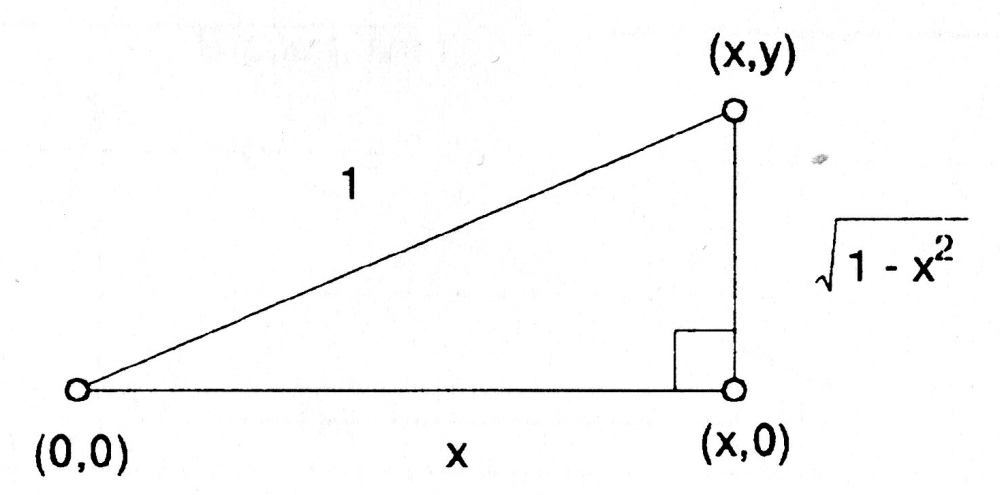}
\caption[\cite{martin}, Seite 99]{LuK-Zahlen}
\label{zul2}
\end{figure}
Die Zeichnung verdeutlicht, dass mit einer LuK-Zahl $x$ mit $-1<x<1$ auch die Zahl $\sqrt{1-x^2}$ eine LuK-Zahl ist. Es gilt aber folgende Identität, falls $z$ positiv ist:

\[
\sqrt{z} = \sqrt{\left( \frac{z+1}{2}\right)^2-\left( \frac{z-1}{2}\right)^2}=\frac{z+1}{2} \cdot \sqrt{1-\left( \frac{z-1}{z+1}\right)^2}
\]

Da aber der Bruch $\frac{z-1}{z+1}$ für positive $z$ zwischen $-1$ und $1$ liegt, beweist diese Identität, dass, für positive LuK-Zahlen $z$ auch die Wurzel in $z$ ist! Also ist auch die Gegenrichtung bewiesen.\footnote{ebd}
\end{proof}
Nun folgt leicht:
\begin{cor}
Mit einem markierten Lineal sind alle euklidischen Konstruktionen möglich.
\end{cor}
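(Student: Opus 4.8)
Der Plan ist, das Korollar unmittelbar aus dem soeben bewiesenen Satz von Poncelet-Steiner abzuleiten. Nach diesem Satz stimmt die Menge der LuK-Punkte exakt mit der Menge der ZuL-Punkte \"uberein, und die ZuL-Punkte sind per Definition genau diejenigen, die mit Zirkel und Lineal, also durch die euklidischen Grundkonstruktionen, erreichbar sind. Es gen\"ugt daher zu zeigen, dass ein markiertes Lineal jede LuK-Konstruktion nachbilden kann; dann liefert Poncelet-Steiner sofort, dass damit auch jeder ZuL-Punkt und somit jede euklidische Konstruktion erreichbar ist.

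Zun\"achst w\"urde ich die beiden Grundoperationen der LuK-Definition durchgehen und zeigen, dass das markierte Lineal beide bereitstellt. Die erste Operation, der Schnitt zweier Geraden durch bereits konstruierte Punkte, ist unmittelbar: Ignoriert man seine Markierungen, so l\"asst sich das markierte Lineal wie ein gew\"ohnliches euklidisches Lineal verwenden, verbindet je zwei Punkte zu einer Geraden und gestattet so die Bildung von Geradenschnittpunkten. Die zweite Operation, der Schnitt einer solchen Geraden mit dem Einheitskreis $K((0,0),(0,1))$, ist gerade die charakteristische Einschiebe-Bewegung: Man h\"alt die Markierung $R$ im Ursprung fest und dreht das Lineal, bis die Markierung $S$ die gegebene Gerade trifft. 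Wegen $RS=1$ liegt der so gefundene Punkt sowohl auf dem Einheitskreis um den Ursprung als auch auf der Geraden, ist also genau der gesuchte Schnittpunkt.

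Damit realisiert das markierte Lineal beide Grundkonstruktionen der LuK-Punkte; zu beachten bleibt lediglich, dass sich die Startmenge $(0,0),(2,0),(0,2)$ mit Hilfe der Einheitsstrecke des Lineals festlegen l\"asst, gegebenenfalls nach einer Streckung auf die passende Ausgangskonfiguration. Per Induktion \"uber die Konstruktionsschritte folgt dann, dass jeder LuK-Punkt ein mit dem markierten Lineal konstruierbarer Punkt ist, und mit dem Satz von Poncelet-Steiner ergibt sich die Behauptung.

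Den Hauptaufwand sehe ich nicht in einer Rechnung, sondern in der sauberen \"Ubersetzung der Werkzeug-Operationen: Man muss pr\"azise begr\"unden, dass die Einschiebe-Bewegung mit im Ursprung fixierter Markierung \emph{genau} den Schnitt mit dem festen Einheitskreis liefert, und sich vergewissern, dass keine dar\"uber hinausgehende F\"ahigkeit des markierten Lineals ben\"otigt wird. Sobald diese Identifikation der Grundoperationen steht, ist das Korollar eine direkte Folge des vorangehenden Satzes.
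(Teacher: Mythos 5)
Dein Vorschlag ist korrekt und entspricht genau dem (in der Arbeit nur mit ``Nun folgt leicht'' angedeuteten) Argument: Die Arbeit h\"alt bereits vor der LuK-Definition fest, dass das markierte Lineal durch die Drehbewegung mit fixierter Markierung neben dem euklidischen Lineal auch den Einheitskreis liefert, also genau die beiden LuK-Grundoperationen bereitstellt, und der Satz von Poncelet-Steiner liefert dann LuK-Punkte $=$ ZuL-Punkte. Deine sorgf\"altigere Ausf\"uhrung (Induktion \"uber die Konstruktionsschritte, Behandlung der Startmenge) f\"ullt lediglich die Details aus, die die Arbeit dem Leser \"uberl\"asst.
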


Man kann also, salopp gesprochen, seinen Zirkel wegwerfen, wenn man nur ein markiertes Lineal hat. Die charakteristische Konstruktion mit einem markierten Lineal ist aber das "`Einschieben"', worunter man folgendes versteht:

\begin{defn}{Einschiebe-Konstruktion}\\
Gegeben sei ein Punkt $V$ sowie 2 verschiedene Geraden $r$ und $s$. Dann kann man damit eine Gerade $v$ so zeichnen, dass diese $V$ enthält, $R$ auf $r$ und $S$ auf $s$ liegen. Diese Gerade $v$ schneidet die Geraden $r$ und $s$ also so, dass die Schnittpunkte Abstand 1 haben. Diese so entstehende Gerade wird als $e(r,s;V)$ bezeichnet
\end{defn}

Diese Konstruktion ist es, die das Arbeiten mit einem markierten Lineal so fundamental von den euklidischen Konstruktionen unterscheidet, die Menge der damit konstruierbaren Zahlen ist echt größer $\E$ in dem Sinne, dass sie Zahlen enthält, die nicht in $\E$ liegen.

\begin{defn}{ML-Punkt}\\
Ein Punkt der karthesischen Ebene ist mit dem markierten Lineal konstruierbar (Kurz: ML-Punkt) genau dann, wenn er der letzte einer endlichen Abfolge von Punkten $P_1, \dots, P_n$ ist, so dass jeder dieser Punkte aus $\{(0,0),(0,1),(1,0)\}$ ist, oder aber erhalten wird:
\bi
\item Als Schnittpunkt zweier Geraden, die durch Punkte definiert sind, die vorher auftragen, oder
\item Als einer von zwei Punkten, die eine Einheit voneinander entfernt sind und kollinear mit einem vorher aufgetretenen Punkt liegen, und die auf zwei verschiedenen Geraden durch vorher auftretende Punkte liegen.
\ei
Eine ML-Gerade ist eine durch zwei ML-Punkte.\\
Ein ML-Kreis ist ein Kreis durch einen ML-Punkt mit einem ML-Punkt als Mittelpunkt.\\
Eine Zahl $x$ ist eine ML-Zahl genau dann, wenn $(x,0)$ ein ML-Punkt ist.\\
Die Menge der ML-Zahlen sei mit $\Lin$ abgekürzt.
\end{defn}

Diese Definition impliziert folgende Aussagen:
\bi
\item Der Schnittpunkt zweier ML-Geraden ist ein ML-Punkt.
\item Wenn $R$ und $S$ auf verschiedenen ML-Geraden liegen und Abstand $1$ haben und mit einem ML-Punkt kollinear liegen, dann sind $R$ und $S$ ML-Punkte.
\ei

Nach dem Satz von Poncelet-Steiner ist schon klar, dass die Menge $\Lin$ den Körper $\E$ enthält. Das heißt also insbesondere, dass aus $x\in \Lin$ folgt, dass auch$\sqrt{x}\in \Lin$. Nun wir das Einschieben etwas algebraischer betrachtet:

\begin{figure}[h]
\includegraphics[width=8cm]{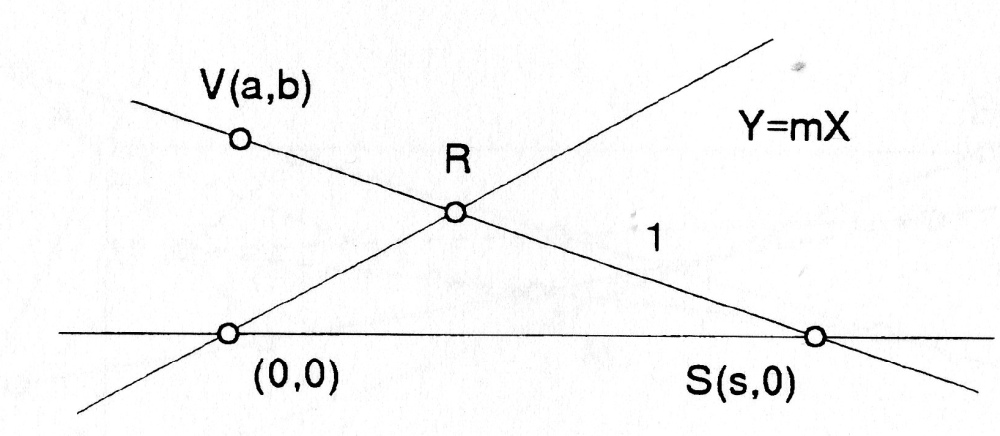}
\caption[\cite{martin}, Seite 125]{Einschieben 1}
\label{zul2}
\end{figure}

Wir schieben hier also durch die gegebenen Geraden $y=0$ und $y=mx$ und den Punkt $V=(a,b)$ ein. Ziel ist, den Punkt $S=(s,0)$ in Abhängigkeit der gegebenen Größen $a,b,m$ darzustellen. Seien die Koordinaten von $R=(x_0,y_0)$. Dann ist offenbar $y_0=m\cdot x_0$. $x_0$ ergibt sich durch das Gleichsetzen der beiden Geraden $y=m\cdot x$ und $y=\frac{b}{a-s}\cdot x - \frac{bs}{a-s}$ zu $x_0=\frac{bs}{ms-ma+b}$. $R$ muss aber auch auf dem Kreis um $S$ mit Radius $RS=1$ liegen, also $(x_0-s)^2+y_0^2=1$. Ineinander einsetzten gibt, nach einigen Umformungen, eine Gleichung in $s$ vom Grad 4:
\[
m^2s^4-2ams^3+(a^2m^2+b^2m^2-m^2)s^2+(2am^2+2bm)s-a^2m+2abm-b^2=0
\]

Es ist also offensichtlich möglich, mit der Einschiebekonstruktion bestimmte Gleichungen 4. Grades zu lösen! Das ist anschaulich betrachtet auch keine Überraschung, denn in der allgemeinen Situation gibt es 4 Möglichkeiten für $e(r,s;V)$:

\begin{figure}[h]
\includegraphics[width=8cm]{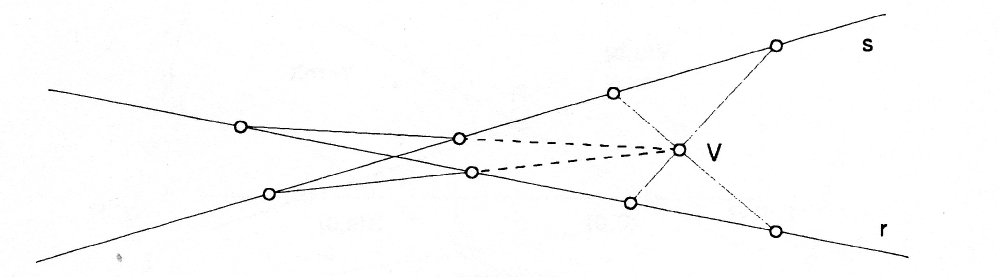}
\caption[\cite{martin}, Seite 126]{Einschieben 2}
\label{zul2}
\end{figure}

Hieraus folgt insbesondere, dass ML-Punkte solche sind, die Lösungen von Polynomen vom Grad höchstens 4 und mit Koeffizienten in $\Lin$ sind. Aber welche Gleichungen vom Grad 4 kann man nun mit der Einschiebekonstruktion lösen? Es soll gezeigt werden, dass mit Hilfe des markierten Lineals alle Aufgaben gelöst werden können, die, in die Sprache der Algebra übersetzt, das Lösen eines Polynoms 3. oder 4. Grades mit Koeffizienten in $\Lin$ bedingen. Hierzu sind einige Vorüberlegungen notwendig. Vieta hat bereits gezeigt, dass man, wenn man Winkel dritteln und dritte Wurzeln konstruieren kann, alle Gleichungen vom Grad bis 4 lösen kann. Der Beweis in dieser Arbeit geht genau so vor. Zunächst wird eine das Einschiebelineal verwendende Methode zur Winkeldrittelung vorgestellt, die auf Archimedes zurück geht\footnote{zitiert nach \cite{henn}, Seite 66f}.

\begin{thm}{Winkeldrittelung mit markiertem Lineal}\\
Man betrachte folgende Abbildung:
\begin{figure}[h]
\includegraphics[width=8cm]{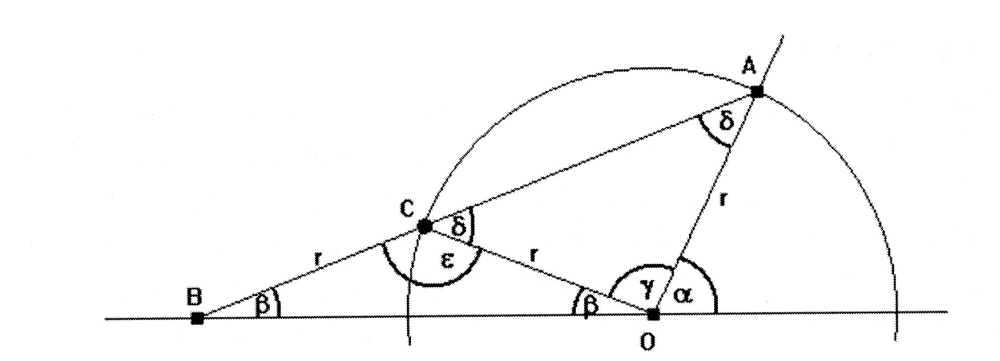}
\caption[\cite{henn}, Seite 66]{Winkeldritteln 1}
\label{zul2}
\end{figure}
Der Winkel $\alpha$ sei gegeben. Die Gerade $g$ verlängert den einen Schenkel. Man zeichnet nun den Einheitskreis um $O$. Dann lege man $S$ auf den Halbkreis und verschiebe das Lineal so, dass $R$ auf der Geraden $g$ liegt und das Lineal durch $A$ verläuft. dann misst der Winkel $\beta$ genau $\beta=\frac{1}{3}\alpha$.
\end{thm}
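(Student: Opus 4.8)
The plan is to read off $\alpha=3\beta$ from the two isosceles triangles that the insertion construction plants in the figure, applying the exterior-angle theorem twice. Write $O$ for the vertex of $\alpha$ and centre of the unit circle, $A$ for the point in which the second leg meets the circle, $R$ for the mark lying on the extension line $g$, and $S$ for the second mark, which lies on the (semi-)circle. By the marking of the ruler we have $RS=1$, and since $A$ and $S$ both lie on the unit circle, $OA=OS=1$ as well; here $\beta=\measuredangle(ORS)$ is the angle the ruler line makes with $g$ at $R$. The first thing I would nail down is the order of points along the ruler line: it meets $g$ in the exterior point $R$ and meets the circle in the two points $S$ and $A$, with $S$ between $R$ and $A$. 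This ordering is exactly what makes the supplement/exterior-angle relations below carry the correct sign.

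Next I would propagate the angle. The triangle $ORS$ is isosceles because $RS=OS=1$, so its base angles coincide and $\measuredangle(ROS)=\beta$. Applying the exterior-angle theorem to $ORS$ at the vertex $S$ --- legitimate because $R$, $S$, $A$ are collinear with $S$ between them --- yields $\measuredangle(OSA)=\measuredangle(ORS)+\measuredangle(ROS)=2\beta$. The triangle $OSA$ is in turn isosceles because $OS=OA=1$, so its base angles agree too and $\measuredangle(OAS)=\measuredangle(OSA)=2\beta$.

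Finally I would apply the exterior-angle theorem once more, to the triangle $ORA$ at the vertex $O$. Since $R$ lies on $g$ on the side opposite to the original leg, the given angle $\alpha$ is precisely the exterior angle of $ORA$ at $O$, so
\[
\alpha=\measuredangle(ORA)+\measuredangle(OAR)=\beta+2\beta=3\beta,
\]
where $\measuredangle(ORA)=\measuredangle(ORS)=\beta$ and $\measuredangle(OAR)=\measuredangle(OAS)=2\beta$ because $S$ lies on the segment $RA$. Hence $\beta=\tfrac13\alpha$, as claimed. There is no computational difficulty in this argument; the only delicate point, and the main obstacle, is the configuration bookkeeping of the first step --- verifying that $A$ really lies on the unit circle (so that $OA=1$), that $S$ is between $R$ and $A$, and that $\alpha$ is genuinely the exterior angle of $ORA$ at $O$ --- since all three invocations of the isosceles/exterior-angle facts depend on getting these incidences right.
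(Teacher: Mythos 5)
Your proposal is correct and follows essentially the same route as the paper's proof: two applications of the Au{\ss}enwinkelsatz to the two isosceles triangles (first $ORS$ with $RS=OS=1$ giving the exterior angle $2\beta$, then $ORA$ with $OS=OA=1$ giving $\alpha=\beta+2\beta$), which is exactly the classical Archimedes argument the paper uses with the labels $B,O,C,A$ and $\delta=2\beta$. Your additional care about the configuration (ordering of $R$, $S$, $A$ and that $\alpha$ is genuinely the exterior angle at $O$) is a welcome refinement the paper leaves implicit in its figure.
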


\begin{proof}
Zum Beweis betrachtet man eine erweiterte Skizze:

\begin{figure}[h]
\includegraphics[width=8cm]{ml4.jpg}
\caption[\cite{henn}, Seite 67]{Winkeldritteln 2}
\label{zul2}
\end{figure}

In $\Delta(BOC)$ gilt $\delta=2\beta$ nach dem Außenwinkelsatz, da das Dreieck gleichschenklig nach Konstruktion ist. Genauso ist im Dreieck $\Delta(BOA)$ nach dem Außenwinkelsatz $\alpha=\beta+\delta$, also $\alpha=3\beta$ oder $\beta=\frac{1}{3}\alpha$, was zu zeigen war.
\end{proof}

\begin{cor}{Drittelungssatz}\footnote{\cite{martin}, Kapitel 9}\\
Wenn $cos(x)\in\Lin$, dann auch $cos(x/3)$.
\end{cor}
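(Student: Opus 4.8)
The plan is to reduce the statement completely to the angle-trisection theorem for the marked ruler that was just established, so that no new insertion argument is required; the only genuine work is to pass back and forth between the \emph{number} $\cos x$ and the \emph{angle} $x$ itself. The organising observation is that an angle at the origin is ML-constructible exactly when a point of the unit circle on one of its legs is an ML-point, and that the first coordinate of such a point is precisely the cosine of the angle. Thus the hypothesis ``$\cos x\in\Lin$'' and the geometric statement ``the angle $x$ can be laid off with the marked ruler'' are two faces of the same fact, and the trisection theorem supplies everything in between.

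Concretely I would argue as follows. Assume $\cos x\in\Lin$ and, without loss of generality, $x\in[0,\pi]$, so that $\sin x\ge 0$. Since $\Lin$ contains the field $\E$ and is closed under square roots, $\sin x=\sqrt{1-\cos^2 x}\in\Lin$, whence $A=(\cos x,\sin x)$ is an ML-point on the unit circle. Together with $O=(0,0)$ and $E=(1,0)$ this yields the angle $\angle(AOE)$ of measure $x$ whose legs $\overrightarrow{OE}$ and $\overrightarrow{OA}$ are ML-lines; the unit circle about $O$ is at our disposal because $O,E$ are ML-points and $\E\subseteq\Lin$ provides all Euclidean auxiliaries. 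Applying the trisection theorem to $\angle(AOE)$ produces, through the insertion construction, the trisecting line, which is an ML-line enclosing the angle $x/3$ with the $x$-axis. Drawing the parallel to it through $O$ (a Euclidean, hence ML, step) and intersecting with the unit circle then gives ML-points whose first coordinates are $\pm\cos(x/3)$; as $\Lin$ is closed under reflection at $O$, this shows $\cos(x/3)\in\Lin$.

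The crux is not any calculation but the conceptual bridge at the two ends: that the purely algebraic hypothesis $\cos x\in\Lin$ lets me actually \emph{build} the angle $x$ as input for the trisection theorem, and that I can \emph{extract} $\cos(x/3)$ from its output. Both directions lean on the closure of $\Lin$ under Euclidean constructions and under square roots, so the inclusion $\E\subseteq\Lin$ coming from Poncelet--Steiner is doing the quiet heavy lifting. Two points deserve a remark. First, $\cos x$ fixes $x$ only up to sign and modulo $2\pi$; I establish the claim for the principal representative $x\in[0,\pi]$ (the extreme cases $x=0,\pi$ give the rational values $1$ and $\tfrac12$ and are trivial), and the values $\cos(x'/3)$ for the remaining representatives $x'$ then also lie in $\Lin$, since they are the other two roots of $4t^3-3t-\cos x=0$, which factors over $\Lin$ as $(t-\cos(x/3))$ times a quadratic with coefficients in $\Lin$ and is therefore split by a single square root. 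Second, the insertion occurring inside the trisection places one mark on a circle rather than on a second line; this is a bona fide marked-ruler move and was already validated in the proof of the trisection theorem, so I invoke it here as a black box.
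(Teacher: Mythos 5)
Your proof is correct, but it takes a visibly different route from the paper's. The paper's own proof is a three-line case analysis on the angle measure: for acute $x$ it cites Satz~5 (die Archimedes-Einschiebung) directly, for $x=90^\circ$ it declares the claim trivial, and for obtuse $x$ it reduces to the acute case via the identity $\tfrac{\alpha}{3}=30^\circ+\tfrac{\alpha-90^\circ}{3}$, using that the $30^\circ$-Winkel already with ZuL konstruierbar ist. The entire translation between the \emph{number} $\cos x\in\Lin$ and the \emph{angle} $x$ is left implicit there; you make exactly this bridge explicit, and that is the genuine added value of your write-up: $\Lin$ euklidisch gives $\sin x=\sqrt{1-\cos^2 x}\in\Lin$, hence the ML-point $(\cos x,\sin x)$ on the unit circle, and after trisecting, the parallel through $O$ and the intersection with the unit circle extract $\cos(x/3)$ as an ML-number. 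Your remark on the multivaluedness of $x$ given $\cos x$ (the other roots of $4t^3-3t-\cos x$ lying in $\Lin$ after splitting off $t-\cos(x/3)$) goes beyond what the paper records and is a welcome precision. One soft spot to flag: you apply Satz~5 uniformly to all $x\in[0,\pi]$, whereas the paper's case split exists precisely because its source (Martin) establishes the Einschiebe-Trisektion for spitze Winkel; your uniform invocation is defensible under the paper's unrestricted formulation of Satz~5, but it silently assumes the Neusis-Konfiguration (R auf $g$, S auf dem Halbkreis, Lineal durch $A$) also exists for obtuse $x$ — true, but worth a sentence, and exactly what the $30^\circ$-identity in the paper sidesteps. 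Your closing caveat that the insertion places one mark on a circle rather than a line, treated as validated inside Satz~5, matches the paper's own (equally tacit) handling of that point.
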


\begin{proof}
Wenn der zu drittelnde Winkel spitz ist, also zwischen 0 und 90 Grad liegt, dann folgt das direkt aus Satz 5. Wenn der Winkel ein rechter Winkel ist, ist die Aussage klar. Wenn der Winkel zwischen 90 und 180 Grad liegt, dann folgt das Resultat aus Satz 5 und der Identität $\frac{\measuredangle{\alpha}}{3}=30+\frac{\measuredangle{\alpha}-90}{3}$.\footnote{\cite{martin}, Seite 182}
\end{proof}

Nun muss man zeigen, dass man auch dritte Wurzeln konstruieren kann. Hierzu betrachtet diese Arbeit eine Konstruktion mit dem Einschiebe-Lineal, die von Nicomedes aus dem 3. Jahrhundert vor Christi Geburt stammt\footnote{\cite{martin}, Kapitel 9}.

\begin{thm}{Nicomedes}\\
Man betrachte folgende Zeichnung:\\
\begin{figure}[h]
\includegraphics[width=8cm]{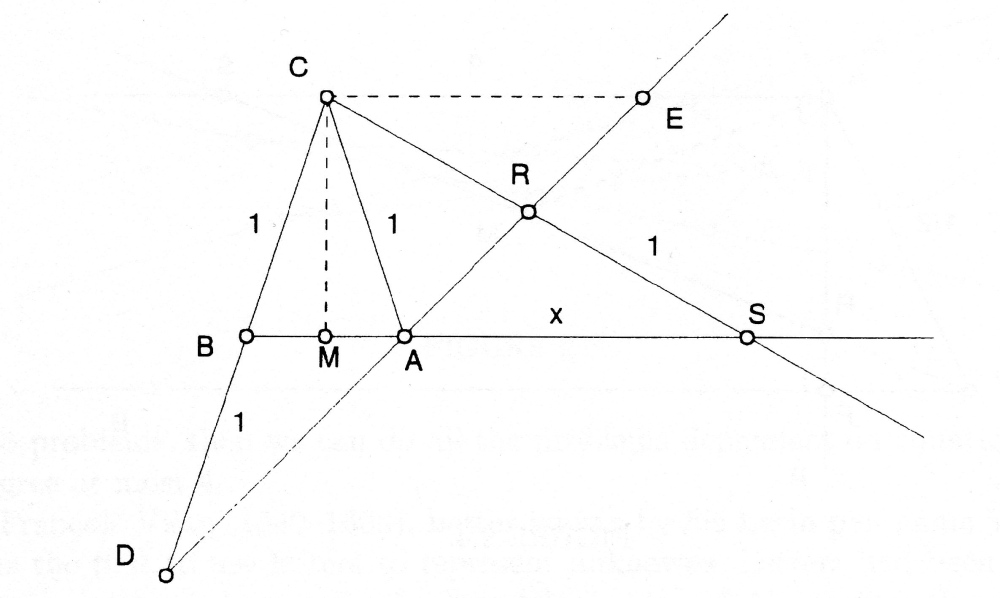}
\caption[\cite{martin}, Seite 128]{Nicomedes}
\label{zul2}
\end{figure}

Das Dreieck $\Delta(ABC)$ habe die Seitenlängen $1,1$ und $k/4$, wobei $AB=k/4$ mit $0<k<8$. Sei $B$ der Mittelpunkt von $\overline{CD}$ und schneide $\overrightarrow{CR}$ $\overrightarrow{DA}$ bei $R$ und $\overrightarrow{BA}$ bei $S$ mit $RS=1$, aber $R\ne B$, es handelt sich also mit anderen Worten um $e(\overleftrightarrow{AB},\overleftrightarrow{AD},C)$. Dann ist $AS=\sqrt[3]{k}$.
\end{thm}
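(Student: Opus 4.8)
The plan is to use coordinate geometry, mirroring the style of the proof of the Satz von Haga. First I would fix coordinates by placing $A=(0,0)$ and $B=(k/4,0)$, so that the line $\overleftrightarrow{AB}$ is the $x$-axis. Since the triangle $\Delta(ABC)$ is isosceles with $CA=CB=1$ and base $AB=k/4$, its apex is $C=(k/8,h)$ with $h=\sqrt{1-k^2/64}$; here the hypothesis $0<k<8$ is exactly what guarantees that $h$ is real, i.e. that the triangle is non-degenerate. Because $B$ is the midpoint of $\overline{CD}$, the point $D$ is the reflection of $C$ in $B$, so that $D=2B-C=(3k/8,-h)$, and in particular $BD=BC=1$.

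Next I would encode the insertion $e(\overleftrightarrow{AB},\overleftrightarrow{AD},C)$ algebraically. Writing the unknown point on the $x$-axis as $S=(s,0)$, the point $R$ is the intersection of the line through $C$ and $S$ with the line $\overleftrightarrow{AD}$. Parametrising $\overleftrightarrow{AD}$ by $R=\mu\,(3k/8,-h)$ and imposing collinearity of $C$, $R$ and $S$ gives a linear equation whose solution is $\mu=\dfrac{s}{k/2-s}$. It then remains to impose the defining length condition $RS=1$, which after substituting the coordinates of $R$ and $S$ becomes a single equation in the unknown $s$.

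Carrying out the simplification — the only genuinely computational step — the factor $h^2=1-k^2/64$ combines with the $x$-contribution so that $RS^2=1$ collapses to
\[
s^4-\tfrac{k}{4}\,s^3+ks-\tfrac{k^2}{4}=0,
\]
which factors neatly as $(s-\tfrac{k}{4})(s^3+k)=0$. The root $s=k/4$ is spurious: it corresponds to $S=B$ and $R=D$, that is, to the degenerate ``insertion'' along the line $\overleftrightarrow{CB}$ (for which indeed $RS=BD=1$), and is precisely the configuration excluded by the condition singling out the genuine neusis position. The remaining factor forces $s^3=-k$, hence $s=-\sqrt[3]{k}$; geometrically $S$ then lies on $\overrightarrow{BA}$ beyond $A$, and since the statement concerns the length $AS=|s|$, the claim $AS=\sqrt[3]{k}$ follows.

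The conceptual heart of the argument is not the algebra but the appearance of the spurious quartic root: I expect the main obstacle to be justifying cleanly that the degenerate solution $S=B$ is the one to discard, and that the geometrically valid insertion is the one with $s<0$, so that it is the \emph{length} $AS$, rather than the signed coordinate $s$, that equals $\sqrt[3]{k}$. A secondary point to check is that this $S$ and the corresponding $R$ really do lie on the prescribed rays $\overrightarrow{BA}$ and $\overrightarrow{DA}$, which a short inspection of the sign of $\mu$ (negative for $s<0$) confirms.
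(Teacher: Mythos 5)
Your proposal is correct and I verified the computation: with $A=(0,0)$, $B=(k/4,0)$, $C=(k/8,h)$, $h^2=1-k^2/64$, $D=(3k/8,-h)$, collinearity of $C,R,S$ does give $\mu=\frac{s}{k/2-s}$, and $RS^2=1$ reduces to $4s^4-ks^3+4ks-k^2=0$, i.e. $(s-\frac{k}{4})\cdot 4(s^3+k)=0$ --- which is exactly the paper's quartic under the substitution $s=-x$ with $x=AS$. But your route is genuinely different from the paper's. The paper argues synthetically: it draws the parallel to $\overleftrightarrow{AB}$ through $C$, meeting $\overleftrightarrow{DA}$ at $E$; from $\Delta(ABD)\sim\Delta(ECD)$ and the midpoint property it gets $CE=2\,AB=k/2$, from $\Delta(ECR)\sim\Delta(ASR)$ it gets $CR=\frac{k}{2x}$, and then applies Pythagoras twice through the foot $M$ (midpoint of $\overline{AB}$, using that $\Delta(ACB)$ is isosceles) to obtain $\left(1+\frac{k}{2x}\right)^2=\left[1-\left(\frac{k}{8}\right)^2\right]+\left(x+\frac{k}{8}\right)^2$, which factors as $(4x+k)(x^3-k)=0$; since $x,k>0$, discarding the unwanted root is immediate. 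What your coordinate version buys: because you work with the signed coordinate $s$ rather than the unsigned length $x$, the spurious root is not a meaningless negative length but the honest degenerate configuration $S=B$, $R=D$ (the line $\overleftrightarrow{CB}$, along which $RS=BD=1$ holds automatically, since $\mu=1$ there) --- this actually explains why the statement carries its exclusion clause at all (the ``$R\ne B$'' is evidently meant to rule out exactly this position), a point the paper's proof buries silently in configuration assumptions: writing $MS=x+\frac{k}{8}$ presupposes $S$ beyond $A$, and $CS=1+CR$ presupposes $R$ between $C$ and $S$, neither of which is justified there. Your closing sign checks ($s<0$ puts $S$ on $\overrightarrow{BA}$ beyond $A$; $\mu<0$ puts $R$ on $\overrightarrow{DA}$ beyond $A$) settle precisely those configuration questions, so your version is, if anything, the more watertight of the two; the paper's synthetic version buys freedom from coordinates and a root-discarding step that needs no discussion.
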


\begin{proof}
Schneide die Parallele zu $\overleftrightarrow{AB}$, welche durch C geht, $\overleftrightarrow{DA}$ bei $E$, wie oben skizziert. Dann ist offenbar das Dreieck $\Delta(ABD)$ ähnlich zum Dreieck $\Delta(ECD)$. Nach Konstruktion ist $B$ der Mittelpunkt von $\overline{CD}$, und daher ist 
\[
CE=2BA=k/2
\]
 Weiterhin ist das Dreieck $\Delta(ECR)$ ähnlich zum Dreieck $\Delta(ASR)$, und daher gilt:
\[\frac{k/2}{CR}=\frac{AS}{1}\]

Sei $x=AS$, wie skizziert, dann ist
\[CR=\frac{k}{2x}\]
Weiterhin ist, da $\Delta(ACB)$ gleichschenklig ist, $M$ der Mittelpunkt von $\overline{AB}$. Dann wendet man zwei Mal den Satz den Pythagoras an, und erhält:
\begin{align*}
CS^2 &= CM^2+MS^2\\
CS^2 &= (CB^2-BM^2)+MS^2
\left( 1+\frac{k}{2x} \right)^2 &= \left[ 1^2-\left( \frac{k}{8} \right) \right] + \left( x+\frac{k}{8}\right)^2
\end{align*}

Diese Gleichung lässt sich vereinfachen zu

\[ 4x^4+kx^3-4xk-k^2=0, \]

Was sich wiederum zu 
\[(4x+k)(x^3-k)=0\]

faktorisieren lässt. Da aber $k$ und $x$ nicht negativ sind, muss $x^3-k=0$ gelten, also ist $x$ die reelle dritte Wurzel aus $k$, was zu beweisen war.

\end{proof}

\begin{cor}{Dritte-Wurzel-Satz}\\
Wenn $x\in\Lin$, dann auch $\sqrt[3]{x}$.
\end{cor}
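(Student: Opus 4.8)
The plan is to reduce the general statement to the Nicomedes construction of Satz 6, which already produces $\sqrt[3]{k}$ as an ML-number for every $k$ in the range $0<k<8$. Everything else will be a matter of removing that range restriction and handling signs, using that $\Lin$ is a field (which follows from the earlier corollary that all euclidean constructions are available with the marked ruler, so $\Lin$ is closed under $+,-,\times,\div$ and contains $\Q\subseteq\E$).

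First I would dispose of the trivial cases. For $x=0$ we have $\sqrt[3]{0}=0\in\Lin$. Since the real cube root is an odd function and $\Lin$ is closed under negation, the case $x<0$ reduces to the case $-x>0$ via $\sqrt[3]{x}=-\sqrt[3]{-x}$. Hence it remains to treat $x\in\Lin$ with $x>0$.

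The obstacle in applying Satz 6 directly is exactly the bound $0<k<8$: a given positive $x$ may be arbitrarily large, so it need not lie in the admissible range. I would overcome this by rescaling. Since $\Q\subseteq\Lin$ and $\Lin$ is a field, for any positive $x$ I can choose a rational $r=2^{-m}$ with $m\in\N$ large enough that $k:=r^{3}x=x/2^{3m}$ satisfies $0<k<8$; being a product of ML-numbers, this $k$ again lies in $\Lin$. Applying Satz 6 to this $k$ yields $\sqrt[3]{k}\in\Lin$. Finally, using $\sqrt[3]{r^{3}x}=r\,\sqrt[3]{x}$ for $r>0$ together with the field property of $\Lin$, I obtain $\sqrt[3]{x}=2^{m}\,\sqrt[3]{k}\in\Lin$, which is the claim.

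The only genuinely delicate point is to check that the geometric figure underlying Satz 6 can actually be laid out entirely within $\Lin$: the triangle $\Delta(ABC)$ with side lengths $1,1,k/4$ has $k/4\in\Lin$, so its vertices $A,B,C$ (and the reflected point $D$) are ML-points, and the step $e(\overleftrightarrow{AB},\overleftrightarrow{AD},C)$ is precisely the admissible marked-ruler insertion of the definition of an ML-point. Thus the resulting $S$ is an ML-point and $AS=\sqrt[3]{k}$ is an ML-number, as Satz 6 asserts. Beyond this bookkeeping and the choice of the scaling exponent $m$, I expect no serious difficulty.
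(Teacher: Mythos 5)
Your proposal is correct and follows essentially the same route as the paper: both reduce to Satz 6 by rescaling with a power of $8$ and the identity $\sqrt[3]{8^{m}k}=2^{m}\sqrt[3]{k}$ so that the argument lands in the admissible range $0<k<8$. You are somewhat more careful than the paper's own proof, which writes $x=8^{n}k$ with $|k|\le 1$ but glosses over the cases $x=0$ and $x<0$ that you treat explicitly via $\sqrt[3]{x}=-\sqrt[3]{-x}$, and which does not spell out the bookkeeping that the Nicomedes configuration lies in $\Lin$.
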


\begin{proof}
Wenn $|x|<8$ ist, dann folgt die Aussage direkt aus Satz 6. Wenn aber $|x|\ge 8$, dann ist $x=8^n k$ für eine natürliche Zahl $n$ und eine reelle Zahl $k$ mit $|k|\le 1$ und die Aussage folgt aus dem Satz 6 und der Identität $\sqrt[3]{x}=\sqrt[3]{8^n k}= 2^n \sqrt[3]{k}$, und damit ist die Aussage für alle positiven $x$ bewiesen \footnote{\cite{martin}, Seite 182}.
\end{proof}

Zusammenfassend wissen wir nun\footnote{\cite{martin}, Kapitel 9}:

\bi
\item $\Lin$ ist ein euklidischer Körper
\item Wenn $x\in\Lin$, dann auch $\sqrt[3]{x}$.
\item Wenn $cos(x)\in\Lin$, dann auch $cos(x/3)$.
\ei

\begin{defn}{Vieta'scher Körper}\\
Ein Körper heißt abgeschlossen unter dritten Wurzeln, wenn der Dritte-Wurzel-Satz gilt. Ein Körper heißt abgeschlossen unter Dreiteilung, wenn der Drittelungssatz gilt. Ein euklidischer Körper heißt \em{vieta'scher Körper}, wenn er unter dritten Wurzeln und unter Dreiteilung abgeschlossen ist.
\end{defn}

Insbesondere ist damit $\Lin$ ein vieta'scher Körper. Nun kann man den zentralen Satz beweisen, dass $\Lin$ aus den Lösungen aller Polynome von Grad höchstens 4 mit Koeffizienten in $\Lin$ besteht. Dieser Satz liefert auch, dass $\Lin$ der kleinste vieta'sche Körper ist.

\begin{thm}{Polynome über vieta'schen Körpern}\\
Sei $V$ ein vieta'scher Körper. Wenn $x$ eine reelle Lösung eines Polynoms
\[ax^4+bx^3+cx^2+dx+e=0\]
mit Koeffizienten $a,b,c,d,e$ aus $V$ ist, dann ist $x\in V$.
\end{thm}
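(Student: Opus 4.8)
The plan is to reduce the quartic to a cubic and then to exploit that a vieta'scher K\"orper is closed under exactly the operations needed to solve an arbitrary cubic over it. Since $V$ is euclidean, solving quadratics never leaves $V$: the roots of $\alpha x^2+\beta x+\gamma=0$ with $\alpha,\beta,\gamma\in V$ are $(-\beta\pm\sqrt{\beta^2-4\alpha\gamma})/(2\alpha)$, and the square root stays in $V$ whenever it is real. Hence if I can produce one root in $V$ of a suitable resolvent cubic, the whole quartic will split into quadratics over $V$ and the claim follows. So the genuine content is the statement that every real root of a cubic with coefficients in $V$ already lies in $V$.

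First I would normalize. Dividing by $a$ I may assume $a=1$, and the substitution $x=y-b/4$ (legitimate since $V$ is a field) removes the cubic term and yields a depressed quartic $y^4+py^2+qy+r=0$ with $p,q,r\in V$. I then pass to the resolvent cubic, for instance $z^3-pz^2-4rz+(4pr-q^2)=0$, whose coefficients again lie in $V$. A real root $z_0$ of the resolvent produces a factorization $y^4+py^2+qy+r=(y^2+uy+v)(y^2-uy+w)$ whose coefficients $u,v,w$ are obtained from $z_0$ using only field operations and a single square root; since $V$ is euclidean these stay in $V$. Solving the two quadratics then exhibits every real root of the quartic inside $V$.

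It remains to solve the cubic. After a further depression I may write it as $t^3+pt+q=0$ with $p,q\in V$ and discriminant governed by $D=-4p^3-27q^2$. If $D\le 0$ (one real root), Cardano's formula gives the real root as
\[
t=\sqrt[3]{-\tfrac{q}{2}+\sqrt{\tfrac{q^2}{4}+\tfrac{p^3}{27}}}+\sqrt[3]{-\tfrac{q}{2}-\sqrt{\tfrac{q^2}{4}+\tfrac{p^3}{27}}};
\]
here the inner square root is real and lies in $V$, and the two real cube roots lie in $V$ because $V$ is closed under dritten Wurzeln, so $t\in V$. The delicate case is the \emph{casus irreducibilis} $D>0$, where all three roots are real but Cardano forces cube roots of non-real numbers. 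Here I would switch to the trigonometric solution: since $D>0$ forces $p<0$, I set $t=2\sqrt{-p/3}\,\cos\theta$, which turns the equation into $\cos(3\theta)=\tfrac{3q}{2p}\sqrt{-3/p}$. The right-hand side is built from $p,q$ and one square root and therefore lies in $V$, and in this case one checks that its absolute value is at most $1$; thus $\cos(3\theta)\in V$, and the closure of $V$ under Dreiteilung (the Drittelungssatz) yields $\cos\theta\in V$. Since $\sqrt{-p/3}\in V$ as well, the root $t$ lies in $V$, and the remaining two real roots arise by replacing $\theta$ with $\theta+2\pi/3$ and $\theta+4\pi/3$, i.e.\ by the same trisection applied to shifted angles.

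The main obstacle I expect is precisely this casus irreducibilis: verifying that the trigonometric right-hand side really has absolute value at most $1$ in the regime $D>0$ (so that the hypothesis of the Drittelungssatz is genuinely met), and confirming that all three real roots, not merely one, are captured by shifting the trisected angle. Everything else --- depressing the quartic, forming and solving the resolvent, and the branch $D\le 0$ --- is routine field arithmetic combined with the euclidean and cube-root closure already established for $V$.
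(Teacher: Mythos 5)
Your proposal is correct and follows essentially the same route as the paper: normalize and depress the quartic, pass to the identical resolvent cubic $z^3-pz^2-4rz+(4pr-q^2)=0$ (the paper writes it as $z^3-pz^2-4rz-(q^2-4pr)=0$ and obtains the quadratic factorization via Ferrari's completing of the square, which is the same splitting you state), and treat the depressed cubic by Cardano using euclidean and cube-root closure when there is one real root, and by Vieta's substitution together with the Drittelungssatz in the casus irreducibilis. The one point you flag as a remaining obstacle is immediate and is noted in the paper: $D>0$ means $\left(\frac{q}{2}\right)^2+\left(\frac{p}{3}\right)^3<0$, which is precisely the statement $\bigl|(-q/2)\sqrt{-27/p^3}\bigr|<1$, so the hypothesis of the Drittelungssatz is automatically satisfied.
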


\begin{proof}
Wenn $a$ und $b$ gleich Null ist, folgt aus der Lösungsformel für quadratische Gleichungen, dass die Lösung aus $V$ ist, da $V$ insbesondere euklidisch ist.\\

Der nächste zu betrachtende Fall ist der, dass $a=0$, aber $B\ne 0$ ist, oBdA kann man dann $b=1$ wählen. Mittels Tschirnhaustransformation ($x=y-c/3$) lässt sich die Gleichung weiter vereinfachen zu 
\begin{align}
y^3+py+q=0
\end{align}
wobei $p=d-c^2/3$ und $q=2c^3/27 -dc/3+e$ ist, also insbesondere algebraische Ausdrücke in den Koeffizienten der ursprünglichen Gleichung $x^3+cx^2+dx+e$. $x$ ist genau dann in $V$, wenn $y$ in $V$ liegt, und man kann die ursprüngliche Gleichung genau dann lösen, wenn man diese reduzierte Gleichung lösen kann, und man kann die Lösungen für $x$ aus den Lösungen für $y$ berechnen.\\
Die erste Lösung folgt aus den Formeln von Cardano, man nutzt die Identität
\[(A+B)^3-3AB(A+B)-(A^3+B^3)=0,\]
die direkt aus dem Binomischen Lehrsatz folgt. Man setzt dann $-3AB=p$ und $-(A^3+B^3)=q$. Wenn man $A$ und $B$ so finden kann, dass sie diese beiden Gleichungen simultan lösen, dann löst $y=A+B$ die reduzierte Gleichung (3.1), denn es gilt dann:
\[0=y^3-3ABy-(A^3+B^3)=y^3+py+q.\]
Die beiden Gleichungen für $p$ und $q$ führen zu folgender Identität:
\[(z-A^3)(z-B^3)=z^2-(A^3+B^3)z+(AB)^3=z^2+qz-p^3 /27\]
Die beiden Gleichungen
\begin{align}
(z-A^3)(z-B^3) &= 0\\
z^2+qz- p^3 /27 &= 0
\end{align}
haben also die gleichen Nullstellen. (3.2) hat offensichtlich die Nullstellen $A^3$ und $B^3$. Die Gleichung (3.3) hat die Nullstellen:

\begin{align}
z_{1,2}=-\frac{q}{2} \pm \sqrt{\left( \frac{q}{2}\right)^2+ \left( \frac{p}{3}\right)^3}
\end{align}

Man kann also $A=\sqrt[3]{z_1}, B=\sqrt[3]{z_2}$ wählen und muss noch zeigen, dass nun $y_0=A+B$ eine Lösung der Gleichung $y^3+py+q=0$ ist, nach einiger Rechnung ist das der Fall. Man findet somit eine reelle Lösung für die reduzierte Gleichung (3.1). Cardano gibt weiterhin die Lösungen $\omega A+ \omega ^2 B$ und $\omega ^2 A + \omega B$ mit der dritten Einheitswurzel $\omega=-1+i \sqrt{3}$ an, welche im Allgemeinen komplex sind. Wenn aber $y_0$ eine reelle Zahl ist, dann ist sie sicher auch in $V$, weil $V$ vietaisch ist. Falls es weitere reelle Lösungen gibt, lassen diese sich durch Polynomdivision von (3.1) durch $y-y_0$ mit der quadratischen Lösungsformel finden.\\

Interessanterweise ergeben sich für den Fall, dass der Ausdruck $\left( \frac{q}{2}\right)^2+ \left( \frac{p}{3}\right)^3$ negativ ist, gerade drei reelle Lösungen. Für Cardano war das historisch ein Problem, da er die komplexen Zahlen zunächst nicht akzeptierte. In der heutigen Mathematik ist das kein Problem mehr, und tatsächlich lassen sich durch konsequente Verwendung der komplexen Zahlen die drei reellen Lösungen finden! $\left( \frac{q}{2}\right)^2+ \left( \frac{p}{3}\right)^3$ ist genau dann negativ, wenn $27q^2<-4p^3$, also in dem Fall, dass $p<0$ und $|(-q/2)\sqrt{-27/p^3}|\le 1$ ist. Also gibt es dann, bei $p<0$, ein $r$ so, dass $cos(3r)=(-q/2)\sqrt{-27/p^3}$ ist. Die von Vieta stammende Substitution $y=t\sqrt{-p/3}$ in der Gleichung (3.1) führt dann zu der Gleichung

\[t^3-3t-2 cos(3r)=0\], welche die Nullstellen $2 cos r$, $2 cos (r+120^\circ)$ und $2 cos (r+240^\circ)$ hat.\\
Wenn man also $\left( \frac{q}{2}\right)^2+ \left( \frac{p}{3}\right)^3 <0$ und $cos(3r)=(-q/2)\sqrt{-27/p^3}$ setzt, dann hat die Gleichung (3.1) die Lösungen

\begin{align}
\sqrt{-4p/3} \cdot cos (r+ k\cdot 120^\circ), ~ k=0,1,2
\end{align}

Die Quadratwurzeln sind alle reell, und, da $V$ vieta'sch ist, sind die Nullstellen in $V$. Zusammenfassend lässt sich sagen:  (3.1) hat entweder nur eine reelle Wurzel, welche die Cardano'sche Formel als $A+B$ angibt, oder drei reelle Wurzeln,  die in (3.5) dargestellt sind. Wenn $p$ und $q$ aus $V$ sind, das sind sie aber nach Konstruktion, so sind die reellen Wurzeln alle in $V$, da $V$ abgeschlossen unter Drittelung und dritten Wurzeln ist.\\

Es bleibt noch, den Fall der allgemeinen Gleichung von Grad 4, also $ax^4+bx^3+cx^2+dx+e$ für den Fall $a\ne 0$, zu untersuchen. Auch hier kann man wieder oBdA $a=1$ annehmen und mit Tschirnhaustransformation ($x=y-b/4$) zu einer reduzierten Gleichung übergehen:

\begin{align}
y^4+py^2+qy+r=0
\end{align}

Hierbei sind die Koeffizienten $p,q,r$ alle algebraisch aus den Koeffizienten $a,b,c,d,e$ bestimmbar als: $p=\frac{8c-3b^2}{8}$, $q=\frac{b^3-4bc+8d}{8}$ und $r=-\frac{3b^4-16b^2c+64bd-256e}{256}$. Für den Fall $q=0$ liegt eine quadratische Gleichung in $y^2$ vor, und die Lösungen ergeben sich mit Hilfe der quadratischen Lösungsformel sehr schnell. Andernfalls wendet man folgende Umformung an:

\begin{align}
y^4 &= -py^2 - qy -r \notag\\
y^4+zy^2+z^2/4 &= -py^2-qy-r+zy^2+z^2/4 \notag\\
(y^2+z/2)^2 &= (z-p)y^2-qy+(z^2/4-r),
\end{align}
wobei $z$ noch zu bestimmen ist. Die linke Seite der Gleichung ist ein Quadrat, und wenn man nun $z$ so wählen kann, dass die rechte Seite das Quadrat eines linearen Ausdrucks $gy+h$ in $y$ wird (binomische Formel), dann folgen die Lösungen für $y$ leicht aus der Gleichung $y^2+z/2=\pm (gy+h)$. Damit ist das Ziel klar. Man bestimmt $z$ so, dass die rechte Seite ein Quadrat wird, und das ist genau dann der Fall, wenn die Nullstellen der quadratischen Gleichung in $y$
\[(z-p)y^2 + (-q)y +(z^2/4-r) = 0\]
zusammenfallen. Nach der quadratischen Lösungsformel passiert das genau dann, wenn $(-q)^2-4(z-p)(z^2/4-r)=0$ gilt, was zu einer Gleichung vom Grad 3 in $z$ führt:
\begin{align}
z^3-pz^2-4rz-(q^2-4pr) =0
\end{align}
Damit ist die Lösungsmethode auf das Problem des Lösens einer Gleichung vom Grad 3 zurückgeführt. Man bestimmt also zunächst eine Lösung $z\ge p$ von (3.8) mit den Methoden, die für die Gleichung vom Grad 3 oben vorgestellt wurden, und löst anschließend die biquadratische Gleichung (3.7). Das Zurückführen der Lösung der Gleichung von Grad 4 auf die vom Grad 3 ergibt insbesondere, dass dann alle Lösungen der Gleichung von Grad 4 in $V$ liegen.
\end{proof}

Nun folgt ganz leicht:

\begin{cor}{Hauptsatz über ML-Zahlen}\\
Sei $x$ reelle Nullstelle eines Polynoms vom Grad höchstens 4, und die Koeffizienten des Polynoms sind in $\Lin$, dann ist $x\in\Lin$.
\end{cor}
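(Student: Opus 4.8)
Der Plan ist, das Korollar als unmittelbaren Spezialfall des unmittelbar vorangehenden Satzes über Polynome über vieta'schen Körpern zu gewinnen; die eigentliche Arbeit steckt ja bereits vollständig in jenem Satz. Zuerst würde ich festhalten, dass $\Lin$ schon als vieta'scher Körper identifiziert wurde. Dazu erinnere ich an die drei im Laufe dieses Abschnittes nachgewiesenen Eigenschaften: $\Lin$ ist euklidisch (enthält also mit jedem Element auch dessen Quadratwurzel, was aus dem Satz von Poncelet--Steiner und $\E\subseteq\Lin$ folgt), $\Lin$ ist nach dem Dritte-Wurzel-Satz abgeschlossen unter dritten Wurzeln, und $\Lin$ ist nach dem Drittelungssatz abgeschlossen unter Dreiteilung. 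Nach der Definition des vieta'schen Körpers bedeuten diese drei Eigenschaften zusammengenommen genau, dass $\Lin$ vieta'sch ist.

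Als zweiten Schritt würde ich den Satz über Polynome über vieta'schen Körpern mit der speziellen Wahl $V=\Lin$ anwenden. Ist nämlich $x$ eine reelle Nullstelle eines Polynoms $ax^4+bx^3+cx^2+dx+e$ mit Koeffizienten $a,b,c,d,e\in\Lin$, so liefert der Satz, angewandt auf den vieta'schen Körper $V=\Lin$, sofort $x\in\Lin$. Da ein Polynom vom Grad höchstens $4$ stets in diese Gestalt gebracht werden kann (gegebenenfalls mit $a=0$ bzw.\ $a=b=0$, was der Satz ausdrücklich zulässt), erfasst dies alle in der Behauptung genannten Fälle.

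Die einzige eigentlich zu verifizierende Aussage ist also, dass $\Lin$ tatsächlich sämtliche Bedingungen der Definition eines vieta'schen Körpers erfüllt; und genau das wurde im vorangehenden Text bereits zusammengetragen. Ein ernsthaftes Hindernis erwarte ich daher nicht: Die gesamte Schwierigkeit liegt im Nachweis der Abgeschlossenheit unter Quadrat- und dritten Wurzeln sowie unter Dreiteilung, der über die Sätze von Poncelet--Steiner, Nicomedes und die Archimedische Winkeldrittelung bereits erbracht ist. Das Korollar selbst besteht somit nur darin, diese Bausteine über den abstrakten Satz über vieta'sche Körper zusammenzusetzen, und erfordert keine weitere Rechnung.
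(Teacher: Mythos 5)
Dein Vorschlag ist korrekt und entspricht genau dem Weg der Arbeit: Auch dort wird das Korollar unmittelbar aus dem vorangehenden Satz \"uber Polynome \"uber vieta'schen K\"orpern gewonnen, nachdem zuvor festgehalten wurde, dass $\Lin$ euklidisch, unter dritten Wurzeln und unter Dreiteilung abgeschlossen, also vieta'sch ist. Die Anwendung mit $V=\Lin$ (einschlie{\ss}lich der F\"alle $a=0$ bzw.\ $a=b=0$ f\"ur Grad $<4$) ist genau das, was die Arbeit mit "`Nun folgt ganz leicht"' meint.
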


Eine Formulierung dieser Tatsache in der Sprache der Körpertheorie könnte so lauten:

\begin{thm}{Ml-Zahlen, Körpertheorie}\\
Eine Zahl $z$ ist eine ML-Zahl genau dann, wenn sie in einer endlichen Körperkette 
\[\Q = F_0 \subset F_1 \subset \dots \subset F_n\]
über $\Q$ liegt, so dass $z\in F_n$ und $[F_i:F_{i-1}]= 2$ oder $3$ für alle $1\le i\le n$.\\
Das heißt, $F_n$ hat dann Körpergrad $[F_n:Q]=2^a3^b$ für natürliche Zahlen $a,b$.\footnote{\cite{cox}, Seite 277f}
\end{thm}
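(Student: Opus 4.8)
The plan is to prove the two implications separately, relying on the already-established facts that $\Lin$ is a vieta'scher K\"orper and that (by the Satz \emph{Polynome \"uber vieta'schen K\"orpern}, applied with $V=\Lin$) every real root of a polynomial of degree at most $4$ with coefficients in $\Lin$ again lies in $\Lin$. For the implication that any $z$ in such a chain is an ML-Zahl, I would induct on the length $n$. The base case $n=0$ is clear because $F_0=\Q\subseteq\Lin$. In the inductive step, since we work in characteristic zero, each extension is simple, $F_i=F_{i-1}(\alpha_i)$, with $\alpha_i$ a root of an irreducible polynomial of degree $2$ or $3$ over $F_{i-1}\subseteq\Lin$; as such a polynomial has degree at most $4$, its real roots lie in $\Lin$, so $F_i\subseteq\Lin$ and in particular $z\in F_n\subseteq\Lin$. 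The degree statement $[F_n:\Q]=2^a3^b$ is then immediate from the tower rule $[F_n:\Q]=\prod_{i=1}^n[F_i:F_{i-1}]$, each factor being $2$ or $3$.

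For the converse I would induct on the length of the defining sequence of the ML-Punkt. A point arising as the intersection of two lines through earlier points has coordinates that are rational functions of the earlier coordinates and so introduces no new field (a degree-$1$ step). A point arising from the Einschiebe-construction has, by the quartic $m^2s^4-2ams^3+\dots=0$ derived above for $e(r,s;V)$, a coordinate that is a root of a polynomial of degree at most $4$ with coefficients in the field $F$ generated by the earlier coordinates, so $[F(s):F]\le 4$. The key observation is that such a degree-$4$ extension refines into degree-$2$ and degree-$3$ steps: exactly as in the quartic case of the Satz \emph{Polynome \"uber vieta'schen K\"orpern}, one first adjoins a root of the resolvent cubic (a degree-$3$ step) and then solves the two resulting quadratics (degree-$2$ steps), which embeds $F(s)$ into a $2$-$3$-chain over $F$. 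Concatenating these chains over all construction steps, beginning with $\Q$ (the starting points $(0,0),(0,1),(1,0)$ have rational coordinates), yields a single chain of the required form whose top field contains $z$.

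The step I expect to be the main obstacle is the bookkeeping between the real setting of $\Lin$ and the possibly non-real fields appearing in an abstract $2$-$3$-chain and in the resolvent-cubic refinement of a quartic. Since the Satz \emph{Polynome \"uber vieta'schen K\"orpern} speaks only of \emph{real} solutions, in the first implication one must ensure that at every step it is a \emph{real} root (the one realizing $z$) that is adjoined, and in the second implication that the quartic's geometrically relevant real root is the one captured by the refining chain. Making this real/complex accounting precise --- or circumventing it by passing to a Galois closure, whose group has order $2^a3^b$ and is therefore solvable with composition factors of order $2$ and $3$ --- is the only genuinely delicate point; all the algebraic ingredients are already supplied by the preceding results.
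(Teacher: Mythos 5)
The paper itself offers no proof of this theorem: it is stated as a field-theoretic reformulation of the preceding results, with the argument delegated entirely to the cited source (\cite{cox}, Seite 277f). Your proposal is therefore an attempt at something the paper does not carry out, and its second half is essentially sound: the induction over the defining sequence of an ML-Punkt, the observation that a verging step contributes a coordinate of degree at most $4$ via the quartic $m^2s^4-2ams^3+\dots=0$, and the refinement of such a quartic extension into a degree-$3$ step (resolvent cubic) followed by degree-$2$ steps is exactly the right mechanism, and concatenating the resulting short chains does produce the required tower. (Minor bookkeeping: reducing a general verging configuration to the normalized one $y=0$, $y=mx$, $V=(a,b)$ may cost an additional quadratic step for normalizing a direction vector, which is harmless.)

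The genuine gap is in your first implication, and it sits precisely at the point you flag as ``delicate'' but then do not close. Your induction asserts $F_i\subseteq\Lin$, which presupposes that every $\alpha_i$ is \emph{real}; an abstract chain $\Q=F_0\subset\dots\subset F_n$ containing a real $z$ may well pass through non-real intermediate fields (e.g.\ when the minimal polynomial of $z$ is a quartic with two complex roots, every chain through a splitting field is forced into $\C\setminus\R$), and then Satz 7 --- which speaks only of real roots of polynomials with coefficients in a subfield of $\R$ --- simply does not apply, and the induction halts. Your proposed circumvention via the Galois closure does not repair this: Burnside's $p^aq^b$ theorem does give solvability and hence a chain with steps of prime index $2$ and $3$ inside the closure, but the fixed fields of that chain are again possibly non-real, so you are back at the same obstruction. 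The standard repair --- and it is what the source the paper cites actually does --- is to leave $\R$ deliberately: pass to $\Lin(i)\subset\C$, identify ML-points with complex numbers, and prove that $\Lin(i)$ is closed under complex square roots (half-angle plus a real square root) and under all complex roots of cubics (Cardano in the one-real-root case, Vieta's trisection substitution in the casus irreducibilis). With that closure established, your induction runs verbatim inside $\C$ with no reality bookkeeping at all, and one recovers $z\in\Lin$ at the end because $z$ was real and $\Lin(i)\cap\R=\Lin$. Without this (or an equivalent device), the first implication as you have written it proves the theorem only for chains all of whose fields happen to be real.
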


Diese Arbeit ist nun einen gewissen Umweg über das markierte Lineal gegangen und hat dort, in einer recht etablierten Theorie, bewiesen, welche Zahlen mit dem markierten Lineal konstruierbar sind. Nun kommt sie wieder zum Origami zurück. Es wird bewiesen, dass der Körper $\Or$ gleich dem Körper $\Lin$ ist. Dies ist zumindest schon zu vermuten, da, wie bereits gesagt, das letzte Origami-Axiom gerade die Konstruktion mit dem Einschiebe-Lineal simuliert. Damit wäre dann die Frage, welche Zahlen mit Origami konstruierbar sind, beantwortet.

\subsection{Die Origami-Zahlen und das markierte Lineal}\footnote{dieses Kapitel folgt\cite{martin}, Kapitel 10}

Origami ist, kann man sagen, auf einer einzigen Idee aufgebaut, nämlich auf der Idee von Symmetrien und Spiegelungen. Zunächst sollte daher etwas vertiefter auf die Geometrie und Algebra von Spiegelungen eingegangen werden, alle diese Resultate sind für die weiteren Überlegungen unabdingbar.

\begin{defn}{Spiegelung}\\
Gegeben eine Gerade $g$. Der Bildpunkt $r_g(P)$ eines Punktes $P$ bei Spiegelung an der Geraden $g$ ist $P$, falls $P$ auf $g$ liegt, oder, falls $P$ nicht auf $g$ liegt, ein Punkt $Q$, der nicht auf $g$ liegt, und zwar so, dass $g$ die Mittelsenkrechte von $\overline{PQ}$ ist.\\
Abkürzend wird auch $P^g=r_g(P)$ bzw., für eine Gerade $h$, $h^g=r_g(h)=\{r_g(P):P\in h\}$.
\end{defn}

\begin{lem}
In der kartesischen Ebene ist der Spiegelpunkt von $P=(x,y)$ an der Geraden $g:ax+by+c=0$ gegeben durch:

\[P'=(x',y')=\left( x-\frac {2a(ax+by+c)}{a^2+b^2},y-\frac{2b(ax+by+c}{a^2+b^2}\right)\]
\end{lem}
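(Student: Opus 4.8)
The plan is to use the defining property of reflection from the preceding Definition: $P'=r_g(P)$ is precisely the point for which $g$ is the perpendicular bisector (Mittelsenkrechte) of the segment $\overline{PP'}$. I would translate this single geometric condition into two algebraic ones. First, $\overline{PP'}$ must be orthogonal to $g$; since the line $g:ax+by+c=0$ has normal vector $(a,b)$ and direction vector $(-b,a)$, orthogonality to $g$ means exactly that $\overrightarrow{PP'}$ is parallel to $(a,b)$. Second, the midpoint $M=\tfrac{1}{2}(P+P')$ must lie on $g$, i.e.\ satisfy $aM_x+bM_y+c=0$. Together these two conditions pin down $P'$ uniquely, so it suffices to produce the unique point meeting both and check that its coordinates agree with the claimed ones.

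To carry this out I would write $P'=(x,y)+t\,(a,b)$ for an unknown scalar $t$, which builds in the orthogonality condition automatically. The midpoint is then $M=(x,y)+\tfrac{t}{2}(a,b)$, and substituting it into the line equation gives
\[
a\Bigl(x+\tfrac{t}{2}a\Bigr)+b\Bigl(y+\tfrac{t}{2}b\Bigr)+c=(ax+by+c)+\tfrac{t}{2}(a^2+b^2)=0.
\]
Solving for $t$ yields $t=-\dfrac{2(ax+by+c)}{a^2+b^2}$, where the denominator is nonzero because a genuine line has $(a,b)\neq(0,0)$. Inserting this value back into $P'=(x+ta,\,y+tb)$ reproduces exactly the two coordinates in the statement.

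Finally I would treat the degenerate case separately, to match the case distinction in the definition of $r_g$: if $P\in g$ then $ax+by+c=0$, hence $t=0$ and the formula returns $P'=P$, as required. The computation is otherwise entirely routine, so I do not expect a real obstacle; the only points needing care are justifying that $(a,b)$ is indeed normal to $g$ (so that $\overrightarrow{PP'}\parallel(a,b)$ correctly encodes orthogonality) and noting that $t$ is determined uniquely, which is what guarantees the candidate really is the reflection and not merely some point on $g$ or on the normal line. As a shorter alternative I could skip the derivation and instead \emph{verify} directly that the stated $P'$ satisfies both conditions, namely that $M$ lies on $g$ and that $\overrightarrow{PP'}=t(a,b)$; this makes the perpendicular-bisector property transparent and is arguably the cleanest route.
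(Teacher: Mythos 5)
Your proposal is correct and takes essentially the same route as the paper: both translate the perpendicular-bisector definition of $r_g$ into the two conditions that the midpoint of $\overline{PP'}$ lies on $g$ and that $\overrightarrow{PP'}$ is parallel to the normal vector $(a,b)$, and both dispose of the case $P\in g$ separately (the paper by noting the subtracted fractions vanish, you via $t=0$). The only difference is bookkeeping: the paper solves the resulting $2\times 2$ linear system in $(x',y')$, while you build the orthogonality into the ansatz $P'=P+t(a,b)$ and solve a single equation for $t$ --- a slightly tidier but equivalent computation.
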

\begin{proof}
Wenn $P$ auf $g$ liegt, so stimmt die Gleichung, da dann die jeweils abgezogenen Brüche 0 werden. Sei also $P$ außerhalb von $g$ gelegen. Dann ist $g$ per Definition die Mittelsenkrechte von $P$ und $P'$. Also liegt der Mittelpunkt von $P$ und $P'$ auf $g$, erfüllt also die Geradengleichung: $a[(x+x')/2]+b[(y+y')/2]+c=0$. Weiterhin ist die Gerade $g$ senkrecht zu $\overleftrightarrow{PP'}$: $a(y'-y)=b(x'-x)$. Das lässt sich in ein Gleichungssystem aus 2 Gleichungen mit den beiden Unbekannten $x'$ und $y'$ schreiben.

\begin{align*}
ax' + by' &= -2c-ax-by\\
bx' - ay' &= bx-ay
\end{align*}

Das Lösen dieses Gleichungssystems ergibt die behauptete Darstellung.
\end{proof}

Was sind denn nun die Geraden $t$ so, dass das Bild eines gegebenen Punktes $P$ unter der Spiegelung an $t$ auf einer bestimmten Geraden $p$ liegen? Der Leser möge sich daran erinnern, dass diese Frage hinter Origami-Axiom 5 steckt! Dazu erinnere man sich kurz daran, wie eine Parabel definiert war:

\begin{defn}{Parabel}\\
Eine Parabel ist die Menge aller Punkte, die von einem gegebenen Punkt $P$, ihrem Brennpunkt, und einer Geraden $p$, genannt ihrer Leitgeraden, den gleichen Abstand haben.
\end{defn}

Das führt nun zu folgender Charakterisierung dieser Geraden:

\begin{thm}{Parabel und Spiegelung}\\
\be
\item Sei $P$ auf der Geraden $p$, dann ist $P^t$ genau dann auf $p$, wenn $t$ durch $P$ geht oder $t$ senkrecht zu $p$ steht.\\
\item Sei $P$ nicht auf der Geraden $p$, dann ist $P^t$ genau dann auf $p$, wenn $t$ eine Tangente der Parabel mit Brennpunkt $P$ und Leitgeraden $p$ ist.
\ee
\end{thm}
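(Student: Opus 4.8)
Der Plan ist, beide Teile auf die definierende Eigenschaft der Spiegelung zur\"uckzuf\"uhren, dass $t$ im Fall $P^t\ne P$ genau die Mittelsenkrechte von $\overline{PP^t}$ ist. Teil 1 (mit $P\in p$) behandle ich direkt: Geht $t$ durch $P$, so ist $P$ ein Fixpunkt der Spiegelung, also $P^t=P\in p$; steht $t$ senkrecht auf $p$, so bildet die Spiegelung an $t$ die zu $t$ senkrechte Gerade $p$ auf sich selbst ab, und da $P\in p$ liegt, folgt $P^t\in p$. F\"ur die Umkehrung setze ich $P^t\in p$ voraus: Im Fall $P^t=P$ ist $P$ Fixpunkt und liegt auf $t$; im Fall $P^t\ne P$ ist $t$ die Mittelsenkrechte von $\overline{PP^t}$, und da beide Endpunkte auf $p$ liegen, liegt die ganze Strecke auf $p$, ihre Mittelsenkrechte $t$ steht also senkrecht auf $p$.

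F\"ur Teil 2 sei $P\notin p$. Zun\"achst beobachte ich, dass aus $P^t\in p$ stets $P^t\ne P$ folgt, da sonst $P=P^t\in p$ im Widerspruch zu $P\notin p$ st\"unde; also ist $t$ hier immer die Mittelsenkrechte von $\overline{PV'}$ mit $V':=P^t\in p$. Die Bedingung $P^t\in p$ ist somit gleichwertig dazu, dass $t$ Mittelsenkrechte einer Strecke $\overline{PV'}$ mit einem Punkt $V'\in p$ ist, und es gen\"ugt zu zeigen, dass genau diese Mittelsenkrechten die Tangenten der Parabel mit Brennpunkt $P$ und Leitgeraden $p$ sind.

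Sei dazu $m$ die Mittelsenkrechte von $\overline{PV'}$ mit $V'\in p$ und $n$ das Lot auf $p$ durch $V'$. Da $\overleftrightarrow{PV'}$ wegen $V'\in p$ und $P\notin p$ nicht parallel zu $p$ ist, sind auch $m\perp\overleftrightarrow{PV'}$ und $n\perp p$ nicht parallel und schneiden sich in genau einem Punkt $V$. Aus $V\in m$ folgt $VP=VV'$, aus $V\in n$ folgt $VV'=d(V,p)$ (der Lotabstand von $V$ zu $p$), zusammen also $VP=d(V,p)$, das hei\ss{}t $V$ liegt auf der Parabel. Dass $m$ dort Tangente ist, zeige ich \"uber die St\"utzeigenschaft: F\"ur jeden Punkt $W\in m$ gilt $WP=WV'\ge d(W,p)$, und f\"ur $W\ne V$ ist diese Ungleichung strikt, da Gleichheit nur f\"ur $W$ auf dem Lot $n$ eintreten kann, $m$ und $n$ sich aber allein in $V$ treffen. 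Somit verl\"auft ganz $m$ im abgeschlossenen \"Au\ss{}eren der Parabel und ber\"uhrt sie einzig in $V$, ist also die Tangente in $V$.

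Umgekehrt sei $t$ Tangente im Punkt $V_0$ mit Lotfu\ss{}punkt $V_0'$ auf $p$. Wegen der Parabeleigenschaft $V_0P=V_0V_0'$ liegt $V_0$ auf der Mittelsenkrechten $m_0$ von $\overline{PV_0'}$, und wegen $\overline{V_0V_0'}\perp p$ auch auf dem Lot $n_0$ durch $V_0'$; nach dem vorigen Absatz ist $m_0$ die Tangente im Punkt $m_0\cap n_0=V_0$. Da die Parabel in $V_0$ nur eine Tangente besitzt, folgt $t=m_0$ und damit $P^t=V_0'\in p$. Den eigentlichen Aufwand erwarte ich bei der St\"utzeigenschaft, also beim Nachweis, dass $m$ die Parabel nur in $V$ ber\"uhrt und sonst ganz im \"Au\ss{}eren liegt; die Eindeutigkeit der Tangente an die konvexe Parabel darf man dabei als anschaulich klar ansehen.
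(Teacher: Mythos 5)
Ihr Beweis ist korrekt und folgt im Kern derselben Route wie die Arbeit: Die tragende Konstruktion ist identisch -- der Parabelpunkt entsteht als Schnitt der Mittelsenkrechten $m$ von $\overline{PV'}$ mit dem Lot $n$ auf $p$ durch $V'$ (in der Arbeit hei\ss{}en diese Objekte $t$, $Q$ und $T$), und die R\"uckrichtung wird in beiden F\"allen \"uber die als bekannt vorausgesetzte Eindeutigkeit der Tangente in einem Parabelpunkt erledigt; auch Teil 1 behandeln beide direkt aus der Definition der Spiegelung, wobei Ihre Fallunterscheidung $P^t=P$ bzw.\ $P^t\ne P$ nur ausf\"uhrt, was die Arbeit mit "`folgt direkt aus der Definition"' abk\"urzt. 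Der einzige substantielle Unterschied liegt im Nachweis der Tangenteneigenschaft: Die Arbeit nimmt einen zweiten Parabelpunkt $S$ auf der Geraden an und f\"uhrt dies zum Widerspruch (das Dreieck $\Delta(FSQ)$ w\"are gleichschenklig mit rechtem Basiswinkel), w\"ahrend Sie die St\"utzungleichung $WP=WV'\ge d(W,p)$ mit Gleichheit genau in $V$ zeigen. Das ist dieselbe geometrische Tatsache, aber Ihre Fassung leistet mehr: sie zeigt, dass $m$ ganz im abgeschlossenen \"Au\ss{}eren der Parabel verl\"auft, also St\"utzgerade ist. Das ist nicht nur Kosmetik, denn "`genau ein gemeinsamer Punkt"' charakterisiert Tangenten an eine Parabel nicht -- achsenparallele Geraden treffen die Parabel ebenfalls in genau einem Punkt, schneiden sie aber; die Arbeit muss diesen Fall deshalb separat ausschlie\ss{}en (mit der Bemerkung, dass $p\perp t$ nicht m\"oglich ist), w\"ahrend Ihr St\"utzargument ihn automatisch miterledigt. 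Die von Ihnen offen gelassene Eindeutigkeit der Tangente im Ber\"uhrpunkt setzt die Arbeit an derselben Stelle ebenso unbewiesen voraus; das ist also kein R\"uckstand gegen\"uber dem Original.
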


\begin{proof}
Der erste Teil des Satzes folgt direkt aus der Definition einer Spiegelung.\\
Angenommen, $P$ liegt nicht auf $p$ und $P^t=Q$ mit $Q\in p$. Nach Definition einer Spiegelung an $t$ ist $t$ Mittelsenkrechte von $\overline{PQ}$. Nachdem $P$ nicht auf $p$ liegt, ist $p \perp t$ nicht möglich. Das Lot zu $p$ durch $Q$ schneidet $t$ in einem Punkt $T$ mit $TP=TQ$. $T$ hat also den gleichen Abstand zu $P$ und zu $p$, liegt also auf der Parabel mit Brennpunkt $P$ und Leitgeraden $p$. $t$ enthält aber auch nur einen Punkt dieser Parabel, denn:\\
Angenommen, $S$ sei ein weiterer Punkt auf dieser Parabel, und sei $F$ der Lotfußpunkt des Lotes $p$ durch $S$. Dann ist $F\ne Q$, aber, da $S$ auf $t$, dem Ort aller Punkte, die von $P$ und $Q$ gleich weit weg sind, liegt, ist $SQ=SP$. Weiterhin ist aber auch $SP=SF$, da $S$ auf der Parabel liegt, als Ort aller Punkte mit gleichem Abstand zu $P$ und $p$. Also muss auch $SQ=SF$ sein, und das führt dazu, dass das Dreieck $\Delta(FSQ)$ gleichschenklig sein muss, mit einem rechten Basiswinkel. Das geht aber nicht. Widerspruch. $t$ hat also genau einen Punkt mit der Parabel gemeinsam und ist somit Tangente der Parabel.\\
Sei umgekehrt $t$ Tangente der Parabel mit Brennpunkt $P$ und Leitgeraden $p$, und schneide $t$ die Gerade bei $T$. Sei weiter $Q$ der Fußpunkt des Lotes zu $p$ durch $T$, und sei $n$ die Mittelsenkrechte zu $\overline{PQ}$. Also ist $Q=P^n$. $T$ liegt auf der Parabel, also ist $TP=TQ$ und $T$ liegt somit auf $n$. Wie oben ist somit $n$ Tangente an die Parabel, diese kann aber nur eine Tangente in einem Punkt haben. Man muss also schließen, dass $n=t$ war, und somit $Q=P^t$ ist.
\end{proof}
Man betrachte nun nochmals das Axiom 6: Gegeben seien jeweils verschiedene Punkte $P$ und $Q$ und Geraden $p$ und $q$, so dass nicht gleichzeitig $P\in p, Q \in q$ und $p \parallel q$ gilt. Dann kann man (entlang einer Geraden $t$) so falten, dass $P$ auf $p$ und $Q$ auf $q$ zu liegen kommt. Es gibt nun verschiedenen Möglichkeiten der Lage dieser 4 Objekte:\\
Angenommen, dass $P$ auf $p$ liegt, $Q$ auf $q$, aber $p\not \parallel q$. Die Lösung für Geraden $t$ sind das Lot zu $q$ durch $P$, das Lot zu $p$ durch $Q$, und die Verbindungsgerade von $P$ und $Q$. Diese sind im schlimmsten Falle alle verschieden und jede davon ist algebraisch bestimmt durch eine höchstens quadratische Gleichung.\\
Nehmen wir oBdA $P\notin Q$ und $p \not\parallel q$ an. Nachdem alle Parabeln ähnlich sind, kann man weiter annehmen, dass $P=(0,0)$ und $p:y=2$, $q$ ist dann allgemein definiert durch eine Gleichung $q: x+ry+s=0$, da die Geraden nicht parallel sein dürfen. Sei $Q=(u,v)$. Da $t$ nicht senkrecht zu $p$ sein kann, wenn $P^t$ auf $p$ ein soll, muss $t$ eine Gleichung der Form $t: y=mx+b$ erfüllen. Die Forderung $P^t\in p$ beinhaltet dann, dass $y'=y-2(-1)(mx-y+b)/(m^2+(-1)^2)$ mit $y'=2$ und $x=y=0$ sein muss. Die Gleichung reduziert sich dann zu $b=m^2+$. Da $Q^t=(u',v')\in q$ sein soll, muss $u'+rv'+s=0$ sein. Nach Lemma 1 ergibt sich dann die Gleichung:
\[\left[u-2m\frac{mu-v+(m^2+1)}{m^2+1}\right] + r \left[v+2\frac{mu-v+(m^2+1)}{m^2+1}\right]+s=0\]
Es handelt sich um eine Gleichung von Grad $3$ in $m$ mit Koeffizienten, die rationale Ausdrücke aus $r,s,u,v$ sind. Es gibt sicher höchstens 3 Lösungen für $m$, und eine Lösung für $m$ bestimmt eindeutig eine Lösung für $b$ und somit eindeutig die Gerade $t$, die dann eine der Tangenten der Parabel mit Brennpunkt $P$ und Leitgerade $p$ ist.\\
Wenn zusätzlich noch $Q\notin q$ gilt, dann ist $t$ auch Tangente an die Parabel mit Brennpunkt $Q$ und Leitgerade $q$! Man hat dann also eine {\em gemeinsame Tangente zweier Parabeln} gefunden!\\
Sei nun noch kurz der Fall betrachtet, dass $P \notin p$ und $p\parallel q$ ist. Die Geraden $t$ bestimmen sich dann aus höchstens quadratischen Gleichungen mit höchstens zwei Lösungen.\\
Fasst man zusammen, ergibt sich: In den erlauben Fällen gibt es höchstens drei verschiedene Geraden $t$, wie im Axiom gefordert. Jede dieser Geraden ist durch ein Polynom vom Grad höchstens 3 bestimmt, mit Koeffizienten in dem kleinsten Körper, der die Koordinaten von $P$ und $Q$ sowie die Schnittpunkte von $p$ und $q$ mit den Koordinatenachsen enthält. Da das markierte Lineal, wie im vorherigen Unterabschnitt gezeigt, solche Probleme lösen kann (Hauptsatz über ML-Zahlen), folgt nun leicht:

\begin{thm}{Hauptsatz über Origami-Zahlen, erste Version}\\
Jede Origami-Zahl ist eine Ml-Zahl. Also ist $\Or\subset\Lin$
\end{thm}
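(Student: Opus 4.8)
Der Plan ist, die Inklusion $\Or\subset\Lin$ durch Induktion \"uber die L\"ange der eine Origami-Konstruktion definierenden Abfolge von Geraden und Punkten zu zeigen. Die Induktionsbehauptung, die ich f\"uhre, lautet: Jede in der Abfolge auftretende Origami-Gerade ist eine ML-Gerade, d.h.\ ihre definierende Gleichung hat Koeffizienten in $\Lin$, und jeder auftretende Origami-Punkt ist ein ML-Punkt, d.h.\ beide Koordinaten liegen in $\Lin$. Im Induktionsanfang ist nichts zu zeigen: Die Startpunkte $(0,0),(1,0),(0,1)$ haben Koordinaten in $\Q\subset\Lin$, und die Startgeraden $x=0$, $y=0$, $x+y=1$ haben rationale Koeffizienten.

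F\"ur den Induktionsschritt w\"urde ich annehmen, dass alle bisher aufgetretenen Geraden und Punkte die Behauptung bereits erf\"ullen, und die Entstehung eines neuen Objekts betrachten. Ein neuer Origami-Punkt entsteht stets als Schnitt zweier bereits vorhandener Origami-Geraden; nach Induktionsvoraussetzung haben deren Gleichungen Koeffizienten in $\Lin$, und das L\"osen des zugeh\"origen linearen Gleichungssystems liefert Koordinaten in $\Lin$, da $\Lin$ ein K\"orper ist. (Insbesondere sind damit auch die Schnittpunkte fr\"uherer Geraden mit den Koordinatenachsen ML-Punkte, da die Achsen zur Startmenge geh\"oren.) Eine neue Origami-Gerade $t$ entsteht hingegen durch eine der Faltungen, deren definierende Bedingung in der vorangegangenen Analyse als polynomiale Gleichung vom Grad h\"ochstens $3$ in dem die Gerade bestimmenden Parameter (etwa der Steigung $m$) identifiziert wurde, mit Koeffizienten im kleinsten K\"orper, der die Koordinaten der beteiligten Punkte und die Schnittpunkte der beteiligten Geraden mit den Achsen enth\"alt. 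Nach Induktionsvoraussetzung liegen all diese Gr\"o\ss{}en in $\Lin$, also sind die Koeffizienten der Gleichung Elemente von $\Lin$.

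Der entscheidende Schritt ist dann die Anwendung des Hauptsatzes \"uber ML-Zahlen: Da der $t$ bestimmende Parameter reelle Nullstelle eines Polynoms vom Grad h\"ochstens $3$, insbesondere h\"ochstens $4$, mit Koeffizienten in $\Lin$ ist, liegt er selbst in $\Lin$. Damit hat $t$ Koeffizienten in $\Lin$ und ist eine ML-Gerade, was den Induktionsschritt abschlie\ss{}t. Ist schlie\ss{}lich $x$ eine Origami-Zahl, so ist $(x,0)$ nach dem Bewiesenen ein ML-Punkt, also $x\in\Lin$, womit $\Or\subset\Lin$ folgt.

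Die eigentliche H\"urde erwarte ich allein beim Origami-Axiom 6: Nur hier tritt tats\"achlich Grad $3$ auf (gemeinsame Tangente zweier Parabeln), und ohne die Abgeschlossenheit von $\Lin$ unter dritten Wurzeln und unter Dreiteilung, also ohne die Vieta-Eigenschaft, die in den Hauptsatz \"uber ML-Zahlen eingeht, w\"are der Induktionsschritt an dieser Stelle nicht durchf\"uhrbar. Die \"ubrigen Axiome f\"uhren nur auf Gleichungen vom Grad $\le 2$ und sind bereits durch $\E\subset\Lin$ abgedeckt. Wesentlich ist daher, dass die der Behauptung vorangestellte Analyse den Grad der Axiom-6-Gleichung korrekt auf $3$ beschr\"ankt; die gesamte Arbeit des Beweises steckt in dieser Gradabsch\"atzung, w\"ahrend der induktive \"Uberbau routinem\"a\ss{}ig ist.
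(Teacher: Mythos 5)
Ihr Vorschlag ist korrekt und verfolgt im Wesentlichen denselben Weg wie die Arbeit: Dort liefert die dem Satz unmittelbar vorangestellte Fallanalyse zu Axiom 6 genau die Gradschranke $3$ (\"ubrige F\"alle Grad $\le 2$) mit Koeffizienten im von den bereits konstruierten Daten erzeugten K\"orper, und der Hauptsatz \"uber ML-Zahlen schlie\ss{}t wie bei Ihnen den Schluss ab --- Sie machen lediglich die in der Arbeit implizit bleibende Induktion \"uber die Konstruktionsfolge explizit. Auch Ihre Einsch\"atzung, dass die eigentliche Last auf der Gradabsch\"atzung f\"ur Axiom 6 liegt, w\"ahrend der induktive \"Uberbau Routine ist, entspricht dem Aufbau des Beweises in der Arbeit.
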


Überraschenderweise gilt auch die Umkehrung dieses Satzes, dieses zu zeigen ist nun die zu lösende Aufgabe. Dies ergibt sich aus folgenden Überlegungen:

Nach den Überlegungen des Unterkapitels 3.1 ist klar, dass der Körper $\E$ mit Origami konstruierbar ist. Nun muss man noch beweisen, dass die Winkeldrittelung und das Ziehen von Quadratwurzeln mit Origami-Methoden lösbar ist, um, unter Anwendung der Erkenntnisse des letzten Kapitels, sagen zu können, dass die Mengen $\Or$ und $\Lin$ identisch sind.

\begin{lem}{Winkeldrittelung durch Origami}\\
Die winkeldrittelnden Geraden eines durch Origami-Geraden gegebenen Winkels sind Origami-Geraden.
\end{lem}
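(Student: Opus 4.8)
Der Plan ist, den Winkel zun\"achst in eine Normalform zu bringen und die eigentliche Dreiteilung dann durch eine einzige Anwendung von Origami-Axiom~6 zu realisieren. Da nach dem Ergebnis aus Abschnitt~3.1 der K\"orper $\E$ mit Origami konstruierbar ist, kann ich Lote, Parallelen und Einheitsstrecken falten; insbesondere w\"ahle ich ohne Einschr\"ankung Koordinaten so, dass der Scheitel des gegebenen Winkels im Ursprung liegt und ein Schenkel auf der $x$-Achse, der andere Schenkel ist dann eine Origami-Gerade $L$ durch den Ursprung mit Steigungswinkel $\theta$. Der entscheidende Punkt ist die Beobachtung, dass die Dreiteilung \"uber die Additionsformel
\[
\cos\theta = 4\cos^{3}\tfrac{\theta}{3}-3\cos\tfrac{\theta}{3}
\]
auf das L\"osen einer \emph{kubischen} Gleichung mit Koeffizienten im von $\cos\theta$ erzeugten K\"orper hinausl\"auft. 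Genau solche Gleichungen vom Grad $3$ lassen sich, wie in der Analyse unmittelbar vor diesem Lemma gezeigt, mit einer Faltung nach Axiom~6 l\"osen, w\"ahrend die rein euklidischen (quadratischen) Grundkonstruktionen hierf\"ur nicht gen\"ugen.

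Konkret w\"urde ich die Konstruktion von H.~Abe verwenden: Zuerst falte ich zwei zur $x$-Achse parallele Faltlinien $p_{1}$ und $p_{2}$, die zusammen mit der $x$-Achse \"aquidistant liegen (eine beliebige Parallele nach Axiom~4, danach die Mittelparallele durch Auffalten der Grundkante auf $p_{1}$). Diese sind als wiederholte Origami-Grundkonstruktionen selbst Origami-Geraden. Anschlie\ss{}end wende ich Axiom~6 an, um in einer einzigen Faltung gleichzeitig den Scheitel und einen weiteren, durch Schnitt von Origami-Geraden bestimmten Punkt auf zwei der bereits konstruierten Geraden (den Schenkel $L$ bzw. eine der Parallelen) zu legen. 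Die dabei entstehende Faltlinie $t$ ist gerade eine gemeinsame Tangente der beiden zugeh\"origen Parabeln (Satz \"uber Parabel und Spiegelung), und ihre Lage wird durch die oben genannte kubische Gleichung festgelegt.

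Den Nachweis, dass diese Faltung tats\"achlich drittelt, w\"urde ich rein geometrisch \"uber Spiegelungs- und Kongruenzargumente f\"uhren: Da die Spiegelung an $t$ eine Isometrie ist und die verwendeten Ausgangspunkte \"aquidistant liegen, entstehen beim Falten drei kongruente rechtwinklige Dreiecke, deren Winkel am Scheitel jeweils $\theta/3$ messen; deren Schenkel liefern die beiden winkeldrittelnden Strahlen. Zum Schluss w\"are noch zu verifizieren, dass die diese Strahlen bestimmenden Punkte Schnittpunkte von Origami-Geraden, also Origami-Punkte sind, womit die Drittelungsgeraden nach Definition Origami-Geraden sind.

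Die eigentliche H\"urde sehe ich an zwei Stellen. Erstens muss die Kongruenz der drei Dreiecke -- \"aquivalent das Zusammenfallen der durch $t$ bestimmten Gleichung mit der Dreiteilungsgleichung $4c^{3}-3c-\cos\theta=0$ (mit $c=\cos\tfrac{\theta}{3}$) -- sauber nachgerechnet werden; hier liegt der rechnerische Kern des Beweises. Zweitens sind die Anwendbarkeitsvoraussetzungen von Axiom~6 zu pr\"ufen, also dass die beiden Punkte verschieden sind und nicht gleichzeitig auf zwei parallelen Zielgeraden liegen, und dass die beiden Parabeln \"uberhaupt eine reelle gemeinsame Tangente besitzen, damit die Faltung existiert. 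Als unabh\"angige Gegenprobe bietet sich der bereits etablierte Weg an: Axiom~6 realisiert die Einschiebe-Konstruktion des markierten Lineals, und f\"ur diese wurde die Dreiteilung in Satz~5 schon bewiesen, sodass die Drittelungsgeraden auch auf diesem Wege als Origami-Geraden erkannt werden.
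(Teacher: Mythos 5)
Ihr Ansatz ist im Kern tragf\"ahig und entspricht inhaltlich genau der Abe-Konstruktion, die die Arbeit sp\"ater in Abschnitt~4.3 als konkrete Winkeldrittelung im Detail beweist: zwei \"aquidistante Parallelen zum einen Schenkel falten, dann mit einer einzigen Anwendung von Axiom~6 den Scheitel auf die Mittelparallele und einen Hilfspunkt auf den freien Schenkel legen, und die Drittelung \"uber drei kongruente Dreiecke nachweisen. Der Beweis des Lemmas in der Arbeit selbst benutzt eine verwandte, aber andere Faltung (nach Martin): Dort ist $M$ der Mittelpunkt von $\overline{PQ}$, $p$ das Lot auf $\overleftrightarrow{QR}$ durch $M$ und $q$ das Lot auf $p$ durch $M$; Axiom~6 legt $P$ auf $p$ und $Q$ auf $q$, und die Drittelung folgt aus $\Delta(PMT)\cong\Delta(QMU)$ (WSW) und $\Delta(TMS)\cong\Delta(UMS)$ (SWS). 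Beide Wege haben dieselbe Struktur (eine Axiom-6-Faltung plus Kongruenzargumente); Ihre Variante ist also zul\"assig und praktisch sogar die leichter faltbare. Die kubische Gleichung $4c^{3}-3c-\cos\theta=0$ brauchen Sie dabei \"ubrigens gar nicht: Beide Beweisvarianten der Arbeit kommen rein synthetisch aus, die Kubik dient nur der Erkl\"arung, warum Zirkel und Lineal nicht gen\"ugen.

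Drei konkrete L\"ucken bleiben jedoch. Erstens k\"undigen Sie den "`rechnerischen Kern"', also die Kongruenz der drei Dreiecke, nur an, statt ihn auszuf\"uhren -- genau dieser Schritt \emph{ist} aber der Beweis (in der Arbeit: $G'$ ist Mittelpunkt des Bildsegments, die Gerade durch den Scheitel und den Faltlinienschnittpunkt steht senkrecht darauf, dann SWS bzw. SsW); da die Arbeit diesen Schritt f\"ur beide Varianten durchf\"uhrt, ist die L\"ucke schlie\ss bar, aber in Ihrer Fassung noch offen. Zweitens ist Ihre "`unabh\"angige Gegenprobe"' \"uber Satz~5 an dieser Stelle der Arbeit zirkul\"ar: Bewiesen ist bis hierher nur $\Or\subset\Lin$ (Satz~10); dass umgekehrt jede Einschiebe-Konstruktion mit Origami nachgebildet werden kann, ist gerade das, was dieses Lemma zusammen mit Lemma~3 erst liefern soll. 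Drittens fehlt der stumpfe Fall: Die von Ihnen beschriebene Faltung funktioniert f\"ur spitze Winkel, f\"ur $90^\circ<\theta<180^\circ$ muss man wie in der Arbeit reduzieren (die Drittelung des rechten Winkels ist bereits mit Zirkel und Lineal m\"oglich, bzw. man halbiert zuvor nach Axiom~4); Ihre Koordinatennormalisierung unterstellt stillschweigend, dass die Konstruktion f\"ur alle $\theta$ greift.
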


\begin{proof}

\begin{figure}[h]
\includegraphics[width=8cm]{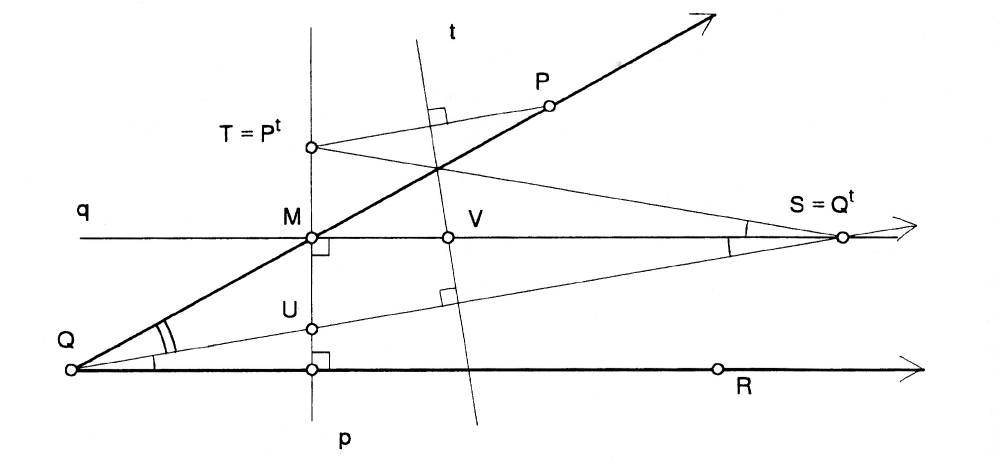}
\caption[\cite{martin}, Seite 155]{Winkeldritteln mit Origami}
\label{zul2}
\end{figure}

Seien $P,Q,R$ Origami-Punkte, die den spitzen Winkel $\angle(PQR)$ bilden, wie in der Skizze. Sei $M$ der Mittelpunkt von $\overline{PQ}$, $p$ das Lot auf $\overleftrightarrow{QR}$ durch $M$, und sei $q$ das Lot auf $p$ durch $M$. Sei $t$ diejenige Gerade $t$ mit $T=P^t\in p$ und $S=Q^t \in q$, die $\overline{PM}$ schneidet. Schneide $\overrightarrow{QS}$ die Gerade $p$ im Punkt $U$. Dann ist das Dreieck $\Delta(PMT)$ kongruent dem Dreieck $\Delta(QMU)$ nach dem WSW-Satz. Also ist $TM=MU$ und $\Delta(TMS)$ damit kongruent zu $\Delta(UMS)$ nach SWS. Da $\angle(TSM)$ kongruent zu $\angle(MSQ)$ ist, welcher selbst wieder kongruent zu $\angle(SQR)$ ist, folgt, dass der Winkel $\angle(TSQ)$ dem Winkel $\angle(PQS)$ kongruent ist (gleichschenkliges Dreieck), drittelt $\overrightarrow{QS}$ den Winkel $\angle(PQR)$.Bezeichnet man den Schnittpunkt von $t$ und $q$ mit $V$, so ist $\overrightarrow{QV}$ die zweite winkeldrittelnde Gerade.\\ Das Ergebnis folgt auch für stumpfe Winkel, da die Drittelung des rechten Winkels (Konstruktion eines $30^\circ$-Winkels) bereits mit ZuL möglich ist.
\end{proof}

Im späteren Verlauf der Arbeit wird noch eine einfach durchzuführende Konstruktion für die Winkeldrittelung explizit besprochen werden.\\

\begin{lem}{Dritte Wurzeln mit Origami}\\
Seien $O,S$ Origami-Punkte mit $OS=k$. Dann gibt es einen Origami-Punkt $R\in\overline{OS}$ so, dass $OR=\sqrt[3]{k}$.
\end{lem}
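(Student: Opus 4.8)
Der Plan ist, die Behauptung auf eine einzige Anwendung von Origami-Axiom~6 zur\"uckzuf\"uhren und alle \"ubrigen Schritte euklidisch, d.h. innerhalb von $\E$, auszuf\"uhren. Nach den Ergebnissen von Abschnitt~3.1 (jede euklidische Grundkonstruktion ist mit Origami reproduzierbar, insbesondere $\E\subset\Or$) ist $\Or$ ein euklidischer K\"orper. Mit $O$ und $S$ liegen daher auch deren Koordinaten in $\Or$, und somit ist die L\"ange $k=OS$ selbst eine Origami-Zahl. Es gen\"ugt folglich zu zeigen: aus $k\in\Or$, $k>0$, folgt $\sqrt[3]{k}\in\Or$. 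Den Punkt $R$ gewinnt man dann, indem man die L\"ange $\sqrt[3]{k}$ von $O$ aus auf dem Strahl $\overrightarrow{OS}$ abtr\"agt -- eine euklidische Konstruktion.

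Konkret w\"urde ich in einem geeigneten Koordinatensystem die Origami-Punkte $P=(-1,0)$ und $Q=(0,-k)$ sowie die Origami-Geraden $p:\ x=1$ und $q:\ y=k$ konstruieren; wegen $k\in\Or$ ist dies mit den bereits verf\"ugbaren Mitteln m\"oglich. Wesentlich ist, dass hier $p\perp q$ gilt: Damit ist die Zusatzbedingung \glqq$q=\overleftrightarrow{PQ}$ oder $p\perp q$\grqq\ aus der Definition der Origami-Geraden erf\"ullt, und wegen $P\notin p$ liegt auch ein im Sinne von Axiom~6 zul\"assiger Fall vor. Nach Axiom~6 kann man also l\"angs einer Geraden $t$ so falten, dass $P^t\in p$ und $Q^t\in q$; dieses $t$ ist damit nach Definition eine Origami-Gerade.

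Der eigentliche Rechenschritt bestimmt die Faltgerade $t:\ y=\mu x+c$ aus den beiden Spiegelungsbedingungen. Mit Lemma~1, oder direkt \"uber die Eigenschaft von $t$, Mittelsenkrechte von $\overline{PP^t}$ bzw.\ $\overline{QQ^t}$ zu sein, erhielte ich die Relationen
\[ c=-\tfrac{1}{\mu}\qquad\text{und}\qquad c=k\mu^2, \]
also $k\mu^3=-1$, mithin $\mu=-k^{-1/3}$ und den $y$-Achsenabschnitt $c=-1/\mu=\sqrt[3]{k}$. Der Schnitt der Origami-Geraden $t$ mit der $y$-Achse (einer Geraden der Startmenge) ist folglich der Origami-Punkt $(0,\sqrt[3]{k})$, was $\sqrt[3]{k}\in\Or$ beweist.

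Die Hauptschwierigkeit liegt weniger in dieser Rechnung als im sauberen Nachweis, dass Axiom~6 f\"ur diese Konfiguration tats\"achlich die gesuchte Faltung liefert: Die Gleichung $k\mu^3+1=0$ besitzt f\"ur jedes $k>0$ genau eine reelle L\"osung, weshalb die Faltgerade existiert und unter den nicht-vertikalen Faltgeraden eindeutig ist (eine vertikale Faltung w\"urde $Q$ nicht nach $q$ spiegeln). Ferner ist zu best\"atigen, dass $P,Q,p,q$ s\"amtliche Zul\"assigkeitsbedingungen erf\"ullen, was durch $P\neq Q$, $p\not\parallel q$ und $p\perp q$ bereits angelegt ist. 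Ein rein technischer Punkt bleibt die Formulierung \glqq$R\in\overline{OS}$\grqq: f\"ur $k\ge 1$ liegt $R$ auf der Strecke, f\"ur $0<k<1$ zun\"achst nur auf $\overrightarrow{OS}$; an der Existenz des Origami-Punktes $R$ mit $OR=\sqrt[3]{k}$ \"andert das nichts. (Alternativ lie\ss{}e sich die Einschiebekonstruktion aus Satz~6 durch Origami nachbilden; der hier skizzierte direkte Faltschnitt ist jedoch k\"urzer und vermeidet die Fallunterscheidung $0<k<8$.)
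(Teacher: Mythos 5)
Ihr Beweis ist korrekt und deckt sich im Kern mit dem der Arbeit: dieselbe Konfiguration $P=(-1,0)$, $Q=(0,-k)$, $p:x=1$, $q:y=k$, dieselben via Lemma~1 (bzw. der Mittelsenkrechten-Eigenschaft der Faltgeraden) gewonnenen Relationen $\mu c=-1$ und $c=k\mu^{2}$, also $k\mu^{3}=-1$, und derselbe Schluss \"uber den Schnitt der Faltgeraden $t$ mit der $y$-Achse im Origami-Punkt $(0,\sqrt[3]{k})$. Ihre Erg\"anzungen -- Existenz und Eindeutigkeit \"uber die einzige reelle Wurzel von $k\mu^{3}+1=0$ statt \"uber die eindeutige gemeinsame Parabeltangente (Satz~9), die explizite Pr\"ufung der Zul\"assigkeitsbedingungen von Axiom~6 sowie der Hinweis, dass $R$ f\"ur $0<k<1$ nur auf $\overrightarrow{OS}$ statt auf $\overline{OS}$ liegt (eine Ungenauigkeit, die auch die Arbeit stillschweigend \"ubergeht) -- sind saubere Verfeinerungen, \"andern aber nichts am Ansatz.
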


\begin{proof}
\begin{figure}[h]
\includegraphics[width=8cm]{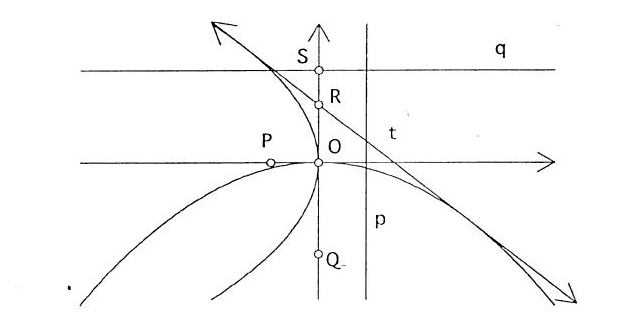}
\caption[\cite{martin}, Seite 156]{Dritte Wurzel mit Origami}
\label{zul2}
\end{figure}

OBdA sei $O=(0,0), S=(0,k)$. Seien $P=(-1,0), Q=(0,-k)$, dies sind offensichtlich Origami-Punkte. Sei $p:X=1$ und $q:Y=k$, das sind Origami-Geraden. Man betrachte nun die obige Skizze und erinnere sich an Satz 9. Da zwei Parabeln mit gemeinsamen Scheitel und Hauptachsen eine eindeutige gemeinsame Tangente haben, gibt es eine Gerade $t$ mit der Eigenschaft $P^t\in p$ und $Q^t\in q$. $t$ ist dann Origami-Gerade und schneidet die $y-Achse$ beim Origami-Punkt $R$. $t$ habe eine Gleichung $t:mx+b$, oder $t:mx-y+b=0$. Da $P^t$ auf $p$ liegt, ist seine x-Koordinate $1$.\\
 Also ist, nach Lemma 1,
\[1=-1-2m\frac{m(-1)-0+b}{m^2+1}=-mb,\]
also $mb=-1$. Weiterhin hat $Q^t$ die y-Koordinate $k$. Wieder wendet man Lemma 1 an, und erhält
\[k=-k-2(-1)\frac{m(0)-(-k)+b}{m^2+1}=\frac{b}{m^2}\]
also $km^2=b$.\\ 
Also müssen $m$ und $b$ die Bedingung $km^3=mb=-1$ erfüllen. Dann ergibt sich $m=\frac{-1}{\sqrt[3]{k}}$ und $b=\sqrt[3]{k}$. Die Gerade $t:y=\frac{-1}{\sqrt[3]{k}}\cdot x +\sqrt[3]{k}$ existiert also eindeutig, ist eine Origami-Gerade, und $R$ ist ihr Schnittpunkt mit der $y$-Achse, und dieser hat die Koordinaten $(0,\sqrt[3]{k})$.
\end{proof}

Da jede ML-Konstruktion auf eine Abfolge von quadratischen Problemen, Winkeldrittelung und das Ziehen dritter Wurzeln reduziert werden kann, ist damit gezeigt, dass jeder ML-Punkt auch ein Origami-Punkt ist:

\begin{cor}{Isomorphiesatz}\\
Ein Punkt ist ein Origami-Punkt genau dann, wenn er ein ML-Punkt ist. Die Körper $\Or$ und $\Lin$ sind isomorph.
\end{cor}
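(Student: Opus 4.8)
Der Plan ist, die Gleichheit $\Or=\Lin$ durch zwei Inklusionen zu zeigen; daraus folgt die behauptete Isomorphie sofort, da beide Mengen dann sogar identisch sind. Die eine Inklusion, $\Or\subseteq\Lin$, ist bereits durch den Satz "`Hauptsatz \"uber Origami-Zahlen, erste Version"' erledigt: dort wurde gezeigt, dass jede der h\"ochstens drei durch die Origami-Axiome erlaubten Faltgeraden durch ein Polynom vom Grad h\"ochstens $3$ \"uber dem Koordinatenk\"orper bestimmt ist, und solche Probleme kann das markierte Lineal nach dem Hauptsatz \"uber ML-Zahlen l\"osen. Zu zeigen bleibt also nur die umgekehrte Inklusion $\Lin\subseteq\Or$.

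F\"ur diese Richtung w\"urde ich die im vorangehenden Abschnitt \"uber das markierte Lineal gewonnene Charakterisierung von $\Lin$ als \emph{kleinsten} vieta'schen K\"orper \"uber $\Q$ benutzen. Nach dem Hauptsatz \"uber ML-Zahlen ist $\Lin$ genau unter den drei Operationen abgeschlossen, die einen vieta'schen K\"orper ausmachen: den euklidischen K\"orperoperationen samt Quadratwurzel, dem Ziehen dritter Wurzeln und der Winkeldrittelung. Es gen\"ugt daher nachzuweisen, dass $\Or$ selbst ein vieta'scher K\"orper ist, der $\Q$ enth\"alt; die Minimalit\"at von $\Lin$ erzwingt dann $\Lin\subseteq\Or$.

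Dass $\Or$ vieta'sch ist, setzt sich aus drei bereits bewiesenen Resultaten zusammen. Erstens ist $\Or$ ein euklidischer K\"orper: Wie in Unterkapitel 3.1 gezeigt wurde, l\"asst sich jede euklidische Grundkonstruktion --- und damit jede K\"orperoperation samt der Quadratwurzelkonstruktion \"uber den H\"ohensatz --- durch die Origami-Axiome nachbilden, also gilt $\E\subseteq\Or$, und $\Or$ ist unter $+,-,\cdot,\div$ und der Quadratwurzel abgeschlossen. Zweitens zeigt das Lemma "`Winkeldrittelung durch Origami"', dass $\Or$ unter Dreiteilung abgeschlossen ist, und drittens zeigt das Lemma "`Dritte Wurzeln mit Origami"', dass $\Or$ unter dritten Wurzeln abgeschlossen ist. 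Das sind genau die definierenden Abschlusseigenschaften eines vieta'schen K\"orpers. Zusammen mit der ersten Inklusion folgt $\Or=\Lin$, und die beiden K\"orper sind damit insbesondere isomorph.

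Der eigentlich heikle Punkt --- und zugleich das erwartete Haupthindernis --- ist nicht die Algebra, sondern die saubere Induktion, die eine abstrakte ML-\emph{Konstruktionsfolge} in eine Origami-\emph{Konstruktionsfolge} \"ubersetzt. Man muss n\"amlich nachweisen, dass die in den beiden Origami-Lemmata erzeugten Zwischenpunkte tats\"achlich zul\"assige Ausgangsdaten f\"ur den jeweils n\"achsten Konstruktionsschritt sind, die Reduktion "`jeder ML-Schritt ist ein quadratisches Problem, eine Drittelung oder eine dritte Wurzel"' also die induktive Definition des Origami-Punktes respektiert und nicht blo\ss{} die resultierenden Zahlenwerte zur Deckung bringt. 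Ist dieser Abschluss unter Komposition pr\"azise gemacht, so ist das Korollar unmittelbar.
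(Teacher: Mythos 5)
Ihr Vorschlag ist korrekt und folgt im Wesentlichen genau dem Weg der Arbeit: $\Or\subseteq\Lin$ aus dem Hauptsatz über Origami-Zahlen (erste Version), und $\Lin\subseteq\Or$, weil $\Or$ nach Unterkapitel 3.1 sowie den beiden Lemmata über Winkeldrittelung und dritte Wurzeln ein vieta'scher Körper ist und $\Lin$ als kleinster vieta'scher Körper darin enthalten sein muss. Auch die Arbeit überspringt die von Ihnen zu Recht als heikel benannte Induktion (die Übersetzung einer ML-Konstruktionsfolge in eine Origami-Konstruktionsfolge) mit einem einzigen Satz, sodass Ihr Vorschlag hier sogar etwas ehrlicher auf die verbleibende Lücke hinweist.
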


Insbesondere gelten also Korollar 6 und Satz 8 auch für die Origami-Zahlen. Die vollständige Charakterisierung aller aus dem Axiomensystem von Kapitel 2 konstruierbaren Zahlen lautet also, wie in Satz 8 fürs markierte Lineal:

\begin{thm}{Hauptsatz über Origami-Zahlen}\\
Eine Zahl $z$ ist eine Origami-Zahl genau dann, wenn sie in einer endlichen Körperkette 
\[\Q = F_0 \subset F_1 \subset \dots \subset F_n\]
über $\Q$ liegt, so dass $z\in F_n$ und $[F_i:F_{i-1}]= 2$ oder $3$ für alle $1\le i\le n$.\\
Das heißt, $F_n$ hat dann Körpergrad $[F_n:Q]=2^a3^b$ für natürliche Zahlen $a,b$.\footnote{\cite{cox}, Seite 277f}
\end{thm}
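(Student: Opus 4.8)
Der Beweis ergibt sich nun unmittelbar aus dem Zusammenspiel zweier bereits bewiesener Resultate; die eigentliche inhaltliche Arbeit ist in den vorangehenden Abschnitten schon geleistet. Zun\"achst st\"utze ich mich auf den Isomorphiesatz, der besagt, dass ein Punkt genau dann ein Origami-Punkt ist, wenn er ein ML-Punkt ist. Da eine reelle Zahl $z$ nach Definition genau dann eine Origami-Zahl beziehungsweise eine ML-Zahl ist, wenn $(z,0)$ der zugeh\"orige Origami- beziehungsweise ML-Punkt ist, stimmen die beiden Zahlenmengen als Teilmengen von $\R$ \"uberein; es gilt also $\Or=\Lin$.

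Der Plan ist daher, die Behauptung vollst\"andig auf Satz 8 (Ml-Zahlen, K\"orpertheorie) zur\"uckzuf\"uhren, der f\"ur die ML-Zahlen genau dieselbe K\"orperketten-Charakterisierung bereitstellt. F\"ur die eine Richtung w\"urde ich eine beliebige Origami-Zahl $z$ w\"ahlen; wegen $\Or=\Lin$ ist $z$ dann eine ML-Zahl, und Satz 8 liefert unmittelbar eine endliche K\"orperkette $\Q=F_0\subset\dots\subset F_n$ mit $z\in F_n$ und $[F_i:F_{i-1}]\in\{2,3\}$ f\"ur alle $i$. F\"ur die Umkehrung w\"urde ich eine Zahl betrachten, die in einer solchen Kette liegt; nach Satz 8 ist sie eine ML-Zahl und damit wegen $\Lin=\Or$ eine Origami-Zahl. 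Damit ist die behauptete \"Aquivalenz gezeigt.

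F\"ur die Gradaussage bliebe nur noch der Gradsatz (Multiplikativit\"at des K\"orpergrades in T\"urmen) anzuwenden, der
\[
[F_n:\Q]=\prod_{i=1}^{n}[F_i:F_{i-1}]
\]
ergibt; da jeder Faktor gleich $2$ oder $3$ ist, besitzt das Produkt die Form $2^a3^b$, wobei $a$ die Anzahl der quadratischen und $b$ die Anzahl der kubischen Erweiterungsschritte z\"ahlt. Eine nennenswerte H\"urde erwarte ich an dieser Stelle nicht: Die gesamte Substanz steckt bereits im Isomorphiesatz, dessen Beweis auf der Origami-Winkeldrittelung (Lemma 2) und der Origami-Konstruktion dritter Wurzeln (Lemma 3) beruht, sowie in Satz 8 mit seinem Rekurs auf die vieta'sche K\"orpertheorie. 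Der vorliegende Hauptsatz ist somit lediglich die Zusammenf\"uhrung dieser Bausteine.
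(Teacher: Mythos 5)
Ihr Vorschlag ist korrekt und entspricht genau dem Vorgehen der Arbeit: Auch dort wird der Hauptsatz unmittelbar aus dem Isomorphiesatz ($\Or=\Lin$) und Satz 8 (Ml-Zahlen, K\"orpertheorie) gefolgert, ohne eigenst\"andigen Beweis. Ihre explizite Erg\"anzung der Gradformel $[F_n:\Q]=\prod_{i=1}^{n}[F_i:F_{i-1}]$ macht lediglich einen Schritt ausf\"uhrlich, den die Arbeit stillschweigend \"ubernimmt.
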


\newpage

\section{Konkrete Origami-Konstruktionen}

Nachdem nun klar ist, welche Zahlen/Punkte der Ebene mit Origami erreichbar sind, wendet sich diese Arbeit nun einigen konkreten Konstruktionen zu, denn schließlich ist es das eine, zu wissen, dass eine Konstruktion theoretisch möglich ist, aber doch nochmal etwas anderes, sie konkret durchführen zu können. Es werden nun im Folgenden einige konkrete Konstruktionen betrachtet, teilweise solche, die auch mit Zirkel und Lineal möglich sind, aber auch solche, die die volle Kraft der Origami-Axiome ausnutzen und mit Zirkel und Lineal nicht lösbar sind. 

\subsection{Der Goldene Schnitt}

Das Verhältnis des Goldenen Schnittes zieht sich nicht nur durch die Mathematik-, sondern auch durch die Kunstgeschichte und sogar die Natur, denn viele Pflanzen haben fünfeckige Blüten, und die Zahl des goldenen Schnittes ist das Verhältnis, in dem sich die Diagonalen im Fünfeck schneiden. Man kann zeigen, dass das Fünfeck genau dann mit Zirkel und Lineal konstruierbar ist, wenn das für die Teilung im Goldenen Schnitt gilt. Historisch spielte das Pentagramm als Ordenssymbol der Pythagoräer eine Rolle, und hat so wohl zur Entdeckung der Inkomensurabilität beigetragen. Eine gewisse Ironie steckt dennoch darin, dass im Ordenssymbol eines Ordens, der die Annahme, dass es zu zwei Strecken stets ein gemeinsames Maß gebe, stets verteidigt hat, ein Streckenverhältnis auftaucht, für das diese Aussage eben gerade nicht gilt. \footnote{\cite{henn}, Seiten 78ff} 

\begin{defn}{Goldener Schnitt}\footnote{ebd}\\
Der Punkt $C$ teilt die Strecke $\overline{AB}$ genau dann im Verhältnis $\Phi$ des Goldenen Schnittes, wenn sich die größere Teilstrecke zur kleineren genau so verhält wie die Gesamtstrecke zur größeren Teilstrecke. Für die Strecken $a=\overline{AC}, b=\overline{CB}$ gilt dann:
\[\frac{a}{b}=\frac{a+b}{a}\]
\begin{figure}[h]
\includegraphics[width=8cm]{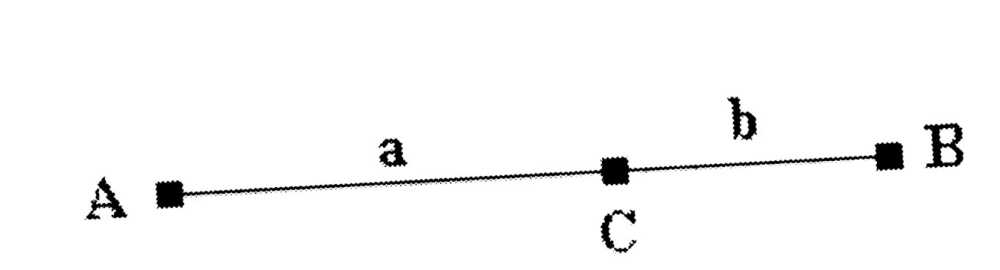}
\caption[\cite{henn}, Seite 80]{Goldener Schnitt}
\label{zul2}
\end{figure}
\end{defn}

Aus dieser Definition folgt mit $\Phi=\frac{a+b}{a}=1+\frac{b}{a}=1+\frac{1}{\Phi}$ sofort die Goldene quadratische Gleichung
\[\Phi^2-\Phi-1=0\]
mit den Nullstellen
\[\Phi=\frac{1}{2}\left(1\pm\sqrt{5}\right).\]
Da $\Phi$ eine Streckenlänge sein soll, ist 
\[\Phi=\frac{1}{2}\left(1+\sqrt{5}\right)\approx 1,618.\]
Vergleicht man hingegen $a$ mit der Gesamtstrecke $c:=a+b$, so ergibt sich 
\[\frac{a}{c}=T=\frac{1}{2}\left(\sqrt{5}-1\right)\footnote{ebd}\]

Es gelten dabei die charakteristischen Gleichungen $\Phi\cdot T=1$ und $\Phi=1+T$.\\

Da das Verhältnis des Goldenen Schnittes Lösung einer Gleichung vom Grad $2$ mit Koeffizienten aus $\Q$ ist, ist $\Phi\in\E$ und damit auch in $\Lin$ und $\Or$ enthalten, gleiches gilt natürlich für $T$.

Eine Origami-Konstruktion zur Teilung einer Quadratseite nach dem Goldenen Schnitt und damit zur Konstruktion eines Goldenen Rechteckes soll nun vorgestellt werden\footnote{\cite{flachs}, Seite 103}.
\begin{thm}{Konstruktion des Goldenen Schnittes}\\
\begin{figure}[h]
\includegraphics[width=8cm]{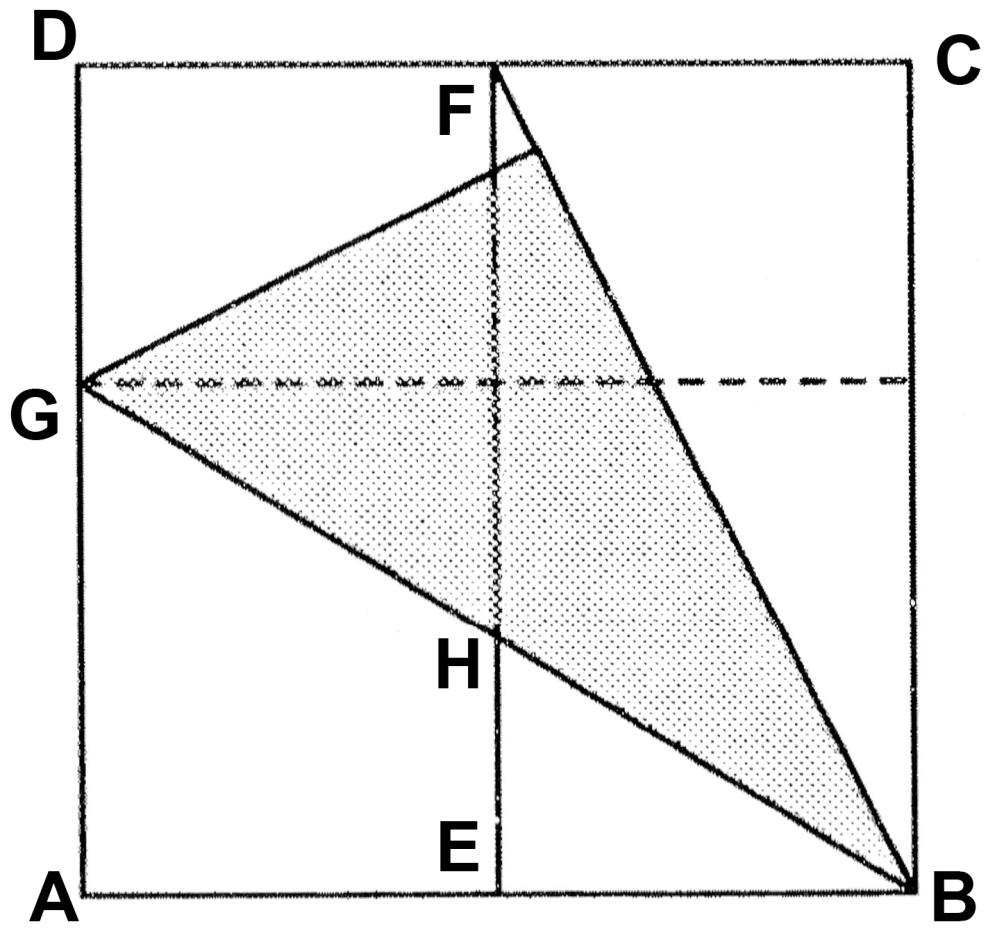}
\caption[Bearbeitung von \cite{flachs}, Seite 103]{Konstruktion Goldener Schnitt}
\label{zul2}
\end{figure}
Man faltet hierzu in ein quadratisches Blatt mit Ecken $A,B,C,D$ eine kantenparallele Mittellinie $\overline{EF}$ ein. In eines der beiden entstehenden Rechtecke faltet man eine Diagonale $\overline{BF}$ ein.Den an der Quadratseite entstehenden größeren Winkel $\beta=\angle(FBA)$ halbiert man. Die Winkelhalbierende$\overrightarrow{BG}$ teilt die Quadratkante $\overline{AD}$ im Punkt $G$ nach dem Goldenen Schnitt.
\end{thm}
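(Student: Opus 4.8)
Der Plan ist, den Satz rein mit analytischer Geometrie und der Halbwinkelformel zu beweisen. Zun\"achst lege ich das Quadrat o.B.d.A.\ mit Seitenl\"ange $2$ ins Koordinatensystem: $A=(0,0)$, $B=(2,0)$, $C=(2,2)$, $D=(0,2)$. Die kantenparallele Mittellinie ist dann die Senkrechte durch $E=(1,0)$ und $F=(1,2)$, und $\overline{BF}$ ist die Diagonale des rechten Teilrechtecks $EBCF$. Dass die drei Faltungen zul\"assig sind---Mittellinie (Axiom 3), Verbindungsgerade $\overline{BF}$ (Axiom 2), Winkelhalbierende von $\beta$ (Axiom 4)---ist nach den fr\"uheren Abschnitten bereits klar; zu zeigen bleibt allein die Korrektheit der Aussage \"uber das Teilungsverh\"altnis.

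Der erste Rechenschritt ist die Bestimmung von $\beta=\angle(FBA)$. Da $F$ der Seitenmittelpunkt ist, hat das bei $E$ rechtwinklige Dreieck $\Delta(EBF)$ die Katheten $EB=1$ und $EF=2$, also die Hypotenuse $BF=\sqrt5$, und somit
\[
\cos\beta=\frac{EB}{BF}=\frac{1}{\sqrt5},\qquad \sin\beta=\frac{EF}{BF}=\frac{2}{\sqrt5}.
\]
Damit ist $\beta=\arctan 2$ der gr\"o\ss{}ere der beiden Teilwinkel, in die $\overline{BF}$ den rechten Winkel bei $B$ zerlegt. An dieser Stelle erscheint die Wurzel $\sqrt5$, und gerade die Halbierung eines Winkels mit $\tan\beta=2$ erzeugt die Zahl des Goldenen Schnittes---das ist der eigentliche Kern des Satzes.

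Als N\"achstes wende ich die Halbwinkelformel an und erwarte
\[
\tan\frac{\beta}{2}=\frac{\sin\beta}{1+\cos\beta}=\frac{2/\sqrt5}{1+1/\sqrt5}=\frac{2}{\sqrt5+1}=\frac{\sqrt5-1}{2}=T.
\]
Die Halbierende $\overrightarrow{BG}$ schlie\ss{}t folglich mit der Kante $\overrightarrow{BA}$ den Winkel $\beta/2$ ein; im bei $A$ rechtwinkligen Dreieck $\Delta(ABG)$ mit $AB=2$ liest man $AG=AB\cdot\tan(\beta/2)=2T=\sqrt5-1$ ab, und wegen $\sqrt5-1<2$ trifft die Halbierende die Kante $\overline{AD}$ echt zwischen $A$ und $D$.

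Zum Schluss rechne ich das Teilungsverh\"altnis nach: mit $a:=AG=\sqrt5-1$ und $b:=GD=AD-AG=3-\sqrt5$ sollte sich
\[
\frac{a+b}{a}=\frac{2}{\sqrt5-1}=\frac{\sqrt5+1}{2}=\Phi
\qquad\text{und}\qquad
\frac{a}{b}=\frac{\sqrt5-1}{3-\sqrt5}=\frac{\sqrt5+1}{2}=\Phi
\]
ergeben, womit die definierende Gleichung $\frac{a}{b}=\frac{a+b}{a}$ des Goldenen Schnittes erf\"ullt ist. Ein ernsthaftes Hindernis sehe ich nicht; die einzige Feinheit ist das korrekte Einsetzen in die Halbwinkelformel, \"aquivalent die Benutzung der Identit\"at $T^2+T-1=0$ (also $1-T=T^2$), die die Br\"ucke zur Goldenen quadratischen Gleichung schl\"agt.
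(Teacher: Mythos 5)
Dein Beweis ist korrekt, geht aber einen genuin anderen Weg als die Arbeit. Dort wird synthetisch argumentiert: Im Dreieck $\Delta(BEF)$ teilt die Winkelhalbierende von $\beta$ nach dem Winkelhalbierendensatz die Seite $\overline{FE}$ im Verh\"altnis der anliegenden Seiten $BE:BF=1:\sqrt{5}$ (mit $BF=\sqrt{5}$ nach Pythagoras), woraus mit $EH+FH=2$ sofort $EH=\tfrac{2}{1+\sqrt{5}}=T$ folgt; der Strahlensatz mit Zentrum $B$ (wegen $BA=2\,BE$ und $\overline{EF}\parallel\overline{AD}$) \"ubertr\"agt dies zu $AG=2\,EH=2T$, und die abschlie\ss{}ende Verh\"altnisrechnung ist mit deiner identisch. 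Du ersetzt genau diese beiden Schritte durch Koordinaten und die Halbwinkelformel $\tan\tfrac{\beta}{2}=\tfrac{\sin\beta}{1+\cos\beta}=\tfrac{2}{\sqrt{5}+1}=T$, also $AG=AB\cdot\tan\tfrac{\beta}{2}=2T$; alle Zwischenwerte stimmen ($\cos\beta=1/\sqrt{5}$, $\sin\beta=2/\sqrt{5}$, $GD=3-\sqrt{5}$, beide Quotienten gleich $\Phi$). Was die beiden Zug\"ange jeweils leisten: Die Arbeit kommt ganz ohne Trigonometrie aus und bleibt im elementargeometrischen Stil des restlichen Kapitels (\"ahnliche Dreiecke, Strahlensatz); deine Rechnung ist k\"urzer, macht den Mechanismus transparenter ($T$ erscheint unmittelbar als $\tan(\tfrac{1}{2}\arctan 2)$, was die Br\"ucke zur Goldenen quadratischen Gleichung $T^2+T-1=0$ schl\"agt) und pr\"uft zus\"atzlich explizit, dass $G$ wegen $\sqrt{5}-1<2$ tats\"achlich im Inneren von $\overline{AD}$ liegt --- ein Punkt, den die Arbeit stillschweigend \"ubergeht. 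Auch deine einleitende Begr\"undung der Faltbarkeit der drei Linien \"uber die Axiome 2, 3 und 4 ist eine sinnvolle Erg\"anzung, die im Originalbeweis fehlt.
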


\begin{proof}
Das ursprüngliche Quadrat habe oBdA die Seitenlänge 2. Man betrachte nun das Dreieck $\Delta{BEF}$. Die innere Winkelhalbierende $\overrightarrow{BH}$ des Winkels $\beta$ teilt die Seite $\overline{FE}$ im Verhältnis der an $\beta$ anliegenden Seiten $\overline{BF}, \overline{BE}$ teilt, und nach Pythagoras $BF=\sqrt{5}$ ist, gilt: 
\[\frac{EH}{FH}=\frac{1}{\sqrt{5}}\]
Nach Voraussetzung ist $FH+EH=2$, damit gilt:\footnote{\cite{walser}, Seite 31} 
\begin{align*}
\frac{EH}{2-EH} &= \frac{1}{\sqrt{5}}\\
\sqrt{5} EH &= 2-EH\\
EH &= \frac{2}{1+\sqrt{5}}\\
EH &= \frac{1}{\Phi}\\
EH &= T
\end{align*}

Nach dem Strahlensatz hat dann $AG$ die Länge $2T$, und damit $DG=2-2T$. Daher:
\[\frac{AD}{AG}=\frac{2}{2T}=\frac{1}{T}=\Phi\]
 und
\[\frac{AG}{GD}=\frac{2T}{2-2T}=\frac{1}{\frac{1}{T}-1}=\frac{1}{\Phi-1}=\frac{1}{T}=\Phi\]

und damit teilt $G$ die Quadratkante $AD$ im Goldenen Schnitt.

\end{proof}

\subsection{Das Delische Problem } \footnote{\cite{henn}, Seiten 62ff}\\

Es kann ohne Beschränkung der Allgemeinheit angenommen werden, dass der zu verdoppelnde Würfel die Seitenlänge $1$ und somit das Volumen $V=1^3=1$ hat. Die Kantenlänge $z$ des Würfels mit dem doppelten Volumen müsste also der Gleichung
\[z^3-2=0\]
genügen. Dieses Polynom ist aber irreduzibel über $\Q$, denn, wäre es reduzibel, so gäbe es eine Zerlegung der Art:
\[z^3-2=(z-a)(z^2+bz+c)=z^3+(b-a)z^2+(c-ab)z-ac \] mit $a,b,c,d\in\Q$. Koeffizientenvergleich liefert dann $b=a, c=ab, ac=2$ und damit $2=a^3$. $a$ muss dann als vollständig gekürzter Bruch eine Darstellung $a^3=\frac{p^3}{q^3}$ haben, mit $p,q$ teilerfremd, also gilt $2q^3=p^3$. Weil $p,q$ teilerfremd angenommen waren, muss nun aber gelten: $2|p$, also $p^3=2^3P^3$ für ein passendes $P$ und damit $2q^3=2^3P^3$ bzw. $q^3=2^2P^3$. Dann muss aber $2$ auch $q$ teilen, was nicht geht, weil $p,q$ teilerfremd angenommen waren. \footnote{http://www.geometrie-und-algebra.de; Lösungshinweise} \\
Der Körper $\Q(z)$ ist also eine Erweiterung vom Grad 3 über $\Q$, und damit ist die Konstruktion zwar nicht mit Zirkel und Lineal, nach dem Hauptsatz über Origami-Zahlen aber wohl mit Origami möglich. Eine konkrete Konstruktion hierfür basiert auf dem bereits bewiesenen Satz von Haga und ist folgender Skizze zu entnehmen:

\begin{thm}{Lösung des Delischen Problems mit Origami}\\
\begin{figure}[h]
\includegraphics[width=8cm]{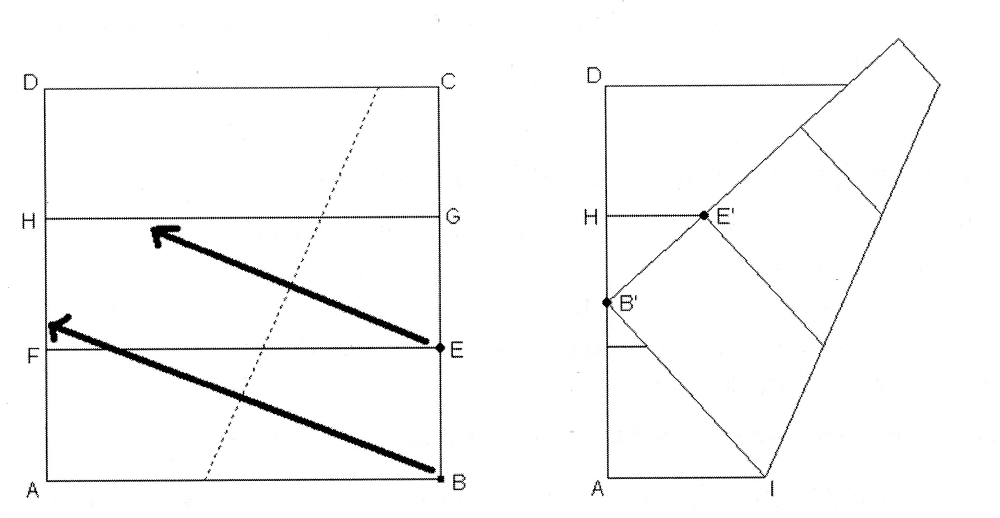}
\caption[\cite{henn}, Seite 63]{Delisches Problem}
\label{zul2}
\end{figure}
Man beginnt hierzu damit, das quadratische Papier mit Bezeichnungen wie in Abbildung 4.3, nach dem Satz von Haga zu dritteln. Danach faltet man $B$ auf $B'\in\overline{AD}$ und gleichzeitig $E$ auf $E'\in\overline{HG}$. Der rechte Teil der Skizze zeigt das Faltergebnis, der Punkt $B'$ teilt nun die Seite $\overline{AD}$ im gewünschten Verhältnis $\sqrt[3]{2}$.
\end{thm}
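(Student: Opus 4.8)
\emph{Beweisidee.} Die abstrakte Lösbarkeit ist bereits gesichert: wegen $[\Q(\sqrt[3]{2}):\Q]=3$ ist $\sqrt[3]{2}$ nach dem Hauptsatz über Origami-Zahlen eine Origami-Zahl, und das Lemma über dritte Wurzeln mit Origami liefert im Prinzip schon eine Faltung. Zu zeigen ist hier, dass die \emph{konkrete} Faltfolge --- Haga-Drittelung, danach der simultane Fall von $B$ auf $\overline{AD}$ und von $E$ auf $\overline{HG}$ --- tatsächlich das Verhältnis $\sqrt[3]{2}$ erzeugt. Zunächst würde ich ein Koordinatensystem wählen: o.\,B.\,d.\,A.\ habe das Quadrat die Seitenlänge $1$ mit $A=(0,0)$, $B=(1,0)$, $C=(1,1)$, $D=(0,1)$, so dass $\overline{AD}$ die linke Kante $x=0$ ist; die Haga-Drittelung liefert die waagerechten Faltlinien $y=\tfrac13$ und $y=\tfrac23$, wobei $E=(1,\tfrac13)$ der rechte Endpunkt der unteren und $\overline{HG}$ die obere dieser Linien ist. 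Die fragliche Faltung ist genau eine Anwendung von Origami-Axiom 6 (zwei Punkte auf zwei Geraden); ihre Existenz als gemeinsame Tangente zweier Parabeln ist durch Satz 9 gesichert.

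Den entscheidenden Schritt bildet die algebraische Auswertung. Ich parametrisiere die Faltgerade $t$ (die Spiegelachse) durch die Höhe $\tau$, in der der Bildpunkt $B'=(0,\tau)$ auf $\overline{AD}$ zu liegen kommt: $t$ ist dann die Mittelsenkrechte von $\overline{BB'}$, also $t:\ x-\tau y+\tfrac{\tau^2-1}{2}=0$. Mit der Spiegelungsformel aus Lemma 1 berechne ich das Bild $E'$ von $E$ unter der Spiegelung an $t$ und erzwinge die zweite Inzidenzbedingung $E'\in\overline{HG}$, d.\,h.\ dass die $y$-Koordinate von $E'$ gleich $\tfrac23$ ist. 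Nach Einsetzen und Kürzen erwarte ich eine kubische Gleichung in $\tau$, konkret
\[ 3\tau^3-3\tau^2+3\tau-1=0. \]

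Der eigentliche Clou ist nun, dass sich diese Kubik wegen $3\tau^3-3\tau^2+3\tau-1=2\tau^3-(1-\tau)^3$ sofort zu $(1-\tau)^3=2\tau^3$, also zu $\left(\tfrac{1-\tau}{\tau}\right)^3=2$ und somit $\tfrac{1-\tau}{\tau}=\sqrt[3]{2}$ umschreiben lässt. Damit teilt $B'$ die Kante $\overline{AD}$ im Verhältnis $\overline{B'D}:\overline{AB'}=\sqrt[3]{2}:1$, wie behauptet. Dass die Faltung wohldefiniert ist, folgt daraus, dass die obige Kubik wegen $9\tau^2-6\tau+3>0$ streng monoton und damit eindeutig reell lösbar ist --- von den bis zu drei durch Axiom 6 erlaubten Faltgeraden liefert also genau eine den gesuchten Punkt in $\overline{AD}$. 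Die Hauptschwierigkeit sehe ich weniger in der überraschend glatten Faktorisierung als in der sorgfältigen Buchführung der Spiegelung und darin, die Punkte $E$ und $\overline{HG}$ sowie die zu faltende Ecke $B$ gerade so zu identifizieren, dass die beiden Inzidenzbedingungen auf $2\tau^3=(1-\tau)^3$ und nicht auf eine andere, geometrisch irrelevante Kubik führen.
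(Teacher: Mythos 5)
Ihr Vorschlag ist korrekt -- ich habe die Rechnung nachvollzogen: die Spiegelung von $E=(1,\tfrac13)$ an der Mittelsenkrechten $t:\,x-\tau y+\tfrac{\tau^2-1}{2}=0$ liefert mit Lemma 1 tats\"achlich die Bedingung $3\tau^3-3\tau^2+3\tau-1=0$, und die Umformung $2\tau^3=(1-\tau)^3$ stimmt -- aber Sie gehen einen genuin anderen Weg als die Arbeit. Die Arbeit argumentiert rein synthetisch (Seitenl\"ange 3): der beim Falten entstehende rechte Winkel bei $B'$ erzwingt die \"Ahnlichkeit der Dreiecke $\Delta(AIB')$ und $\Delta(B'E'H)$; zusammen mit $AI+IB'=3$ (denn $IB=IB'$ nach Faltung), dem Satz von Haga ($B'E'=1$) und dem Pythagoras in $\Delta(AIB')$ ergibt sich die Kubik $x^3-6x^2+18x-18=0$ f\"ur $x=DB'$, die dann geschickt zu $x^3=2(3-x)^3=2y^3$ umgeschrieben wird. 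Ihre Koordinatenrechnung ist dazu \"aquivalent: unter der Substitution $x=3(1-\tau)$ gehen die beiden Kubiken (bis auf einen konstanten Faktor) ineinander \"uber, und die \"uberraschend glatte Faktorisierung ist in beiden Fassungen derselbe Kunstgriff. Ihr Zugang kauft daf\"ur etwas, das die Arbeit nicht anspricht: die explizite Einbettung in den Apparat von Kapitel 3 (die Faltung als Axiom-6-Konstruktion, deren Existenz \"uber Satz 9 gesichert ist; die Spiegelungsformel aus Lemma 1) sowie die \emph{Eindeutigkeit} der zul\"assigen Faltgeraden, da die Kubik wegen $9\tau^2-6\tau+3>0$ streng monoton ist; zudem folgt $\tau\in(0,1)$, also wirklich $B'\in\overline{AD}$, unmittelbar aus $\tfrac{1-\tau}{\tau}=\sqrt[3]{2}>0$ -- ein Punkt, den die Arbeit stillschweigend der Skizze entnimmt. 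Der synthetische Beweis der Arbeit bietet umgekehrt einen koordinatenfreien, elementargeometrischen Zugang in der Tradition des Haga-Beweises, der die Gleichung direkt aus der gefalteten Figur abliest. Einzige kleine Erg\"anzung, die Ihrem Text guttun w\"urde: ein Satz dazu, dass der Bildpunkt $E'$ nicht nur die richtige $y$-Koordinate hat, sondern auch innerhalb des Blattes auf der Strecke $\overline{HG}$ landet -- das ist f\"ur $\tau=\tfrac{1}{1+\sqrt[3]{2}}$ der Fall, wird aber auch in der Arbeit nur implizit vorausgesetzt.
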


\begin{proof}
Die Seitenlänge des Ausgangsquadrates kann oBdA als $3$ angenommen werden, dies vereinfacht die nun folgende Rechnung etwas. Dann ist $B'E'=1$ nach dem Satz von Haga. Setze weiterhin $x:=\overline{DB'}, y:=\overline{AB'}, a:=\overline{AI}$ und $b:=\overline{IB'}$, wie in der Zeichnung, dann gilt: $x+y=a+b=3$. Die Dreiecke $\Delta(AIB')$ und $\Delta(B'E'H)$ sind dann ähnlich, und es gilt:
\[\frac{a}{b}=\frac{x-1}{1},\]
woraus $a=\frac{3x-3}{x},~ b=\frac{3}{x}$ folgt. Weiterhin nach der ersten Gleichung, $y=3-x$. Man wendet nun den Satz des Pythagoras im Dreieck $\Delta(AIB')$ an und erhält:

\begin{align*}
y^2+a^2 &= b^2\\
(3-x)^2 + \left( \frac{3x-3}{x}\right)^2 &= \left(\frac{3}{x}\right)^2\\
x^2-6x+9+\frac{9x^2-18x+9}{x^2} &= \frac{9}{x^2}\\
x^2-6x+9+9-\frac{18}{x} + \frac{9}{x^2} &= \frac{9}{x^2}\\
x^2-6x+18 &=\frac{18}{x}\\
x^3-6x^2+18x-18 &= 0\\
3x^3 -18x^2+54x-54 &= 0\\
\end{align*}

Dieses Resultat wird nun geschickt in das behauptete Ergebnis umgeformt:
\begin{align*}
0 &= x^3-6x^2+18x-18 \\
0 &= 3x^3 -18x^2+54x-54 \\
x^3 &= -2x^3+18x^2-54x+54\\
x^3 &= 2(27-27x+9x^2-x^3)\\
x^3 &= 2(3-x)^3\\
x^3 &= 2y^3\\
\end{align*}

Es verhält sich also, wie behauptet, $x:y$ wie $\sqrt[3]{2}$.

\end{proof}

\subsection{Winkeldrittelung}\footnote{\cite{henn}, Seite 64ff}\\

Wie bereits in der Einleitung festgestellt, ist die Winkeldrittelung mit Zirkel und Lineal nicht möglich. Es ist bekannt\footnote{zum Beispiel \cite{henn}, Seite 64f}, dass sich das Problem der Winkeldrittelung des Winkels $\alpha$ auf das Lösen eines Polynoms in $\cos(\alpha)$ vom Grad 3, genauer auf $4\cos^3(\alpha/3)-3\cos(\alpha/3)-\cos(\alpha)=0$ zurückführen lässt. Dies lässt sich durch Substitution auf das Polynom $4x^3-3x-\cos(\alpha)=0$ zurückführen. Die Winkeldrittelung ist dann genau dann, mit Zirkel und Lineal möglich, wenn dieses Polynom reduzibel über $\Q$ ist, was zwar für zum Beispiel $90^\circ$ der Fall ist, nicht aber für $60^\circ$, denn das Polynom $4x^3-3x-1/2$, welches sich durch Substitution von $y=2x$ im mit zwei Mal genommenen Ausdruck in $y^3-3y-1=0$ umformen lässt, ist irreduzibel über $\Q$. Wäre es nämlich reduzibel, so gäbe es einen vollständig gekürzten Bruch $\frac{a}{b}$, der das Polynom löst, und es wäre:
\[\frac{a^3}{b^3}-3\frac{a}{b}-1=0 \Leftrightarrow a^3=b^3+3ab^2=b^3(b+3a)\]
Dies würde bedeuten, dass jeder Primteiler von $a$ auch einer von $b$ ist, Widerspruch zu "`teilerfremd"'. Es könnte höchstens noch $b=1$ sein, dann müsste aber jeder Primteiler von $a$ auch $1$ teilen, was zu $a=\pm 1$ führt, und ebenfalls ein Widerspruch ist. \footnote{\cite{henn}, Seite 65}\\
Die Tatsache, dass das Winkeldritteln mit Origami möglich ist, wurde im letzten Kapitel nicht nur bewiesen, es wurde auch bereits eine mögliche Konstruktion angegeben. Nun soll eine weitere, einfacher zu faltende Konstruktion vorgestellt werden:

\begin{thm}{Winkeldrittelung, konkrete Konstruktion}\\
Gegeben sei ein rechteckiges Blatt mit Ecken $A,B,C,D$, zum Beispiel DIN A4, in das der zu drittelnde Winkel $90^\circ>\alpha=\angle(CBP)$  eingefaltet ist:

\begin{figure}[h]
\includegraphics[width=8cm]{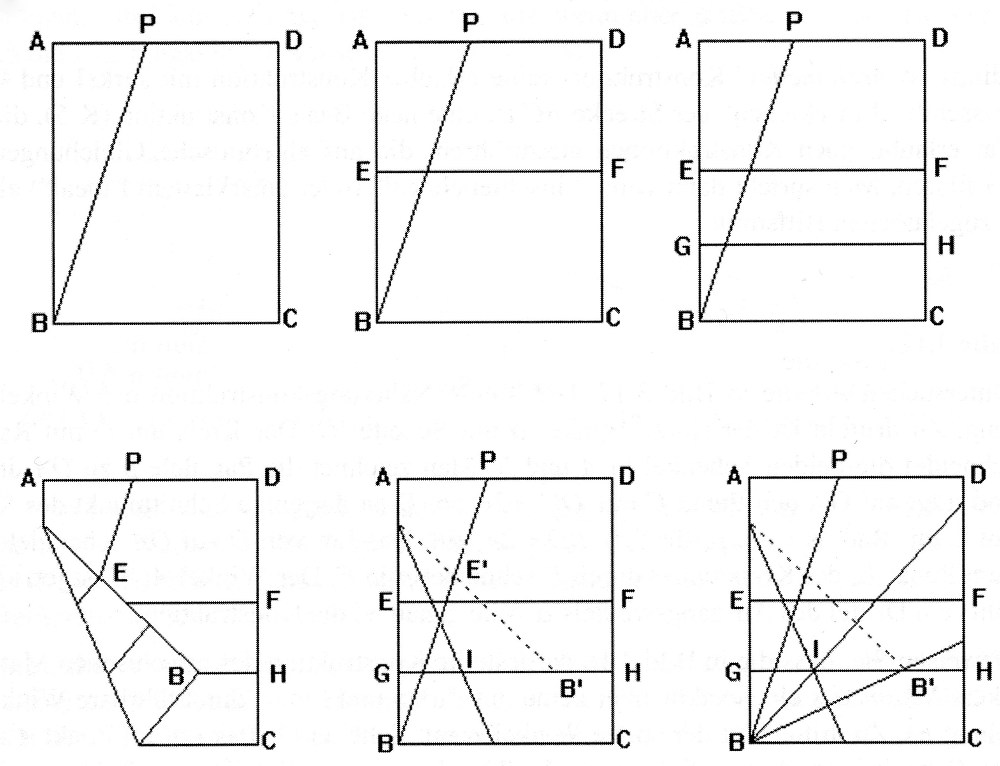}
\caption[\cite{henn}, Seite 68]{Winkeldritteln}
\label{zul2}
\end{figure}

Man faltet dann eine beliebige Parallele $\overleftrightarrow{EF}$ zu $\overleftrightarrow{BC}$ und die Mittelparallele $\overleftrightarrow{GH}$ dieser beiden Geraden. Dann faltet man so, dass $B\in\overline{GH}$ und $E\in\overline{BP}$ ist, die Bildpunkte seien $B'$ bzw. $E'$, die dabei entstehende Faltkante schneidet $\overleftrightarrow{GH}$ im Punkt $I$. Dann dritteln die Halbgeraden $\overrightarrow{BI};\overrightarrow{BB'}$ den Winkel $\alpha$.
\end{thm}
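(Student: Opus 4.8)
Mein Plan ist, die Faltung konsequent als Spiegelung $r$ an der Faltgeraden $t$ aufzufassen und ihr die Spiegelung $\sigma$ an der Mittelparallelen $m:=\overleftrightarrow{GH}$ zur Seite zu stellen. Nach Origami-Axiom 6 (bzw. Satz 9) existiert $t$ mit $B':=r(B)\in m$ und $E':=r(E)\in\overrightarrow{BP}$; insbesondere ist $\measuredangle(CBE')=\measuredangle(CBP)=\alpha$. Zuerst würde ich die entscheidende Vorbeobachtung festhalten, dass $E$ gerade der Spiegelpunkt von $B$ an $m$ ist: Da $B$, $G$ und $E$ auf der linken Kante $\overline{AB}$ liegen, $G$ als Mittelparallelen-Punkt der Mittelpunkt von $\overline{BE}$ ist und $m\parallel\overleftrightarrow{BC}$ senkrecht auf dieser Kante steht, gilt $\sigma(B)=E$.

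Der Kern des Beweises ist dann die Komposition $\rho:=\sigma\circ r$. Als Hintereinanderausführung zweier Geradenspiegelungen ist $\rho$ eine Drehung, und zwar um den Schnittpunkt der beiden Achsen $t$ und $m$, also genau um $I=t\cap m$, mit einem Drehwinkel $\phi$ vom Betrag des doppelten Winkels zwischen $t$ und $m$. Nun rechne ich die Bilder der relevanten Punkte aus: Weil $r$ eine Involution ist und $\sigma$ die Achse $m\ni B',I$ punktweise festhält, ergibt sich
\[
\rho(E')=\sigma(r(E'))=\sigma(E)=B,\qquad \rho(B)=\sigma(B')=B',\qquad \rho(B')=\sigma(B)=E.
\]
Die Drehung $\rho$ bildet die Punkte also in der Reihenfolge $E'\mapsto B\mapsto B'\mapsto E$ aufeinander ab. Da $\rho$ eine Isometrie mit Fixpunkt $I$ ist, folgt $|IE'|=|IB|=|IB'|=|IE|$, und je zwei aufeinanderfolgende dieser Punkte werden durch denselben Mittelpunktswinkel $\phi$ getrennt; sie liegen somit gleichverteilt auf einem Kreis um $I$.

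Aus dieser Gleichverteilung lese ich die drei gleichen Winkel am Scheitel $B$ ab. Die Dreiecke $IBB'$ und $IBE'$ sind gleichschenklig mit Spitze $I$ (je zwei Radien) und Spitzenwinkel $\phi$; ihre Basiswinkel betragen daher beide $\tfrac{1}{2}(180^\circ-\phi)=:\beta$, also $\measuredangle(IBB')=\beta$ und $\measuredangle(IBE')=\beta$. Für den ersten Strahl nutze ich zusätzlich $\overline{IB'}\subset m$ und $m\parallel\overleftrightarrow{BC}$: Als Wechselwinkel an der Transversalen $\overleftrightarrow{BB'}$ ist $\measuredangle(CBB')=\measuredangle(IB'B)$, und wegen der Gleichschenkligkeit von $IBB'$ ist $\measuredangle(IB'B)=\measuredangle(IBB')=\beta$, mithin $\measuredangle(CBB')=\beta$. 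Prüft man schließlich die Lage der Strahlen — sie ist $\overrightarrow{BC},\overrightarrow{BB'},\overrightarrow{BI},\overrightarrow{BE'}$ mit jeweils $\beta$ dazwischen —, so addieren sich die Teilwinkel zu $\measuredangle(CBE')=3\beta=\alpha$, und $\overrightarrow{BB'},\overrightarrow{BI}$ dritteln $\alpha$ wie behauptet.

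Der Hauptaufwand steckt weniger in den Spiegelungsrechnungen als in der Lagediskussion: Ich muss begründen, dass $t$ und $m$ nicht parallel sind (sonst wäre $\rho$ eine Translation und $I$ existierte nicht), und vor allem, dass die vier Strahlen in der genannten Reihenfolge liegen, so dass sich die drei $\beta$-Winkel aufaddieren statt zu überlappen. Für $\alpha<90^\circ$ (wie vorausgesetzt) ist dies der gutartige Fall; die Übertragung auf stumpfe Winkel würde ich, wie beim früheren Drittelungssatz, über den $30^\circ$-Trick erledigen. Als unabhängige Kontrolle böte sich an, $E'=r(E)$ mittels Lemma 1 direkt in Koordinaten zu berechnen; setzt man für die Steigung von $\overrightarrow{BB'}$ den Wert $\tan\beta$ ein, so fällt unmittelbar die Identität $\tan\alpha=\frac{3\tan\beta-\tan^3\beta}{1-3\tan^2\beta}=\tan(3\beta)$ heraus, was $\alpha=3\beta$ nochmals bestätigt.
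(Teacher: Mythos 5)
Your proposal is correct, but it takes a genuinely different route from the paper's proof. The paper works with the fold as a \emph{single} reflection and transports the figure across the crease: since $B'\in\overleftrightarrow{GH}$, the image of the middle parallel under the fold is the line $\overleftrightarrow{BI}$, which is perpendicular to $g=\overleftrightarrow{B'E'}$ and meets it in the midpoint $G'$ of $\overline{B'E'}$; the three equal angles at $B$ then come from the congruent right triangles $\Delta(BB'G')$ and $\Delta(BG'E')$ (SWS) and, with $R$ the foot of the perpendicular from $B'$ onto $\overleftrightarrow{BC}$ and $B'R=B'G'$, the triangle $\Delta(BRB')$ (SsW). You instead compose the fold $r$ with the reflection $\sigma$ in the middle parallel and exploit that $\rho=\sigma\circ r$ is a rotation about $I$ with orbit $E'\mapsto B\mapsto B'\mapsto E$ (your three evaluations of $\rho$ are correct, using $\sigma(B)=E$ and $\sigma|_m=\mathrm{id}$): the four points become concyclic about $I$ with equal central angles, and the equal base angles of the two isosceles triangles, combined with the Wechselwinkel argument at the parallels $m$ and $\overleftrightarrow{BC}$, replace all three congruences at once. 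What each approach buys: the paper's argument runs entirely on school congruence theorems and stays close to the refoldable figure, which fits its didactic aim; yours is more conceptual, makes the role of $I$ transparent (it is equidistant from $E',B,B',E$ --- invisible in the paper's version), and it explicitly isolates the two genuine configuration issues, namely $t\not\parallel m$ and the cyclic order of the rays $\overrightarrow{BC},\overrightarrow{BB'},\overrightarrow{BI},\overrightarrow{BE'}$. Be aware that on exactly these points the paper's proof is no more rigorous than yours --- it reads the perpendicularity set-up and the ordering of the three angles at $B$ off the sketch --- so the Lagediskussion you flag is a gap you share with the original rather than one you introduce; for obtuse angles the paper bisects first via Axiom 4, which serves the same purpose as your $30^\circ$-reduction, and your closing $\tan(3\beta)$ identity is a sensible independent check the paper does not offer.
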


\begin{proof}\footnote{http://www.geometrie-und-algebra.de, Lösungshinweise}
Man betrachtet zunächst noch einmal die letzte durchgeführte Faltung nach Origami-Axiom 6.
\begin{figure}[h]
\includegraphics[width=8cm]{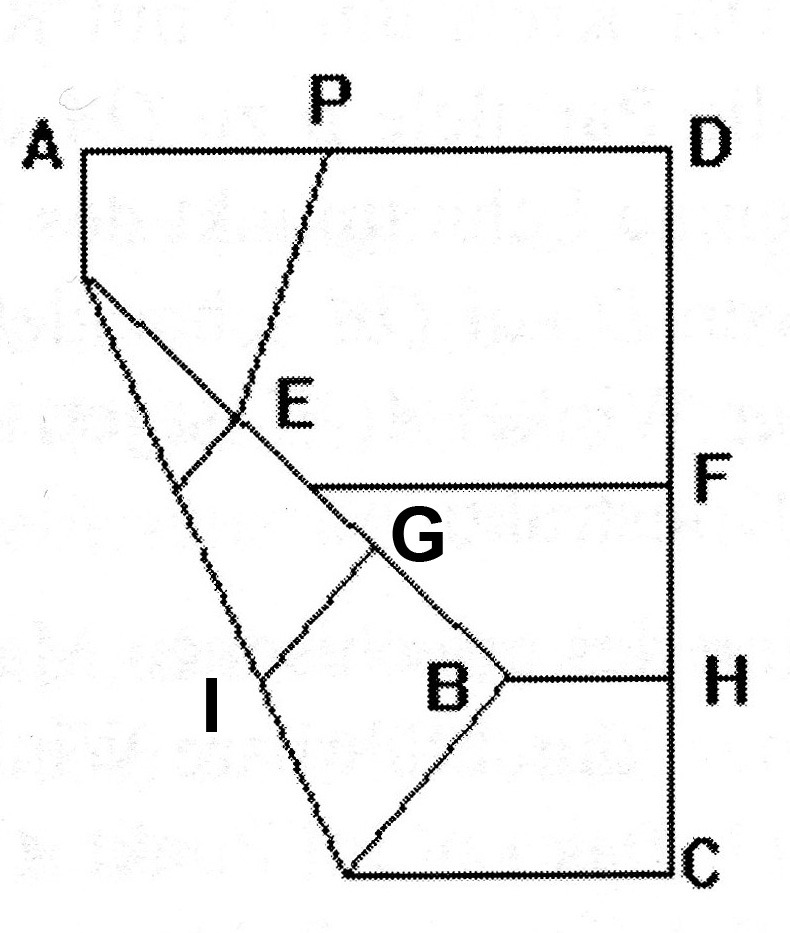}
\caption[eigene Bearbeitung von\cite{henn}, Seite 68]{Winkeldritteln 2}
\label{zul2}
\end{figure}

Nach Konstruktion ist nun der Fall, dass die Gerade $\overleftrightarrow{IG}$ senkrecht auf der Blattkante steht, die Blattkante legt hierbei nun die Verbindungsgerade $g$ von $E'$ und $B'$ fest. Da $\overleftrightarrow{GH}$ und $\overleftrightarrow{BC}$ parallel waren, muss das nun auch für $\overleftrightarrow{GI}$ und die Gerade $h$ durch $B$ und den Schnittpunkt der Faltlinie mit der Seite $\overline{BC}$ gelten. $h$ steht aber offenbar senkrecht auf $g$ damit muss aber nun auch die Gerade $\overleftrightarrow{BI}$ senkrecht auf $g$ stehen. Nach Konstruktion ist aber auch  der Schnittpunkt von $\overleftrightarrow{BI}$ mit $g$, bezeichnet $G'$, der Mittelpunkt der Strecke $\overline{E'B'}$, weil die Geraden $\overleftrightarrow{BC}, \overleftrightarrow{GH}, \overleftrightarrow{EF}$ parallel und waren und $\overleftrightarrow{GH}$ die Mittelparallele der beiden anderen Geraden ist. Damit sind die Dreiecke $\Delta{BB'G'}$ und $\Delta(BG'E')$ kongruent nach dem Satz SWS.\\
Man bezeichne den Fußpunkt des Lotes von $\overleftrightarrow{BC}$ durch $B'$ mit $R$. Das Dreieck $\Delta(BRB')$ ist dann, da nach Konstruktion $B'R=B'G'$ gilt, ebenfalls kongruent zu den beiden vorherigen Dreiecken, nun nach SsW.\\
 Daraus folgt insbesondere, dass die drei Winkel bei $B$ gleich sind, der zu drittelnde Winkel also tatsächlich in drei gleiche Teile zerlegt wurde.
\end{proof}

Da es nach Axiom 4 möglich ist, Winkel mittels Origami zu halbieren, lassen sich auch Winkel zwischen $90^\circ$ und $180^\circ$ dritteln. Diese Konstruktion ist in der Praxis wesentlich einfacher durchzuführen als die im vorhergehenden Kapitel genannte, wenn auch der Beweis etwas trickreicher argumentiert und, zumindest für den Autor dieser Arbeit, schwerer nachzuvollziehen war. Das aktive Nachfalten ist fürs Verständnis der Kongruenzen extrem hilfreich!\\

Die bisher angegebenen Konstruktionen lösten oft eine spezielle Gleichung vom Grad 3. Nun soll das Lösen von allgemeinen Gleichungen von Grad 3 durch konkrete Faltkonstruktion genauer untersucht werden.\\

\subsection{Lösung von Gleichungen vom Grad 3}\footnote{\cite{ger95}, Kapitel 6}\\

Bereits in Kapitel 3.3. wurde diskutiert, dass das Falten der gemeinsamen Tangente zweier Parabeln ein analytisches Problem vom Grad 3 ist. Das Origami-Axiom 6 erlaubt es, diese Tangente zu falten, daher ist damit zu rechnen, dass es so möglich sein könnte, Wurzeln von Gleichungen vom Grad 3 explizit falten zu können, wenn man nur die beiden Parabeln entsprechend geschickt wählt. Zunächst soll die Konstruktion der dritten Wurzel aus einem Bruch von Origami-Zahlen dargestellt werden, um diese Methode dann auf allgemeine Gleichungen vom Grad 3 anzuwenden. Die Lösungsmethode für die dritte Wurzel, die hierbei verwendet wird, erinnert naturgemäß stark an die bereits im Beweis von Lemma 3 verwendete Methode. Eine ausführliche Behandlung dieser Faltmethode wird dennoch für notwendig gehalten, da eine exakte Parametrisierung der beteiligten Parabeln bisher unterblieb, diese war auch zum Beweis des Lemmas nicht notwendig, wird aber wohl benötigt, um die Lösungen einer Gleichung $ax^3+bx^2+cx+d=$ aus den Koeffizienten dieser Gleichung konstruieren zu können. Dass das Problem des Findens der gemeinsamen Tangente zweier Parabeln zu mit dritten Wurzeln zu tun hat, war schon in der Antike bekannt.

\subsubsection{Das Falten dritter Wurzeln}\footnote{ebd}\\

Man betrachte die Parabeln

\[p_1:y^2=2ax ~ \text{und}~ p_2:x^2=2by.\]

\begin{figure}[h]
\includegraphics[width=8cm]{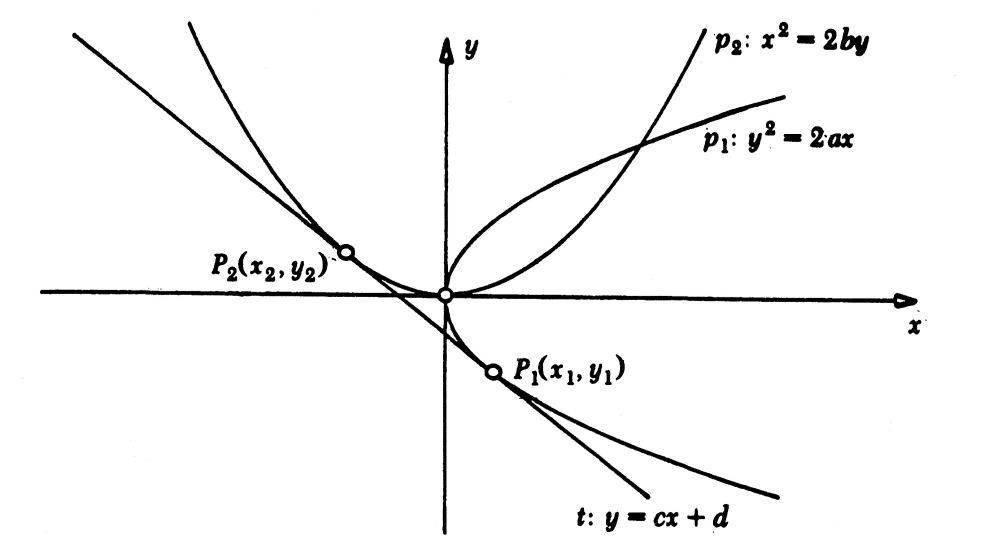}
\caption[\cite{ger95}, Kapitel 6]{Dritte Wurzeln 1}
\label{zul2}
\end{figure}

Diese Parabeln schneiden sich bereits in zwei Punkten, haben also nur noch eine gemeinsame Tangente, diese habe die Gleichung

\[t:y=cx+d.\]

Sei $P_1=(x_1,y_1)$ der Berührpunkt von $t$ mit $p_1$, dann hat $t$ aber auch die Gleichung:

\[yy_1=ax+ax_1 \Leftrightarrow y=\frac{a}{y_1}x+\frac{ax_1}{y_1}\]

Koeffizientenvergleich liefert dann $c=a/y_1$ und $d=ax_1/y_1$. Also ist 
\[y_1=\frac{a}{c} ~\text{und}~x_1=\frac{d}{x}\]
Wenn man das in die Gleichung von $p_1$ einsetzt, ergibt sich
\[\frac{a^2}{c^2}=2a\frac{d}{c} \Leftrightarrow a=2cd\]

Sei nun $P_2=(x_2, y_2)$ der Punkt, in dem sich $t$ und $p_2$ berühren. Dann hat $t$ auch die Gleichung
\[xx_2=by+by_2 \Leftrightarrow y=\frac{x_2}{b}x-y_2.\]
Diesmal liefert der Koeffizientenvergleich $c=x_2/b$ und $d=-y_2$. Dann ist:
\[x_2=bc \text{und} y_2=-d\]
Einsetzen in $p_2$ liefert:
\[b^2c^2=-2bd \Leftrightarrow d=-\frac{bc^2}{2}\]
Zusammenbasteln der Resultate für $a$ und $d$ liefert:\\

\begin{align*}
a &= -bc^3\\
a &= -\sqrt[3]{\frac{a}{b}}
\end{align*}

Also ist die Steigung der Tangente die negative dritte Wurzel aus dem Quotienten der beiden Koeffizienten der Parabeln. Daher kann eine dritte Wurzel auch folgendermaßen gefaltet werden:

\begin{figure}[h]
\includegraphics[width=8cm]{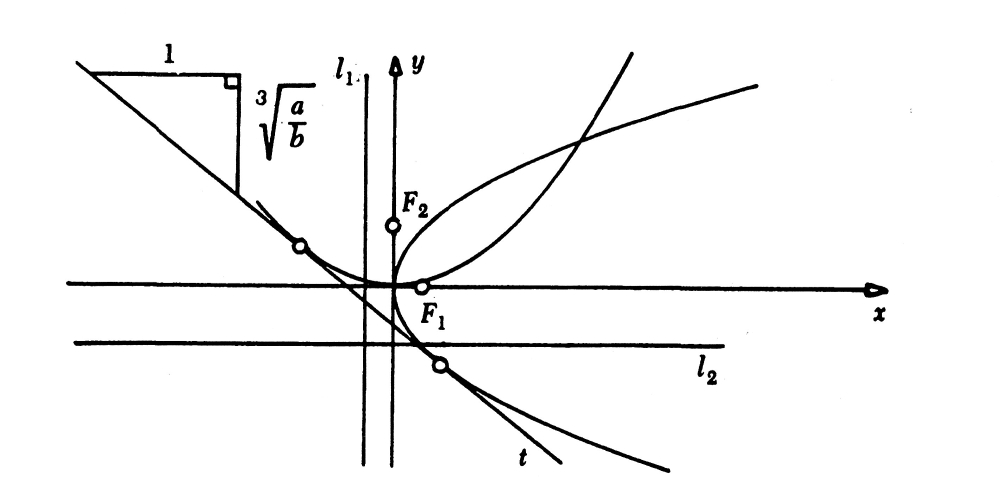}
\caption[\cite{ger95}, Kapitel 6]{Dritte Wurzeln 2}
\label{zul2}
\end{figure}

Es seien $a$ und $b$ gegeben. Man faltet nun zwei orthogonale Geraden, die die Hauptachsen der Parabeln repräsentieren sollen. Man faltet nun $a/2$ nach links und rechts des Ursprungs, um links die Leitgerade $l_1$ der Parabel $p_1$, die parallel zur y-Achse durch diesen Punkt verläuft, und rechts den Brennpunkt $F_1$ von $p_1$ zu erhalten. Genauso faltet man nach oben und unten $b/2$, um unten die Leitgerade $l_2$ der Parabel $p_2$, die  parallel zur x-Achse durch diesen Punkt verläuft, und oben den Brennpunkt $F_2$ von $p_2$ zu erhalten. (Es wäre genauso möglich, $a$ und $b$ nach rechts und links zu falten, oder jedes andere Vielfache, solange man nur das gleiche Vielfache von $a$ und $b$ wählt.) Axiom 6 erlaubt nun die Konstruktion der gemeinsamen Tangente durch das Falten von $F_1$ auf $l_1$ und $F_2$ auf $l_2$, deren Steigung, wie bereits gesagt, $-\sqrt[3]{\frac{a}{b}}$ ist. Faltet man nun die Einheitsstrecke parallel zur x-Achse an irgendeinen Punkt der Tangente an, und vervollständigt dann das Steigungsdreieck, so ergibt sich die Strecke der Länge $\sqrt[3]{\frac{a}{b}}$ wie eingezeichnet als die zur y-Achse parallele Kathete des Steigungsdreiecks.\\
Diese Konstruktionsmethode soll nun zu einer Konstruktionsmethode für die Lösungen einer allgemeinen Gleichung 3. Grades verallgemeinert werden.

\subsubsection{Lösen einer Gleichung vom Grad 3}\footnote{ebd}\\

Man betrachte nun die Parabeln

\[p_1:(y-n)^2=2a(x-m) ~ \text{und}~ p_2:x^2=2y.\]

\begin{figure}[h]
\includegraphics[width=8cm]{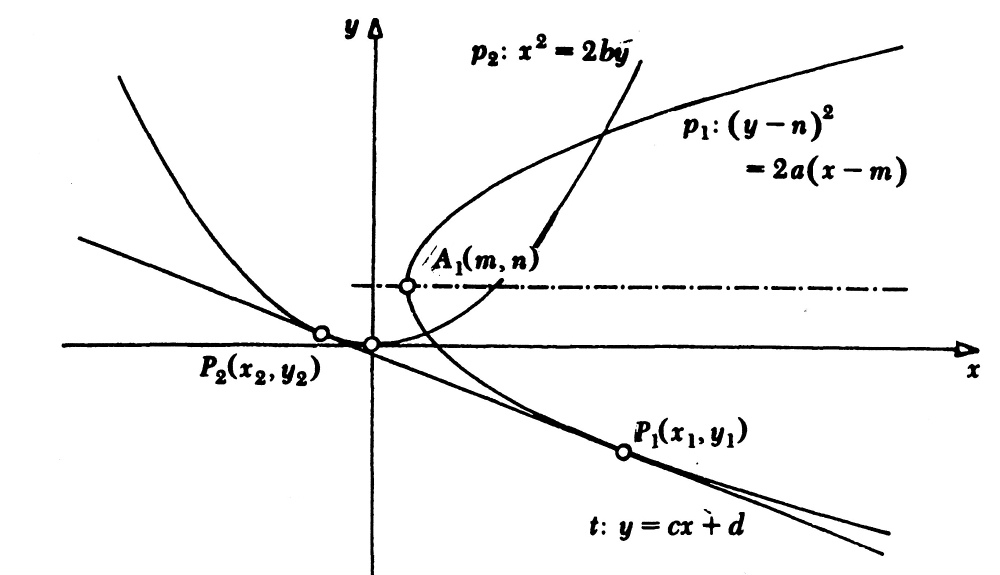}
\caption[\cite{ger95}, Kapitel 6]{Lösung einer Gleichung 3. Grades}
\label{zul2}
\end{figure}

In diesem Falle muss es nicht unbedingt nur eine Tangente geben, aber alle haben sicher wieder eine Gleichung der Form

\[t:y=cx+d\]

Sei nun wieder $P_1=(x_1,y_1)$ der Berührpunkt von $t$ und $p_1$, und daher hat $t$ auch die Gleichung:

\begin{align*}
(y-n)(y_1-n) &= a(x-m)+a(x_1-m)\\
y &= \frac{a}{y_1-n} x + \left( n+\frac{ax_1-2am}{y_1-n} \right)
\end{align*}

Daher, durch Koeffizientenvergleich, $c=\frac{a}{y_1-n}$ und $d=\left( n+\frac{ax_1-2am}{y_1-n} \right)$ und damit:

\begin{align*}
y_1 &= \frac{a+nc}{c}\\
x_1 &= \frac{d-n}{c}+2m
\end{align*}

Einsetzen in $p_1$ liefert nun:

\begin{align*}
(y_1-n)^2 &= 2a(x_1-m)\\
\frac{a^2}{c^2} &= 2a\left(\frac{d-n}{c} +m\right)\\
a &= 2c(d-n+cm)
\end{align*}

Wie im vorhergehenden Unterabschnitt führt die Betrachtung von $p_2$ und $t$ für den Parameter $b=1$ zu $d=-\frac{c^2}{2}$. Substituiert man nun $d$ in der vorigen Gleichung, so erhält man:
\begin{align}
2c \left( -\frac{bc^2}{2} - n +cm\right) &= a\notag\\
c^3 - 2m\cdot c^2+2n\cdot c +a &= 0
\end{align}

Eine Lösung der Gleichung (4.1) ist somit wieder die Steigung der gemeinsamen Tangente der beiden Parabeln. Die Gleichung (4.1) kann nun nur eine reelle und zwei komplexe Lösungen haben, oder aber drei reelle Lösungen, von denen evtl. zwei oder alle drei zusammenfallen. Natürlich gehören hierzu auch jeweils spezielle Konstellationen der beiden Parabeln. Zwei Lösungen fallen nämlich genau dann zusammen, wenn zwei Tangenten zusammenfallen, und das passiert genau dann, wenn sich die Parabeln berühren, und alle drei Lösungen fallen genau dann zusammen, wenn drei gemeinsame Punkte der Parabeln zusammenfallen (sie oskulieren).\\

Wie findet man nun eine Lösung der konkreten Gleichung dritten Grades, die oBdA durch $x^3+px^2+qx+r=0$ gegeben ist? Dann ergeben sich die Parameter in Gleichung (4.1) zu $m=-p/2, n=q/2$ und $a=r$.\\

Man muss nun also, was möglich ist, den Brennpunkt $F_1=\left(\frac{-p+r}{2},\frac{q}{2}\right)$ und die Leitgerade $l_1:x=\frac{-p-r}{2}$ der Parabel $p_1$ konstruieren. Der Brennpunkt $F_2=(0,1/2)$ und die Leitgerade $l_2:y=-1/2$ von $p_2$ sind ebenfalls leicht zu konstruieren. Wieder faltet man $F_1$ auf $l_1$ und $F_2$ auf $l_2$ und erhält somit eine Lösung der Gleichung (4.1) durch die Konstruktion des Steigungsdreiecks der hierbei entstehenden Tangente wie im vorhergehenden Unterabschnitt.\\

Natürlich folgen hieraus nun auch weitere mögliche Konstruktionen zur Lösung des Delischen Problems und des Problems der Winkeldrittelung als Spezialfälle, allerdings sind die bereits angegebenen expliziten Konstruktionen hierbei wesentlich eleganter.

\subsection{Die Konstruktion regelmäßiger n-Ecke}
Wenn in diesem Kapitel von einem n-Eck die Rede ist, so soll immer ein regelmäßiges n-Eck gemeint sein, also eines mit n gleich langen Seiten.
\subsubsection{Elementare Überlegungen und die Forschungen von Gauß}\footnote{\cite{henn}, Seiten 72ff}\\

Einfache Konstruktionen mit Zirkel und Lineal für das regelmäßige (gleichseitige) Dreieck und das regelmäßige Viereck, das Quadrat, kennt man aus der Schule und müssen hier nicht mehr erklärt werden. Weiterhin sind folgende Punkte leicht einsehbar:
\bi
\item Ist das n-Eck konstruierbar, so auch das 2n, 4n, 8n, ...-Eck, denn die Eckpunkte des 2n-Eckes sind die Schnittpunkte der Mittelsenkrechten der Seiten des n-Ecks mit seinem Umkreis.
\item Ist das regelmäßige n-Eck konstruierbar, und ist $n=ab, a\ge 3$, so ist auch das a-Eck konstruierbar, man nimmt dann einfach nur jede b-te Ecke.
\item Sind $a,b$ teilerfremd, und sind das a-Eck und das b-Eck konstruierbar, so ist auch das $ab-$Eck konstruierbar. Denn wenn $360^\circ /a$ und $360^\circ /b$ konstruierbar sind, dann auch $360^\circ /(ab)$.
\ei

Eine erste vollständige Theorie der n-Ecke und ihrer Konstruierbarkeit hat Gauß geschaffen, ihr zentraler Satz lautet:

\begin{thm}{Charakterisierung der mit ZuL konstruierbaren regelmäßigen n-Ecke}\\
Das regelmäßige n-Eck ist genau dann mit Zirkel und Lineal konstruierbar, wenn $n$ eine Zweierpotenz ist, oder aber wenn gilt:
\[n=2^s p_1\dots p_n\]
mit $s\in \N_0$ und paarweise verschiedenen fermatschen Primzahlen $p_1,\dots,p_n$.
\end{thm}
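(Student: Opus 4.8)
Der Plan ist, die Konstruierbarkeit des $n$-Ecks auf die Konstruierbarkeit einer einzigen reellen Zahl zur\"uckzuf\"uhren und darauf den Hauptsatz \"uber konstruierbare Zahlen anzuwenden. Ich identifiziere die Zeichenebene mit $\C$ und betrachte das dem Einheitskreis einbeschriebene regelm\"a\ss{}ige $n$-Eck, dessen Ecken genau die $n$-ten Einheitswurzeln $\zeta_n^{k}$ mit $\zeta_n=e^{2\pi i/n}$ sind. Das $n$-Eck ist also genau dann konstruierbar, wenn der Punkt $\zeta_n=(\cos(2\pi/n),\sin(2\pi/n))$ ein ZuL-Punkt ist, und dies ist \"aquivalent dazu, dass $\cos(2\pi/n)$ eine ZuL-Zahl ist: ist n\"amlich $\cos(2\pi/n)\in\E$, so liegt wegen $\sin(2\pi/n)=\sqrt{1-\cos^{2}(2\pi/n)}$ und weil $\E$ ein euklidischer K\"orper ist auch $\sin(2\pi/n)$ in $\E$.

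F\"ur $n\ge 3$ bestimme ich als N\"achstes den K\"orpergrad. Das Kreisteilungspolynom $\Phi_n$ ist das (als bekannt vorausgesetzt irreduzible) Minimalpolynom von $\zeta_n$ \"uber $\Q$ vom Grad $\varphi(n)$, also $[\Q(\zeta_n):\Q]=\varphi(n)$. Wegen $\zeta_n+\zeta_n^{-1}=2\cos(2\pi/n)$ erf\"ullt $\zeta_n$ \"uber dem reellen Teilk\"orper $\Q(\cos(2\pi/n))$ eine quadratische Gleichung, und da $\zeta_n$ f\"ur $n\ge 3$ nicht reell ist, folgt $[\Q(\zeta_n):\Q(\cos(2\pi/n))]=2$ und somit
\[ [\Q(\cos(2\pi/n)):\Q]=\frac{\varphi(n)}{2}. \]

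Jetzt wende ich den Hauptsatz \"uber konstruierbare Zahlen an. \emph{Notwendigkeit:} Ist das $n$-Eck konstruierbar, so liegt $\cos(2\pi/n)$ in einem reellen K\"orper vom Grad $2^{m}$ \"uber $\Q$; nach dem Gradsatz teilt $[\Q(\cos(2\pi/n)):\Q]$ diese Zweierpotenz, ist also selbst eine Zweierpotenz, womit auch $\varphi(n)$ eine Zweierpotenz ist. \emph{Hinl\"anglichkeit:} Da $\Q(\zeta_n)/\Q$ galoissch mit abelscher Galoisgruppe $(\mathbb{Z}/n\mathbb{Z})^{\times}$ ist, ist jeder Zwischenk\"orper, insbesondere $\Q(\cos(2\pi/n))$, galoissch \"uber $\Q$ mit abelscher Galoisgruppe der Ordnung $\varphi(n)/2$. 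Ist $\varphi(n)$ eine Zweierpotenz, so ist dies eine abelsche $2$-Gruppe, die eine Kette von Untergruppen mit jeweiligem Index $2$ besitzt; verm\"oge der Galoiskorrespondenz erh\"alt man daraus eine Kette quadratischer Erweiterungen reeller K\"orper von $\Q$ bis zum reellen K\"orper $\Q(\cos(2\pi/n))$, so dass $\cos(2\pi/n)$ nach dem Hauptsatz konstruierbar ist.

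Es bleibt die zahlentheoretische \"Ubersetzung, dass $\varphi(n)$ genau dann eine Zweierpotenz ist, wenn $n=2^{s}p_1\cdots p_k$ mit paarweise verschiedenen fermatschen Primzahlen $p_i$ gilt (wobei der Fall $k=0$, also die reine Zweierpotenz $n=2^{s}$, eingeschlossen ist). Dazu schreibe ich $n=2^{s}p_1^{a_1}\cdots p_k^{a_k}$ mit ungeraden Primzahlen $p_i$ und benutze die Multiplikativit\"at, also $\varphi(n)=\varphi(2^{s})\prod_i p_i^{a_i-1}(p_i-1)$. Da $\varphi(2^{s})$ stets eine Zweierpotenz ist, ist dieser Ausdruck genau dann eine Zweierpotenz, wenn jeder Faktor $p_i^{a_i-1}(p_i-1)$ eine solche ist; weil $p_i$ ungerade ist, erzwingt das $a_i=1$ und $p_i-1=2^{t_i}$, also $p_i=2^{t_i}+1$, und da $2^{t}+1$ nur f\"ur eine Zweierpotenz $t$ prim sein kann, ist $p_i$ gerade eine fermatsche Primzahl. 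Die gr\"o\ss{}te H\"urde erwarte ich bei der Hinl\"anglichkeit, n\"amlich bei der Einsicht, dass der blo\ss{}e Grad als Zweierpotenz noch nicht gen\"ugt, sondern erst die abelsche Struktur der Kreisteilungs-Galoisgruppe die vom Hauptsatz geforderte Kette quadratischer Erweiterungen liefert; ebenso geht die hier vorausgesetzte Irreduzibilit\"at von $\Phi_n$ wesentlich ein.
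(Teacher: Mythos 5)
Die Arbeit selbst beweist diesen Satz gar nicht: unmittelbar nach der Formulierung hei\ss{}t es, da der Satz kein zentraler Bestandteil der Arbeit sei, werde auf einen Beweis verzichtet, und es wird auf die Literatur (Henn) verwiesen. Ein Vergleich mit einem papiereigenen Beweis ist also nicht m\"oglich; dein Vorschlag ist stattdessen an der Sache selbst zu messen, und er h\"alt stand. Es ist der klassische Gau\ss--Wantzel-Beweis, und die kritischen Schritte sind alle richtig ausgef\"uhrt: die Reduktion der Konstruierbarkeit des $n$-Ecks auf $\cos(2\pi/n)\in\E$ (mit dem korrekten Hinweis, dass $\E$ euklidisch ist, also $\sin(2\pi/n)=\sqrt{1-\cos^{2}(2\pi/n)}$ mitkommt), die Gradformel $[\Q(\cos(2\pi/n)):\Q]=\varphi(n)/2$ f\"ur $n\ge 3$, die Notwendigkeit \"uber den Gradsatz, und vor allem die Hinl\"anglichkeit \"uber die abelsche Galoisgruppe $(\mathbb{Z}/n\mathbb{Z})^{\times}$ mit der Untergruppenkette vom Index $2$ -- dieser Schritt ist tats\"achlich unverzichtbar, weil der in der Arbeit formulierte Hauptsatz \"uber konstruierbare Zahlen ausdr\"ucklich eine \emph{Kette} quadratischer Erweiterungen verlangt und nicht blo\ss{} einen Grad $2^{m}$; dass du das explizit als H\"urde benennst, zeigt, dass du den Punkt verstanden hast. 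Auch die zahlentheoretische \"Ubersetzung ist vollst\"andig, einschlie\ss{}lich der oft vergessenen Beobachtung, dass $2^{t}+1$ nur f\"ur Zweierpotenzen $t$ prim sein kann, sodass $p_i$ wirklich eine fermatsche Primzahl ist. Zwei Stellen bleiben zitiert statt bewiesen: die Irreduzibilit\"at des Kreisteilungspolynoms $\Phi_n$ (die du selbst als Voraussetzung kennzeichnest) und -- stillschweigend -- die \"Aquivalenz der Konstruierbarkeit irgendeines regelm\"a\ss{}igen $n$-Ecks mit der des dem Einheitskreis einbeschriebenen (Zentrum als Schnitt der Mittelsenkrechten, Verschieben und Skalieren sind ZuL-Operationen); beides ist Standard und auf dem Detaillierungsniveau der Arbeit, die den Satz selbst nur zitiert, v\"ollig akzeptabel.
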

Da der Satz kein zentraler Bestandteil der Arbeit ist, soll auf einen Beweis verzichtet werden. Ein solcher findet sich zum Beispiel bei \cite{henn}. Mit Origami lassen sich deutlich mehr n-Ecke konstruieren, hier gilt:

\begin{thm}{Charakterisierung der mit Origami konstruierbaren n-Ecke}\\
Das regelmäßige n-Eck ist genau dann mit Origami-Methoden konstruierbar, wenn gilt:
\[n=2^r 3^s p_1 p_2\dots p_n\]
mit natürlichen Zahlen $r,s$ und Primzahlen $p_i$ vom Typ $p_i=2^u 3^v + 1;~ u,v\in \N$, sogenannte Pierpontsche Primzahlen.
\end{thm}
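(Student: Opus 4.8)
Der Plan ist, die Konstruierbarkeit des regelmäßigen $n$-Ecks zunächst geometrisch auf die Bedingung $\cos(2\pi/n)\in\Or$ zu reduzieren und diese dann mit dem Hauptsatz über Origami-Zahlen, etwas Galoistheorie der Kreisteilungskörper und einer elementaren Aussage über die Eulersche $\varphi$-Funktion zu übersetzen. Lege ich den Mittelpunkt des $n$-Ecks in den Ursprung und eine Ecke nach $(1,0)$, so sind die Ecken gerade die Punkte $(\cos(2\pi k/n),\sin(2\pi k/n))$, $k=0,\dots,n-1$. Nach dem Isomorphiesatz stimmt $\Or$ mit $\Lin$ überein und ist daher ein euklidischer Körper. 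Es genügt also, $\cos(2\pi/n)\in\Or$ zu zeigen: Daraus folgt wegen der Abgeschlossenheit unter Quadratwurzeln sofort $\sin(2\pi/n)=\sqrt{1-\cos^2(2\pi/n)}\in\Or$, mit den Additionstheoremen liegen dann alle Eckkoordinaten in $\Or$, und ein Punkt ist genau dann ein Origami-Punkt, wenn beide Koordinaten Origami-Zahlen sind. Die Umkehrung ist klar, da die zu $k=1$ gehörende Ecke gerade $(\cos(2\pi/n),\sin(2\pi/n))$ ist. Für $n\le 2$ ist die Aussage trivial, im Folgenden sei also $n\ge 3$.

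Als Nächstes würde ich den Hauptsatz ins Spiel bringen. Die Zahl $\cos(2\pi/n)$ erzeugt den maximalen reellen Teilkörper $\Q(\cos(2\pi/n))$ des Kreisteilungskörpers $\Q(\zeta_n)$; dieser ist über $\Q$ galoissch mit abelscher Galoisgruppe $G$ der Ordnung $\varphi(n)/2$. Für die \emph{Notwendigkeit} folgt aus $\cos(2\pi/n)\in\Or$ nach dem Hauptsatz, dass $\cos(2\pi/n)$ in einer Körperkette $F_m$ mit $[F_m:\Q]=2^a3^b$ liegt; also teilt $\varphi(n)/2=[\Q(\cos(2\pi/n)):\Q]$ die Zahl $2^a3^b$ und besitzt nur die Primteiler $2$ und $3$. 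Für die \emph{Umkehrung} nehme ich an, dass $\varphi(n)/2=2^a3^b$ ist. Dann ist $G$ eine endliche abelsche Gruppe dieser Ordnung, also direktes Produkt ihrer Sylow-Untergruppen; jede dieser $p$-Gruppen besitzt als abelsche $p$-Gruppe eine Untergruppenkette mit sukzessivem Index $p$. Zusammengesetzt ergibt das eine Kette von Untergruppen von $G$, deren sämtliche Indizes in $\{2,3\}$ liegen, und die Galoiskorrespondenz übersetzt sie in eine Körperkette $\Q=F_0\subset\dots\subset F_m=\Q(\cos(2\pi/n))$ mit $[F_i:F_{i-1}]\in\{2,3\}$. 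Nach dem Hauptsatz ist damit $\cos(2\pi/n)\in\Or$. Also ist das $n$-Eck genau dann mit Origami konstruierbar, wenn $\varphi(n)$ nur die Primteiler $2$ und $3$ hat.

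Es bleibt die zahlentheoretische Übersetzung dieser Bedingung, für die ich die Multiplikativität von $\varphi$ ausnutzen würde. Schreibt man $n=\prod_i q_i^{e_i}$ mit verschiedenen Primzahlen $q_i$, so ist $\varphi(n)=\prod_i q_i^{e_i-1}(q_i-1)$, und $\varphi(n)$ hat genau dann nur die Primteiler $2,3$, wenn jeder Faktor $q_i^{e_i-1}(q_i-1)$ dies tut. Für $q_i=2$ bzw. $q_i=3$ ist das stets erfüllt (es ist $\varphi(2^e)=2^{e-1}$ und $\varphi(3^e)=2\cdot 3^{e-1}$). Für $q_i\ge 5$ erzwingt der Faktor $q_i^{e_i-1}$ sofort $e_i=1$, und $q_i-1$ darf nur die Primteiler $2,3$ haben, also $q_i-1=2^u3^v$, d.h. $q_i$ ist eine Pierpontsche Primzahl $>3$. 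Das liefert genau die behauptete Gestalt $n=2^r3^sp_1\cdots p_k$ mit verschiedenen Pierpontschen Primzahlen $p_i>3$.

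Den eigentlichen Knackpunkt sehe ich nicht in der Rechnung, sondern bei der Umkehrung: Allein aus $[\Q(\cos(2\pi/n)):\Q]=2^a3^b$ folgt im Allgemeinen \emph{nicht}, dass eine Körperkette mit Schritten vom Grad $2$ oder $3$ existiert (ganz analog dazu, dass bei Zirkel und Lineal der Grad $2^k$ zwar notwendig, aber nicht hinreichend ist). Dass es hier dennoch klappt, beruht entscheidend darauf, dass die Kreisteilungserweiterung \emph{abelsch} ist und deshalb eine Kompositionsreihe mit Faktoren der Ordnung $2$ und $3$ besitzt. Der Kern des Beweises ist also, diese abelsche Struktur zu erkennen und über die Galoiskorrespondenz in die vom Hauptsatz geforderte Körperkette zu überführen; alles Übrige ist Buchhaltung über $\varphi$.
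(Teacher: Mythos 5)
Dein Beweis ist korrekt -- ein Vergleich mit dem Beweis der Arbeit ist allerdings gegenstandslos, denn die Arbeit beweist diesen Satz gar nicht: Wie bei der unmittelbar davor stehenden Gau{\ss}schen Charakterisierung der mit Zirkel und Lineal konstruierbaren $n$-Ecke wird das Resultat nur zitiert, und der Text geht direkt zur konkreten Konstruktion des 7-Ecks \"uber. Du lieferst also genau den Baustein nach, der im Text fehlt, und zwar auf dem Weg, der sich im Rahmen der Arbeit anbietet: Reduktion auf $\cos(2\pi/n)\in\Or$ (unter Benutzung des Isomorphiesatzes $\Or=\Lin$, der Abgeschlossenheit unter Quadratwurzeln und des Kriteriums, dass ein Punkt genau dann Origami-Punkt ist, wenn beide Koordinaten Origami-Zahlen sind), dann der Hauptsatz \"uber Origami-Zahlen f\"ur die \"Ubersetzung in K\"orperketten mit Schritten vom Grad $2$ oder $3$, schlie{\ss}lich die Auswertung von $\varphi(n)=\prod_i q_i^{e_i-1}(q_i-1)$. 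Deine Diagnose des eigentlichen Knackpunkts ist zutreffend: Aus $[\Q(\cos(2\pi/n)):\Q]=2^a3^b$ allein folgt die Existenz einer solchen Kette im Allgemeinen nicht; erst die Abelschheit von $\mathrm{Gal}(\Q(\zeta_n)/\Q)$ liefert \"uber die Sylow-Zerlegung, Untergruppenketten mit Primindex und die Galoiskorrespondenz die vom Hauptsatz geforderte Kette -- genau hier unterscheidet sich die Lage vom notwendigen, aber nicht hinreichenden Gradkriterium $2^k$ bei Zirkel und Lineal. Ein erfreulicher Nebeneffekt: Dein Argument pr\"azisiert die Satzformulierung, denn f\"ur Primteiler $q\ge 5$ erzwingt $q^{e-1}\mid 2^a3^b$ sofort $e=1$; die Pierpontschen Primzahlen $p_i>3$ m\"ussen also paarweise verschieden sein und in erster Potenz auftreten, w\"ahrend in der Satzformulierung der Arbeit (anders als im Gau{\ss}-Satz mit seinen "`paarweise verschiedenen"' Fermatschen Primzahlen) das f\"ur die Richtigkeit wesentliche Wort "`verschieden"' fehlt. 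Als einzige kleine Erg\"anzung solltest du die benutzten Standardfakten explizit ausweisen, insbesondere $[\Q(\cos(2\pi/n)):\Q]=\varphi(n)/2$ f\"ur $n\ge 3$ (Irreduzibilit\"at des Kreisteilungspolynoms plus Fixk\"orper der komplexen Konjugation); inhaltliche L\"ucken sehe ich keine.
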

Das kleinste n-Eck, das nicht mit Zirkel und Lineal, wohl aber mit Origami konstruiert werden kann, ist das 7-Eck, und diese Konstruktion soll nun als nächstes besprochen werden.

\subsubsection{Konstruktion des 7-Ecks}\footnote{\cite{henn}, Seite 88ff}
\begin{figure}[h]
\includegraphics[width=8cm]{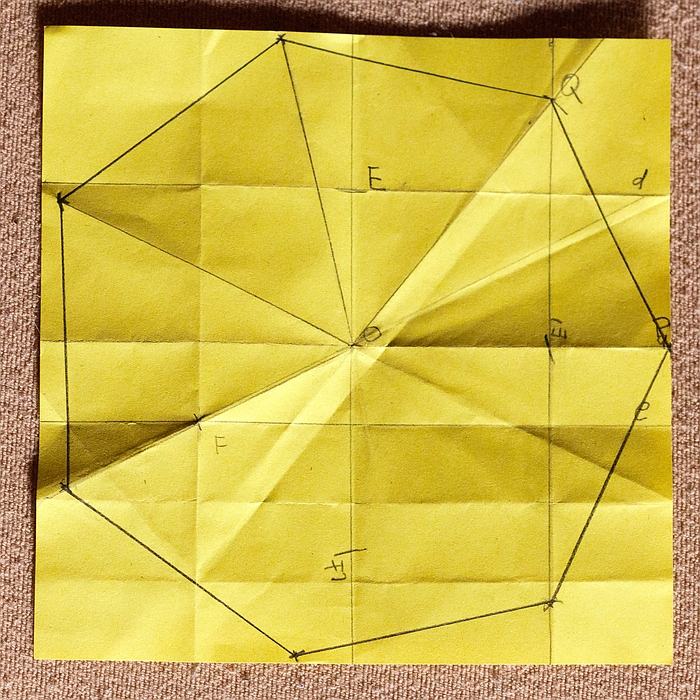}
\caption[Eigenes Foto]{Regelmäßiges 7-Eck}
\label{zul2}
\end{figure}

\begin{thm}{Konstruktion des 7-Ecks nach Alperin(2002)}\\
Man beginnt mit dem Quadrat mit Ecken $ABCD$. Zuerst werden die beiden Mittellinen $a$ und $b$, die "`Viertellinien"' $c$ und $d$ und die "`Achtellinie"' $e$ gemäß der Skizze in Abbildung 4.10 eingefaltet:
\begin{figure}[h]
\includegraphics[width=6cm]{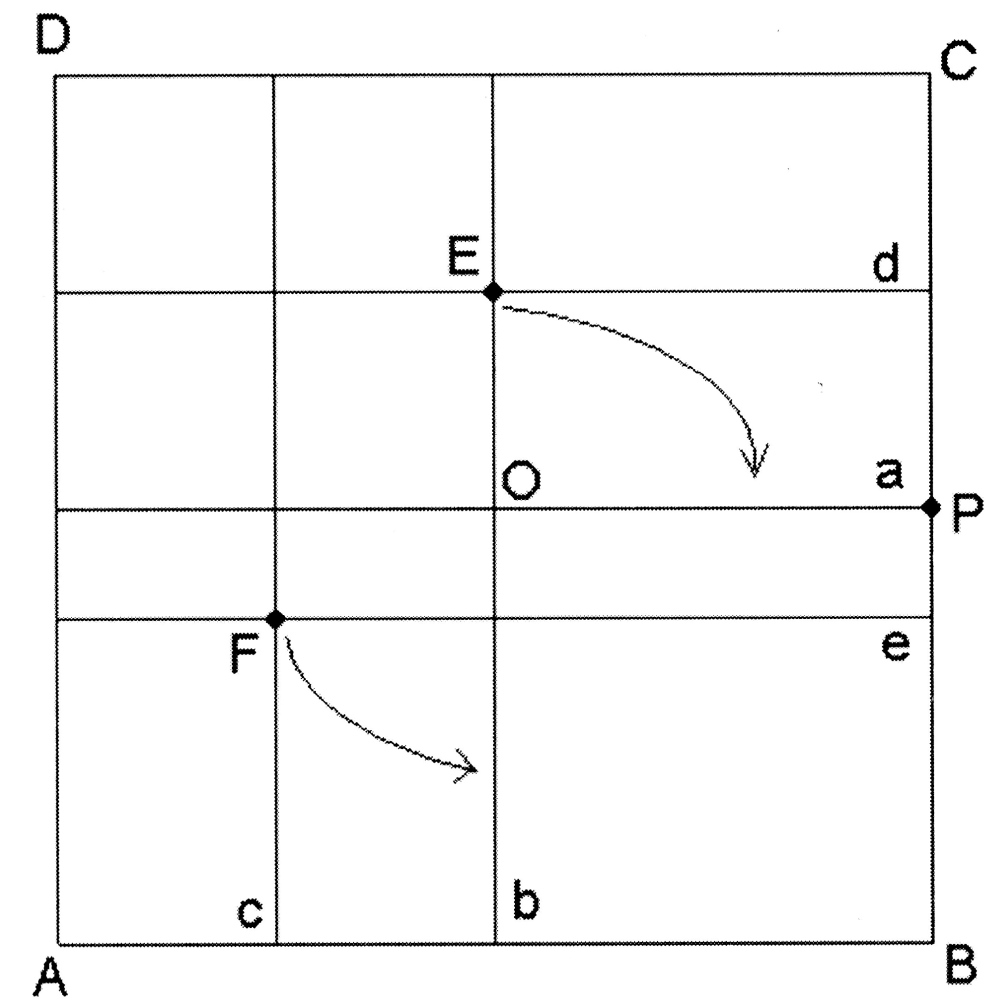}
\caption[\cite{henn}, Seite 88]{Regelmäßiges 7-Eck 2}
\label{zul2}
\end{figure}

Damit sind dann auch die Punkte $O,E,F$ gegeben. Ziel ist es, dass 7-Eck mit Mittelpunkt $O$ und Umkreisradius $OP$ zu falten, $P$ ist dann eine Ecke. Nun wird gemäß Axiom 6 so gefaltet, dass der Punkt $E\in a$ und $F\in b$ liegt, die Bildpunkte werden mit $E'$ und $F'$ bezeichnet. (Abbildung 4.11)
\begin{figure}[h]
\includegraphics[width=6cm]{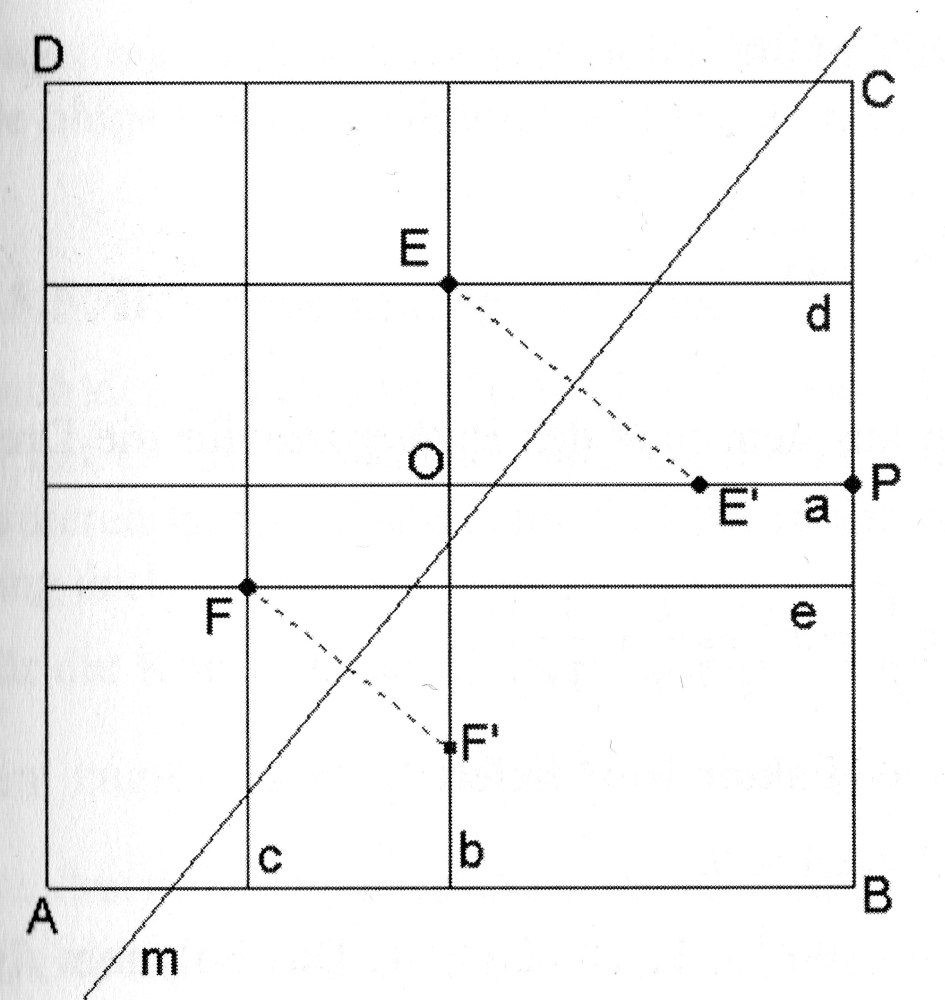}
\caption[\cite{henn}, Seite 89]{Regelmäßiges 7-Eck 3}
\label{zul2}
\end{figure}

Dann faltet man die Parallele $f$ zur Kante $\overline{BC}$ durch $E'$ und anschließend $P$ so auf $f$, dass die Faltlinie durch $O$ geht, der Bildpunkt von $P$ sei $Q$.(Abbildung 4. 12)
\begin{figure}[h]
\includegraphics[width=6cm]{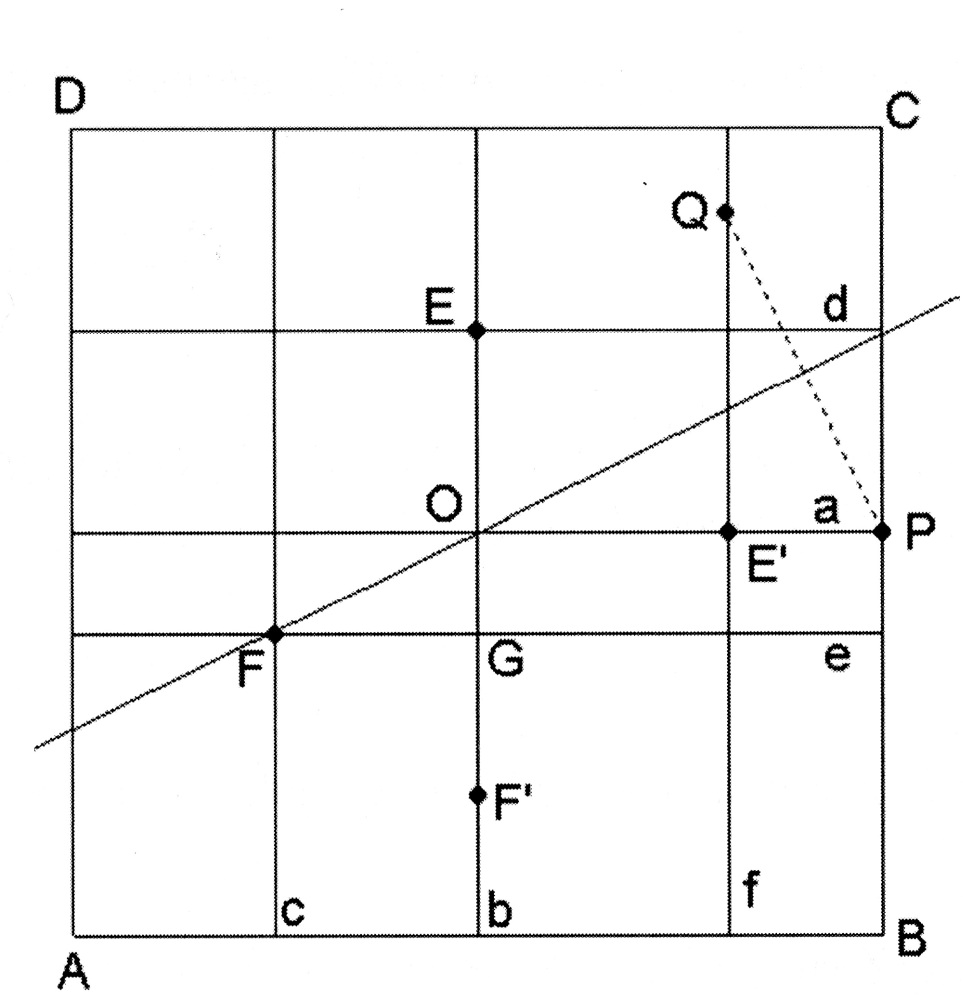}
\caption[\cite{henn}, Seite 89]{Regelmäßiges 7-Eck 4}
\label{zul2}
\end{figure}
Dann ist $Q$ ein weiterer Eckpunkt des 7-Ecks. Durch Spiegelung des Punktes $P$ an $\overrightarrow{OQ}$ erhält man den nächsten Punkt des Siebenecks, und so weiter.
\end{thm}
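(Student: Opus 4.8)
\section*{Beweisvorschlag}

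Die Grundidee ist, die gesamte Behauptung auf eine einzige Winkelaussage zu reduzieren: Es genügt zu zeigen, dass der konstruierte Punkt $Q$ mit der Startecke $P$ vom Mittelpunkt $O$ aus den Zentriwinkel $\measuredangle(POQ)=\frac{360^\circ}{7}$ einschließt. Zunächst stelle ich fest, dass $Q$ nach Konstruktion auf dem Umkreis $K(O,OP)$ liegt, denn $Q$ entsteht durch Falten von $P$ auf $f$ entlang einer Faltlinie $t$ durch $O$, und die Spiegelung an einer Geraden durch $O$ erhält den Abstand zu $O$, also $OQ=OP$. Hat man $\measuredangle(POQ)=\frac{2\pi}{7}$, so liefert die Spiegelung des Punktes mit Winkel $\alpha$ an der Origami-Geraden $\overrightarrow{OQ}$ (Winkel $\tfrac{2\pi}{7}$) den Punkt mit Winkel $\tfrac{4\pi}{7}-\alpha$; da Spiegelungen an Origami-Geraden Origami-Punkte wieder auf Origami-Punkte abbilden (Origami-Axiom 4), erhält man so sukzessive alle sieben gleichabständigen Ecken auf $K(O,OP)$ als Origami-Punkte, und diese bilden gerade ein regelmäßiges Siebeneck. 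Es bleibt also allein die Gleichung $\measuredangle(POQ)=\frac{360^\circ}{7}$ nachzuweisen.

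Dazu würde ich ein Koordinatensystem auf dem Quadrat mit $O$ im Ursprung einführen und aus der Faltvorschrift (Mittellinien $a,b$, Viertellinien $c,d$, Achtellinie $e$) die exakten Koordinaten von $E,F,P$ sowie die Gleichungen von $a,b$ und der Parallelen $f$ ablesen. Der entscheidende Schritt ist die Faltung nach Axiom 6, bei der $E$ auf $a$ und $F$ auf $b$ zu liegen kommt. Nach dem Satz über Parabel und Spiegelung ist die zugehörige Faltlinie $t$ eine gemeinsame Tangente der beiden Parabeln mit Brennpunkt $E$ und Leitgerade $a$ bzw. Brennpunkt $F$ und Leitgerade $b$. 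Nach der in den Abschnitten 3.3 und 4.4 durchgeführten Analyse des sechsten Axioms erfüllt die Steigung $c$ einer solchen gemeinsamen Tangente eine Gleichung dritten Grades, deren Koeffizienten rationale Ausdrücke in den zuvor bestimmten Koordinaten von $E$ und $F$ sind. Diese kubische Gleichung würde ich explizit aufstellen.

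Der eigentliche Kern und zugleich die Hauptschwierigkeit ist der Nachweis, dass die so gewonnene Kubik gerade die das Siebeneck steuernde Gleichung ist, also nach geeigneter Substitution das Minimalpolynom $y^3+y^2-2y-1=0$ von $2\cos\frac{2\pi}{7}$ über $\Q$. Da dieses Polynom irreduzibel ist und $\cos\frac{2\pi}{7}$ wegen $[\Q(\cos\frac{2\pi}{7}):\Q]=3$ in keinem nur durch Quadratwurzeln erreichbaren Körper liegt, ist genau hier der Punkt, an dem das sechste Origami-Axiom die Grenzen von Zirkel und Lineal überschreitet; deshalb gehört die $7$ zu den Pierpontschen, nicht aber zu den Fermatschen Fällen. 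Ist die Übereinstimmung der Polynome verifiziert, so ist die Höhe des Bildpunktes $E'$ über $O$, und damit die Höhe der Parallelen $f$, so justiert, dass der Schnittpunkt $Q$ von $K(O,OP)$ mit $f$ genau den Winkel $\frac{2\pi}{7}$ realisiert. Konkret läuft dies darauf hinaus, die aus der Faltung abgelesene Höhe von $f$ mit dem passenden trigonometrischen Ausdruck $OP\cdot\sin\frac{2\pi}{7}$ abzugleichen, wozu ich die oben gefundene Wurzel der Kubik und die bekannte trigonometrische Identität für $2\cos\frac{2\pi}{7}$ heranziehen würde.

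Abschließend folgere ich aus $\measuredangle(POQ)=\frac{360^\circ}{7}$ mit dem Spiegelungsargument des ersten Absatzes, dass sämtliche sieben Ecken Origami-Punkte auf $K(O,OP)$ mit konstantem Zentriwinkelabstand $\frac{360^\circ}{7}$ sind; das konstruierte Polygon ist also ein vollständiges regelmäßiges Siebeneck. Die wesentliche Arbeit steckt dabei im Aufstellen und Identifizieren der Kubik und in der sorgfältigen Buchführung über die Lage der Faltpunkte $E,F$ sowie die daraus resultierende Höhe von $E'$; die anschließende Erzeugung der übrigen Ecken durch iterierte Spiegelungen ist demgegenüber rein geometrischer Routineteil.
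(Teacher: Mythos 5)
Es liegt eine echte L\"ucke vor: Ihr Text ist ein Beweisplan, kein Beweis. Die gesamte mathematische Substanz des Satzes steckt in dem Nachweis, dass gerade die konkrete Lage von $E$ und $F$ (Mittel-, Viertel- und Achtellinien) eine Kubik liefert, deren relevante Wurzel $\cos\frac{2\pi}{7}$ ist -- und genau diesen Schritt k\"undigen Sie nur an ("`w\"urde ich explizit aufstellen"', "`Ist die \"Ubereinstimmung der Polynome verifiziert"'), f\"uhren ihn aber nicht aus. Alles, was Sie tats\"achlich zeigen ($OQ=OP$, Erzeugung der \"ubrigen Ecken durch iterierte Spiegelungen), ist der Routineanteil, den auch die Arbeit nur kurz abhandelt. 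Hinzu kommt: Selbst mit der Kubik in der Hand m\"ussten Sie begr\"unden, welche der bis zu drei gemeinsamen Tangenten bzw. Wurzeln die tats\"achlich ausgef\"uhrte Faltung realisiert; die Arbeit erledigt dies, indem sie zeigt, dass die Kubik genau eine positive reelle Nullstelle besitzt und die gesuchte Koordinate positiv sein muss. Auch Ihr Abgleich der "`H\"ohe von $f$"' mit $OP\cdot\sin\frac{2\pi}{7}$ passt nicht zur Geometrie der Konstruktion: $f$ ist die Parallele zu $\overline{BC}$ durch $E'$, und $Q$ teilt mit $E'$ die $x$-Koordinate; verglichen wird also mit $\cos\frac{2\pi}{7}$, nicht mit dem Sinus -- ein Indiz daf\"ur, dass die Koordinaten nie fixiert wurden.

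Zum Vergleich der Wege: Die Arbeit ben\"otigt die Parabel-Tangenten-Maschinerie aus Kapitel 3 hier gar nicht. Sie rechnet direkt mit der Isometrieeigenschaft der Faltung: Mit $E=(0|0{,}5)$, $E'=(x|0)$, $F=(-0{,}5|-0{,}25)$, $F'=(0|-0{,}25-z)$ liefert die \"Ahnlichkeit der Dreiecke $\Delta(GFF')$ und $\Delta(OEE')$ die Beziehung $z=\frac{1}{4x}$, und $EF=E'F'$ ergibt via zweimaligem Pythagoras die Gleichung $(0{,}5)^2+(0{,}75)^2=x^2+\left(\frac{1}{4}+\frac{1}{4x}\right)^2$, also die Quartik $16x^4-12x^2+2x+1=0$. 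Abspalten des Faktors $\left(y-\frac{1}{2}\right)$ f\"uhrt auf $g(y)=8y^3+4y^2-4y-1$ (nach der Substitution $t=2y$ genau Ihre Kubik $t^3+t^2-2t-1$), irreduzibel \"uber $\Q$ mit genau einer positiven Nullstelle; dass $w=\cos\frac{2\pi}{7}=\frac{\xi+\xi^{-1}}{2}$ Nullstelle ist, folgt durch Einsetzen der Potenzsummen von $\xi$ und der Kreisteilungsrelation $\xi^6+\xi^5+\dots+\xi+1=0$. Diese Rechnung -- nicht der Spiegelungsrahmen -- ist der eigentliche Beweis; solange Sie sie (oder Ihre Parabelvariante samt der angek\"undigten Substitution) nicht tats\"achlich durchf\"uhren, bleibt Ihr Vorschlag unvollst\"andig.
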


\begin{proof}
OBdA sei die Seitenlänge des Blattes $2$, sodass das entstehende Siebeneck den Einheitskreis als Umkreis hätte. Man verwendet nun das Koordinatensystem mit Ursprung $O$ und Achsen $a, b$. Zu zeigen ist nun, dass der Punkt $Q$ als komplexe Zahl die siebte Einheitswurzel $\xi=\cos(\alpha)+i\sin(alpha)$ mit $\alpha=\frac{2\pi}{7}$ ist, dafür wiederum genügt es zu zeigen, dass die x-Koordinate von $Q$, welche gleich der von $E'$ ist, $x=\cos(\alpha)$ ist.
\begin{figure}[h]
\includegraphics[width=6cm]{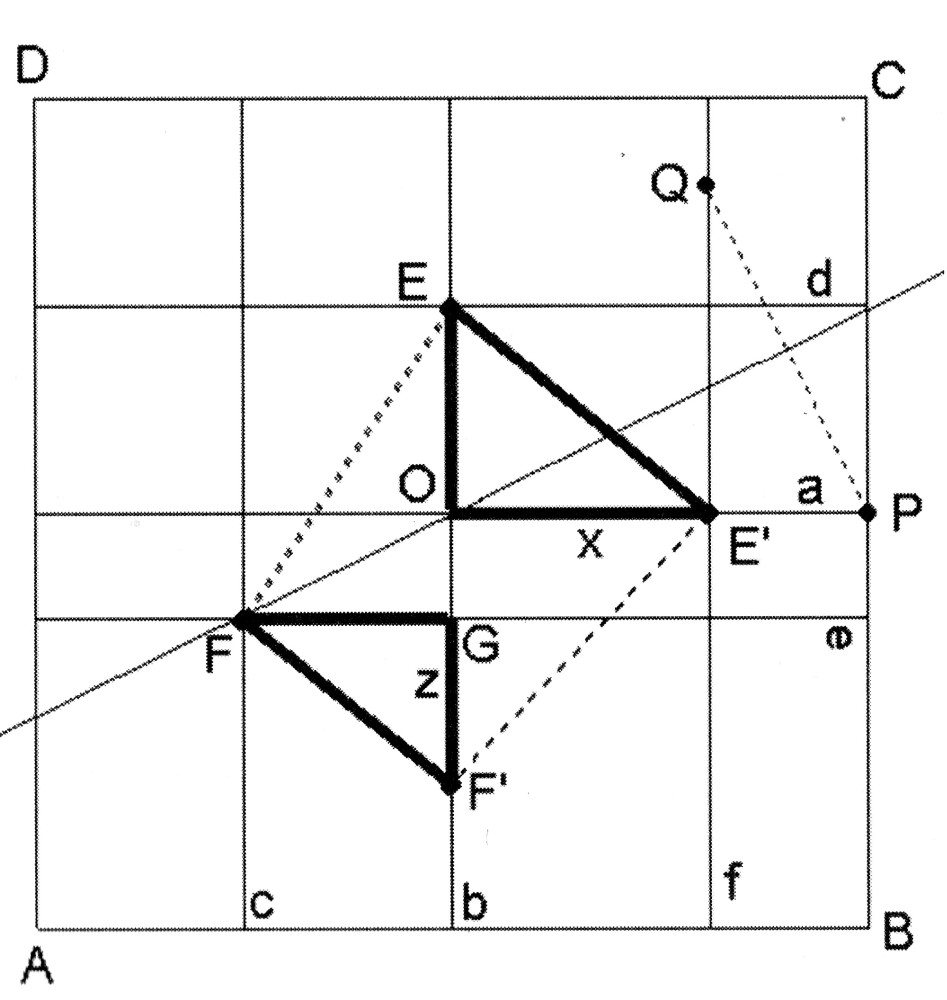}
\caption[\cite{henn}, Seite 89]{Regelmäßiges 7-Eck 5}
\label{zul2}
\end{figure}

Sei weiter $z=GF'$, dann gilt: $E=(0|0,5), E'=(x|0), F=(-0,5|-0,25), F'=(0|-0.25-z), G=(0|-0,25)$.\\
Die beiden fett markierten Dreiecke $\Delta(GFF')$ und $\Delta(OEE')$ sind ähnlich, daher gilt:
\[\frac{z}{0,5})=\frac{0,5}{x}~\Rightarrow z=\frac{1}{4x}\]
Nach Konstruktion (vgl. Abbildung 4.11) ist weiter $EF=E'F'$. Der Satz des Pythagoras im Dreieck $\Delta(FGE)$ liefert daher: $FG^2+GE^2=EF^2$, und der Pythagoras im Dreieck $\Delta(OE'F')$ liefert: $OE'^2+OF'^2=E'F'^2$. Man kann nun beide Gleichungen zusammensetzen und erhält:
\[(0.5)^2+(0,75)^2=x^2+(0,25+z)^2=x^2+\left(\frac{1}{4}+\frac{1}{4x}\right)^2\]
Zusammenfassen führt zum über $\Q$ reduziblen Polynom $16x^4-12x^2+2x+1=0$. $x$ ist also Nullstelle des Polynoms $f(y)=16y^4-12y^2+2y+1$, dieses ist aber nicht das Minimapolynom von $x$, da es, wie schon gesagt, reduzibel ist und den Faktor $(y-0,5)$ abspaltet. Polynomdivision führt zum Polynom $g(y)=8y^3+4y^2-4y-1$, und das ist irreduzibel über $\Q$. Dieses Polynom hat drei reelle Nullstellen, von denen eine echt positiv ist, daher kommt nur diese für $x$ in Frage. Zu zeigen bleibt, dass diese Nullstelle auch gerade $\cos(\alpha)$ ist.\\
Dazu wird gezeigt, dass $w=\cos(\alpha)$ ebenfalls eine positive Nullstelle dieses Polynoms ist. Da es nur eine solche gibt, muss dann $w=x$ gelten.\\
Die bereits ausgezeichnete 7-te Einheitswurzel $\xi$ hat Minimalpolynom $y^6+y^5+\dots+y+1$ und die Potenzen $\xi^2, \dots, \xi^6$ sind seine anderen Nullstellen. Es folgt nun:
\[w=cos(\alpha)=\frac{\xi+\xi^6}{2}=\frac{\xi+\xi^{-1}}{2},\]
da $\xi^i=\xi^{7-1}$ gilt. Damit kann man die nun in unserem Polynom benötigten Vielfachen ausrechnen:
\begin{align*}
4w &= 2(\xi+\xi^{-1})\\
4w^2 &= (\xi+\xi^{-1})^2=\xi^2+2+\xi^{-2}\\
8w^3 &= (\xi+\xi^{-1})^3=\xi^3+3\xi+3\xi^{-1}+\xi^{-3}
\end{align*}
Einsetzen in $g(y)$ schließlich liefert:

\begin{align*}
g(w) &= 8w^3+4w^2-4w-1\\
&= (\xi^3+3\xi+3\xi^{-1}+\xi^{-3})+(\xi^2+2+\xi^{-2})-2(\xi+\xi^{-1})-1\\
&= \xi^3+\xi^2+\xi+\xi^{3}+\xi{-2}+\xi^{-1}+1\\
&=\xi^6+\xi^5+\xi^4+\xi^3+\xi^2+\xi+1
\end{align*}
$\xi$ ist aber nach Definition Nullstelle dieses Polynoms. Damit hat die Faltkonstruktion tatsächlich ein regelmäßiges 7-Eck erzeugt.
\end{proof}
Ein weiteres Resultat, das untrennbar mit den Arbeiten von Gauß verbunden ist, und auch das nächste n-Eck, dessen Konstruktion mit Zirkel und Lineal möglich, und dabei nicht völlig trivial, ist, ist das 17-Eck. 

\subsubsection{Das regelmäßige 17-Eck}
Es ist bereits bekannt, dass zur prinzipiellen Möglichkeit der Konstruktion zu zeigen ist, dass der Cosinus des Zentralwinkels mit geschachtelten Quadrat - und für Origami eventuell dritte - Wurzeln darstellbar sein muss. Gauß gab hierzu folgende Darstellung an:

\begin{align*}
\cos\left(\frac{360^\circ}{17}\right) &= \frac{1}{16} \left( -1 + \sqrt{17}+\sqrt{2(17-\sqrt{17})}\right.\\
&+\left. 2\sqrt{17+3\sqrt{17}-\sqrt{2(17-\sqrt{17})}-2\sqrt{2(17+\sqrt{17})}}\right)
\end{align*}
Damit ist das 17-Eck mit Zirkel und Lineal und somit insbesondere auch mit Origami konstruierbar.\footnote{\cite{henn}, Seite 91}\\
Wie bereits nach der Darstellung dieses Wurzelausdruckes zu erwarten, ist die Konstruktion allerdings nicht ganz einfach. Es handelt sich um die "`Übersetzung"' einer ZuL-Konstruktion von H.W. Richmond in "`Origami-Sprache"'.

\newpage
\begin{thm}{Konstruktion des 17-Ecks}\footnote{\cite{ger02}}\\

\begin{figure}[h]
\includegraphics[width=10cm]{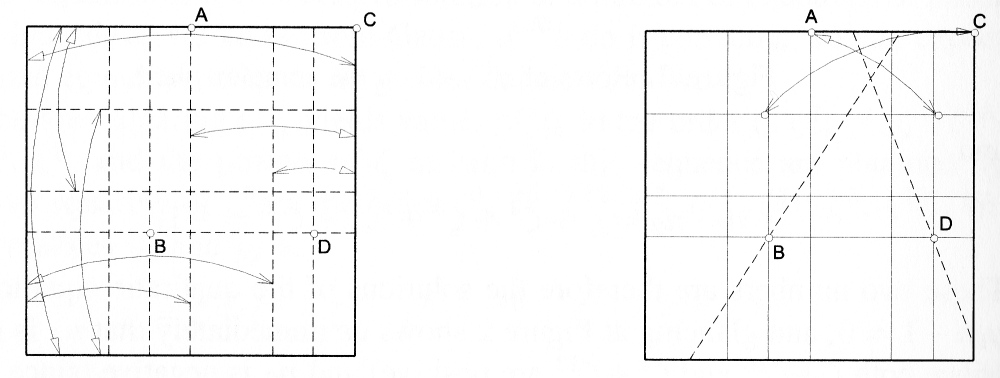}
\caption[\cite{ger02}]{Regelmäßiges 17-Eck 1}
\label{zul2}
\end{figure}
Man faltet zunächst die Mittellinien und dann in der rechten Hälfte des quadratischen Blattes die Viertel- bzw. Achtellinien. Die Linie, auf der $B$ liegt, ergibt sich durch Falten der linken Quadratseite auf die eben gefaltete Mittellinie, die zweite vertikale Linie in der linken Hälfte durch Faltung der Kante auf die rechte Viertellinie. Die Mittelline in der oberen Hälfte wird gefaltet, die horizontale Linie in der unteren Hälfte ergibt sich durch Faltung der unteren Blattkante auf diese Mittellinie.\\
Danach wird $A$ so auf die obere Mittellinie gefaltet, dass die Faltung durch $B$ geht. Nun faltet man $C$ so auf diese Mitellinie, dass die Faltung durch $D$ geht.

\begin{figure}[h]
\includegraphics[width=10cm]{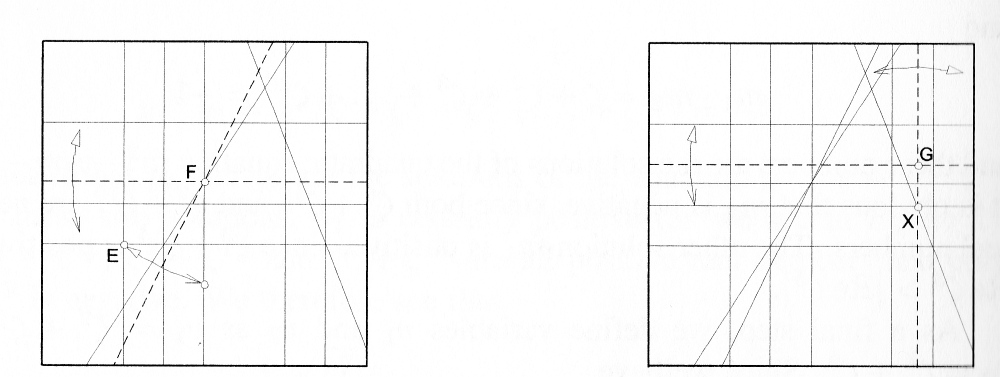}
\caption[\cite{ger02}]{Regelmäßiges 17-Eck 2}
\label{zul2}
\end{figure}
Man faltet nun die Linie, die den Punkt $B$ enthält, auf die obere Viertellinie. Der Schnittpunkt dieser Faltgeraden mit der vertikalen Mittellinie ist der Punkt $F$. Dann faltet man den Punkt $E$ so auf diese Mitellinie, dass die Faltung durch $F$ geht.
Der Schnittpunkt dieser Faltlinie mit der Faltlinie durch $D$, mit der man $C$ auf die Mittellinie faltete, wird nun so auf die rechte Quadratkante gefaltet, dass die Faltung parallel zu dieser Kante geht. Der Schnittpunkt dieser Faltung mit der horizontalen Mittellinie ist der Punkt $X$. Nun faltet $X$ auf die obere Viertellinie so, dass die Faltline parallel zur oberen Quadratkante ist, deren Schnittpunkt mit der gerade durchgeführten vertikalen Faltung ist der Punkt $G$.

\begin{figure}[h]
\includegraphics[width=10cm]{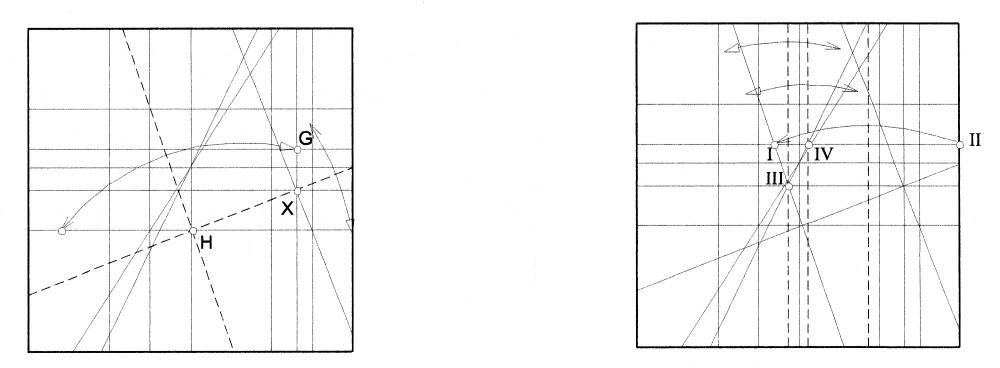}
\caption[\cite{ger02}]{Regelmäßiges 17-Eck 3}
\label{zul2}
\end{figure}
Man faltet nun die Faltung aus Schritt 2, die den Punkt $D$ enthält, so auf sich selbst, dass die Faltlinie durch $X$ geht. Diese Faltlinie schneidet dann die horizontale Faltlinie durch $E$ im Punkt $H$. Nun faltet man $G$ so auf $\overleftrightarrow{EH}$, dass die Faltlinie durch $H$ geht.\\
Anschließend faltet man vertikale Faltungen durch die in der Skizze markierten Punkte $III$ und $IV$ und anschließend $II$ auf $I$, ohne aufzufalten.

\begin{figure}[h]
\includegraphics[width=10cm]{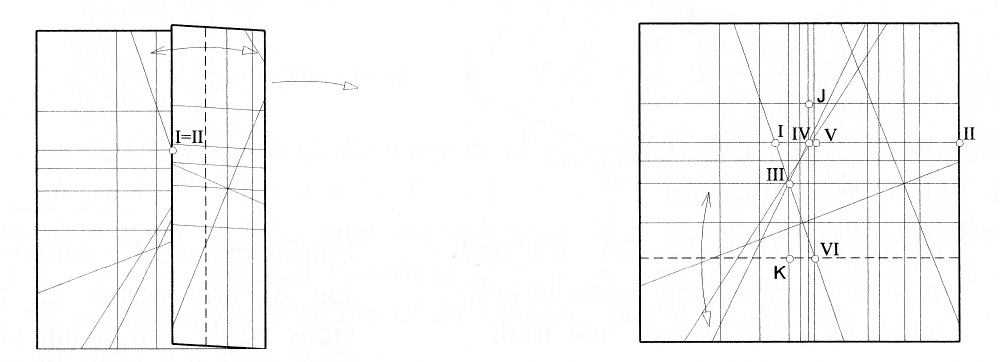}
\caption[\cite{ger02}]{Regelmäßiges 17-Eck 4}
\label{zul2}
\end{figure}
Nun faltet man die beiden Schichten entlang der vertikalen Faltung durch Punkt $D$ aus Schritt 1 und faltet alles auf.\\
Dann faltet man eine horizontale Linie durch $K$ (Lage entsprechend Skizze) ein.

\newpage
\begin{figure}[h]
\includegraphics[width=10cm]{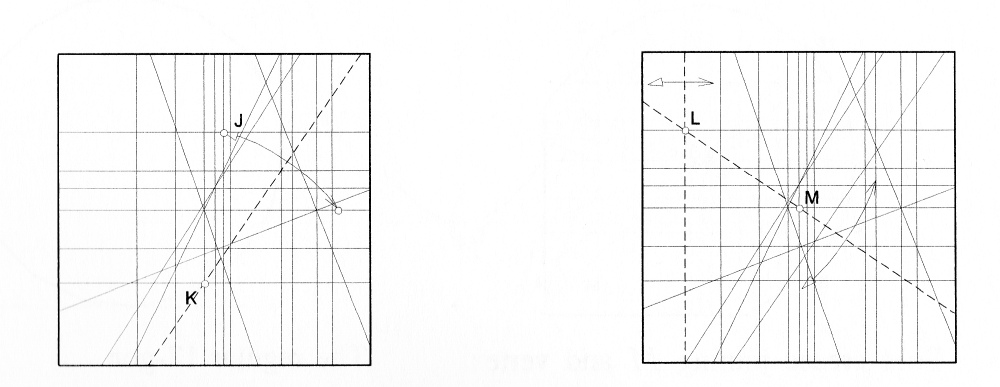}
\caption[\cite{ger02}]{Regelmäßiges 17-Eck 5}
\label{zul2}
\end{figure}
Nun faltet man $J$ (gelegen wie in der Skizze) so auf die horizontale Mittellinie, dass die Faltlinie durch $K$ verläuft.\\
Diese Faltlinie wird nun so auf sich selbst gefaltet, dass die neue Faltlinie durch $M$ verläuft. Diese Faltung schneidet die obere Viertellinie im Punkt $L$, und man faltet nun noch die Parallel zur linken Quadratkante durch $L$ ein.

\begin{figure}[h]
\includegraphics[width=10cm]{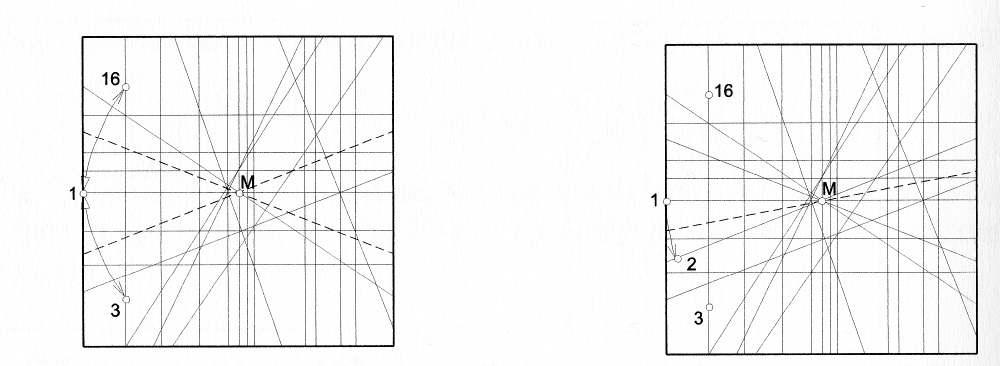}
\caption[\cite{ger02}]{Regelmäßiges 17-Eck 6}
\label{zul2}
\end{figure}
Der Punkt $1$ wird nun auf die gerade eingefaltete vertikale Falte gefaltet, dies geht nach unten oder nach oben und ergibt die Punkte $16$ und $3$.\\
Nun faltet man $1$ auf die gerade entstandene Falte, die $1$ auf $3$ abgebildet hatte. Dann ist der Bildpunkt von $1$ der Punkt $2$. $\Delta(M12)$ ist dann ein Bestimmungsdreieck des 17-Ecks. Ähnliche Symmetrieüberlegungen wie bei der vorhergehenden Konstruktion des 7-Ecks vervollständigen nun das 17-Eck.
\end{thm}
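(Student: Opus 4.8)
The plan is to mimic the setup of the 7-gon proof (Satz~17). I place the centre $M$ of the intended 17-gon at the origin, take the two Mittellinien of the square as the coordinate axes, and normalise the side length so that the circumscribed circle has radius $1$. Writing $\alpha=\tfrac{2\pi}{17}$, the whole claim reduces to showing that the final constructed point $2$ has polar angle $\alpha$ about $M$, that is $\measuredangle(1\,M\,2)=\tfrac{360^\circ}{17}$; equivalently, that a constructed vertex on the positive horizontal axis has abscissa $\cos\alpha$. A useful preliminary observation is that every fold occurring in the construction is of the type of Origami-Axiom~2, 4 or 5 (or the Lot-Korollar~1), and that none invokes Axiom~6. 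This is exactly what one expects: the regular 17-gon is constructible with Zirkel und Lineal by Gau\ss{} (Satz~15), and the construction at hand is a faithful translation of H.~W. Richmond's Zirkel-und-Lineal construction into Faltungen, so its correctness must ultimately rest on the elementary origami axioms already shown in Abschnitt~3.1 to reproduce all euclidean Grundkonstruktionen.

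First I would dispose of the preparatory folds. The Mittellinien, the Viertellinien, the Achtellinie and the fold of a Quadratseite onto a Mittellinie place the landmark points $A,B,C,D$ and $O,E,F$ at dyadic-rational coordinates such as $\tfrac{1}{4},\tfrac{1}{8},\tfrac{1}{16}$ of the side; all of these are immediate from Axioms~2 and 4. The remaining folds are then of two elementary kinds. A fold of the form ``lege Punkt $P$ auf die Gerade $\ell$, so dass die Faltlinie durch einen gegebenen Punkt geht'' is precisely Axiom~5, hence the intersection of $\ell$ with a circle, and is therefore governed by a \emph{quadratic} relation in the coordinates already obtained; a fold ``lege Gerade $g$ auf Gerade $h$'' is Axiom~4, a reflection, and is likewise algebraically quadratic (linear in the parallel case). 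I would therefore compute, one fold at a time and in the order of the six figures, the coordinates of the successive auxiliary points $F,G,H,X,K,L$, each new point being the solution of such a quadratic taken on the geometrically correct branch, down to the abscissa $x$ of the point $2$.

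The correctness of the endpoint I would establish by two mutually supporting arguments. The structural one is to identify the folds one-to-one with the steps of Richmond's construction: the point with ordinate $\tfrac{1}{4}$ (Richmond's $I$ with $OI=\tfrac{1}{4}$), the quartering of the angle whose tangent equals $4$, the $45^\circ$-offset producing $F$, the Thales circle over the corresponding diameter, and the circle about $E$ cutting the axis in the abscissae of the vertices. Each of these is a ZuL-operation, reproducible by the elementary origami axioms by Satz~3, so that Richmond's classical proof --- itself a reading of Gau\ss{}'s theory of the periods of the $17$th roots of unity --- transfers step for step. To make the statement self-contained I would, exactly as in the 7-gon proof, read off the coordinate $x$ of the point $2$ and verify that it is a root of the degree-$8$ minimal polynomial of $\cos\alpha$ over $\Q$, whose solution in nested radicals is precisely Gau\ss{}'s displayed formula for $\cos(360^\circ/17)$; among the roots, $x$ is singled out as the one lying in the correct interval by a positivity-and-size argument, as in Satz~17. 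This forces $x=\cos\alpha$.

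The genuine difficulty here is not conceptual but organisational: the construction stretches over six figures and more than a dozen folds, and propagating explicit coordinates through all of them --- while at each Axiom-5 fold justifying which of the two circle--line intersections is the intended one, and keeping the many auxiliary labels apart --- is long and error-prone. For this reason the economical route is to carry the global correctness on the identification with Richmond's construction and to perform the explicit coordinate computation only for the last vertex, where the interval argument pins down the root. Once $\measuredangle(1\,M\,2)=\tfrac{360^\circ}{17}$ is secured, the same Symmetrie\"uberlegungen as in the 7-gon case --- repeated Spiegelung of a vertex about the ray to the next --- produce the remaining fifteen vertices and complete the regular 17-gon.
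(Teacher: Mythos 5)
Your overall framing is reasonable, and your observation that every fold in the construction is of the type of Axiom 2, 4 or 5 (never Axiom 6) is correct and agrees with the paper. But the load-bearing step of your plan --- carrying global correctness on a one-to-one identification with Richmond's ZuL construction, and computing explicit coordinates only for the last vertex --- does not match what the folds actually do, and as written it leaves a genuine gap. Despite the preamble's loose description as an ``\"Ubersetzung'' of Richmond, the paper's proof never verifies Richmond's circles. It rests instead on one lemma plus Gauss's period chain: folding the point $(0,1)$ onto the line $y=-1$ so that the crease passes through $(-p,q)$ produces a crease whose slope is a root of $x^2+px+q=0$ (an Axiom-5 fold; the crease is the tangent through $(-p,q)$ to the parabola with this focus and directrix). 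With the periods $y_{1,2}$ (roots of $y^2+y-4=0$), $n_2$ (negative root of $n^2-y_1n-1=0$), $m_2$ (negative root of $m^2-y_2m-1=0$) and $v_1=\xi^2+\xi^{15}=2\cos\frac{4\pi}{17}$ (positive root of $v^2-n_2v+m_2=0$), the successive folds --- in suitably shifted and rotated coordinate frames --- produce creases of slopes $y_1$, $y_2$, $n_2$, $m_2$, then the point $K=(n_2|m_2)$, and finally a crease of slope $v_1$, from which $L$ lies at distance $\cos\frac{4\pi}{17}$ from the vertical Mittellinie and the vertices $3$, $16$ and $2$ follow by reflections. None of these folds quarters an angle of tangent $4$, produces a $45^\circ$ offset, or realizes a Thales circle, so the structural half of your argument would fail on inspection; and your fallback cannot be executed in isolation, because the coordinates of $J$, $K$, $L$ entering the last fold are exactly the quantities $y_1,y_2,n_2,m_2$ that the earlier folds produce --- you would have to propagate through all folds anyway, which is precisely what you hoped to avoid.

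Two further remarks. Your planned endgame (check that the final abscissa satisfies the degree-$8$ minimal polynomial of $\cos\frac{2\pi}{17}$ and isolate the root by an interval argument, as in the $7$-gon proof) is workable in principle but strictly heavier than necessary: in the paper's modular scheme every single fold is certified by a quadratic whose coefficients were constructed before, so no polynomial of degree greater than $2$ ever appears, and the branch choice at each Axiom-5 fold (two tangents through the given point) is settled by the sign data $n_2<0$, $m_2<0$, $v_1>0$ rather than by numerics. Also note that the final crease encodes $2\cos\frac{4\pi}{17}$, not $\cos\frac{2\pi}{17}$; the vertex $2$ is obtained only afterwards by folding $1$ onto the crease that carried $1$ to $3$, so the quantity a minimal-polynomial check should target is $\cos\frac{4\pi}{17}$, or else the final bisection must be included in the computation.
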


Bevor mit dem konkreten Beweis begonnen wird, sind noch einige theoretische Vorüberlegungen durchzuführen. Man nehme an, dass die Ecken des 17-Ecks als Lösung eines Polynoms in der komplexen Ebene gegeben sind. Dieses Polynom ist die Gleichung $z^17 -1=0$. Daher ist der Einheitskreis der Umkreis des 17-Ecks, und der Punkt $z_1=1$ auf der reellen Achse ist eine Ecke. Die anderen 16 Ecken sind daher die Lösung des Kreisteilungspolynoms

\[\frac{z^{17} -1}{z-1}=z^{16}+z^{15}+\dots+z+1=0\] 

und sie lassen sich schreiben als:
\[z_k=\cos\left(\frac{2\pi (k-1)}{17}\right)+i \sin\left(\frac{2\pi (k-1)}{17}\right), ~ k=1,2,\dots, 17\]

$z_2:=\xi$ ist dann eine primitive Einheitswurzel, und es gilt:
\[z_k=\xi^{k-1}, ~k=2,3,\dots,17\]
Geschicktes Umsortieren der Potenzen von $\xi$ erlaubt nun, diese Gleichung vom Grad 16 auf das Lösen von Gleichungen mit kleinerem Grad zurückzuführen. Gauß' Resultate beweisen sogar, dass all diese Gleichungen vom Grad höchstens 2 sein werden. Wiederholtes Quadrieren von $\xi$ liefert $\xi, \xi^2, \xi^4, \xi^8, \xi^{16}, \xi^{15}, \xi^{13},\xi^9$, und die fehlenden Potenzen ergeben sich durch wiederholtes Quadrieren von $\xi^3$ in der Reihenfolge $xi^3, \xi^6, \xi^{12}, \xi^7, \xi^{14}, \xi^{11}, \xi^5, \xi^{10}$. Seien nun 
\[y_1=\xi+ \xi^2+ \xi^4+ \xi^8+ \xi^{16}+ \xi^{15}+ \xi^{13}+\xi^9\]
und
\[y_2=xi^3+ \xi^6+ \xi^{12}+ \xi^7+ \xi^{14}+ \xi^{11}+ \xi^5+ \xi^{10}\]
Dann ergibt sich:
\begin{align*}
y_1+y_2 &= \sum\limits_{k=1}^{16}{\xi^k}=-1\\
y_1\cdot y_2 &= 4\cdot (y_1+y_2)=-4
\end{align*}
Das heißt, $y_1$ und $y_2$ sind die Nullstellen der Gleichung $y^2+y-4=0$ nach dem Satz von Vieta. Die Lösungen dieser Gleichung sind 
\[\frac{-1\pm\sqrt{17}}{2},\] 
und man kann leicht nachrechnen, dass $y_1$ diejenige mit Plus sein muss, daher ist $y_2$ die mit Minus.\\
Nun definiert man die Variablen $n_1=\xi+\xi^4+\xi^{16}+\xi^{13}$ und $n_2=\xi^2+\xi^8+\xi^{15}+\xi^9$, und offenbar ist $n_1+n_2=y_1$ und $n_1 n_2=-1$. Diese zwei Zahlen sind somit die Lösungen der quadratischen Gleichung$n^2-y_1n-1=0$, und $n_1$ ist die positive, $n_2$ die negative Lösung.\\

Völlig analog definiert man $m_1=\xi^3+\xi^{12}+\xi^{14}+\xi^5$ und $m_2=\xi^6+\xi^7+\xi^{11}+\xi^{10}$. Dann ist $m_1+m_2=y_2$ und $m_1 m_2=-1$, somit sind diese Zahlen die Lösungen der quadratischen Gleichung $m^2-y_2 m-1=0$. $m_2$ ist negativ, $m_1$ positiv.\\
In einem letzten Schritt definiert man die Variablen $v_1:=\xi^2+\xi^{15},~ v_2=\xi^8+\xi^9$. Nachdem $v_1+v_2=n_2$ und $v_1 v_2=m_2$ ist, sind diese Zahlen Lösungen der quadratischen Gleichung $v^2-n_2v+m_2=0$, wobei 
\[v_1=2\cos\frac{4\pi}{17}>0, ~~ v_2=2\cos\frac{16\pi}{17}<0\]
Ziel der Konstruktion muss also gewesen sein, die Zahl $v_1$ zu konstruieren und dann, durch diverse Symmetrieoperationen, das 17-Eck zu vervollständigen.

\begin{proof}
Warum aber funktioniert diese Faltung? Man erinnert sich zunächst daran, dass die Nullstellen einer quadratischen Gleichung $x^2+px+q=0$ die Steigung der Tangenten der Parabel mit Brennpunkt $(0,1)$ und Leitgeraden $l:y=-1$, die durch den Punkt $(-p,q)$ gehen. Diese lassen sich falten, indem man $F$ so auf $l$ faltet, dass die Faltlinien durch $P$ gehen, dafür gibt es im Allgemeinen zwei Möglichkeiten.\footnote{Das folgt aus den Konstruktionen für allgemeine dritte Wurzeln als Spezialfall}. Von dieser Idee wird in der gegebenen Konstruktion nun ausgiebig Gebrauch gemacht. Um $v_1$ als positive Lösung einer Gleichung $v^2-n_2v+m_2=0$ zu bestimmen, muss man also den Punkt mit Koordinaten $(n_2|m_2)$ konstruieren.$n_2$ ist aber die negative Lösung von $n^2-y_1n-1=0$, und $m_2$ ist die negative Lösung von $m^2-y_2m-1=0$, und daher ist es nun zunächst nötig, die Punkte $y_1$ und $y_2$ als die Lösungen von $y^2+y-4=0$ zu bestimmen. Das passiert in den ersten beiden Schritten der Konstruktion. Sei die Seitenlänge des ursprünglichen Quadrates 8, und auf Grund der Halbierungen in Schritt 1 kann man schließen, dass $A=(0|1),B=(-1|-4)$, wobei die x-Achse als nach rechts positiv, die y-Achse als nach oben positiv orientiert angenommen wird.Die Gerade, auf der $B$ liegt, hat somit die Gleichung $y=-1$, und die Faltung von $A$ auf diese Gerade sodass die Faltung durch $B$ geht konstruiert eine Gerade mit Steigung $y_1$.\\
Auf Grund der endlichen Größe des Blattes kann man so keine Gerade mit Steigung $y_2$ falten, aber da die Punkte $C$ und $D$ relativ zueinander genauso liegen wie $A$ und $B$, kann man genauso annehmen, dass $C=(0|1)$ und $D=(-1|-4)$, und $C$ so auf die Gerade $y=-1$ falten, dass die Faltung durch $D$ geht, und daher entsteht so eine Gerade mit Steigung $y_2$.\\
Im dritten Schritt faltet man eine Gerade mit Steigung $n_2$. Da $n_2$ die Gleichung $n^2-y_1n-1=0$ löst, muss man Punkte $F=(0|1),P=(y_1|-1)$ in einem noch festzulegendem Koordinatensystem wählen. Man wählt die x-Achse nach oben orientiert, y-Achse nach links und $B$ als Ursprung. Die Wahl von $E$ wie skizziert liefert einen Punkt $F$ mit x-Koordinate um 2 größer als die von $E$. Man kann daher annehmen, dass $E=(0|1),F=(y_1|-1)$ die Koordinaten der Punkte $E,F$ in diesem System sind. Das Falten von $E$ auf die vertikale Faltlinie durch $F$ so, dass die neue Faltlinie durch $F$ geht, liefert eine Faltung mit Steigung $n_2$ in diesem System.\\
In den Schritten 4 und 5 faltet man eine Gerade mit Steigung $m_2$ in dem Koordinatensystem der Schritte 1 und 2.Durch Falten der Gerade mit Steigung $y_2$ auf sich selbst erhält man eine Gerade mit Steigung $-1/y_2$, die x-Koordinate von $H$ ist daher um $|y_2|$ kleiner als die von $G$, und da $G$ auf der waagerechten Faltung 2 Einheiten höher als die durch $H$ liegt, kann man schließen, dass $G=(0|1),H=(y_2,1)$. $m_2$ ist die negative Lösung der Gleichung $m^2-y_2m-1=0$, und durch Falten von $G$ auf die waagerechte Gerade durch $H$ so, dass die Faltung durch $H$ geht, liefert daher eine Gerade mit Steigung $m_2$.\\
Nun kann man daran gehen, $v_1$ zu bestimmen, diese Zahl ist eine Lösung der quadratischen Gleichung $v^2-n_2v+m_2=0$, und ist daher Steigung einer Geraden, die durch Faltung von $(0|1)$ auf die Gerade $y=-1$ so, dass die Faltlinie durch den Punkt $(n_2|m_2)$ in einem geeigneten Koordinatensystem geht. Dies geschieht in den Schritten 6 bis 9.\\
Die waagerechte Gerade drei Einheiten von der oberen Kante entfernt schneidet die Gerade mit Steigung $m_2$ in $I$ und die rechte Kante des Quadrats in $II$. Die Faltung von $II$ auf $I$ und das Weiterfalten dieser Faltung in Schritt 7 führt zu einer vertikalen Geraden, die $\overleftrightarrow{I~II}$ in $V$ schneidet, und die Gerade mit Steigung $m_2$ in $VI$. Diese Gerade wäre auch ohne das doppelte Falten konstruierbar, nur etwas umständlicher. Da $I$ und $V$ eine Einheit voneinander entfernt sind, müssen $V$ und $VI$ genau $|m_2|$ Einheiten voneinander entfernt sein, und die waagerechte Faltung in Schritt 8 durch $VI$ liegt $m_2$ Einheiten unter $\overleftrightarrow{I~II}$.\\
Diese Gerade $\overleftrightarrow{I~ II}$ schneidet aber die Gerade mit Steigung $n_2$ (in einem gedrehten Koordinatensystem) in $IV$. Die waagerechte Gerade eine Einheit unter $\overleftrightarrow{I\quad II}$ schneidet diese Gerade in $III$, und die vertikale Geraden durch $III$ und $IV$ sind $|n_2|$ Längeneinheiten voneinander entfernt. Daher ist der Punkt $K$, in dem sich die horizontale Gerade durch $VI$ und die vertikale Gerade durch $II$ schneiden, $|n_2|$ Einheiten links und $|m_2|$ Einheiten unterhalb des Punktes $IV$. Wählt man also $IV$ als Ursprung eines neuen Koordinatensystems, mit der x-Achse nach rechts und der y-Achse nach oben orientiert, hat $K$ die Koordinaten $(n_2|m_2)$. Der Punkt $J$, in welchem die vertikale Gerade durch $IV$ die horizontale Gerade eine Einheit unterhalb von $\overleftrightarrow{I~ II}$ schneidet, ist durch die Gleichung $y=-1$ in diesem Koordinatensystem gegeben. Die Faltung, die durch Falten von $J$ auf diese Gerade so, dass die Faltung durch $K$ verläuft, löst daher die Gleichung, und ihre Steigung ist 
\[v_1=2\cos\frac{4\pi}{17}\]
Die Faltung senkrecht zu dieser durch den Mittelpunkt $M$ des Quadrates in Schritt 10 hat also die Steigung $\-\frac{1}{2\cos\frac{4\pi}{17}}$. Da der Umkreis des 17-Ecks der Inkreis des Papierquadrates sein soll, nehme man nun an, dass die Kanten des Quadrates $2$ Einheiten lang sind. Die Entfernung von $M$ zur horizontalen Geraden durch $J$ ist dann eine halbe Längeneinheit, und diese Gerade schneidet die Gerade mit Steigung $\-\frac{1}{2\cos\frac{4\pi}{17}}$ in einem Punkt $L$, dessen Entfernung von der vertikalen Geraden durch den Mittelpunkt des Papierquadrates $\cos\frac{4\pi}{17}$ ist.\\
Nimmt man an, dass der Punkt $1$ in Schritt 11, in dem sich die horizontale Linie durch den Mittelpunkt des Papierquadrates und dessen linke Kante schneiden, eine Ecke des 17-Ecks ist, dann erhält man die Ecken $3$ und $16$ durch Falten von $1$ auf die vertikale Gerade durch $L$ so, dass die Faltlinie durch $M$ geht. Die dabei entstehenden Faltlinien enthalten die Ecken $2$ und $17$. Das Falten von $1$ auf die untere Faltlinie ergibt daher die Ecke $2$. $\Delta(M12)$ ist somit ein Bestimmungsdreieck des 17-Ecks.
\end{proof}
Diese Konstruktion soll das Kapitel abschließen. Im nächsten Kapitel soll ein Einblick in mögliche Erweiterungen des vorgestellten Origami-Axiomensystems gegeben werden.

\newpage
\section{Runde Faltungen und die Konstruktion von $\pi$}\footnote{\cite{hull}}
Über den Verlauf dieser Arbeit war sehr schön zu erkennen, wie die Menge der konstruierbaren Objekte von der Wahl des zugrunde liegenden Werkzeuges abhängt. Natürlich kann man auch das vorgestellte Origami-Axiomensystem um weitere Grundkonstruktionen erweitern.\\
Runde Faltungen erscheinen zunächst sehr spannend, da man sich wohl gar nicht so richtig vorstellen kann, wie man diese realisieren soll. Es ist schon schwierig genug, solche Faltungen überhaupt praktisch zu definieren, gleichwohl die Theorie dahinter relativ wenig Probleme macht. Die runden Faltungen bringen jedoch neue Möglichkeiten ins Origami, zum Beispiel wird es nun einfach, die über $\Q$ transzendente Zahl $\pi$ zu konstruieren.\\

Wie faltet man nun überhaupt einen Kreis? Auf jeden Fall können Kreise nicht flach gefaltet werden, sodass sie nicht einfach durch das Aufeinanderfalten von Geraden oder Punkten entstehen können. Man könnte sich mit Werkzeugen behelfen, zum Beispiel mit einem Stechzirkel die Kreislinie in das Papier einritzen. Aber eigentlich ist Origami nur Falten mit Hilfe der eigenen Finger. Eine mögliche Methode zur Faltung eines Kreises ist die folgende:
\begin{figure}[h]
\includegraphics[width=6cm]{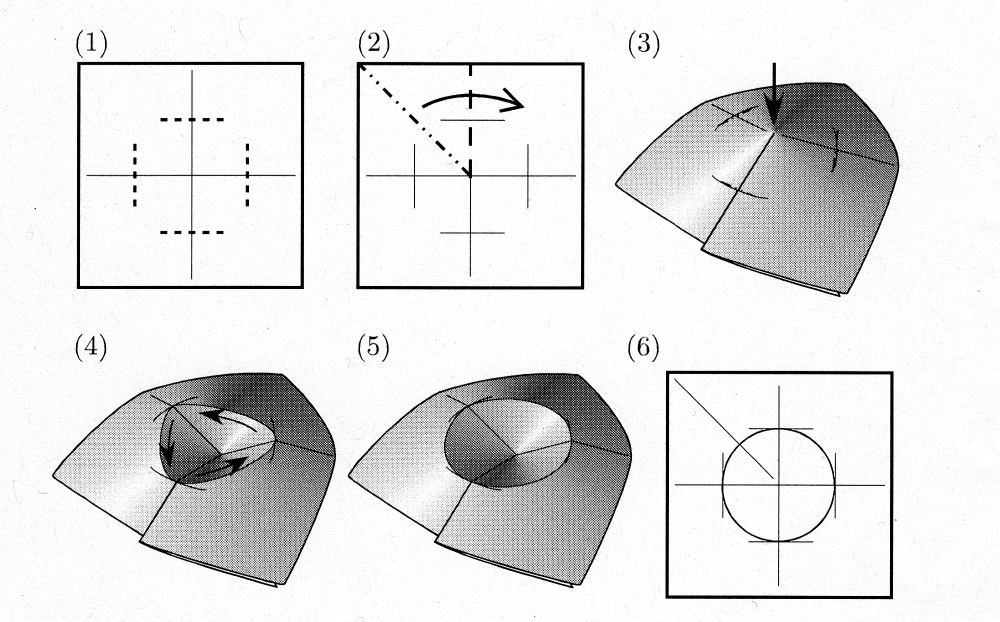}
\caption[\cite{hull}, Seite 4]{Falten eines Kreises}
\label{zul2}
\end{figure}
\bi
\item Man faltet in das quadratische Papier wie dargestellt die Mittellinien als Koordinatenachsen und den Radius entlang der beiden Achsen ein.
\item In einen Quadranten faltet man die Diagonale und das Blatt dann entlang dieser. So entsteht eine Art Kegel.
\item Die Kegelspitze wird nun nach unten gedrückt.
\item Die Faltungen nähert man nun mit den Fingern an einen Kreis mit dem gegebenen Radius an.
\item Mit viel Übung funktioniert das recht gut.
\item Es ergibt sich, wenn man das Blatt wieder flach faltet, der gesuchte Kreis.
\ei
Zugegeben, das Ergebnis sieht erst nach viel Übung nach einem Kreis aus, ein etwas stabileres Papier vereinfacht die Konstruktion beträchtlich. Es erscheint jedoch illusorisch, mit Hilfe nur der Hände einen exakten Kreis falten zu können, alleine eine Sprache zur Beschreibung derartiger Konstruktionen zu entwickeln, dürfte eine größere Aufgabe sein. Aber, angenommen, man könnte einen sauberen Halbkreis falten, dann ergäbe sich sofort eine sehr einfache Möglichkeit zur Konstruktion von $\pi$.

\begin{thm}{Falten von $\pi$}\\
\begin{figure}[h]
\includegraphics[width=8cm]{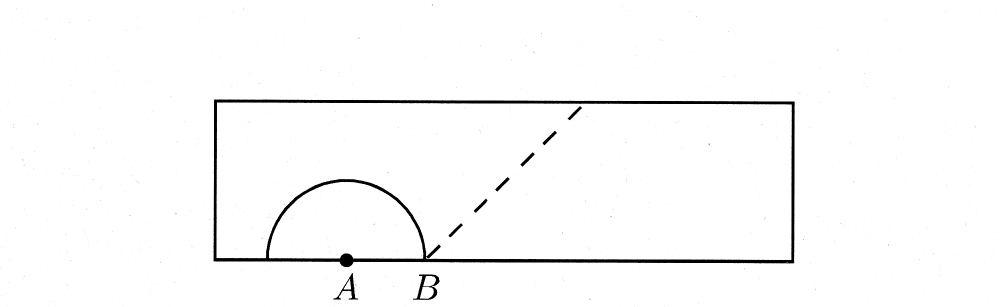}
\caption[\cite{hull}, Seite 2]{Falten von $\pi$ 1}
\label{zul2}
\end{figure}

Man faltet zunächst einen Halbkreis mit Radius 1 als Talfalte um einen Punkt $A$ auf der langen Seite eines Papierstreifens und markiert am rechten Schnittpunkt mit der Blattkante den Punkt $B$. Als nächstes faltet man eine Talfalte von einem Ende des Halbkreises aus, die einen Winkel von $45^\circ$ mit der Blattkante einschließt, und faltet diese nicht auf! Nun faltet man wie in folgender Skizze die Blattkante entlang des Tal-Halbkreises:

\begin{figure}[h]
\includegraphics[width=6cm]{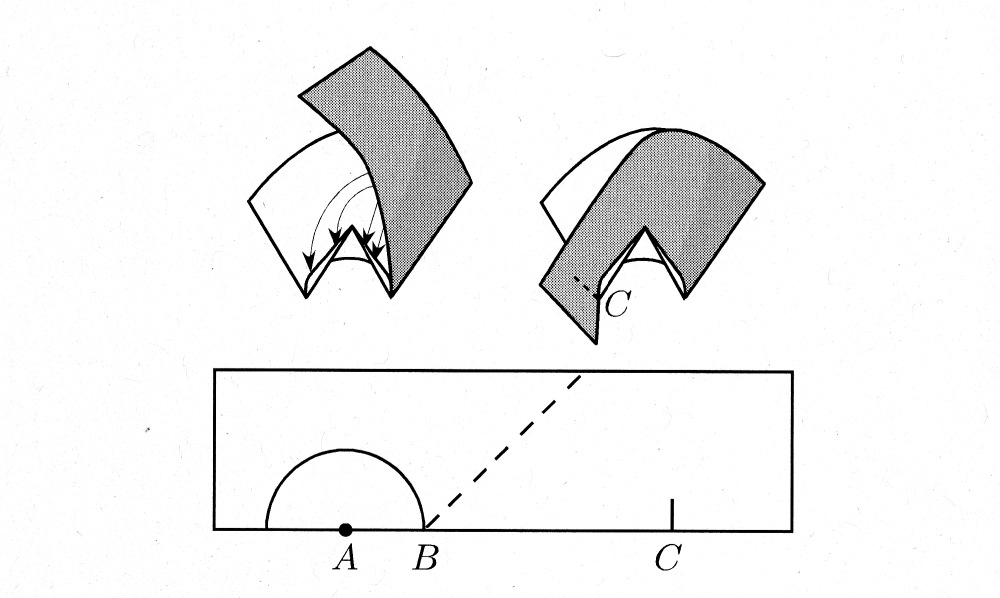}
\caption[\cite{hull}, Seite 3]{Falten von $\pi$ 2}
\label{zul2}
\end{figure}
\newpage
Zur weiteren Verdeutlichung hier eine Fotografie der Faltprozedur an dieser Stelle:
\begin{figure}[h]
\includegraphics[width=8cm]{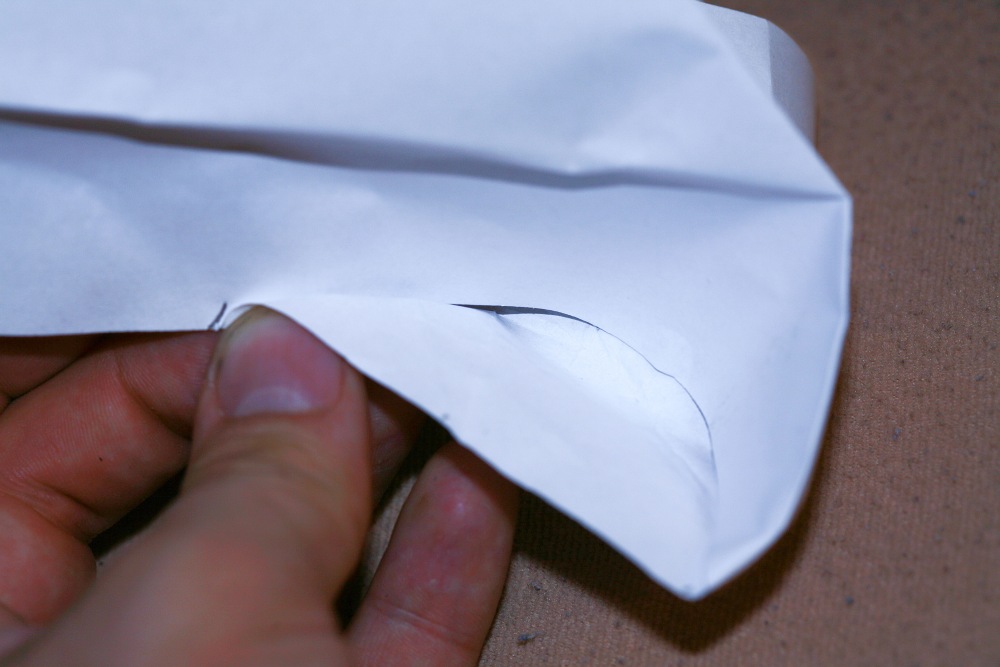}
\caption[Eigene Fotografie]{Falten von $\pi$ 3}
\label{zul2}
\end{figure}

Man folgt also nun mit der Blattkante der Kreislinie, und da, wo die Blattkante  beim linken Schnittpunkt des Kreises mit der Kante, ankommt, markiert man den Punkt $C$. Dann ist $BC=\pi$.
\end{thm}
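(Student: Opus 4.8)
The plan is to recognise the whole procedure as a physical \emph{rectification} of the semicircle and to reduce the claim to the single elementary fact that a semicircular arc of radius $1$ has length $\pi \cdot 1 = \pi$. The starting mark $B$ sits at the right endpoint of the arc on the strip's edge, and the rolling lays the initially straight edge against the fixed circular crease until the contact point has travelled all the way to the left endpoint of the arc; the point $C$ is then, by construction, the point of the edge that finally coincides with this left endpoint. So everything comes down to showing that the edge-distance $BC$ equals the length of the arc swept during the rolling, namely $\pi$.

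First I would make the rolling precise as a limit of folds. Parametrise the semicircular crease by arc length $s \in [0,\pi]$, starting at $B$. Approximate the arc by an inscribed polygonal path with $n$ segments and replace the continuous roll by $n$ successive folds: at each step the edge is reflected across the line of the current polygon segment, which lays that segment flat against the edge. Since a reflection is an isometry, each such fold transfers exactly the length of one polygon segment onto the straightened edge, so after all $n$ folds the point of the edge touching the left endpoint lies at edge-distance equal to the total polygonal length measured from $B$. Letting $n \to \infty$, the inscribed polygonal length converges to the arc length, whence $BC = \int_0^\pi ds = \pi$.

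Alternatively I would invoke the classical kinematic principle that, when a line rolls without slipping along a curve, the contact points trace equal arc lengths on the two objects, because their instantaneous tangential velocities agree. Rolling the straight edge once across the semicircle therefore matches a piece of the edge, of length equal to the arc, onto the curve; as the arc measures $\pi$, the traced segment $BC$ measures $\pi$. The auxiliary $45^\circ$ crease enters neither computation: it merely fixes a convenient starting flap for the roll and can be ignored in the argument, as can the actual width of the strip.

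The hard part will be turning the intuitive ``rolling without slipping'' into something rigorous, because a single fold is a reflection rather than a continuous motion, and one must argue that no length is gained or lost where consecutive folds meet. The polygonal limiting argument above is the cleanest remedy: it replaces the elusive continuous roll by finitely many honest reflections whose contributions are manifestly additive and length-preserving, and it exhibits clearly that only the radius of the crease --- and hence only the value $\pi$ --- survives in the limit.
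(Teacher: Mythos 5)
Your proposal is correct, and its core identity is exactly the one the paper uses: $BC$ is the rectified perimeter of a semicircle of radius $1$, hence has length $\pi$. The difference lies in how much is justified. The paper's own proof consists of two sentences --- it declares it \emph{offenbar} (obvious) that $BC$ equals the circumference of the half-circle and concludes $BC=\pi$ --- whereas you supply precisely the step the paper waves through: a rigorous account of why the rolling procedure transfers arc length onto the straight edge. Your polygonal-approximation argument (approximate the crease by an inscribed $n$-gon, realise the roll as $n$ reflections, use that each reflection is an isometry so segment lengths add up on the edge, then pass to the limit where inscribed polygonal length converges to arc length) is a genuine strengthening, and it is well matched to the paper's own admission elsewhere in the chapter that circular foldings lack an axiomatic foundation --- your limit of honest straight folds is exactly the kind of reduction to the established straight-fold framework that such a foundation would require. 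You are also right that the $45^\circ$ crease and the strip width play no role in the mathematics; the paper likewise never uses them in its proof, they only stabilise the physical execution. What the paper's terseness buys is an honest reflection of the informal status of the construction; what your version buys is an actual proof, at the modest cost of having to say what ``rolling without slipping'' means --- which your reflection-limit argument handles cleanly, since additivity and length preservation are manifest at each finite stage.
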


\begin{proof}
Offenbar ist die Strecke $BC$ der Umfang eines Halbkreises mit Radius 1. Damit hat sie die Länge $\pi$.
\end{proof}
In der Praxis ist diese Faltung absolut nichttrivial in der Durchführung. Bei dem Versuch, der oben fotografiert wurde, erhielt der Autor einen Wert von $\pi\approx 3,03$ und somit einen relativen Fehler von $3,6\%$. Dies verdeutlicht, wie viel Arbeit wohl noch in die Erforschung kreisförmiger Faltungen und in die Systematisierung einer Falttechnik dafür gesteckt werden muss, bis man diese axiomatisch sauber begründen und dann mathematisch untersuchen kann. Wünschenswert wäre das auch insofern, da, sobald man $\pi$ konstruieren könnte, natürlich auch das Problem der Quadratur des Kreises lösbar wäre.

\newpage
\section{Origami und die Schere}

Eines der überraschendsten Ergebnisse der mathematischen Untersuchungen von Origami ist das Fold and One-Cut-Theorem. Es ist so beeindruckend, dass man schon fast an einen Zaubertrick glauben könnte. Zunächst soll der Satz formuliert werden, um danach zu präzisieren, was damit genau gemeint ist, und danach soll ein Beweis skizziert werden.

\begin{defn}
Eine flache Faltung eines Blattes ist eine Faltung, an deren Ende nur noch parallele Papierschichten übereinander liegen, die man sich als idealisiert in eine Ebene gepresst vorstellen kann.\footnote{\cite{rourke}, Seite 57}
\end{defn}

\begin{thm}{Fold and One-Cut}\footnote{\cite{rourke}, Seite 78}\\
Jede Figur, die durch gerade Strecken begrenzt wird (Streckenzug), kann so flach gefaltet werden, dass eine gerade Schnittlinie existiert, sodass die Schere beim Schneiden entlang dieser Linie genau diese Figur(en) ausschneidet (und sonst nichts!).
\end{thm}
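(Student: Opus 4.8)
Der Plan ist, das Schneiden entlang einer einzigen Geraden $L$ in eine Aussage über \emph{flache Faltungen} zu übersetzen. Eine gerade Schnittlinie erzeugt nämlich genau dann exakt den gegebenen Streckenzug, wenn es eine flache Faltung des Blattes gibt, die jede Kante des Streckenzuges auf eine gemeinsame Gerade $L$ abbildet und keinen weiteren Teil des Papiers auf $L$ zu liegen bringt. Denn der Schnitt trifft im gefalteten Zustand genau diejenigen Papierschichten, die auf $L$ liegen; faltet man anschließend wieder auf, so trennt er das Papier genau entlang der Urbilder dieser Schichten, also entlang des Streckenzuges. Die eigentliche Aufgabe reduziert sich damit darauf, zu einem beliebigen Streckenzug ein flach faltbares Faltmuster anzugeben, das sämtliche Kanten auf eine einzige Gerade kollabiert.

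Zur Konstruktion dieses Faltmusters würde ich das Konzept des \emph{geradlinigen Skeletts} (straight skeleton) heranziehen. Die grundlegende Beobachtung ist, dass die Spiegelung an der Winkelhalbierenden zweier benachbarter Kanten die Trägergerade der einen Kante auf die der anderen abbildet --- genau das leistet Origami-Axiom 4b. Lässt man alle Kanten des Streckenzuges gleichzeitig und mit gleicher Geschwindigkeit parallel nach innen wandern, so durchlaufen die Eckpunkte gerade solche Winkelhalbierenden, und die Gesamtheit ihrer Spuren bildet das geradlinige Skelett. Dessen Kanten liefern die Hauptfaltlinien: Faltet man entlang ihnen, so werden durch die sukzessiven Spiegelungen alle Kanten des Streckenzuges auf eine gemeinsame Gerade gelegt. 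Hinzu kommen \emph{Lotfaltungen}, also Faltlinien senkrecht zu den Kanten, die an den Verzweigungspunkten (den \emph{Ereignissen}) des Skeletts angesetzt werden und dafür sorgen, dass das überstehende Papier flach zur Seite gelegt wird, statt auf $L$ zu bleiben.

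Der nächste Schritt ist der Nachweis, dass dieses Muster tatsächlich flach faltbar ist. Lokal, an jedem inneren Knoten, sind dafür zwei Bedingungen zu prüfen: die Bedingung von Kawasaki, nach der die alternierende Summe der an einem Knoten anliegenden Winkel verschwindet (benachbarte Winkel summieren sich also zu $180^\circ$), und die Bedingung von Maekawa, nach der sich die Anzahlen der Berg- und Talfalten an einem Knoten um genau $2$ unterscheiden. Dass Kawasaki erfüllt ist, folgt strukturell daraus, dass die Faltlinien ausschließlich aus Winkelhalbierenden und Loten bestehen, sodass an jedem Knoten gegenüberliegende Winkel paarweise übereinstimmen.

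Der eigentliche Knackpunkt --- und die zu erwartende Hauptschwierigkeit --- ist die \emph{globale} Realisierbarkeit: Aus der lokalen Faltbarkeit an allen Knoten folgt noch nicht, dass sich das Papier widerspruchsfrei und überschneidungsfrei in die Ebene legen lässt. Man muss eine konsistente Berg-Tal-Zuordnung \emph{und} eine zulässige Stapelreihenfolge der Papierschichten angeben, sodass keine zwei Schichten einander durchdringen. Besonders bei nicht-konvexen Streckenzügen mit einspringenden Ecken verzweigt sich das Skelett stark, und die Schichtung wird unübersichtlich; hier liegt der technisch aufwendigste Teil. Eine alternative Konstruktion, die diese globale Hürde eleganter umgeht, ist die \emph{Kreispackungs-Methode}, bei der das Blatt so mit Kreisscheiben gefüllt wird, dass in den verbleibenden Lücken jeweils ein bekanntes, flach faltbares Elementarmuster eingesetzt werden kann. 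Für einen bloß skizzierten Beweis genügt es jedoch, die Skelett-Konstruktion anzugeben und die Existenz einer zulässigen Schichtung als den entscheidenden, separat zu begründenden Punkt zu benennen.
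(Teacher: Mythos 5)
Ihre Reduktion ist korrekt und deckt sich mit dem Ausgangspunkt der Arbeit: Der gerade Schnitt schneidet genau dann den Streckenzug aus, wenn eine flache Faltung alle Kanten auf eine gemeinsame Gerade legt und sonst nichts dorthin bringt. Ihr Konstruktionsweg ist danach aber ein genuin anderer als der der Arbeit. Sie schlagen die Methode des geradlinigen Skeletts mit Lotfaltungen vor; die Arbeit folgt dagegen dem Kreispackungs-Beweis von Bern, Demaine, Eppstein und Hayes (2002, korrigiert durch Bern und Hayes 2009): Kreisscheiben werden so gepackt, dass jede Kante des Graphen $PR$ Vereinigung von Radien ist und nur 3- und 4-L\"ucken verbleiben; die induzierten Drei- und Vierecke werden als Hasenohr- bzw. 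Kreuzblech-Molek\"ule gefaltet; \"uber die B\"aume $T_C$ und $T_M$ wird eine Berg-Tal-Zuordnung gefunden, die Maekawas Satz an jeder Ecke erf\"ullt, und die Schichtung wird durch $\epsilon$-S\"aume ("`B\"ucher in B\"uchern"') und H\"ohenreduktion widerspruchsfrei gemacht.

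Der entscheidende Punkt Ihres Vorschlags bleibt jedoch unbewiesen, und zwar exakt derjenige, an dem der historische Skelett-Beweis gescheitert ist. Sie stellen selbst fest, dass aus der lokalen Faltbarkeit (Kawasaki, Maekawa) die globale Existenz einer \"uberschneidungsfreien Schichtung nicht folgt, und verschieben diesen Nachweis auf einen "`separat zu begr\"undenden Punkt"'. Bei der Skelett-Methode ist das kein nachtr\"agliches Detail: Die Lotfaltungen werden an jeder Skelettkante reflektiert und k\"onnen dabei spiralf\"ormig weiterlaufen; weder ihre Terminierung noch eine konsistente Stapelreihenfolge der Schichten wurde im urspr\"unglichen Ansatz von Demaine, Demaine und Lubiw (1999) vollst\"andig nachgewiesen -- die Arbeit vermerkt ausdr\"ucklich, dass dieser Beweis nicht vollst\"andig war und dass gerade deshalb der Kreispackungs-Beweis (samt der Korrektur von 2009, die sich schneidende Verklebungen durch $\epsilon$-S\"aume repariert) herangezogen wird. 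Ihr Hinweis auf die Kreispackung als Alternative ist also sachlich richtig, bleibt bei Ihnen aber ein blo{\ss}er Verweis ohne Konstruktion. Als Beweis hat der Vorschlag damit eine echte L\"ucke: Die gesamte Last des Satzes liegt in der globalen Schichtungsfrage, und genau diese wird weder durch Ihre Skelett-Konstruktion gel\"ost noch durch die nur genannte Alternative ersetzt.
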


Dieser Satz klingt so unglaublich, dass er noch einmal grafisch aufbereitet werden soll:

\begin{figure}[h]
\includegraphics[width=8cm]{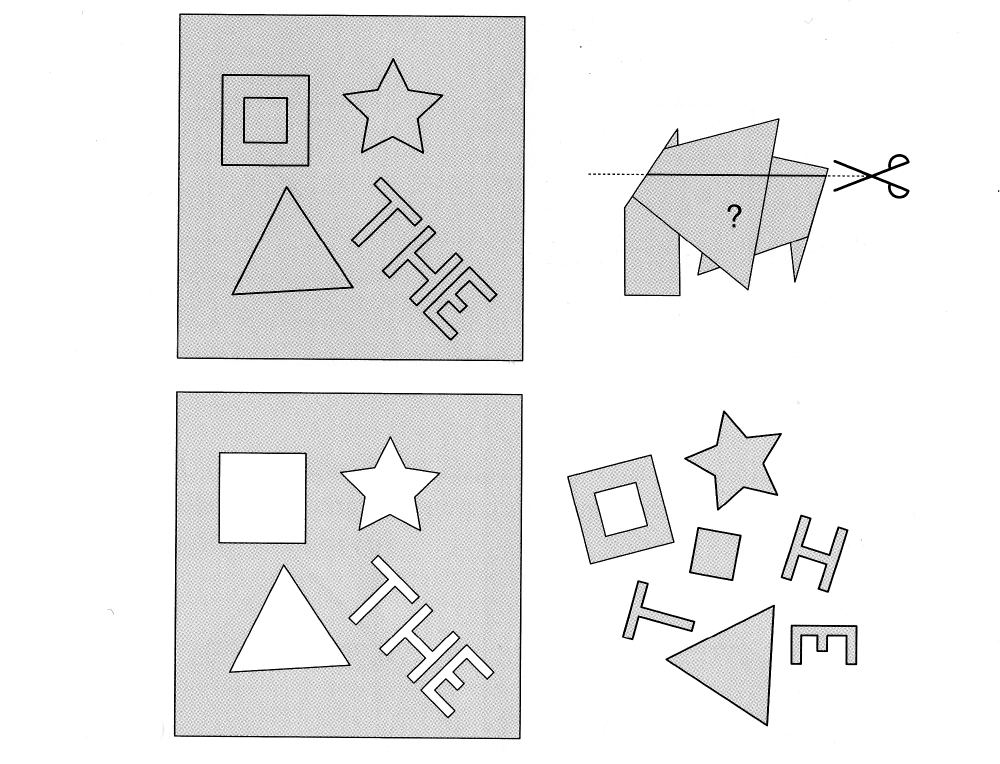}
\caption[\cite{rourke}, Seite 73]{Fold and One Cut}
\label{zul2}
\end{figure}

Man zeichnet also beliebige Figuren, bestehend aus geraden Linien, auf, dann existiert irgendeine Faltung, die es erlaubt, dass man mit einem geraden Schnitt genau die Umrisse, die man aufgezeichnet hat, ausschneidet. Für den Rest der Arbeit ist mit der Abkürzung "`FaoC-Faltung"' eine Faltung gemäß dieses Satzes gemeint.\\
Der Satz an sich ist ein recht neues Resultat aus dem 21. Jahrhundert. Dass eine solche flache Faltung für gewisse Körper möglich ist, ist aber schon länger bekannt. In seinem Buch "`Paper Magic"' aus dem Jahre 1922 beschrieb der bekannte Zauberkünstler Houdini eine FaoC-Faltung für einen fünfzackigen Stern. Dieses Faltmuster ist recht einfach nachzufalten und nachzuvollziehen, da es auf einfachen Symmetrieüberlegungen des {\em regulären} 5-Ecks basiert\footnote{ebd, Seite 73}:
\begin{figure}[h]
\includegraphics[width=8cm]{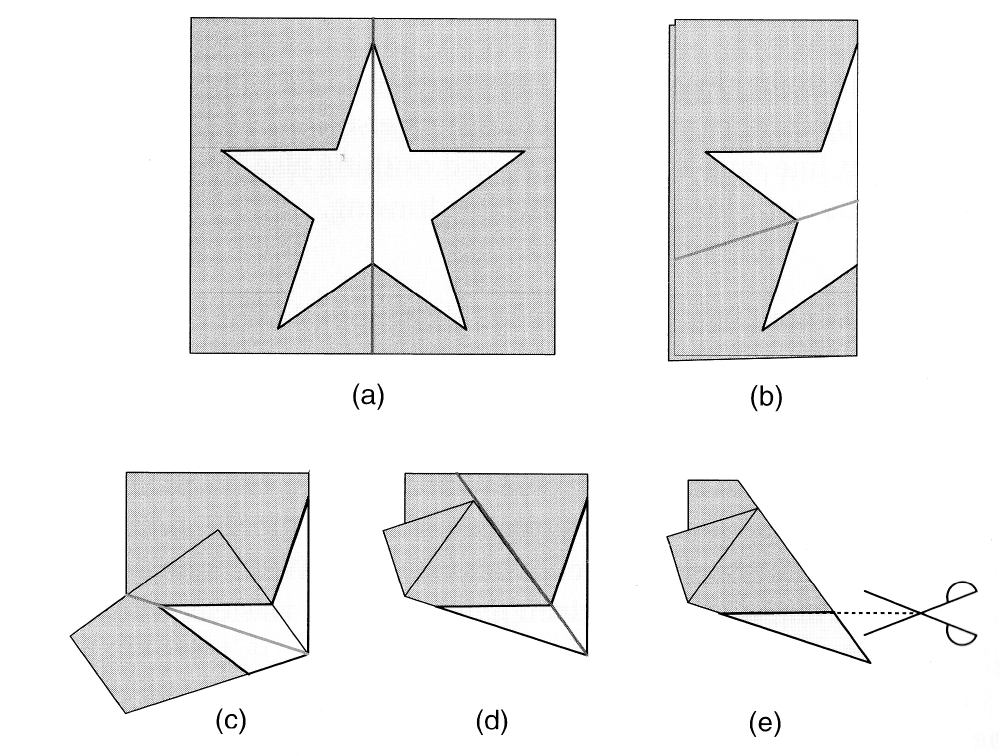}
\caption[\cite{rourke}, Seite 74]{FoaC-Faltung des Pentagramms.}
\label{zul2}
\end{figure}

So faszinierend der Satz klingt, so kompliziert ist sein Beweis. Dieser durchlebte eine längere Entwicklungsgeschichte, bis sämtliche möglichen Fälle abgedeckt waren.\footnote{\cite{rourke}, Seite 144} Eine erste Version des Beweises von Demaine, Martin, Demaine und Lubiw aus dem Jahre 1999\footnote{"`Folding and one straight cut suffice"' in "`Proceedings of the 10th Annual ACM-SIAM Symposium Discrete Algorithms, Seiten 891-892} war nicht vollständig. Ein prinzipiell vollständiger Beweis, der Kreispackungen verwendet, wurde 2002 von Bern, Demaine, Eppstein und Hayes gegeben\footnote{\cite{ber02}}, dieser enthielt jedoch kleinere Fehler, die von Bern und Hayes 2009 in einem Vortrag bei einer Konferenz in Sao Paolo korrigiert werden konnten\footnote{\cite{ber09}}. Den beiden letzteren Artikeln soll im nun anstehenden Beweis gefolgt werden. Dabei bezeichnet im Folgenden das Wort "`Ecke"' gelegentlich auch einen Schnittpunkt mehrerer Faltlinien in der Ebene.

\begin{proof}
Der Beweis folgt zunächst \cite{ber02}. Gegeben ist also ein Polygon $P$, dass zunächst Löcher haben kann oder auch aus mehreren, nicht zusammenhängenden Komponenten bestehen kann, und ein Rechteck $R$, das groß genug ist, um $P$ strikt zu enthalten, das heißt, $P$ soll den Rand von $R$ nicht berühren.. Die Beweisstrategie ist nun, Kreisscheiben so auf $R$ zu packen, dass die Mittelpunkte eine Aufteilung in Drei- und Vierecke des Polygons $P$ ergeben. Man faltet dann jedes Dreieck oder Viereck innerhalb von $P$ nach oben, und jedes außerhalb nach unten aus der Papierebene, so, dass die benachbarten Polygone zu dieser Orientierung der Faltungen passen. Ein geraden Schnitt durch die Papierebene trennt dann Inneres von äußerem.\footnote{\cite{ber02}, Introduction}\\

\subsection{Kreisscheiben-Überdeckung}
Sei nun $PR$ der ebene, aus geraden Linien bestehende Graph, der durch Vereinigung von der Ränder von $P$ und $R$ entsteht. In diesem Absatz soll skizziert werden, wie man Kreisscheiben so auf diesen Graph packt, dass folgende Forderungen erfüllt sind:

\bi
\item Jede Kante von $PR$ ist eine Vereinigung von Radien von Kreisscheiben.
\item Die Kreisscheiben induzieren eine Aufteilung von $R$ in Drei- und Vierecke.
\ei

Man beginnt mit Kreisscheiben, die maximal die Ränder gemeinsam haben. Bezeichne "`Lücke"' eine zusammenhängende Teilmenge von $R$ ohne die Kreisscheiben. Eine Lücke, die von 3 Bögen umrandet ist, nennt man eine 3-Lücke, eine von 4 Bögen beschränkte eine 4-Lücke.\footnote{ebd} Disk-Packing\\
Man beginnt nun damit, eine Kreisscheibe mit dem Mittelpunkt auf jede Ecke $v$, auch die von $R$ zu legen, und wähle als Radius die Hälfte der Entfernung zur $v$ nächstliegenden, nicht auf $v$ liegenden Ecke. Es entsteht somit eine Unterteilungsecke von Grad 2 mit spitzem Winkel bei jedem Schnittpunkt einer Kreisscheibe und einer Kante von $PR$.
Nun betrachtet man die Kanten von $PR$, die noch nicht mit Kreisen bedeckt sind. Man nennt eine solche Kante überfüllt, wenn eine Kreisscheibe mit dieser Kante als Durchmesser eine solche Kreisscheibe einer anderen Kante des Restes von $PR$ schneidet. Man teilt diese nun so weit auf, bis keine Kante mehr überfüllt ist. Dann fügt man die Kreisscheiben mit den Durchmessern dieser neuen Kantensegmente hinzu, so dass jede Kante Vereinigung von Durchmessern ist, wie gefordert. Streng genommen würde es sogar ausreichen, die Kanten von $P$ so zu bedecken. Es ist nun möglich, So Kreise hinzuzufügen, dass alle Lücken zwischen Kreisscheiben 3- oder 4-Lücken sind. Das kann am Computer durch Berechnung des Voroni-Diagramms der bereits vorhandenen Kreisscheiben geschehen, das man wiederholt durch Kreisscheiben mit maximal möglichem Radius füllt und neu berechnet, bis man an dieser Stelle angekommen ist. Eppstein gab 1997 einen Algorithmus hierfür an, der mit $O(n \log n)$ Operationen bei $n$ Kreisscheiben auskommt.
Das nun folgende Bild \ref{foc3} zeigt links ein Beispiel für eine solche Überdeckung, und rechts die dadurch induzierte Zerlegung in Drei-und Vierecke, die entsteht, wenn man die Mittelpunkte sich berührender Kreisscheiben verbindet.
\begin{figure}[h]
\includegraphics[width=8cm]{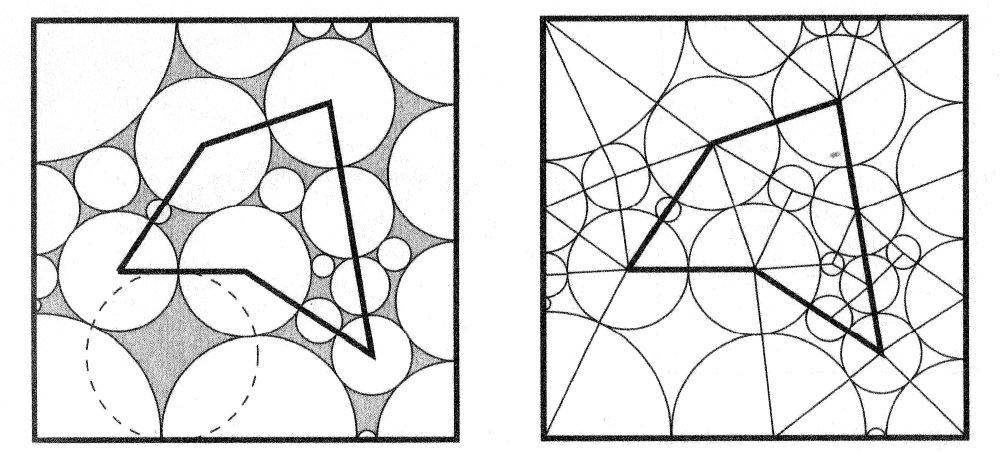}
\caption[\cite{ber02}, Disk Packing]{Beispiel einer Kreisscheiben-Überdeckung}
\label{foc3}
\end{figure}
\subsection{Moleküle}\footnote{\cite{ber02}, Molecules}\\
Ein Molekül ist eine typischerweise flache Faltung eines Polygons, dass als Baustein für größere Origami-Faltungen verwendet werden kann. Die Faltung der Dreiecke unserer Zerlegung, die hier verwendet werden soll, ist das sogenannte Hasenohr-Molekül. In dieser Faltung trifft eine Bergfaltung auf jede Ecke des Dreiecks, und da diese Faltlinien die Winkelhalbierenden des Dreiecks sind, schneiden sie sich in einem Punkt.Weiterhin faltet man Talfalten so, dass diese durch jeweils den Punkt der Dreiecksseiten gehen, in denen sie die Kreisscheiben berühren, dies alles ergibt einen vertikalen Grat senkrecht zur ursprünglichen Papierebene. Der Schnittpunkt der sechs Faltungen, die Spitze dieses Grates ist dann der Inkreismittelpunkt des Dreiecks. Das Bild \ref{foc4} zeigt diese Faltung, und auf der rechten Seite sieht man ein an einen Seestern erinnerndes Zwischenstadium.
\begin{figure}[h]
\includegraphics[width=8cm]{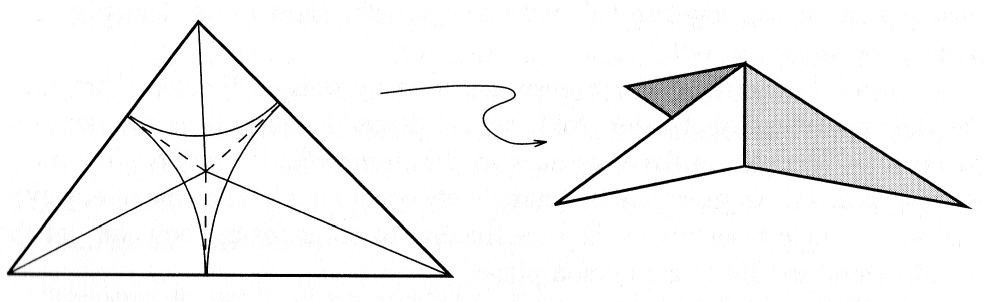}
\caption[\cite{ber02}, Molecules]{Hasenohr-Molekül}
\label{foc4}
\end{figure}
An diesem Punkt des Beweises sind die Orientierungen der Talfalten prinzipiell austauschbar. In größeren Origami-Faltungen mag es nötig sein, die Orientierung umzukehren, um den Satz von Maekawa, der besagt, dass sich an jeder Ecke im Inneren des Papiers die Anzahl der Bergfalten und der Talfalten um $2$ unterscheiden muss, wenn es sich denn um eine flache Faltung handelt, zu erfüllen.\\
Die Vierecke sollen nun ganz ähnlich gefaltet werden, als ein "`vierzackiger Seestern mit Tal in der Mitte"', das man als Kreuzblech-Molekül bezeichnet.
\begin{figure}[h]
\includegraphics[width=8cm]{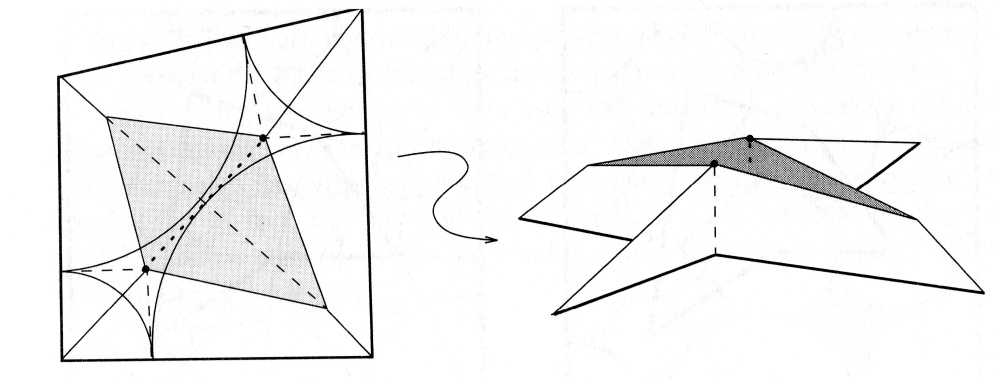}
\caption[\cite{ber02}, Molecules]{Kreuzblech-Molekül}
\label{foc5}
\end{figure}
In dieser Faltung verlaufen Bergfalten ein Stück weit entlang der Winkelhalbierenden, unterbrochen vom Kreuzblech, einem Viereck innerhalb des ursprünglichen Vierecks, wie in Abbildung \ref{foc5} gezeigt. Das Kreuzblech wird durch eine seiner Diagonalen in zwei Dreiecke geteilt, die Diagonalen sind Talfalten, und jede Hälfte des großen, ursprünglichen Vierecks wird so in eine Art Hasenohr-Molekül-Form gefaltet. Auch hier treffen sich die Talfalten der Berührpunkte in der Spitze eines zentralen Grates, senkrecht zur ursprünglichen Papierebene. Auch hier ist die Orientierung der Faltungen prinzipiell austauschbar.\\
Zwei der Ecken des Kreuzbleches sind dadurch vorgegeben, dass die Talfalten vom Berührpunkt mit den Kreisscheiben senkrecht verlaufen müssen. Diese Ecken sollen als {\em Tangenten-Punkte} bezeichnet werden. Die anderen zwei Ecken sind nicht zwingend festgelegt, sie müssen aber auf den Winkelhalbierenden des ursprünglichen Viereckes liegen, damit dessen Grenzen sich in eine gemeinsame Ebene falten lassen. Eine Möglichkeit zum Festlegen dieser Punkte ist das Einschieben eines inneren Vierecks, das aus zu den ursprünglichen Seiten parallelen und äquidistanten Seiten besteht. In Abbildung \ref{foc6} ist das äußere Viereck mit $A,B,C,D$, das neue innere mit $P,Q,R,S$ als Ecken eingezeichnet. 
\begin{figure}[h]
\includegraphics[width=8cm]{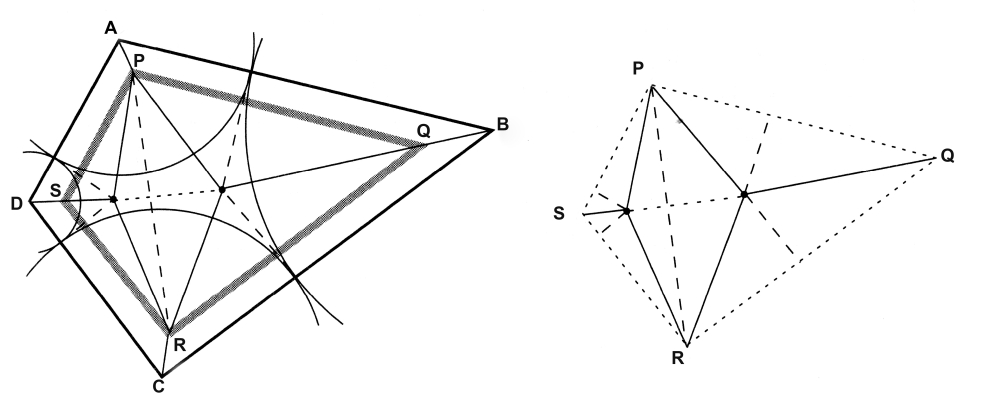}
\caption[Bearbeitung von \cite{ber02}, Molecules]{Kreuzblech-Molekül 2}
\label{foc6}
\end{figure}

Das innere Viereck bildet bei der Faltung des Kreuzblech-Moleküls  einen kleinen Seestern, so, dass $PQRS$ und $PR$ in die gleiche Ebene gefaltet werden. Tatsächlich handelt es sich bei der Kreuzblech-Faltung, eingeschränkt auf $PQRS$, ja nur um zwei Hasenohr-Moleküle, wie in der rechten Skizze dargestellt. Also liegen die Tangenten-Punkte auf den Inkreismittelpunkten der Dreiecke $\Delta(P,Q,R), \Delta(P,R,S)$, und dies bestimmt die Größe von $PQRS$ eindeutig.\\

Was noch zu zeigen ist, ist die Tatsache, dass alle Vierecke, die aus 4-Lücken entstehen, als Kreuzblech-Molekül gefaltet werden können. Dazu ist zu zeigen, dass die Dreiecke $\Delta(P,Q,R), \Delta(P,R,S)$ mit Inkreismittelpunkten bei den Tangenten-Punkten wirklich innerhalb von $ABCD$ liegen, also dass die Anforderungen an das Kreuzblech sich nicht widersprechen.\\
Dazu nehme man zunächst an, dass die Tangenten-Punkte eindeutig sind, und ihre Verbindungsgerade sei $l$. Die Gerade $l$ ist die Menge aller Punkte mit gleicher Potenz\footnote{Die Potenz eines Punktes zu einem Kreis ist das Quadrat der Entfernung vom Mittelpunkt des Kreises minus dem Quadrat des Kreisradius.} von den Kreisscheiben um $A$ und $C$, und verläuft daher zwischen diesen Kreisen. Die Winkelhalbierende des Winkels zwischen $l$ und der Talfalte senkrecht zu $\overleftrightarrow{BC}$ legen dann den Punkt $R$ fest. Da $l$ oberhalb der Scheibe um $C$ verläuft, liegt auch $R$ oberhalb von $C$ auf der Winkelhalbierenden bei $C$. Daher liegt $PQRS$ tatsächlich in $ABCD$. In dem Fall, dass sich die Kreise um $A$ und $C$ berühren, ist $PQRS$ gleich $ABCD$ und das Kreuzblech-Molekül wird zu zwei Hasenohr-Molekülen.\\

Aber was passiert, wenn die beiden Tangenten-Punkte aufeinander liegen? In diesem Extremfall kann man eine spezielle Eigenschaft von 4-Lücken ausnutzen. Die Berührpunkte von vier Kreisscheiben, die sich zyklisch berühren, liegen auf einem Kreis. Abbildung \ref{foc3} zeigt links gestrichelt einen solchen Kreis. Das bedeutet aber, dass sich die Winkelhalbierenden dieses Viereckes in einem Punkt $O$ schneiden, dem Mittelpunkt des Kreises durch die Berührpunkte! In diesem Extremfall zieht sich also $PQRS$ auf den Punkt $O$ zusammen, und die Talfalten von den Berührpunkten  und die Bergfalten entlang der Winkelhalbierenden treffen sich alle in einer flach faltbaren Ecke.\\

\subsection{Verbinden der Moleküle}\footnote{\cite{ber02}, Joining Molecules}\\

Nun bleibt nur noch zu zeigen, welche Orientierung den einzelnen Faltungen zugewiesen werden muss, so dass benachbarte Moleküle zusammenpassen und jede Ecke Maekawa's Satz erfüllt.\\
Man sucht also eine Faltung von $R$, die wie zwei Bücher von Papierlaschen in dreieckiger Form aussieht, eines oberhalb und eines unterhalb der ursprünglichen Papierebene. Die "`Klebekante"' der Seiten des Buches sei als Achse bezeichnet. Die Moleküle, die innerhalb des Polygons $P$ liegen, bilden das obere Bündel, diejenigen außerhalb das untere. Der Rand des Polygons selbst wird nicht gefaltet, und die Teil-Polygone, die die Grenze des Polygons überschreiten, enthalten jeweils ein Dreieck von zwei ursprünglichen Molekülen, und liegen so in beiden Bündeln.\\
Winkelhalbierende Kanten innerhalb des Polygons werden zu Bergfalten, und solche außerhalb werden zu Talfalten. Andere Kanten des Faltmusters erhalten zunächst eine Standard-Orientierung, die sich aber am Ende noch ändern kann. Die Standard-Orientierung einer Kante, die durch einen Berührpunkt verläuft, einer {\em Berührkante}, und die einer {\em Seitenkante}, also einer solchen, die entlang einer Seite eines Drei- oder Vierecks läuft,  ist Tal innerhalb des Polygons, und Berg außerhalb. Seitenkanten entlang des Randes des Polygons werden ja gar nicht gefaltet.\\
Nun hat jede Ecke innerhalb des Faltmusters die gleiche Anzahl von Bergen und Tälern. Die Ecken innerhalb des Polygons $P$ benötigen einen weiteren Berg, und die Ecken außerhalb des Polygons $P$ ein weiteres Tal, damit die Moleküle korrekt nach oben oder unten gefaltet werden.\\
Sei $G$ der ebene Graph, der aus der Zerlegung durch Entfernung aller Winkelhalbierenden-Kanten und aller Kanten entlang der Grenzen des Polygons $P$ entsteht. Man möchte nun eine Menge von Kanten $M$ finden so, dass jede Ecke von $G$ im Inneren des Rechtecks $R$ auf genau einer Kante von $M$ liegt. Durch Umorientierung der Kanten von $M$ kann man sicher stellen, dass jede Ecke Maekawa's Satz erfüllt. Alle Ecken, auch die entlang des Polygons $P$, welche zwei Kanten im Vergleich zur ursprünglichen Zerlegung von $R$ verloren haben, erfüllen auch den Satz von Kawasaki\footnote{Jede Ecke im Inneren des Papiers hat eine alternierende Winkelsumme von $180^\circ$}.\\
Nun wird gezeigt, wie man das Problem des Findens von $M$ lösen kann.\\

Sei dazu $T_C$ der Baum aller Seitenkanten so, dass:
\bi 
\item $T_C$ keine Kanten entlang der Grenzen von $R$ oder $P$ enthält.
\item $T_C$ spannt alle inneren Ecken von Molekülen auf
\item $T_C$ spannt genau eine Ecke auf der Grenze von $R$ auf, die man als Wurzel betrachten kann.
\ei

Wenn man nun das Papier entlang von $T_C$ schneidet, erhält man einen Baum von Molekülen $T_M$ mit einem der Moleküle, die auf der Wurzel von $T_C$ liegen als Wurzel. $M$ enthält nun zwei Arten von Kanten:  Jede Berührkante von der Mitte eines Moleküls zu der Seite seines entsprechenden Elements in $T_M$, und jede Seitenkante von einer Ecke zum zugehörigen Element in $T_C$.\\

Nehme man nun an, man habe bereits enlang der Kanten von $T_C$ geschnitten. Weiterhin stelle man sich vor, man baue die flache Faltung Molekül für Molekül entsprechend $T_M$ auf. Das Wurzel-Molekül von $T_M$ bildet dann ein Buch mit kollinearen Kanten in der ursprünglichen Papierebene. Jedes folgende Molekül fügt einen Abschnitt aus 3 oder 4 Laschen zwischen zwei Laschen des bis dahin konstruierten Buches ein. Die erste und letzte Seite dieses Abschnittes werden an ihre angrenzenden Seiten geklebt, so dass ein Viereck die Dicke zweier alter Laschen um zwei Laschen erhöht.\\
So macht man entlang des Baumes weiter. Wann immer man die Grenzen von $P$ überschreitet, klebt man den nächsten Abschnitt über oder unter das vorhergehende Molekül ein, so dass die Grenze von $P$ selbst nicht gefaltet wird. Wenn man so alle Moleküle verbunden hat, haben wir nun zwei Bücher, eines oberhalb und eines unterhalb der ursprünglichen Papierebene.

Nehme man nun an, man verklebe die Schnittkanten wieder in der umgekehrten Reihenfolge entlang von $T_C$. Vorher bilden die Schnitte, die zu einem Blatt von $T_C$ innerhalb von $P$ führen, die untere Kante zweier "`Armbeugen"', wie in Abbildung \ref{foc7}.
\begin{figure}[h]
\includegraphics[width=8cm]{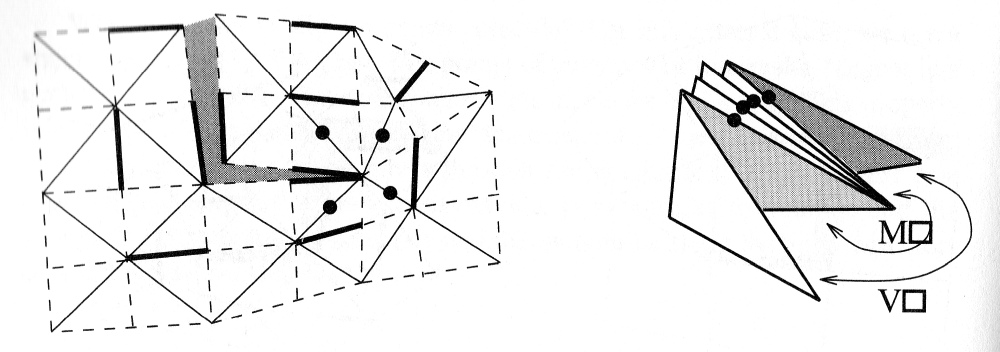}
\caption[Bearbeitung von \cite{ber02}, Joining Molecules]{Zusammenkleben}
\label{foc7}
\end{figure}

Eine solche Armbeuge besteht dabei aus je einer Schicht angrenzender Laschen. Klebt man die erste und letzte Schicht der dazwischen liegenden Laschen zusammen, so entsteht eine Bergfalte, wie in der Standard-Orientierung für Seitenkanten gefordert. Zusammenkleben der verbleibenden zwei Seiten des Schnittes ergibt eine Talfalte, ebenfalls in Übereinstimmung mit unseren Forderungen. Zusammenkleben eines Schnittes, der zu einem Blatt von $T_C$ führt, schließt zwei Armbeugen und reduziert die Anzahl der Laschen im Buch um zwei. 

Ohne weitere Einschränkungen kann es hier aber zu Problemen kommen, denn diese Konstruktion kann sich schneidende Verklebungen erfordern\footnote{\cite{ber09}, Seite 623}. Um die Konstruktion zu reparieren, muss man sicher stellen, dass der Baum der Moleküle die Beziehungen der Grenz-Komponenten untereinander respektiert. Daher müssen die Moleküle innerhalb des Polygons $P$ einen wohldefinierten Unterbaum innerhalb des Baumes aller Moleküle bilden. Wenn das Polygon $P$ ein Loch enthält, das innerhalb einer "`Insel"' innerhalb eines weiteren Loches liegt, dann muss jede Schicht von Molekülen einen wohldefinierten Unterbaum des Baumes darüber bilden. Man versieht dazu jede Grenz-Komponente mit einem Saum der Dicke $\epsilon$, dann falten sich die tiefsten Moleküle zu einem Buch, wie in Abbildung \ref{foc7}. Die zweittiefesten Moleküle falten sich zu Büchern, die die tiefsten Moleküle als Kapitel, also als zusammenhängende Menge von von Laschen innerhalb der größeren Bücher. Der Saum verschiebt die gemeinsamen Achsen der tiefsten Moleküle so, dass sie $\epsilon$ über und parallel zur Achse der zweittiefesten Moleküle. So macht man weiter, und baut Bücher innerhalb von Büchern, wobei jede Schicht zu der vorhergehenden um $\epsilon$ verschoben ist.\\
 Um nun alle Grenzkomponenten auf einmal auszuschneiden, und nichts anderes, faltet man das Innere aller Moleküle nach außerhalb der Achsen falten. Um dies zu tun, reduziert man die Höhe der einzelnen Moleküle so, dass sie alle kleiner als $\epsilon$ sind, entsprechen der in Abbildung \ref{foc8} skizzierten Technik.
\begin{figure}[h]
\includegraphics[width=8cm]{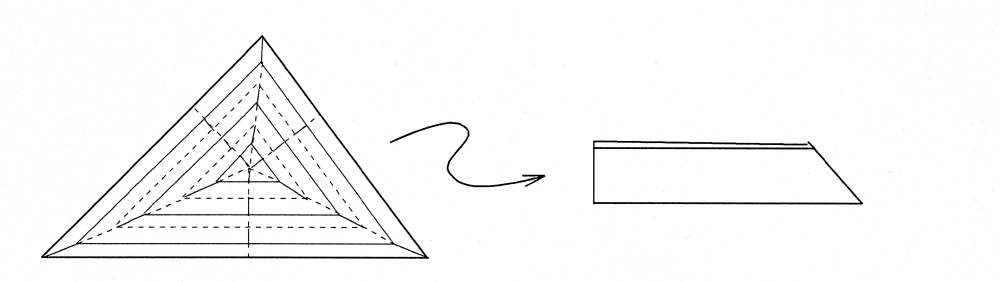}
\caption[Bearbeitung von \cite{ber09}, Seite 625]{Die Höhe einer Buch-Faltung lässt sich beliebig klein machen.}
\label{foc8}
\end{figure}

Nun legt also der Saum die Höhe der ganzen Faltung fest, diese wird für ein einfaches Polygon ohne Löcher in der Gegend von $\epsilon$ sein. Letztendlich kann man das ganze Buch mittel Faltungen parallel zu den Achsen falten, so dass sich alle Achsen auf eine gemeinsame Schnittlinie bringen lassen.
\end{proof}

Damit ist der Satz bewiesen, und der Algorithmus ist nicht zu komplex, hängt er doch linear von der Anzahl der anfangs verwendeten Kreisscheiben ab.\footnote{\cite{ber02}, Seiten 25ff}

Abschließend sollen noch einige FoaC-Faltmuster angegeben werden, um den Leser zur selbständigen Auseinandersetzung mit dieser "`mathematischen Zauberei"' zu bringen. Es ist natürlich mit Hilfe des angegebenen Algorithmus nun jedem selbst möglich, ein Faltmuster für FoaC-Faltungen für einen beliebigen Polygonzug zu erstellen. Ein weiteres Beispiel aus Berns Artikel soll diesen Abschnitt beginnen:

\newpage
\begin{figure}[h]
\includegraphics[width=12cm]{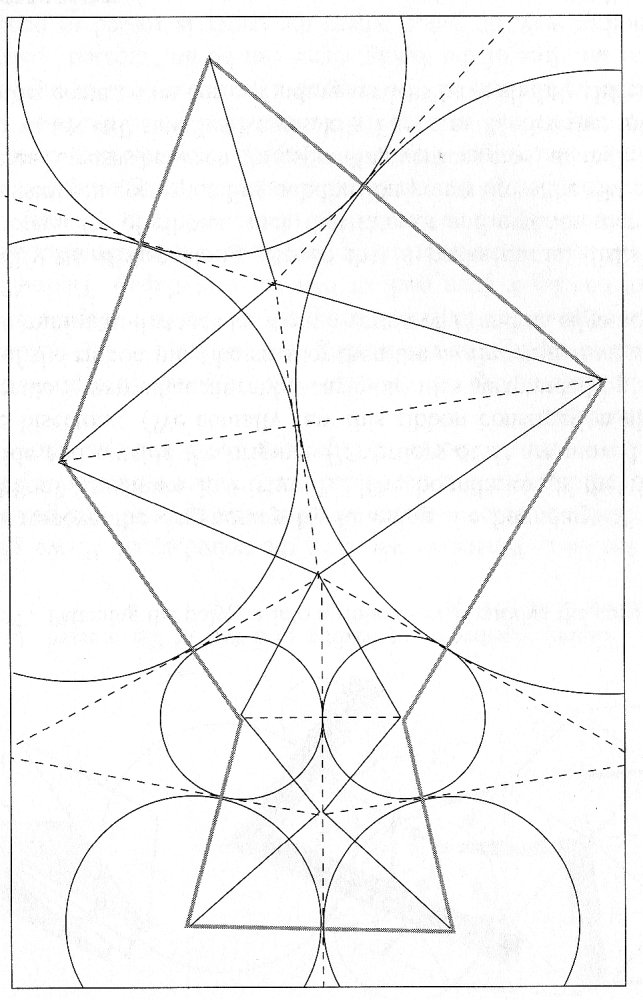}
\caption[\cite{ber02}, Seite 26]{FoaC-Faltmuster für einen Fisch}
\label{foc9}
\end{figure}

\newpage

\begin{figure}[h]
\includegraphics[width=12cm]{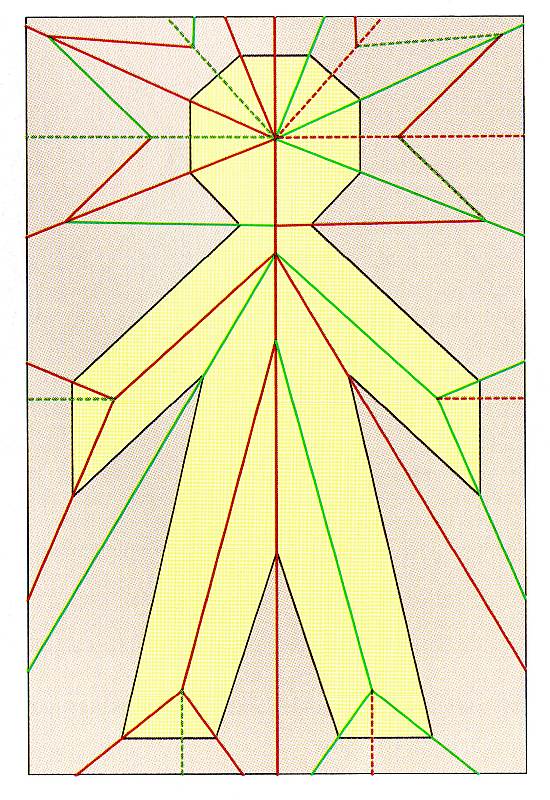}
\caption[\cite{rourke}, Seite 82]{FoaC-Faltmuster für die menschliche Gestalt}
\label{foc9}
\end{figure}

\newpage

\begin{figure}[h]
\includegraphics[width=12cm]{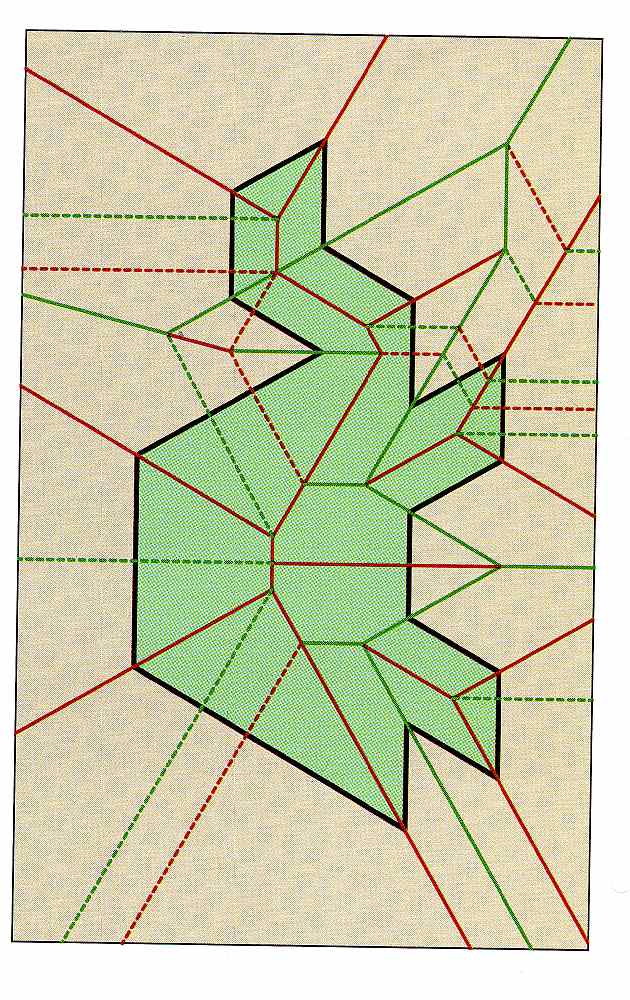}
\caption[\cite{rourke}, Seite 81]{FoaC-Faltmuster für eine Schildkröte}
\label{foc9}
\end{figure}

\newpage
{\huge \bfseries Danksagung}
~\\
~\\
~\\
Ich danke besonders Herrn Professor Knop dafür, dass er mich auf dieses spannende Thema gestoßen hat.

\newpage
\section*{Selbständigkeitserklärung}
Ich erkläre, dass ich diese Arbeit selbständig und nur unter Zuhilfenahme der zitierten Quellen verfasst habe.\\

Erlangen, im Juli 2012\\
~\\
~\\
Kay Paulus
\newpage

\listoffigures

\end{document}